\newcommand{\C}{\mathbb{C}}
\newcommand{\Ql}{\mathbb{Q}_\ell}
\newcommand{\Zl}{\mathbb{Z}_\ell}
\newcommand{\Fl}{\mathbb{F}_\ell}
\newcommand{\Qlb}{\overline{\mathbb{Q}}_\ell}
\newcommand{\Z}{\mathbb{Z}}
\newcommand{\N}{\mathbb{N}}
\newcommand{\K}{\mathbb{K}}
\newcommand{\F}{\mathbb{F}}
\renewcommand{\O}{\mathbb{O}}
\newcommand{\bk}{\Bbbk}
\newcommand{\fg}{\mathfrak{g}}
\newcommand{\fz}{\mathfrak{z}}
\newcommand{\fp}{\mathfrak{p}}
\newcommand{\fl}{\mathfrak{l}}
\newcommand{\fu}{\mathfrak{u}}
\newcommand{\cN}{\mathscr{N}}
\newcommand{\cO}{\mathscr{O}}
\newcommand{\cC}{\mathscr{C}}
\newcommand{\fL}{\mathfrak{L}}
\newcommand{\fN}{\mathfrak{N}}
\newcommand{\Db}{D^{\mathrm{b}}}
\newcommand{\Perv}{\mathsf{Perv}}
\newcommand{\Rep}{\mathsf{Rep}}
\newcommand{\Loc}{\mathsf{Loc}}
\newcommand{\bT}{\mathbb{T}}
\newcommand{\bS}{\mathbb{S}}
\newcommand{\Res}{\mathbf{R}}
\newcommand{\Ind}{\mathbf{I}}
\newcommand{\uInd}{\underline{\Ind}}
\newcommand{\cL}{\mathcal{L}}
\newcommand{\cM}{\mathcal{M}}
\newcommand{\IC}{\mathcal{IC}}
\newcommand{\cF}{\mathcal{F}}
\newcommand{\cG}{\mathcal{G}}
\newcommand{\ubk}{\underline{\bk}}
\newcommand{\cE}{\mathcal{E}}
\newcommand{\cS}{\mathcal{S}}
\newcommand{\cD}{\mathcal{D}}
\newcommand{\simto}{\xrightarrow{\sim}}
\DeclareMathOperator{\End}{End}
\DeclareMathOperator{\Hom}{Hom}
\DeclareMathOperator{\Irr}{Irr}
\newcommand{\cusp}{\mathrm{cusp}}
\def\lotimes{\@ifnextchar_{\@lotimessub}{\@lotimesnosub}}
\def\@lotimessub_#1{\mathchoice{\mathbin{\mathop{\otimes}^L}_{#1}}%
  {\otimes^L_{#1}}{\otimes^L_{#1}}{\otimes^L_{#1}}}
\def\@lotimesnosub{\mathbin{\mathop{\otimes}^L}}
\newcommand{\GL}{\mathrm{GL}}
\newcommand{\SL}{\mathrm{SL}}
\newcommand{\Sp}{\mathrm{Sp}}
\newcommand{\Spin}{\mathrm{Spin}}
\newcommand{\SO}{\mathrm{SO}}
\newcommand{\G}{{\mathrm{G}}}
\newcommand{\fS}{\mathfrak{S}}
\newcommand{\Psico}{\Psi^{\mathrm{co}}}
\newcommand{\psico}{\psi^{\mathrm{co}}}
\newcommand{\Xico}{\Xi^{\mathrm{co}}}
\newcommand{\xico}{\xi^{\mathrm{co}}}
\newcommand{\Omegaco}{\Omega^{\mathrm{co}}}
\newcommand{\omegaco}{\omega^{\mathrm{co}}}
\newcommand{\omegacop}{\omega^{\mathrm{co}\prime}}
\newcommand{\Part}{\mathrm{Part}}
\newcommand{\Bipart}{\mathrm{Bipart}}
\newcommand{\Comp}{\N^\infty}
\newcommand{\sa}{\mathsf{a}}
\newcommand{\sfb}{\mathsf{b}}
\newcommand{\sm}{\mathsf{m}}
\newcommand{\uPart}{\underline{\Part}}
\newcommand{\uBipart}{\underline{\Bipart}}
\newcommand{\blambda}{{\boldsymbol{\lambda}}}
\newcommand{\tr}{\mathrm{t}}
\newcommand{\ve}{\mathrm{ve}}
\newtheorem*{thm*}{Theorem}
\numberwithin{equation}{section}
\newtheorem{thm}{Theorem}[section]
\newtheorem{lem}[thm]{Lemma}
\newtheorem{prop}[thm]{Proposition}
\newtheorem{cor}[thm]{Corollary}
\newtheorem{stmt}[thm]{Statement}
\theoremstyle{definition}
\theoremstyle{remark}
\newtheorem{rmk}[thm]{Remark}
\DeclareFontFamily{U}{mathx}{\hyphenchar\font45}
\DeclareFontShape{U}{mathx}{m}{n}{
      <5> <6> <7> <8> <9> <10>
      <10.95> <12> <14.4> <17.28> <20.74> <24.88>
      mathx10
      }{}
\DeclareSymbolFont{mathx}{U}{mathx}{m}{n}
\DeclareMathAccent{\widebar}{0}{mathx}{"73}
\title[Modular generalized Springer correspondence II]{Modular generalized Springer correspondence II: classical groups}
\author{Pramod N. Achar}
\address{Department of Mathematics\\
  Louisiana State University\\
  Baton Rouge, LA 70803\\
  U.S.A.}
\email{pramod@math.lsu.edu}
\author{Anthony Henderson}
\address{School of Mathematics and Statistics\\
  University of Sydney, NSW 2006\\
  Australia}
\email{anthony.henderson@sydney.edu.au}
\author{Daniel Juteau}
\address{Laboratoire de Math\'ematiques Nicolas Oresme\\
  Universit\'e de Caen, BP 5186\\
  14032 Caen Cedex\\ 
  France}
\email{daniel.juteau@unicaen.fr}
\author{Simon Riche}
\address{Universit{\'e} Blaise Pascal - Clermont-Ferrand II, Laboratoire de Math{\'e}matiques, CNRS, UMR 6620, Campus universitaire des C{\'e}zeaux, F-63177 Aubi{\`e}re Cedex, France
}
\email{simon.riche@math.univ-bpclermont.fr}
\subjclass[2010]{Primary 17B08, 20G05}
\thanks{P.A. was supported by NSF Grant No.~DMS-1001594.  A.H. was supported by ARC Future Fellowship Grant No.~FT110100504. D.J. and S.R. were supported by ANR Grants No.~ANR-09-JCJC-0102-01 and ANR-13-BS01-0001-01. S.R. was supported by ANR Grant No.~ANR-2010-BLAN-110-02.}
\begin{document}

\begin{abstract}
We construct a modular generalized Springer correspondence for any classical group, by generalizing to the modular setting various results of Lusztig in the case of characteristic-$0$ coefficients. We determine the cuspidal pairs in all classical types, and compute the correspondence explicitly for $\SL(n)$ with coefficients of arbitrary characteristic and for $\SO(n)$ and $\Sp(2n)$ with characteristic-$2$ coefficients.
\end{abstract}

\maketitle

\section{Introduction}
\label{sec:intro}

\subsection{Summary}
This paper continues a series, commenced in~\cite{genspring1}, which aims to construct and describe a modular generalized Springer correspondence for connected reductive groups: in other words, to prove analogues, for sheaves with modular coefficients, of the fundamental results of Lusztig~\cite{lusztig,lusztig-fourier,lusztig-cusp2} on the generalized Springer correspondence for $\Qlb$-sheaves. In~\cite{genspring1} we accomplished this for the group $\GL(n)$. Here we establish more of the foundational results, and use them to construct the correspondence for classical groups more generally.

\subsection{Statement of the main result}

Recall the set-up from~\cite{genspring1}: $G$ denotes a connected reductive group over $\C$, and we consider $G$-equivariant perverse sheaves on the nilpotent cone $\cN_G$ with coefficients in a field $\bk$ of positive characteristic $\ell$. The simple perverse sheaves are indexed by the set $\fN_{G,\bk}$ of pairs $(\cO,\cE)$ where $\cO$ is a nilpotent orbit and $\cE$ is an irreducible $G$-equivariant $\bk$-local system on $\cO$. As we recall in~\S\ref{sec:generalities}, there is a subset $\fN_{G,\bk}^\cusp\subset\fN_{G,\bk}$ of \emph{cuspidal pairs}. Let $\mathfrak{L}$ be a set of representatives of $G$-conjugacy classes of Levi subgroups of $G$. For any $L\in\fL$ and $(\cO_L,\cE_L)\in\fN_{L,\bk}^\cusp$, we have a corresponding \emph{induction series} $\fN_{G,\bk}^{(L,\cO_L,\cE_L)}\subset\fN_{G,\bk}$.

\begin{thm}
\label{thm:main}
Assume that $G$ is classical, and that $\bk$ is big enough \textup{(}see below for the precise conditions\textup{)}. Then we have a disjoint union
\begin{equation} \label{eqn:disjointness}
\fN_{G,\bk} = \bigsqcup_{L \in \mathfrak{L}} \bigsqcup_{(\cO_L,\cE_L)\in\fN_{L,\bk}^\cusp}\fN_{G,\bk}^{(L,\cO_L,\cE_L)},
\end{equation}
and for any $L\in\fL$ and $(\cO_L,\cE_L)\in\fN_{L,\bk}^\cusp$ we have a canonical bijection
\begin{equation} \label{eqn:bijection}
\fN_{G,\bk}^{(L,\cO_L,\cE_L)}\longleftrightarrow\Irr(\bk[N_G(L)/L]).
\end{equation}
Hence we obtain a bijection
\begin{equation} \label{eqn:mgsc}
\fN_{G,\bk} \longleftrightarrow
\bigsqcup_{L \in \mathfrak{L}} \bigsqcup_{(\cO_L,\cE_L)\in\fN_{L,\bk}^\cusp}\Irr(\bk[N_G(L)/L]), 
\end{equation}
which we call the \emph{modular generalized Springer correspondence} for $G$. 
\end{thm}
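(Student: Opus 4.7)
The plan is to follow the broad architecture of Lusztig's proof in characteristic zero, but working with the modular induction and restriction functors $\Ind_L^G$ and $\Res_L^G$ developed in \S\ref{sec:generalities} and in~\cite{genspring1}. There are three structural statements to establish: the disjointness in~\eqref{eqn:disjointness}, the exhaustion (that the right-hand side of~\eqref{eqn:disjointness} really fills up $\fN_{G,\bk}$), and the canonical bijection~\eqref{eqn:bijection} for each fixed cuspidal datum. I would organize the argument around an induction on the semisimple rank of $G$, so that the classification of cuspidal pairs and the correspondence for proper Levi subgroups are available as input at each stage.

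The central computation driving~\eqref{eqn:bijection} is the identification
\[
\End\bigl(\Ind_L^G \IC(\cO_L,\cE_L)\bigr)\simeq \bk[N_G(L)/L]
\]
for a cuspidal pair $(\cO_L,\cE_L)$, valid under the ``big enough'' hypothesis on $\bk$. Once this is in hand, the usual yoga of idempotents and heads produces a bijection between the indecomposable summands of $\Ind_L^G \IC(\cO_L,\cE_L)$ and the simple $\bk[N_G(L)/L]$-modules, and the elements of the induction series $\fN_{G,\bk}^{(L,\cO_L,\cE_L)}$ are recognized as the $\IC$-sheaves appearing as the heads of these summands. The disjointness~\eqref{eqn:disjointness} then reduces to a Mackey-style compatibility between $\Res$ and $\Ind$ combined with the defining vanishing property of cuspidal pairs: the restriction of an induced cuspidal to a non-conjugate Levi vanishes, so induced objects from non-conjugate cuspidal data cannot share a simple constituent.

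The hardest ingredient is exhaustion, and this is where the hypothesis that $G$ be classical enters essentially. Schematically, if $(\cO,\cE)\in\fN_{G,\bk}$ is not itself cuspidal then by definition $\Res_L^G \IC(\cO,\cE)$ is nonzero for some proper Levi $L$; an inductive argument using the correspondence for $L$ then places $(\cO,\cE)$ in an induction series from $L$. To execute this rigorously one needs an explicit description of $\fN_{L,\bk}^\cusp$ for every Levi $L$ of every classical $G$, which reduces (since Levis of classical groups are products of smaller classical and general linear factors) to a determination of cuspidals in $\GL(n)$, settled in~\cite{genspring1}, and in $\Sp(2n)$, $\SO(n)$, and $\Spin(n)$. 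The main obstacle is therefore this explicit classification of cuspidal pairs in each classical type, together with a combinatorial count matching them up with the irreducibles of the relevant normalizer quotients $N_G(L)/L$; this is where the characteristic $\ell$, and in particular the dichotomy between $\ell=2$ and $\ell$ odd, plays a decisive role. Once these classifications are in place, the disjointness, the bijection~\eqref{eqn:bijection}, and the exhaustion fit together to produce the overall bijection~\eqref{eqn:mgsc}.
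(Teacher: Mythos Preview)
Your proposal has the right broad shape but inverts the difficulty of the two main structural statements and misses the paper's central technical tool. Exhaustion is \emph{not} the hard part and does not require $G$ to be classical: it is a general fact (\cite[Corollary~2.7]{genspring1}) that every simple object of $\Perv_G(\cN_G,\bk)$ is a quotient of some $\Ind_{L\subset P}^G(\IC(\cO_L,\cE_L))$ with $(\cO_L,\cE_L)$ cuspidal, by nothing more than adjunction and the definition of cuspidality. The hard statement is \emph{disjointness}, and your Mackey-plus-vanishing sketch at best covers non-conjugate Levis; it says nothing about why two distinct cuspidal pairs on the \emph{same} Levi $L$ give disjoint series (the Mackey ``main term'' involves the $N_G(L)$-orbit of the cuspidal pair, and one must still show the other pair is not in that orbit). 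The paper does not use a Mackey formula here at all. It passes through Fourier transform: Corollary~\ref{cor:partial-disjointness} (via Lemma~\ref{lem:fourier}) gives disjointness whenever the associated Lusztig strata $Y_{(L,\cO_L')}$ differ, and the remaining same-Levi case is handled by showing distinct cuspidal local systems on a given orbit have distinct central characters (Statement~\ref{stmt:dist-char}, see Theorem~\ref{thm:conditional}). It is \emph{this} verification, not exhaustion, that requires the type-by-type classification of cuspidals in Sections~\ref{sec:sln}--\ref{sec:son}.

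Separately, the identification $\End(\Ind_L^G\IC(\cO_L,\cE_L))\cong\bk[N_G(L)/L]$ is not automatic in the modular setting: a priori one only gets a \emph{twisted} group algebra, and the vanishing of the relevant $2$-cocycle (Remark~\ref{rmk:cocycle}) is precisely the content of Theorem~\ref{thm:cocycle}. The paper proves it, again via Fourier transform, by a geometric analysis of the pushforward local system $(\varpi_{(L,\cO_L')})_*\widetilde{\cE_L'}$ on the Lusztig stratum $Y_{(L,\cO_L')}$: one constructs a canonical extension $\widehat{\cE_L'}$ to the larger variety $\widetilde{T}_{(L,\cO_L')}$ and uses its restriction to the induced orbit $\mathrm{Ind}_L^G(\cO_L')$ to normalize an $(N_G(L)/L)$-equivariant structure (Proposition~\ref{prop:equivariance-hatE}), which simultaneously untwists the endomorphism algebra and singles out the \emph{canonical} bijection~\eqref{eqn:bijection}. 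Your ``usual yoga of idempotents and heads'' presupposes this step and does not explain what makes the bijection canonical.
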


Here, $\Irr(\bk[N_G(L)/L])$ denotes the set of isomorphism classes of irreducible $\bk$-representations of the relative Weyl group $N_G(L)/L$; we say that $G$ is \emph{classical} if its root system has irreducible components only of types $\mathbf{A}$, $\mathbf{B}$, $\mathbf{C}$ or $\mathbf{D}$; and we say that $\bk$ is \emph{big enough} if it satisfies the following conditions:
\begin{enumerate}
\item if the root system of $G$ contains a component of type $\mathbf{A}_{n-1}$, then $\bk$ contains all $n$-th roots of unity of its algebraic closure;
\item if the root system of $G$ contains a component of type $\mathbf{B}$ or $\mathbf{D}$, then $\bk$ contains all fourth roots of unity of its algebraic closure.
\end{enumerate}
The second condition is, of course, vacuous if $\bk$ happens to have characteristic $2$. As we will see, for particular groups these conditions on $\bk$ can be weakened (for instance, in \cite{genspring1} we proved Theorem~\ref{thm:main} for $G=\GL(n)$ and $\bk$ arbitrary); we imposed them uniformly in order to have a concise statement.

\subsection{Overview of the proof of Theorem~\ref{thm:main}}

The main content of Theorem~\ref{thm:main} can be divided into two rather different results on induction series: the
disjointness (i.e.~the fact that the unions on the right-hand side of~\eqref{eqn:disjointness} are disjoint), and the parametrization (i.e.~the canonical bijection~\eqref{eqn:bijection}).
Recall that in Lusztig's paper~\cite{lusztig}, 
these results had uniform proofs, whereas case-by-case arguments were needed for the explicit descriptions of the cuspidal pairs and the generalized Springer correspondence, the latter being completed subsequently in~\cite{lus-spalt,spaltenstein}. 

Theorem~\ref{thm:cocycle} of this paper provides a uniform construction of the parametrization~\eqref{eqn:bijection}, which requires only mild assumptions on $(\cO_L,\cE_L)$ (weaker than the assumptions of Theorem~\ref{thm:main}). In particular, this statement does not require $G$ to be classical. As in~\cite{lusztig}, the word ``canonical'' refers to the fact that this bijection does not depend on any choice: it is characterized by a geometric condition, relating to the restriction of a certain perverse sheaf constructed from $(L,\cO_L,\cE_L)$ to the induced orbit $\mathrm{Ind}_L^G(\cO_L')$, where $\cO_L' \subset \cN_L$ is a nilpotent orbit determined by $(\cO_L,\cE_L)$ (which frequently coincides with $\cO_L$).

On the other hand, our proof of disjointness (or rather of that part of its content which goes beyond the general result Corollary~\ref{cor:partial-disjointness}) relies on the classification of cuspidal pairs, and hence requires case-by-case arguments. More precisely, we use general results to reduce the proof of this disjointness to two key statements about cuspidal pairs (see Theorem~\ref{thm:conditional}). Then we use induction on the rank within each classical type to classify cuspidal pairs, and simultaneously check these statements:
for $\SL(n)$ in Theorem~\ref{thm:sln-cuspidal}, for $\Sp(2n)$ when $\ell=2$ in Theorem~\ref{thm:sp2n2-cuspidal}, for $\Sp(2n)$ when $\ell\neq 2$ in Theorem~\ref{thm:sp2n-cuspidal}, for $\Spin(n)$ when $\ell=2$ in Theorems~\ref{thm:spinodd2-cuspidal} and~\ref{thm:spineven2-cuspidal}, and for $\Spin(n)$ when $\ell\neq 2$ in Theorems~\ref{thm:sonot2-cuspidal} and~\ref{thm:spinnot2-cuspidal}. (Easy arguments, explained in~\S\ref{ss:reductions}, reduce the classification of modular cuspidal pairs to the case where $G$ is simply connected and quasi-simple.)

This approach is similar to the one used for $\GL(n)$ in~\cite{genspring1}. The main new complication in this paper is the appearance of non-constant local systems. (In the case where the local systems are constant, the bijection~\eqref{eqn:bijection} is easy to see; hence Theorem~\ref{thm:cocycle} was not needed in~\cite{genspring1}.)

\subsection{Remarks on cuspidal pairs}

In the classification of cuspidal pairs we use two general results, which provide an upper bound and a lower bound for the number of cuspidal pairs. Namely, as observed in~\cite{genspring1}, all pairs obtained by modular reduction from a cuspidal pair in characteristic $0$ are cuspidal, providing the lower bound. On the other hand, an easy generalization of a result of Lusztig (Proposition~\ref{prop:distinguished}) says that if $(\cO,\cE)$ is a cuspidal pair then $\cO$ is a distinguished orbit, providing the upper bound. We will show that, for quasi-simple classical groups, the description of cuspidal pairs is always given by one of these two extremes. In type $\mathbf{A}$ for all $\ell$ and in types $\mathbf{B}$, $\mathbf{C}$, $\mathbf{D}$ when $\ell \neq 2$, the only cuspidal pairs are those obtained by modular reduction; this is reminiscent of the result of Geck--Hiss--Malle~\cite[Theorem 4.11]{ghm} on cuspidal unipotent Brauer characters of finite classical groups. On the other hand, in types $\mathbf{B}$, $\mathbf{C}$, $\mathbf{D}$ when $\ell=2$, all pairs supported on a distinguished orbit are cuspidal; that is, cuspidal pairs are as plentiful as possible.

Among the exceptional groups, which will be treated in the third paper of this series, there are cases where the number of cuspidal pairs is strictly between these upper and lower bounds. As a result, there are cases where we cannot easily verify Statement~\ref{stmt:dist-char} of the present paper, which asserts the distinctness of central characters of cuspidal pairs supported on the same orbit. Therefore the third paper will include a different proof of the disjointness of induction series, based on a Mackey formula for our induction and restriction functors.

\subsection{Explicit determination of the correspondence}

Theorem~\ref{thm:main} raises the problem of determining the modular generalized Springer correspondence~\eqref{eqn:mgsc} explicitly in terms of the usual combinatorial parametrizations of both sides. In Theorem~\ref{thm:sln-det} we solve this problem for $G=\SL(n)$ by a similar method to that used in~\cite{genspring1} for the $\GL(n)$ case, and in Theorems~\ref{thm:sp2n2-det} and~\ref{thm:so4n2-det} we solve it for $G=\SO(n)$ and $G=\Sp(2n)$ when $\ell=2$. In particular, the latter results determine for the first time the (un-generalized) modular Springer correspondence for these groups when $\ell=2$, complementing the results of~\cite{jls} on the $\ell\neq 2$ case; see Corollary~\ref{cor:ordinary-springer}.

These determinations require further general results, proved in Section~\ref{sect:transitivity}, which play the same role in our theory that Lusztig's restriction theorem~\cite[Theorem 8.3]{lusztig} did in the determination of his generalized Springer correspondence. 

\subsection{Acknowledgements}

We are grateful to C\'edric Bonnaf\'e for helpful conversations.

\subsection{Organization of the paper} 

The remainder of the paper falls into three parts. In Sections~\ref{sec:generalities}--\ref{sect:strategy}, $G$ is a general connected reductive group, and we prove a number of general results underlying Theorem~\ref{thm:main}. In Sections~\ref{sec:sln}--\ref{sec:son} we take $G$ to be a simply connected quasi-simple classical group, considering the various types in turn; these sections complete the proof of Theorem~\ref{thm:main}, along the lines set out in Section~\ref{sect:strategy}. In Section~\ref{sec:det2} we compute the modular generalized Springer correspondence in the cases mentioned above.

\tableofcontents

\section{Generalities}
\label{sec:generalities}

In this section we continue the study, begun in~\cite[Section 2]{genspring1}, of the basic definitions and constructions required to formulate the modular generalized Springer corrrespondence for an arbitrary connected reductive group over $\C$.

\subsection{Some notation}

Our notation follows~\cite{genspring1}. In particular, $\bk$ denotes a field of characteristic $\ell > 0$. We consider sheaves of $\bk$-vector spaces on varieties over $\C$. For a complex algebraic group $H$ acting on a variety $X$, we denote by $\Db_H(X,\bk)$ the constructible $H$-equivariant derived category and by $\Perv_H(X,\bk)$ its subcategory of $H$-equivariant perverse $\bk$-sheaves on $X$. We denote by $\Loc(X,\bk)$ the category of $\bk$-local systems on $X$, and by $\Loc^H(X,\bk)$ the category of $H$-equivariant local systems.

Throughout, $G$ denotes a connected reductive complex algebraic group, $\fg$ its Lie algebra and $\cN_G\subset\fg$ its nilpotent cone. Recall that $G$ has finitely many orbits in $\cN_G$, and that every simple object in $\Perv_G(\cN_G,\bk)$ is of the form $\IC(\cO,\cE)$ where $\cO\subset\cN_G$ is a $G$-orbit and $\cE$ is an irreducible $G$-equivariant $\bk$-local system on $\cO$. Let $\fN_{G,\bk}$ denote the set of such pairs $(\cO,\cE)$, where the local systems $\cE$ on a given orbit $\cO$ are taken up to isomorphism. Thus $\fN_{G,\bk}$ is finite and parametrizes the isomorphism classes of simple objects of $\Perv_G(\cN_G,\bk)$.

Let $P \subset G$ be a parabolic subgroup, and let $L \subset P$ be a Levi factor. Then $L$ is also a connected reductive group, with Lie algebra $\fl$ and nilpotent cone $\cN_L$. We will denote by $U_P$ the unipotent radical of $P$, by $\fp$ the Lie algebra of $P$, and by $\fu_P$ the Lie algebra of $U_P$. 

As explained in~\cite[\S2.1]{genspring1}, we have two restriction functors
\[
\Res^G_{L \subset P}, {}'\Res^G_{L \subset P} : \Perv_G(\cN_G,\bk) \to \Perv_{L}(\cN_{L},\bk),
\] 
which are exchanged by Verdier duality, and an induction functor 
\[
\Ind_{L \subset P}^G : \Perv_{L}(\cN_{L},\bk) \to \Perv_G(\cN_G,\bk),
\]
which commutes with Verdier duality.
All these functors are exact, and we have adjunctions ${}'\Res^G_{L \subset P}\dashv \Ind_{L \subset P}^G\dashv \Res^G_{L \subset P}$.

For simplicity we will say that $L \subset G$ is a \emph{Levi subgroup} if it is a Levi factor of a parabolic subgroup of $G$. Given a Levi subgroup $L$, we write $\fz_L$ for the centre of its Lie algebra $\fl$, and $\fz_L^\circ$ for the open subset $\{z\in\fz_L\,|\,G_z^\circ=L\}$, where $G_z$ denotes the stabilizer of $z$ in $G$ and $G_z^\circ$ its identity component. The \emph{Lusztig stratification} of $\fg$, defined in~\cite[\S 6]{lusztig-cusp2}, expresses $\fg$ as the disjoint union of the smooth $G$-stable irreducible locally closed subvarieties
\[
Y_{(L,\cO_L)}:= G\cdot(\cO_L+\fz_L^\circ),
\]
where $L$ runs over the Levi subgroups of $G$ and $\cO_L$ over the nilpotent orbits for $L$, and $Y_{(L,\cO_L)}=Y_{(M,\cO_M)}$ if and only if the pairs $(L,\cO_L)$ and $(M,\cO_M)$ are $G$-conjugate. As in~\cite{genspring1}, we define
\[
X_{(L,\cO)} := \overline{Y_{(L,\cO)}}
\qquad\text{and}\qquad
\widetilde{Y}_{(L,\cO_L)}:=G\times^L(\cO_L+\fz_L^\circ),
\]
and we let $\varpi_{(L,\cO_L)}:\widetilde{Y}_{(L,\cO_L)}\to Y_{(L,\cO_L)}$ denote the natural $G$-equivariant morphism. By~\cite[proof of Lemma 5.1.28]{letellier}, $\varpi_{(L,\cO_L)}$ is a Galois covering with Galois group $N_G(L,\cO_L)/L$, where $N_G(L,\cO_L)$ denotes the subgroup of the normalizer $N_G(L)$ that preserves the orbit $\cO_L$. Here $n\in N_G(L,\cO_L)$ acts on $\widetilde{Y}_{(L,\cO_L)}$ by
\[
n\cdot(g*(x+z))=gn^{-1}*(n\cdot (x+z)),\text{ for }g\in G,\, x\in\cO_L,\, z\in\fz_L^\circ.
\]
If $\cE_L$ denotes an $L$-equivariant local system on $\cO_L$, then we write $\widetilde{\cE_L}$ for the unique $G$-equivariant local system on $\widetilde{Y}_{(L,\cO_L)}$ whose pull-back to $G\times(\cO_L+\fz_L^\circ)$ is $\ubk_G\boxtimes(\cE_L\boxtimes\ubk_{\fz_L^\circ})$.

\subsection{Induction series}
\label{ss:series}

Recall that a simple object $\cF$ in the abelian category $\Perv_G(\cN_G,\bk)$ is called \emph{cuspidal} if for any proper parabolic $P\subsetneq G$ and Levi factor $L \subset P$ we have $\Res^G_{L \subset P}(\cF)=0$. By~\cite[Proposition 2.1]{genspring1}, the definition is unchanged if we instead require ${}'\Res^G_{L \subset P}(\cF)=0$. A pair $(\cO,\cE)\in\fN_{G,\bk}$ is called a \emph{cuspidal pair} if $\IC(\cO,\cE)$ is cuspidal. Let $\fN_{G,\bk}^\cusp\subset\fN_{G,\bk}$ denote the set of cuspidal pairs.

Recall from~\cite[Corollary 2.7]{genspring1} that every simple object of $\Perv_G(\cN_G,\bk)$ appears as a quotient of $\Ind_{L \subset P}^G(\IC(\cO_L,\cE_L))$ for some $L\subset P\subset G$ as above and $(\cO_L,\cE_L)\in\fN_{L,\bk}^\cusp$. The cuspidal objects of $\Perv_G(\cN_G,\bk)$ occur here in the case $L=P=G$.

As in~\cite{genspring1}, we make essential use of the Fourier--Sato transform $\bT_{\fg}$, an autoequivalence of the category of conic $G$-equivariant perverse $\bk$-sheaves on $\fg$ (for its definition and basic properties, see~\cite{ahjr}).  By~\cite[Corollary~2.12]{genspring1}, for any $(\cO_L,\cE_L) \in \fN_{L,\bk}^\cusp$, there is a unique pair $(\cO_L',\cE_L')\in\fN_{L,\bk}^\cusp$ such that
\begin{equation}
\label{eqn:l-fourier}
\bT_{\fl}(\IC(\cO_L,\cE_L))\cong\IC(\cO_L'+\fz_L,\cE_L'\boxtimes\ubk_{\fz_L}).
\end{equation}
As mentioned in~\cite[Remark~2.13]{genspring1}, it is possible that, as in the characteristic-zero case, we have $(\cO_L',\cE_L')=(\cO_L,\cE_L)$ always. (We will see in Corollary~\ref{cor:sln-fourier-invariance} below that this holds when $G = \SL(n)$.)
Next,~\cite[Corollary~2.18]{genspring1} tells us that there is a canonical isomorphism
\begin{equation} 
\label{eqn:fourier}
\bT_{\fg}(\Ind_{L \subset P}^G(\IC(\cO_L,\cE_L)))\cong\IC(Y_{(L,\cO_L')},(\varpi_{(L,\cO_L')})_*\widetilde{\cE_L'}).
\end{equation}
It follows from~\eqref{eqn:fourier} that $\Ind_{L \subset P}^G(\IC(\cO_L,\cE_L))$ does not depend on $P$, up to canonical isomorphism. We refer to the set of isomorphism classes of simple quotients of $\Ind_{L \subset P}^G(\IC(\cO_L,\cE_L))$, or to the corresponding subset $\fN_{G,\bk}^{(L,\cO_L,\cE_L)}$ of $\fN_{G,\bk}$, as the \emph{induction series} attached to $(L,\cO_L,\cE_L)$. Clearly, this induction series is unchanged if the triple $(L,\cO_L,\cE_L)$ is conjugated by an element of $G$.

\begin{lem}
\label{lem:fourier}
Let $(\cO,\cE)\in\fN_{G,\bk}$. Then $(\cO,\cE)\in\fN_{G,\bk}^{(L,\cO_L,\cE_L)}$ if and only if 
\[ \bT_{\fg}(\IC(\cO,\cE))\cong\IC(Y_{(L,\cO_L')},\cD) \] 
for some simple local system $\cD$ on $Y_{(L,\cO_L')}$ that is a quotient of $(\varpi_{(L,\cO_L')})_*\widetilde{\cE_L'}$, where $(\cO_L',\cE_L')\in\fN_{L,\bk}^\cusp$ is as in \eqref{eqn:l-fourier}. In particular, $\fN_{G,\bk}^{(L,\cO_L,\cE_L)}$ is canonically in bijection with the set of isomorphism classes of simple quotients of $(\varpi_{(L,\cO_L')})_*\widetilde{\cE_L'}$.
\end{lem}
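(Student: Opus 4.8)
The plan is to combine the description~\eqref{eqn:fourier} of the Fourier--Sato transform of an induced simple with the fact that $\bT_{\fg}$ is an exact autoequivalence of the category of conic $G$-equivariant perverse sheaves. First I would observe that, since $\bT_{\fg}$ is exact and fully faithful, it induces a bijection between the set of isomorphism classes of simple quotients of $\Ind_{L \subset P}^G(\IC(\cO_L,\cE_L))$ and the set of isomorphism classes of simple quotients of $\bT_{\fg}(\Ind_{L \subset P}^G(\IC(\cO_L,\cE_L)))$. By~\eqref{eqn:fourier}, the latter object is $\IC(Y_{(L,\cO_L')},(\varpi_{(L,\cO_L')})_*\widetilde{\cE_L'})$. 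So $(\cO,\cE)\in\fN_{G,\bk}^{(L,\cO_L,\cE_L)}$ if and only if $\bT_{\fg}(\IC(\cO,\cE))$ is a simple quotient of $\IC(Y_{(L,\cO_L')},(\varpi_{(L,\cO_L')})_*\widetilde{\cE_L'})$.

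Next I would translate ``simple quotient of an $\IC$-sheaf built from a (not necessarily semisimple) local system'' into the language of local systems on $Y_{(L,\cO_L')}$. The key point is that $Y_{(L,\cO_L')}$ is smooth, so $\IC(Y_{(L,\cO_L')},\cG) = \cG[\dim Y_{(L,\cO_L')}]$ for any local system $\cG$ on it, and the functor $\cG \mapsto \IC(Y_{(L,\cO_L')},\cG)$ from local systems to perverse sheaves supported on $\overline{Y}_{(L,\cO_L')}$ is exact on the subcategory of sheaves with full support and sends simples to simples, surjections to surjections. Hence the simple quotients of $\IC(Y_{(L,\cO_L')},(\varpi_{(L,\cO_L')})_*\widetilde{\cE_L'})$ are exactly the $\IC(Y_{(L,\cO_L')},\cD)$ with $\cD$ a simple quotient local system of $(\varpi_{(L,\cO_L')})_*\widetilde{\cE_L'}$. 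One should note here that $(\varpi_{(L,\cO_L')})_*\widetilde{\cE_L'}$ is indeed a local system on $Y_{(L,\cO_L')}$ (as $\varpi_{(L,\cO_L')}$ is a finite Galois covering), and that any simple subquotient of $\IC(Y_{(L,\cO_L')},\cM)$ for $\cM$ a local system on $Y_{(L,\cO_L')}$ is again of the form $\IC(Y_{(L,\cO_L')},\cD')$ with $\cD'$ simple — this is because restriction to the open stratum is exact and faithful on the relevant Serre subcategory, so no ``smaller-support'' constituents appear. This establishes the displayed ``if and only if'' statement.

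For the final sentence, I would assemble the bijection: the map $\cD \mapsto \IC(Y_{(L,\cO_L')},\cD)$ followed by $\bT_{\fg}^{-1}$ gives a bijection from isomorphism classes of simple quotients of $(\varpi_{(L,\cO_L')})_*\widetilde{\cE_L'}$ onto $\fN_{G,\bk}^{(L,\cO_L,\cE_L)}$; it is canonical because $\bT_{\fg}$ and the isomorphism~\eqref{eqn:fourier} are canonical. I expect the only real subtlety — hardly an ``obstacle'' — to be the bookkeeping in the second paragraph: making precise that forming $\IC$ from a local system on the smooth open stratum $Y_{(L,\cO_L')}$ is an exact, quotient-preserving, simple-preserving operation on the appropriate subcategory, so that passing between ``simple quotients of $(\varpi_{(L,\cO_L')})_*\widetilde{\cE_L'}$'' and ``simple quotients of $\IC(Y_{(L,\cO_L')},(\varpi_{(L,\cO_L')})_*\widetilde{\cE_L'})$'' is harmless. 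Everything else is a formal consequence of $\bT_{\fg}$ being an exact equivalence together with~\eqref{eqn:fourier}.
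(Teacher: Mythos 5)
Your route is the same as the paper's: apply the exact, fully faithful autoequivalence $\bT_{\fg}$ together with~\eqref{eqn:fourier} to reduce the question to identifying the simple quotients of $\IC\bigl(Y_{(L,\cO_L')},(\varpi_{(L,\cO_L')})_*\widetilde{\cE_L'}\bigr)$, and then to match these with simple quotient local systems of $(\varpi_{(L,\cO_L')})_*\widetilde{\cE_L'}$. This is exactly the paper's (very short) proof, which handles the second step by citing the fact that ``$\IC$ preserves heads'' (Juteau, Proposition~2.28).

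However, the justifications you give for that second step are not correct as stated. First, the functor $\cM \mapsto \IC(Y_{(L,\cO_L')},\cM)$ (intermediate extension $j_{!*}$ from the smooth open piece) is \emph{not} exact, not even ``on sheaves with full support'': for a short exact sequence of local systems, $j_{!*}$ applied to the middle term can acquire composition factors supported on the boundary, so exactness genuinely fails. Second, and more seriously, your claim that every simple subquotient of $\IC(Y_{(L,\cO_L')},\cM)$ is again of the form $\IC(Y_{(L,\cO_L')},\cD')$ is false in the modular setting: composition factors supported on $\overline{Y_{(L,\cO_L')}}\setminus Y_{(L,\cO_L')}$ do occur in general (this is precisely the phenomenon of nonzero decomposition numbers for perverse sheaves, which is central to this paper). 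What is true, and is all the lemma needs, is the statement about \emph{heads}: intermediate extension sends epimorphisms of local systems to epimorphisms of perverse sheaves, and $\IC(Y_{(L,\cO_L')},\cM)$ admits no nonzero quotient supported on the boundary, since it is a quotient of ${}^{p}j_!(\cM[d])$ and $\Hom({}^{p}j_!(\cM[d]),Q)\cong\Hom(\cM[d],j^*Q)=0$ whenever $j^*Q=0$. Consequently the simple quotients of $\IC\bigl(Y_{(L,\cO_L')},(\varpi_{(L,\cO_L')})_*\widetilde{\cE_L'}\bigr)$ are exactly the $\IC(Y_{(L,\cO_L')},\cD)$ with $\cD$ a simple quotient of $(\varpi_{(L,\cO_L')})_*\widetilde{\cE_L'}$; this is the content of the cited result of Juteau. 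Replacing your exactness/subquotient claims by this statement makes your argument agree with the paper's proof.
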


\begin{proof}
This follows immediately from~\eqref{eqn:fourier}, using the fact that $\bT_{\fg}$ is exact and fully faithful and $\IC$ preserves heads; see \cite[Proposition 2.28]{juteau-aif}.
\end{proof}

\begin{cor}
\label{cor:partial-disjointness}
If $(L,\cO_L,\cE_L)$ and $(M,\cO_M,\cE_M)$ are two triples as above where $(L,\cO_L')$ and $(M,\cO_M')$ are not $G$-conjugate, then $\fN_{G,\bk}^{(L,\cO_L,\cE_L)}\cap\fN_{G,\bk}^{(M,\cO_M,\cE_M)}=\emptyset$. In particular, this holds if $L$ and $M$ are not $G$-conjugate.
\end{cor}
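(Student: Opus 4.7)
The plan is to derive this corollary as a direct consequence of Lemma~\ref{lem:fourier}, using the basic fact that the pieces $Y_{(L,\cO_L)}$ of the Lusztig stratification are pairwise disjoint and indexed (up to $G$-conjugacy) by the pairs $(L,\cO_L)$.

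First, suppose for contradiction that there exists $(\cO,\cE)\in\fN_{G,\bk}^{(L,\cO_L,\cE_L)}\cap\fN_{G,\bk}^{(M,\cO_M,\cE_M)}$. Applying Lemma~\ref{lem:fourier} twice, we obtain isomorphisms
\[
\bT_\fg(\IC(\cO,\cE)) \cong \IC(Y_{(L,\cO_L')},\cD) \cong \IC(Y_{(M,\cO_M')},\cD')
\]
for certain simple local systems $\cD$ and $\cD'$. Since the support of a simple perverse sheaf of the form $\IC(Y,\cL)$ is exactly the closure $\overline{Y}$, the two open strata $Y_{(L,\cO_L')}$ and $Y_{(M,\cO_M')}$ must have the same closure; but by the characterization of the Lusztig stratification recalled earlier (each $Y_{(L,\cO_L)}$ is irreducible and locally closed, and two of them coincide if and only if the defining pairs are $G$-conjugate), this forces $(L,\cO_L')$ and $(M,\cO_M')$ to be $G$-conjugate, contradicting the hypothesis.

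For the ``in particular'' clause, I would simply observe that if $L$ and $M$ are not $G$-conjugate, then no pairs $(L,\cO_L')$ and $(M,\cO_M')$ can be $G$-conjugate either, so the first part applies.

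There is no real obstacle here: once Lemma~\ref{lem:fourier} is in hand, the only ingredient is the well-known fact that distinct Lusztig strata have distinct closures, which is part of the setup in~\cite{lusztig-cusp2} already cited in the preceding paragraphs. The mild subtlety to be careful about is simply distinguishing the primed pairs $(L,\cO_L')$ from $(L,\cO_L)$: the Fourier--Sato transform is what brings $\cO_L'$ rather than $\cO_L$ into the stratum label, so the hypothesis must be phrased in terms of the primed data, exactly as stated.
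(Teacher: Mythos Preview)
Your proof is correct and follows essentially the same approach as the paper: both deduce the result from Lemma~\ref{lem:fourier} together with the fact that distinct Lusztig strata (indexed by $G$-conjugacy classes of pairs) cannot support the same simple $\IC$-sheaf. You have simply spelled out the contradiction argument that the paper compresses into one sentence.
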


\begin{proof}
Since $(L,\cO_L')$ and $(M,\cO_M')$ are not $G$-conjugate, $Y_{(L,\cO_L')}$ and $Y_{(M,\cO_M')}$ are different pieces of the Lusztig stratification. The result follows from Lemma~\ref{lem:fourier}.
\end{proof}


In defining induction series, we focused on quotients rather than subobjects; however, we have an analogue of `Property A' for Harish--Chandra induction of cuspidal modular representations~\cite[Section 2.2]{gh}:

\begin{lem} \label{lem:head-socle}
The induction series attached to $(L,\cO_L,\cE_L)$ equals the set of isomorphism classes of simple subobjects of $\Ind_{L \subset P}^G(\IC(\cO_L,\cE_L))$. 
\end{lem}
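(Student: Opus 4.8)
The plan is to deduce the statement about simple subobjects from the already-established description of simple quotients (Lemma~\ref{lem:fourier}) by applying Verdier duality. First I would recall that Verdier duality $\mathbb{D}$ is an anti-autoequivalence of $\Perv_G(\cN_G,\bk)$ which fixes each simple object $\IC(\cO,\cE)$ up to replacing $\cE$ by its dual local system $\cE^\vee$; more precisely $\mathbb{D}\,\IC(\cO,\cE) \cong \IC(\cO,\cE^\vee)$. Since $\mathbb{D}$ is exact and contravariant, it exchanges heads and socles: the simple subobjects of an object $M$ are exactly the duals of the simple quotients of $\mathbb{D}M$. So the simple subobjects of $\Ind_{L\subset P}^G(\IC(\cO_L,\cE_L))$ are the duals of the simple quotients of $\mathbb{D}\,\Ind_{L\subset P}^G(\IC(\cO_L,\cE_L))$.

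Next I would use the fact, recalled in the excerpt, that $\Ind_{L\subset P}^G$ commutes with Verdier duality. Hence $\mathbb{D}\,\Ind_{L\subset P}^G(\IC(\cO_L,\cE_L)) \cong \Ind_{L\subset P}^G(\mathbb{D}\,\IC(\cO_L,\cE_L)) \cong \Ind_{L\subset P}^G(\IC(\cO_L,\cE_L^\vee))$. Now $(\cO_L,\cE_L^\vee)$ is again a cuspidal pair (duality preserves cuspidality, since it intertwines $\Res$ and ${}'\Res$, both of whose vanishing defines cuspidality, as noted after the definition of cuspidal). Therefore, by the definition of induction series, the simple quotients of $\Ind_{L\subset P}^G(\IC(\cO_L,\cE_L^\vee))$ constitute the induction series $\fN_{G,\bk}^{(L,\cO_L,\cE_L^\vee)}$. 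Dualizing back, the simple subobjects of $\Ind_{L\subset P}^G(\IC(\cO_L,\cE_L))$ are precisely $\{(\cO,\cE^\vee) : (\cO,\cE)\in\fN_{G,\bk}^{(L,\cO_L,\cE_L^\vee)}\}$.

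It then remains to identify this set with $\fN_{G,\bk}^{(L,\cO_L,\cE_L)}$ itself. The cleanest way is to show that $\mathbb{D}$ induces an involution of $\fN_{G,\bk}$, $(\cO,\cE)\mapsto(\cO,\cE^\vee)$, that carries the induction series attached to $(L,\cO_L,\cE_L)$ onto that attached to $(L,\cO_L,\cE_L^\vee)$. This follows from Lemma~\ref{lem:fourier} together with the compatibility of the Fourier--Sato transform with Verdier duality: $\bT_{\fg}$ intertwines $\mathbb{D}$ with $\mathbb{D}$ up to a shift/twist (see~\cite{ahjr}), so $\bT_{\fg}(\IC(\cO,\cE^\vee))$ is the dual of $\bT_{\fg}(\IC(\cO,\cE))$, and the covering $\varpi_{(L,\cO_L')}$ being Galois, $(\varpi_{(L,\cO_L')})_*\widetilde{\cE_L'^\vee} \cong \left((\varpi_{(L,\cO_L')})_*\widetilde{\cE_L'}\right)^\vee$; hence its simple quotients are the duals of the simple quotients of $(\varpi_{(L,\cO_L')})_*\widetilde{\cE_L'}$, and we must also check that the pair $(\cO_L',\cE_L')$ attached to $(\cO_L,\cE_L^\vee)$ via~\eqref{eqn:l-fourier} is exactly $(\cO_L',\cE_L'^\vee)$, which again follows from $\bT_\fl$ commuting with duality. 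Combining, $\mathbb{D}$ sends $\fN_{G,\bk}^{(L,\cO_L,\cE_L^\vee)}$ to $\fN_{G,\bk}^{(L,\cO_L,\cE_L)}$, and the chain of equalities closes up.

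The main obstacle is purely bookkeeping: making sure all the duality compatibilities are stated with the correct normalizations — in particular that $\bT_{\fg}$ and $\bT_{\fl}$ genuinely commute with $\mathbb{D}$ on the relevant categories of equivariant perverse sheaves (the precise statement from~\cite{ahjr} may involve a Tate twist or shift that is harmless here because we only care about simple subquotients), and that the pushforward along the Galois covering $\varpi_{(L,\cO_L')}$ commutes with taking duals of local systems. None of this is deep, but it must be assembled carefully; alternatively, one can avoid~\eqref{eqn:l-fourier} entirely and argue directly from~\eqref{eqn:fourier} and the commutation of $\Ind$ with $\mathbb{D}$, which is perhaps the most economical route and the one I would ultimately write up.
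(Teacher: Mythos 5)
Your opening reduction is fine: since $\Ind_{L\subset P}^G$ commutes with Verdier duality $\mathbb{D}$ and $\mathbb{D}$ exchanges heads and socles, the simple subobjects of $\Ind_{L\subset P}^G(\IC(\cO_L,\cE_L))$ are exactly the duals of the simple quotients of $\Ind_{L\subset P}^G(\IC(\cO_L,\cE_L^\vee))$, i.e.\ the duals of the members of $\fN_{G,\bk}^{(L,\cO_L,\cE_L^\vee)}$. But this only transports the problem: what remains to be shown is that $\mathbb{D}$ carries $\fN_{G,\bk}^{(L,\cO_L,\cE_L^\vee)}$ onto $\fN_{G,\bk}^{(L,\cO_L,\cE_L)}$, and this is precisely the corollary that the paper deduces \emph{from} Lemma~\ref{lem:head-socle}, so it needs an independent proof. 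Your attempted justification contains a non sequitur at the decisive step: from $(\varpi_{(L,\cO_L')})_*\widetilde{(\cE_L')^\vee}\cong\bigl((\varpi_{(L,\cO_L')})_*\widetilde{\cE_L'}\bigr)^\vee$ one may conclude that its simple quotients are the duals of the simple \emph{subobjects} of $(\varpi_{(L,\cO_L')})_*\widetilde{\cE_L'}$, not of its simple quotients. Asserting the latter is exactly the statement that the head and socle of $(\varpi_{(L,\cO_L')})_*\widetilde{\cE_L'}$ involve the same simple constituents, which (after applying the exact, fully faithful $\bT_\fg$ and $\IC$) is equivalent to the lemma itself. So the argument is circular at its core; the ``more economical route'' via~\eqref{eqn:fourier} runs into the same wall, since the commutation of $\Ind$ with $\mathbb{D}$ only ever converts subobjects of one induced object into quotients of the dual one.

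The missing content is supplied in the paper by a direct argument on the local system: for any simple $\cL$ on $Y_{(L,\cO_L')}$ one has $\dim\Hom\bigl((\varpi_{(L,\cO_L')})_*\widetilde{\cE_L'},\cL\bigr)=\dim\Hom\bigl(\cL,(\varpi_{(L,\cO_L')})_*\widetilde{\cE_L'}\bigr)$, proved using the biadjointness of $(\varpi_{(L,\cO_L')})_*$ and $(\varpi_{(L,\cO_L')})^*$ (a finite Galois covering) together with the semisimplicity of $(\varpi_{(L,\cO_L')})^*\cL$, which follows because its socle is $N_G(L,\cO_L')/L$-equivariant and $(\varpi_{(L,\cO_L')})^*\cL$ is simple as an equivariant local system. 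Some such ingredient must appear in any proof; your duality bookkeeping alone cannot produce it. A secondary, fixable point: the compatibility of $\bT_{\fg}$ (and $\bT_{\fl}$) with $\mathbb{D}$ is not just ``up to a shift/twist'' --- it involves pullback along the antipodal map $x\mapsto -x$, and one must check (e.g.\ via an $\mathfrak{sl}_2$-triple argument) that this pullback fixes the relevant equivariant simple perverse sheaves before you may treat $\bT$ as commuting with duality; but even granting this, the gap above remains.
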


\begin{proof}
In view of Lemma~\ref{lem:fourier} and its analogue for subobjects, it suffices to prove that for any simple local system $\cL$ on $Y_{(L,\cO_L')}$ we have 
\[
\dim\Hom_{\Loc(Y_{(L,\cO_L')},\bk)}\bigl((\varpi_{(L,\cO_L')})_*\widetilde{\cE_L'},\cL\bigr)=\dim\Hom_{\Loc(Y_{(L,\cO_L')},\bk)}\bigl(\cL,(\varpi_{(L,\cO_L')})_*\widetilde{\cE_L'}\bigr).
\] 
Because $\varpi_{(L,\cO_L')}$ is a Galois covering, the functors $(\varpi_{(L,\cO_L')})_*$ and $(\varpi_{(L,\cO_L')})^*$ are biadjoint; so it is equivalent to show that 
\[
\dim\Hom_{\Loc(\widetilde{Y}_{(L,\cO_L')},\bk)}\bigl(\widetilde{\cE_L'},(\varpi_{(L,\cO_L')})^*\cL\bigr)=\dim\Hom_{\Loc(\widetilde{Y}_{(L,\cO_L')},\bk)}\bigl((\varpi_{(L,\cO_L')})^*\cL,\widetilde{\cE_L'}\bigr).
\]
Since the local system $\widetilde{\cE_L'}$ is simple, it sufficies to prove that $(\varpi_{(L,\cO_L')})^*\cL$ is semisimple. However, since this local system is $N_G(L,\cO_L')/L$-equivariant, its socle is also $N_G(L,\cO_L')/L$-equivariant; and since $(\varpi_{(L,\cO_L')})^*\cL$ is simple as an $N_G(L,\cO_L')/L$-equivariant local system (see equivalence~\eqref{eqn:equivalence-equivariant} below), this socle must be equal to
$(\varpi_{(L,\cO_L')})^*\cL$.
\end{proof}

In the following corollary, we denote by $\cE^\vee$ the local system dual to $\cE$. In the statement we use the fact that if $(\cO_L, \cE_L)$ is a cuspidal pair for $L$, then $(\cO_L, \cE_L^\vee)$ is also a cuspidal pair (see~\cite[Remark~2.3]{genspring1}).

\begin{cor}
If $(\cO, \cE) \in \fN_{G,\bk}^{(L,\cO_L,\cE_L)}$, then $(\cO, \cE^\vee) \in \fN_{G,\bk}^{(L,\cO_L,\cE_L^\vee)}$.
\end{cor}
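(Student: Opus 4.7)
The plan is to derive this directly from Verdier duality, using the behavior of induction under $\mathbb{D}$ together with Lemma~\ref{lem:head-socle}. The key observation is that although induction series are defined in terms of simple quotients, Lemma~\ref{lem:head-socle} shows they can equivalently be described in terms of simple subobjects. This is crucial because Verdier duality, being contravariant, swaps subobjects and quotients.

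Concretely, the strategy is as follows. Suppose $(\cO,\cE)\in\fN_{G,\bk}^{(L,\cO_L,\cE_L)}$. By Lemma~\ref{lem:head-socle}, the simple perverse sheaf $\IC(\cO,\cE)$ is a subobject of $\Ind_{L \subset P}^G(\IC(\cO_L,\cE_L))$ for any choice of parabolic $P$ containing $L$. Apply the Verdier duality functor $\mathbb{D}$, which is an exact contravariant autoequivalence of the category of perverse sheaves, and therefore sends a simple subobject of a perverse sheaf $\cF$ to a simple quotient of $\mathbb{D}(\cF)$. Thus $\mathbb{D}\bigl(\IC(\cO,\cE)\bigr)$ is a simple quotient of $\mathbb{D}\bigl(\Ind_{L \subset P}^G(\IC(\cO_L,\cE_L))\bigr)$.

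To identify both sides, I use the two compatibilities already recorded in the excerpt: first, $\Ind_{L \subset P}^G$ commutes with Verdier duality, so the right-hand side equals $\Ind_{L \subset P}^G\bigl(\mathbb{D}(\IC(\cO_L,\cE_L))\bigr)$; second, for simple perverse sheaves of the form $\IC(\cO,\cE)$ on a smooth orbit one has $\mathbb{D}(\IC(\cO,\cE))\cong\IC(\cO,\cE^\vee)$, and similarly for $(\cO_L,\cE_L)$ in $\cN_L$. Substituting, $\IC(\cO,\cE^\vee)$ is a simple quotient of $\Ind_{L \subset P}^G(\IC(\cO_L,\cE_L^\vee))$. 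Noting that $(\cO_L,\cE_L^\vee)\in\fN_{L,\bk}^{\cusp}$ (as recalled just before the corollary), this means exactly $(\cO,\cE^\vee)\in\fN_{G,\bk}^{(L,\cO_L,\cE_L^\vee)}$, as desired.

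I do not anticipate a serious obstacle here. The only point that requires mild care is ensuring that the convention used for $\cE^\vee$ really corresponds to Verdier duality on $\IC(\cO,\cE)$; this is standard for local systems on smooth orbits (with the perverse normalization placing $\IC(\cO,\cE)$ in degree $-\dim\cO$), and once this is recorded the argument is a one-line diagram chase combining Lemma~\ref{lem:head-socle} with the commutation $\mathbb{D}\circ\Ind=\Ind\circ\mathbb{D}$.
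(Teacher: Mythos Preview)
Your proposal is correct and follows exactly the same approach as the paper's proof: use Lemma~\ref{lem:head-socle} to pass from quotients to subobjects, apply Verdier duality (which exchanges subobjects and quotients and commutes with $\Ind_{L\subset P}^G$), and identify $\mathbb{D}(\IC(\cO,\cE))\cong\IC(\cO,\cE^\vee)$. The paper states this in one line; your write-up simply unpacks the details.
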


\begin{proof}
Since Verdier duality commutes with $\Ind_{L \subset P}^G$, this follows from Lemma~\ref{lem:head-socle}. 
\end{proof}

The following easy result is sometimes useful in determining induction series.

\begin{lem} \label{lem:bounds}
Let $(\cO,\cE)\in\fN_{G,\bk}$, and suppose that $\IC(\cO,\cE)$ is a quotient of $\Ind_{L \subset P}^G(\IC(\cO_L,\cE_L))$ for some $L\subset P\subset G$ as above and $(\cO_L,\cE_L)\in\fN_{L,\bk}$. Then
\[
G\cdot\cO_L\subset \overline{\cO}\subset G\cdot(\overline{\cO_L}+\fu_P).
\]
\end{lem}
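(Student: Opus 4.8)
The plan is to deduce both inclusions from the geometry of the induction functor, namely from the correspondence that defines $\Ind_{L\subset P}^G$. Recall that $\Ind_{L\subset P}^G$ is built from the diagram $\cN_L \xleftarrow{} G\times^L (\cN_L + \fu_P) \xrightarrow{} \cN_G$, where the first map is (up to a smooth pullback) the projection and the second is the proper map induced by the adjoint action; concretely $\Ind_{L\subset P}^G(\cF)$ is obtained by pulling $\cF$ back, inducing up, and pushing forward along a proper map whose image is exactly $G\cdot(\overline{\cO_L}+\fu_P)$ when $\cF = \IC(\cO_L,\cE_L)$. Since $\IC(\cO,\cE)$ is assumed to be a quotient — in particular a subquotient — of $\Ind_{L\subset P}^G(\IC(\cO_L,\cE_L))$, its support $\overline{\cO}$ is contained in the support of the latter, which is contained in the proper pushforward's image $G\cdot(\overline{\cO_L}+\fu_P)$. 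This gives the right-hand inclusion.

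For the left-hand inclusion $G\cdot\cO_L \subset \overline{\cO}$, I would argue that $\Ind_{L\subset P}^G(\IC(\cO_L,\cE_L))$ cannot be supported on anything strictly smaller than a closed set containing $G\cdot\cO_L$. One clean way: restrict to the open piece $Y_{(L,\cO_L)}$ of the Lusztig stratification (or rather use $\fz_L^\circ = \{0\}$ boundary behaviour) — over the locus $G\cdot\cO_L$ the induced complex is, by the standard computation (cf.\ the clean/proper base change arguments underlying~\eqref{eqn:fourier}), nonzero: the fiber of the proper map over a point of $\cO_L$ is a point (or a variety contributing nontrivial cohomology), so $\Ind_{L\subset P}^G(\IC(\cO_L,\cE_L))$ has nonzero stalk along $G\cdot\cO_L$. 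Hence $G\cdot\cO_L$ meets the support of $\Ind_{L\subset P}^G(\IC(\cO_L,\cE_L))$; but then, because every simple subquotient of this perverse sheaf — in particular $\IC(\cO,\cE)$, by Lemma~\ref{lem:head-socle} applied to heads and socles — has support meeting every $G$-stable irreducible component of that support, and $G\cdot\cO_L$ lies in the closure of the "top" stratum, we get $G\cdot\cO_L\subset\overline{\cO}$. Alternatively, and perhaps more robustly, one transports the problem through $\bT_\fg$: by Lemma~\ref{lem:fourier}, $\IC(\cO,\cE)$ corresponds under $\bT_\fg$ to an $\IC$-sheaf with full support $\overline{Y_{(L,\cO_L')}} = X_{(L,\cO_L')}$, and then the known relationship between Fourier--Sato transform and the dimension/support of nilpotent orbit sheaves pins down $\overline{\cO}$ from above and below.

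The main obstacle, and the step requiring the most care, is the lower bound $G\cdot\cO_L\subset\overline{\cO}$: I need to make sure that passing to a subquotient does not lose the whole "top" of the support. This is exactly the kind of statement that could fail for a general subquotient of a perverse sheaf, but works here because $\Ind_{L\subset P}^G(\IC(\cO_L,\cE_L))$ has a particularly controlled support structure — its support is irreducible (it is $G\cdot(\overline{\cO_L}+\fu_P)$, which is irreducible as the image of an irreducible variety), with a single open dense orbit-type stratum $G\cdot\cO_L$ after quotienting by $\fu_P$, and every nonzero perverse subquotient of a perverse sheaf whose support is irreducible with dense stratum $S$ must itself be supported on the whole of $\overline{S}$ provided the original sheaf has no sub or quotient supported on the boundary; since $\IC(\cO_L,\cE_L)$ is simple and $\Ind$ is exact, the relevant vanishing on the boundary follows. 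I would spell this out using the characterization of $\overline{\cO}$ as the support of $\IC(\cO,\cE)$ together with the fact established above that $\Ind_{L\subset P}^G(\IC(\cO_L,\cE_L))$ has nonzero stalks precisely along $G\cdot(\overline{\cO_L})$-type loci, and conclude by a short support-dimension comparison.
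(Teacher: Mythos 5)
Your first inclusion is fine and agrees with the paper: the support of $\Ind_{L\subset P}^G(\IC(\cO_L,\cE_L))$ lies in the closed set $G\cdot(\overline{\cO_L}+\fu_P)$, and a quotient can only have smaller support. The problem is your argument for $G\cdot\cO_L\subset\overline{\cO}$, which contains a genuine gap. The central claim you lean on --- that every simple quotient (or subquotient) of a perverse sheaf whose support is irreducible with a dense stratum must itself be supported on the whole of that closure, or must meet every $G$-stable component of the support --- is false in general: a perverse sheaf with irreducible support can perfectly well have simple quotients supported on proper closed subsets (think of pushforwards along proper maps, which routinely have constituents supported on smaller strata), and the hypotheses you invoke (``no sub or quotient supported on the boundary'', ``$\IC(\cO_L,\cE_L)$ is simple and $\Ind$ is exact'') do not rule this out; indeed the whole content of the lemma is precisely that this collapse of support onto a smaller closure cannot happen past $G\cdot\cO_L$, so you cannot assume it. Moreover, your geometric input is also shaky: the fibre of $\pi_{(L,\cO_L)}$ over a point of $\cO_L$ is not a single point in general, and even if the stalk of the induced complex along $G\cdot\cO_L$ is nonzero, nonvanishing of a stalk does not pass to a quotient. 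Finally, your fallback via $\bT_\fg$ and Lemma~\ref{lem:fourier} is not available here, because that lemma (and the isomorphism~\eqref{eqn:fourier} behind it) requires $(\cO_L,\cE_L)$ to be \emph{cuspidal}, whereas the present lemma allows an arbitrary pair in $\fN_{L,\bk}$.

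The missing idea is adjunction. Since $\IC(\cO,\cE)$ is a quotient of $\Ind_{L\subset P}^G(\IC(\cO_L,\cE_L))$, one has
\[
\Hom\bigl(\IC(\cO_L,\cE_L),\Res^G_{L\subset P}(\IC(\cO,\cE))\bigr)=\Hom\bigl(\Ind_{L\subset P}^G(\IC(\cO_L,\cE_L)),\IC(\cO,\cE)\bigr)\neq 0,
\]
using $\Ind_{L\subset P}^G\dashv\Res^G_{L\subset P}$. Hence $\cO_L$ lies in the support of $\Res^G_{L\subset P}(\IC(\cO,\cE))$, and by the very definition of the restriction functor that support is contained in $\overline{\cO}$; since $\overline{\cO}$ is $G$-stable this gives $G\cdot\cO_L\subset\overline{\cO}$. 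This one-line adjunction step replaces the entire delicate support analysis you were attempting.
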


\begin{proof}
The support of $\Ind_{L \subset P}^G(\IC(\cO_L,\cE_L))$ is contained in $G\cdot(\overline{\cO_L}+\fu_P)$ by~\cite[Corollary 2.15(1)]{genspring1}, so $\overline{\cO}\subset G\cdot(\overline{\cO_L}+\fu_P)$ (the latter being closed). By adjunction, 
\[ 
\Hom\bigl(\IC(\cO_L,\cE_L),\Res_{L\subset P}^G(\IC(\cO,\cE))\bigr)=\Hom\bigl(\Ind_{L \subset P}^G(\IC(\cO_L,\cE_L)),\IC(\cO,\cE)\bigr)\neq 0,
\] 
implying that $\cO_L$ is contained in the support of $\Res_{L\subset P}^G(\IC(\cO,\cE))$. By definition of $\Res_{L\subset P}^G$, the latter support is contained in $\overline{\cO}$, so $G\cdot\cO_L\subset\overline{\cO}$.  
\end{proof}

\subsection{Cuspidal pairs and distinguished orbits}

Recall that a $G$-orbit $\cO\subset\cN_G$ is said to be \emph{distinguished} if it does not meet $\cN_L$ for any proper Levi subgroup $L$ of $G$. We then have the following analogue of~\cite[Proposition 2.8]{lusztig}.

\begin{prop}
\label{prop:distinguished}
If $(\cO,\cE)\in\fN_{G,\bk}^\cusp$, then $\cO$ is distinguished.
\end{prop}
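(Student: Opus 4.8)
The plan is to mimic Lusztig's argument from \cite[Proposition 2.8]{lusztig}, adapting it to the modular setting using the restriction functors $\Res^G_{L \subset P}$ already at our disposal. Suppose $(\cO,\cE)$ is cuspidal but $\cO$ is \emph{not} distinguished, so that $\cO$ meets $\cN_L$ for some proper Levi subgroup $L \subsetneq G$; choose $L$ to be such a subgroup, contained in a proper parabolic $P$ with unipotent radical $U_P$. The key point is to show that the non-vanishing $\cO \cap \cN_L \neq \emptyset$ forces $\Res^G_{L \subset P}(\IC(\cO,\cE)) \neq 0$, contradicting cuspidality.

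First I would recall the explicit description of the restriction functor: up to shift, $\Res^G_{L\subset P}(\cF)$ is computed by pulling $\cF$ back along the inclusion $\cN_P \hookrightarrow \cN_G$ (where $\cN_P = \cN_G \cap \fp$), then pushing forward along the projection $\cN_P \to \cN_L$ induced by $\fp \twoheadrightarrow \fl$. So the support of $\Res^G_{L\subset P}(\IC(\cO,\cE))$ is the image in $\cN_L$ of $\overline{\cO} \cap \fp$. Since $\cO$ meets $\cN_L$, pick a nilpotent element $x \in \cO \cap \cN_L \subset \cO \cap \fp$; then $x$ itself lies in the image of $\overline\cO \cap \fp$ under the projection (it maps to $x \in \cN_L$), so $\cO \cap \cN_L$ is contained in the support. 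Thus the support is non-empty.

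The second step is to upgrade non-empty support to non-vanishing of the perverse sheaf itself, which is automatic once we know $\Res^G_{L\subset P}$ is exact with values in $\Perv_L(\cN_L,\bk)$: a perverse sheaf on $\cN_L$ with non-empty support is non-zero. Here one must be a little careful that the relevant stalk or costalk computation genuinely detects $\IC(\cO,\cE)$ rather than being killed; the cleanest route is to compute the (co)stalk of $\Res^G_{L\subset P}(\IC(\cO,\cE))$ at a generic point of $\cO \cap \cN_L$ and show it involves the cohomology of a fibre of the projection $\overline\cO \cap \fp \to \cN_L$ tensored with (a stalk of) $\cE$, which is non-zero. Alternatively, one invokes the transitivity of induction/restriction together with the fact established earlier that every simple object appears as a quotient of an induced cuspidal, to pin down precisely which orbit supports the restriction.

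The main obstacle I anticipate is precisely this second step: in characteristic $\ell$ the cohomology of the fibres of $\overline\cO \cap \fp \to \cN_L$ could in principle vanish mod $\ell$ even though it does not vanish rationally, so one cannot simply quote the characteristic-zero fibre computation. The way around this is to avoid computing cohomology of a general fibre and instead work at the \emph{open} stratum: restrict attention to a point $x$ in the dense orbit of $\cO \cap \cN_L$ (or better, use a parabolic $P$ adapted so that $\cO \cap \fu_P$-type considerations make the relevant fibre a point or an affine space), where the (co)stalk reduces to a shift of a stalk of $\cE$, which is a non-zero $\bk$-vector space by construction. This localization argument, together with exactness of $\Res^G_{L\subset P}$, then yields $\Res^G_{L\subset P}(\IC(\cO,\cE)) \neq 0$ and hence the desired contradiction.
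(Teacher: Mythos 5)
Your overall strategy is the same as the paper's (if $\cO$ meets $\cN_L$ for a proper Levi $L\subset P$, show that the restriction of $\IC(\cO,\cE)$ to $L$ is nonzero, contradicting cuspidality), but the decisive step — the actual non-vanishing — is not carried out, and the fix you sketch is not correct as stated. Two specific problems. First, your "support" argument is a non sequitur: the support of the pushforward is only \emph{contained} in the image of $\overline{\cO}\cap\fp$, so the presence of $x\in\cO\cap\cN_L$ in that image proves nothing; you implicitly concede this, so everything rests on the stalk computation. Second, at that stalk computation your proposed workaround ("choose $x$ generic, or choose $P$ so that the fibre is a point or an affine space, so the costalk reduces to a shift of a stalk of $\cE$") cannot be arranged: the relevant fibre is $(x+\fu_P)\cap\overline{\cO}$, which in general has positive dimension and several components, and its cohomology does not reduce to a stalk of $\cE$. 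The mechanism that actually makes the argument characteristic-free — precisely the issue you worry about — is different: one first shows $(x+\fu_P)\cap\overline{\cO}=(x+\fu_P)\cap\cO$ (via a one-parameter subgroup fixing $\fl$ and contracting $\fu_P$, so $x\in\overline{G\cdot y}$ for every $y\in x+\fu_P$); then one uses the dimension bound $\dim\bigl((x+\fu_P)\cap\cO\bigr)\le\frac{1}{2}(\dim\cO-\dim\cC)$, where $\cC=L\cdot x$, together with the fact that $U_P\cdot x$ is an irreducible component of $(x+\fu_P)\cap\cO$ of exactly that dimension on which $\cE$ restricts to a \emph{constant} sheaf, because $(U_P)_x$ is connected. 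Then the top compactly supported cohomology
\[
\mathsf{H}_c^{\dim\cO-\dim\cC}\bigl((x+\fu_P)\cap\cO,\cE\bigr)
\]
is nonzero over any coefficient field, since the component $U_P\cdot x$ contributes the nonzero top $\mathsf{H}_c$ of an irreducible variety with constant coefficients; this is the correct perverse degree, so it detects ${}'\Res^G_{L\subset P}(\IC(\cO,\cE))\neq 0$ (note the paper works with ${}'\Res$, which is legitimate since cuspidality can be tested with either restriction functor).

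In short: you identified the right statement to prove and the right worry (mod-$\ell$ vanishing of fibre cohomology), but the proof is missing the three concrete ingredients that resolve it — the contraction argument replacing $\overline{\cO}$ by $\cO$ in the fibre, the dimension estimate for $(x+\fu_P)\cap\cO$, and the identification of $U_P\cdot x$ as a top-dimensional component carrying a constant local system (connectedness of $(U_P)_x$). Your alternative suggestion via transitivity and the "every simple is a quotient of an induced cuspidal" fact is too vague to substitute for this. As written, the proposal does not yet constitute a proof.
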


This follows immediately from:

\begin{prop}
\label{prop:distinguished-restrict}
Let $(\cO,\cE)\in\fN_{G,\bk}$. If $\cO$ meets $\cN_L$ where $L\subset P\subset G$ are as above, then ${}'\Res^G_{L \subset P}(\IC(\cO,\cE))\neq 0$.
\end{prop}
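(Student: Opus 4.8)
The plan is to compute the stalk of ${}'\Res^G_{L\subset P}(\IC(\cO,\cE))$ at a suitable point of $\cN_L$ and show it is nonzero, using the explicit definition of ${}'\Res^G_{L\subset P}$ recalled from \cite[\S2.1]{genspring1}. Recall that ${}'\Res^G_{L\subset P}$ is built from the diagram $\cN_L \xleftarrow{p} \fp\cap\cN_G \xrightarrow{q} \cN_G$ (where $p$ is induced by the projection $\fp\to\fl$ and $q$ is the inclusion), via ${}'\Res^G_{L\subset P}(\cF) = p_!\, q^*\cF[\text{shift}]$ (up to the appropriate cohomological normalization making it perverse). Pick $x\in\cO\cap\cN_L$. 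Since $\cF:=\IC(\cO,\cE)$ is supported on $\overline{\cO}$ with $\cF|_{\cO}=\cE[\dim\cO]$, the stalk $q^*\cF$ along the fibre $p^{-1}(x)$ is controlled by how much of that fibre lands in $\cO$.

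First I would identify the fibre $p^{-1}(x) = (x+\fu_P)\cap\cN_G$ and observe that $x$ itself lies in it (as $x\in\fl\cap\cN_G\subset\fp\cap\cN_G$ and $p(x)=x$). Near $x$, the complex $q^*\cF$ restricted to $p^{-1}(x)$ has, in top degree $-\dim\cO$, the local system $\cE$ (since a neighbourhood of $x$ in $p^{-1}(x)$ meets $\cO$ in an open subset, because $x\in\cO$). The key point is then a \emph{dimension/support argument}: one shows that the contribution of the top-degree piece of $q^*\cF$ to $p_!q^*\cF$ at the point $x\in\cN_L$ survives, i.e.\ is not cancelled by lower-dimensional strata. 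Concretely, I would use the fact — essentially Lusztig's transversality estimate, cf.\ the proof of \cite[Proposition~2.8]{lusztig} — that $\dim\bigl((x+\fu_P)\cap\overline{\cO}\bigr) = \dim\bigl((x+\fu_P)\cap\cO\bigr)$ and that this locally closed subset is open and dense in the fibre, so that the compactly-supported cohomology of the fibre with coefficients in $q^*\cF$ is nonzero in the expected top degree, with a subquotient built from $H^0_c$ of an open subset of an affine space tensored with (the fibre of) $\cE$. Passing to the stalk at $x$ of ${}'\Res^G_{L\subset P}(\cF)$ via proper (here: the relevant part of $p$ is the inclusion-type map, so one may instead use the open piece) base change, one obtains a nonzero vector space, whence ${}'\Res^G_{L\subset P}(\IC(\cO,\cE))\neq 0$.

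The main obstacle I anticipate is the \emph{cancellation problem}: a priori the lower-degree cohomology sheaves of $\IC(\cO,\cE)$ supported on $\overline{\cO}\setminus\cO$, combined with the fibres of $p$ over points in smaller orbits in the closure, could interfere when one takes $p_!$. The clean way around this is to restrict attention to a neighbourhood of $x$ inside $\cN_L$ on which the relevant geometry simplifies, or better, to argue at the level of a single point: since $x\in\cO$, the fibre $p^{-1}(x)$ meets the \emph{open} stratum $\cO$, and one uses the long exact sequence for compactly supported cohomology together with the dimension bound $\dim\bigl(p^{-1}(x)\cap(\overline{\cO}\setminus\cO)\bigr) < \dim\bigl(p^{-1}(x)\bigr)$ to see that $H^{\mathrm{top}}_c\bigl(p^{-1}(x), q^*\cF\bigr)$ receives a nonzero contribution from the open part that cannot be killed. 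This is exactly the role played by the transversality of $\fz_L^\circ$-translates in Lusztig's original argument, and the modular coefficients cause no difficulty since the statement is about non-vanishing, not about computing a precise dimension or multiplicity. Alternatively, one can invoke the relation between ${}'\Res$ and the Fourier transform via~\eqref{eqn:fourier}-type identities together with the Lusztig stratification, but the direct stalk computation seems the most transparent route.
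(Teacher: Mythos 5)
Your overall shape (compute the stalk of ${}'\Res^G_{L\subset P}(\IC(\cO,\cE))$ at a point $x\in\cO\cap\cN_L$ and detect a nonzero class in the top possible degree of $H_c^\bullet$ of $(x+\fu_P)\cap\cO$ with coefficients in $\cE$) is indeed the approach of the paper, but two essential ingredients are missing, and the step where you dismiss them is exactly where the difficulty lives. First, non-vanishing of top-degree compactly supported cohomology with coefficients in a \emph{local system} is not automatic: if $Z$ is irreducible of dimension $d$, then $H_c^{2d}(Z,\cE|_Z)$ is the space of monodromy coinvariants, which vanishes whenever the monodromy acts without trivial quotient -- already over $\Qlb$, and a fortiori for modular $\cE$. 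So your assertion that one gets ``(top) $H_c$ of an open subset of affine space tensored with the fibre of $\cE$'' and that ``modular coefficients cause no difficulty since the statement is about non-vanishing'' is not justified; this is precisely why the paper (following Lusztig's Lemma~2.9(a)) singles out the subset $U_P\cdot x\subset (x+\fu_P)\cap\cO$ and proves that $\cE$ restricts to a \emph{constant} sheaf on it, because the stabilizer $(U_P)_x$ is connected. Second, you only quote the upper bound $\dim\bigl((x+\fu_P)\cap\cO\bigr)\le\frac12(\dim\cO-\dim(L\cdot x))$; to get a nonzero class in degree $\dim\cO-\dim(L\cdot x)$ one must also know this bound is \emph{attained} by an irreducible component, which is again supplied by the computation that $U_P\cdot x$ is a component of exactly that dimension. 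Without both points, the ``expected top degree'' cohomology may simply be zero.

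There is also a cleaner way around your ``cancellation problem'' than the strict dimension inequality you assert (without proof) for $(x+\fu_P)\cap(\overline{\cO}\setminus\cO)$: the paper observes that this set is \emph{empty}. Indeed, for any $y\in x+\fu_P$ a one-parameter subgroup fixing $\fl$ and contracting $\fu_P$ shows $x\in\overline{G\cdot y}$; combined with $x\in\cO$ this forces $(x+\fu_P)\cap\overline{\cO}=(x+\fu_P)\cap\cO$, so the boundary strata of $\IC(\cO,\cE)$ never enter the stalk computation at all. (One could instead argue as you suggest, using the perverse support conditions on $\IC(\cO,\cE)$ together with the Letellier-type estimate applied to each boundary orbit, but that requires a degree count you have not carried out, and in any case it does not repair the two gaps above.) To fix the proof you should: (i) invoke the contraction argument to reduce to $(x+\fu_P)\cap\cO$; (ii) prove $U_P\cdot x$ is an irreducible component of it of dimension $\frac12(\dim\cO-\dim(L\cdot x))$; and (iii) use connectedness of $(U_P)_x$ to see $\cE|_{U_P\cdot x}$ is constant, whence $H_c^{\dim\cO-\dim(L\cdot x)}\bigl((x+\fu_P)\cap\cO,\cE\bigr)\neq 0$.
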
 

\begin{proof}
Let $x\in\cO\cap\cN_L$ and let $\cC$ be the $L$-orbit of $x$. It suffices to show that
\begin{equation*}
\mathsf{H}^{-\dim(\mathscr{C})} \Bigl( {}'\Res^G_{L \subset P} \bigl( \IC(\cO,\mathcal{E}) \bigr)_x \Bigr)\neq 0, 
\end{equation*}
which by definition of ${}'\Res^G_{L \subset P}$ is equivalent to 
\begin{equation} \label{eqn:non-vanishing}
\mathsf{H}_c^{-\dim(\mathscr{C})} \bigl( (x+\fu_P) \cap \overline{\cO}, \IC(\cO,\mathcal{E}) \bigr)\neq 0.
\end{equation}
But for any $y\in x+\fu_P$, we have $x\in\overline{G\cdot y}$, since there is a $1$-parameter subgroup of $G$ that fixes $\fl$ and contracts $\fu_P$ to zero. Hence $(x+\fu_P) \cap \overline{\cO}=(x+\fu_P) \cap \cO$, and~\eqref{eqn:non-vanishing} becomes
\begin{equation} \label{eqn:non-vanishing2}
\mathsf{H}_c^{\dim(\cO)-\dim(\mathscr{C})} \bigl( (x+\fu_P) \cap \cO, \mathcal{E} \bigr)\neq 0.
\end{equation}
The proof of~\eqref{eqn:non-vanishing2} is analogous to that of the corresponding statement in~\cite{lusztig}. Namely, by \cite[Proposition~5.1.15(1)]{letellier} we have $\dim \bigl( (x+\fu_P) \cap \cO \bigr) \leq \frac{1}{2}(\dim(\cO)-\dim(\mathscr{C}))$. One sees in exactly the same way as in~\cite[Lemma 2.9(a)]{lusztig} that $U_P\cdot x$ is an irreducible component of $(x+\fu_P)\cap\cO$ of dimension equal to $\frac{1}{2}(\dim(\cO)-\dim(\mathscr{C}))$, and that the restriction of $\mathcal{E}$ to $U_P\cdot x$ is a constant sheaf because $(U_P)_x$ is connected. It follows that $\mathsf{H}_c^{\dim(\cO)-\dim(\mathscr{C})} \bigl( U_P \cdot x, \mathcal{E} \bigr)\neq 0$, which implies~\eqref{eqn:non-vanishing2}.
\end{proof}

\begin{rmk}
When $G=\GL(n)$, only the regular orbit $\cO_{(n)}$ is distinguished. So Proposition~\ref{prop:distinguished} is consistent with~\cite[Theorem 3.1]{genspring1}.
\end{rmk}

We conclude this subsection with a useful observation about distinguished orbits. It is probably known to experts, but we could not find a reference. 

\begin{lem} \label{lem:dist-norm}
If $L$ is a Levi subgroup of $G$ and $\cO_L$ is a distinguished nilpotent orbit of $L$, then $N_G(L,\cO_L)=N_G(L)$; that is, the normalizer $N_G(L)$ preserves $\cO_L$.
\end{lem}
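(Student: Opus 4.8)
The plan is to exploit the characterization of distinguished orbits via $\fz_L$. First I would recall the standard fact that a nilpotent orbit $\cO_L$ in $L$ is distinguished (as a nilpotent orbit of $L$) if and only if, for $x \in \cO_L$, the reductive part of the centralizer $L_x$ is contained in the centre $Z(L)$; equivalently, the connected centralizer $L_x^\circ$ has the property that any torus in it lies in $Z(L)^\circ$. More useful here is the following reformulation: if $x \in \cO_L$ and $z \in \fz_L$, then the orbit $L \cdot (x+z)$ determines $z$ up to the action of $N_G(L)$ only through its membership in a stratum, but the key point is that $\cO_L + \fz_L^\circ$ (with $\fz_L^\circ$ as defined in the paper, namely $\{z \in \fz_L \mid G_z^\circ = L\}$) is precisely the set of elements of $\fg$ whose "Jordan-type" semisimple-plus-nilpotent decomposition has semisimple part generic in $\fz_L$ and nilpotent part conjugate into $\cO_L$.

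The core of the argument: take $n \in N_G(L)$. Since $n$ normalizes $L$, conjugation by $n$ sends nilpotent orbits of $L$ to nilpotent orbits of $L$; write $n \cdot \cO_L = \cO_L'$, another nilpotent orbit of $L$. I claim $\cO_L'$ is also distinguished in $L$: indeed distinguishedness is intrinsic to the pair $(L, \cO_L)$ and is preserved by the automorphism of $L$ given by conjugation by $n$. Now I would use the Lusztig stratification. Pick $x \in \cO_L$ and $z \in \fz_L^\circ$; then $x + z \in Y_{(L,\cO_L)}$ and $n \cdot (x+z) = (n\cdot x) + (n \cdot z)$. Because $n$ normalizes $L$, it normalizes $\fz_L$, and it sends $\fz_L^\circ$ to itself (the condition $G_z^\circ = L$ is preserved under conjugation by $n$, since $G_{n\cdot z}^\circ = n G_z^\circ n^{-1} = nLn^{-1} = L$). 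Hence $n \cdot x \in \cO_L'$ and $n \cdot z \in \fz_L^\circ$, so $n \cdot (x+z) \in Y_{(L,\cO_L')}$. On the other hand $n \cdot (x+z) \in G \cdot (x+z) = Y_{(L,\cO_L)}$ by $G$-stability of the stratum. Therefore $Y_{(L,\cO_L)} \cap Y_{(L,\cO_L')} \neq \emptyset$, which forces $Y_{(L,\cO_L)} = Y_{(L,\cO_L')}$, i.e.\ the pairs $(L,\cO_L)$ and $(L,\cO_L')$ are $G$-conjugate. But both have the \emph{same} Levi $L$ (not merely a conjugate one), so the conjugating element lies in $N_G(L)$; more to the point, the semisimple parts already match, and I need to conclude $\cO_L = \cO_L'$. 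Here is where distinguishedness is essential: if $(L, \cO_L)$ and $(L, \cO_L')$ are $G$-conjugate with both orbits distinguished in $L$, then taking $z \in \fz_L^\circ$, the elements $x + z$ and $x' + z$ (with $x \in \cO_L$, $x' \in \cO_L'$) are $G$-conjugate and have the same semisimple part $z$ (after adjusting the conjugating element to fix $z$, which is possible since $G_z^\circ = L$ and the conjugating element normalizes $L$), hence $x$ and $x'$ are conjugate under $G_z = $ (a group with identity component $L$); since both orbits are distinguished, one checks the component group of $G_z$ also preserves each orbit — and in any case $x, x'$ being $L$-conjugate modulo this finite group together with distinguishedness gives $\cO_L = \cO_L'$. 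Thus $n \cdot \cO_L = \cO_L$, so $n \in N_G(L, \cO_L)$.

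The main obstacle I anticipate is the last step: upgrading "$(L,\cO_L)$ and $(L,\cO_L')$ are $G$-conjugate" to "$\cO_L = \cO_L'$". The subtlety is that a priori $N_G(L)$ could act nontrivially on the set of nilpotent orbits of $L$, and one must use distinguishedness to rule this out for distinguished orbits specifically. The cleanest route is probably: for $z \in \fz_L^\circ$ and $x \in \cO_L$, the $G$-orbit of $x+z$ meets $\cN_L + \{z\}$ exactly in $\{x' + z : x' \in (N_G(L,\cO_L)/L)\text{-orbit closure stuff}\}$ — more precisely, by the theory of the Lusztig stratification and the description of $\xymap_{(L,\cO_L)}$ as a Galois covering with group $N_G(L,\cO_L)/L$, the fibre structure pins down that the only ambiguity in recovering $(L, \cO_L)$ from a point of $Y_{(L,\cO_L)}$ is the $N_G(L)/L$-action, and distinguishedness means $\cO_L$ is the unique distinguished orbit in its $N_G(L)$-orbit meeting a given fibre — but actually the honest statement one needs is simply that for a distinguished $\cO_L$, the decomposition of $x+z$ into semisimple part $z$ and nilpotent part $x$ is canonical, so any $g \in G$ with $g \cdot (x + z) = x' + z$ satisfies $g \cdot z = z$ hence $g \in G_z$, and $G_z^\circ = L$ gives $x' \in \overline{L \cdot x}$ and by dimension (orbits of equal dimension since conjugate) $x' \in L \cdot x$; then one handles the finite group $G_z / L$ by noting it acts on $\cN_L$ preserving dimensions and, since $\cO_L$ is distinguished, a short argument (or invoking that this finite group is generated by elements of $N_G(L)$ which we can absorb) shows it cannot move $\cO_L$ off itself while staying inside a single $G$-orbit intersected with $\cN_L + \{z\}$. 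I would write this final paragraph carefully, possibly citing the relevant normality property of Levi subgroups and the structure of centralizers of semisimple elements.
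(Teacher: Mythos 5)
Your argument has a genuine gap, and the decisive step is circular. For $n \in N_G(L)$ set $\cO_L' = n\cdot\cO_L$. The observation that $Y_{(L,\cO_L)} = Y_{(L,\cO_L')}$, i.e.\ that the pairs $(L,\cO_L)$ and $(L,\cO_L')$ are $G$-conjugate, is automatic (they are conjugate by $n$ itself) and yields no new information; the entire content of the lemma is the passage from ``$\cO_L$ and $\cO_L'$ are $N_G(L)$-conjugate'' to ``$\cO_L=\cO_L'$'', and that is exactly the step you defer to ``a short argument''. The device of fixing the semisimple part does not repair this. To compare $x+z$ with $(n\cdot x)+z$ for the \emph{same} $z$ you would need $n$ to fix some $z\in\fz_L^\circ$, which can fail: if $n$ permutes isomorphic simple factors of $L$, the fixed subspace of $n$ in $\fz_L$ may be contained in the complement of $\fz_L^\circ$ (the centralizer of such a fixed $z$ is a strictly larger Levi). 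And even when such a $z$ exists, any $g$ with $g\cdot(x+z)=x'+z$ lies in $G_z=N_G(L)_z$, whose identity component is $L$; so you only learn that the stabilizer $N_G(L)_z$ --- which for generic $z$ is just $L$ --- preserves $\cO_L$. That is a (possibly vacuous) special case of the lemma, not a proof of it. Note also that distinguishedness is never used in a substantive way in your argument, whereas it must be: $N_G(L)$ can permute non-distinguished nilpotent orbits of $L$ nontrivially, e.g.\ for $L=\mathrm{S}(\GL(2)\times\GL(2))\subset\SL(4)$ the nontrivial element of $N_G(L)/L$ swaps the orbits (regular, zero) and (zero, regular); so no argument relying only on formal properties of the stratification can succeed.

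For comparison, the paper's proof is not uniform. It reduces to $G$ simple (after passing to a central quotient), decomposes the root system of $L$ into type-$\mathbf{A}$ components plus at most one component of other type, notes that in the type-$\mathbf{A}$ factors the only distinguished orbit is the regular one (hence automatically preserved), and then handles the remaining factor case by case: for classical $G$, each element of $N_G(L)$ acts on that factor as some element of $L$ does, while for exceptional $G$ the distinguished orbits in that factor have pairwise distinct dimensions. If you want to avoid case-checking you would need a genuinely new idea at exactly the point your proposal leaves open.
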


\begin{proof}
Since the assumptions and conclusion are unchanged if one replaces $G$ by a central quotient, we can assume that $G$ is a product of simple groups; thus it suffices to consider the case where $G$ is itself simple. 
The root system of $L$ is then a sum of irreducible root systems, at most one of which is of type different from $\mathbf{A}$. Correspondingly, $\cN_L$ is a product of nilpotent cones for simple groups, at most one of which is of type different from $\mathbf{A}$, and the orbit $\cO_L$ is a product of distinguished orbits in these nilpotent cones. The action of the normalizer $N_G(L)$ on $\cN_L$ preserves the product of the factors of type $\mathbf{A}$, and in each of these factors the distinguished orbit must be the regular orbit. So it suffices to consider the case where $\cN_L$ does have a factor of type different from $\mathbf{A}$, and to show that the action of $N_G(L)$ preserves each distinguished nilpotent orbit in that non-type-$\mathbf{A}$ factor.

If $G$ is of classical type, then in fact $N_G(L)$ preserves every nilpotent orbit in the non-type-$\mathbf{A}$ factor of $\cN_L$, since the action of each element of $N_G(L)$ on the non-type-$\mathbf{A}$ factor of $\cN_L$ is the same as that of some element of $L$. To see this, one can assume that $G=\Sp(V)$ or $\SO(V)$ where $V$ is a vector space with a non-degenerate skew-symmetric or symmetric bilinear form. If for example $G=\Sp(V)$, then there is some orthogonal direct sum decomposition $V=U\oplus U^\perp$ such that $L=\Sp(U)\times H$ and $N_G(L)=\Sp(U)\times H'$ where $H$ and $H'$ are subgroups of $\GL(U^\perp)$; thus, the action of each element of $N_G(L)$ on the non-type-$\mathbf{A}$ factor $\cN_{\Sp(U)}$ of $\cN_L$ is that of some element of $\Sp(U)$.

If $G$ is of exceptional type, then the distinguished nilpotent orbits in the non-type-$\mathbf{A}$ factor of $\cN_L$ have different dimensions (this can be checked case-by-case, for instance using the tables in~\cite[Section 8.4]{cm}), so the claim is again obvious.  
\end{proof}

\begin{rmk}
Since the distinguished nilpotent orbits in $L$ are exactly the Richardson orbits of distinguished parabolic subalgebras of $\fl$ (see~\cite[p.~123]{cm}), the lemma is equivalent to the statement that if $\fp$, $\fp'$ are distinguished parabolic subalgebras of $\fl$, then the pairs $(\fl, \fp)$ and $(\fl, \fp')$ are $G$-conjugate iff $\fp$ and $\fp'$ are $L$-conjugate, or in other words that in the Bala--Carter classification of nilpotent orbits as stated in~\cite[Theorem~8.2.12]{cm}, the parabolic subalgebra $\fp_{\fl}$ is defined uniquely up to $L$-conjugacy. In the case of classical groups, this property is observed in~\cite[p.~126]{cm}. If $G$ is simple of exceptional type, it follows from the fact that, for each proper Levi subgroup $L$ of $G$, the Levi subalgebras of non-$L$-conjugate distinguished parabolic subalgebras of $L$ have different semisimple ranks (since $L$ is not of type $\mathbf{E}_8$).
\end{rmk}

\section{Study of an induced local system}
\label{sec:study}

In this section we continue to let $G$ be an arbitrary connected reductive group over $\C$. We fix a Levi subgroup $L$ and an orbit $\cO \subset \cN_L$. (To reduce clutter, we drop the subscript $L$ from the notation $\cO_L$ of the previous section.) Sometimes it will be convenient to choose a parabolic subgroup $P \subset G$ with Levi factor $L$, but our main constructions will not depend on this choice. 

We let $\mathrm{Ind}_L^G(\cO)$ denote the nilpotent orbit in $\cN_G$ induced by $\cO$. The most familiar definition of $\mathrm{Ind}_L^G(\cO)$ is as the unique nilpotent orbit that intersects $\cO+\fu_P$ in a dense set (see~\cite[Theorem 7.1.1]{cm}), but it can also be defined in a way that is independent of $P$: it is the unique nilpotent orbit that is dense in $\cN_G\cap X_{(L,\cO)}$ (see~\cite[Theorem 7.1.3 and its proof]{cm}).

\subsection{Overview}
\label{ss:overview}

Our aim in this section is to study the local system $(\varpi_{(L,\cO)})_*\widetilde{\cE}$ on $Y_{(L,\cO)}$, where $\cE$ is an irreducible $L$-equivariant $\bk$-local system on $\cO$ and $\widetilde{\cE}$ is the resulting local system on $\widetilde{Y}_{(L,\cO)}$. The specific statement we will prove is the following; its proof will be completed in~\S\ref{ss:simple-quotients}. In this statement we use the fact that since $\varpi_{(L,\cO)} : \widetilde{Y}_{(L,\cO)} \to Y_{(L,\cO)}$ is a Galois covering with Galois group $N_G(L,\cO)/L$, there is a natural functor
\[
\Rep(N_G(L,\cO)/L, \bk) \to \Loc(Y_{(L,\cO)}, \bk): \ U \mapsto \cL_U;
\]
see \S\ref{ss:Loc-Galois} below (and in particular~\eqref{eqn:local-systems-Galois}) for the details.

\begin{thm}
\label{thm:cocycle}
Assume that the following conditions hold:
\begin{align}
& \negthickspace\!\begin{array}{l}
\text{there is a parabolic subgroup $P\subset G$ with Levi factor $L$}\\
\text{such that for any $y\in\mathrm{Ind}_L^G(\cO)\cap(\cO+\fu_P)$, $G_y\subset P$;}
\end{array} 
\label{eqn:distinguished-substitute} \\
& \text{$\cE$ is absolutely irreducible;}  \label{eqn:abs-irr} \\
& \text{the isomorphism class of $\cE$ is fixed by the action of $N_G(L,\cO)$.}\label{eqn:fixed}
\end{align}
Then there is a unique \textup{(}up to isomorphism\textup{)} indecomposable direct summand of $(\varpi_{(L,\cO)})_*\widetilde{\cE}$ whose $\IC$-extension has a nonzero restriction to $\mathrm{Ind}_L^G(\cO)$. This direct summand appears with multiplicity one, and its head $\overline{\cE}$ is absolutely irreducible. Moreover, the following properties hold:
\begin{enumerate}
\item $\dim \Hom \bigl( (\varpi_{(L,\cO)})_*\widetilde{\cE}, \overline{\cE} \bigr) = 1$;
\item the morphism $\widetilde{\cE} \to (\varpi_{(L,\cO)})^* \overline{\cE}$ obtained by adjunction from the unique \textup{(}up to scalar\textup{)} morphism $ (\varpi_{(L,\cO)})_*\widetilde{\cE} \to \overline{\cE}$ is an isomorphism;
\item the local system $(\varpi_{(L,\cO)})_*\widetilde{\cE}$ is isomorphic to $\overline{\cE} \otimes \cL_{\bk[N_G(L,\cO)/L]}$, and its endomorphism algebra $\End((\varpi_{(L,\cO)})_*\widetilde{\cE})$ is isomorphic to $\bk[N_G(L,\cO)/L]$;
\item the assignment $U \mapsto \overline{\cE} \otimes \cL_U$ induces a bijection
\begin{equation} 
\label{eqn:cocycle}
\Irr(\bk[N_G(L,\cO)/L])
\longleftrightarrow
\left\{\begin{array}{c}
\text{isomorphism classes of simple} \\
\text{quotients of $(\varpi_{(L,\cO)})_*\widetilde{\cE}$}
\end{array}\right\}.
\end{equation}
\end{enumerate}
\end{thm}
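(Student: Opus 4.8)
The plan is to reduce everything to a statement about local systems on $\widetilde{Y}_{(L,\cO)}$ and representations of the finite group $\Gamma := N_G(L,\cO)/L$, via the dictionary $U \mapsto \cL_U$ for the Galois covering $\varpi := \varpi_{(L,\cO)}$. Since $\varpi$ is Galois with group $\Gamma$, the functor $\varpi^*$ is biadjoint to $\varpi_*$, and $\varpi_* \widetilde{\cE}$ carries a natural action of $\Gamma$; concretely $\varpi_* \widetilde{\cE} \cong \bigoplus$ of the $\Gamma$-twists of $\widetilde{\cE}$, pushed down. The first reduction is to observe that assumption~\eqref{eqn:fixed} says all the $\Gamma$-twists of $\widetilde{\cE}$ are isomorphic to $\widetilde{\cE}$ as plain (non-equivariant) local systems, so $\varpi^* \varpi_* \widetilde{\cE} \cong \widetilde{\cE} \otimes \bk[\Gamma]$. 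This already makes $\varpi_* \widetilde{\cE}$ a "cocycle-twisted" version of $\widetilde{\cE} \otimes \cL_{\bk[\Gamma]}$: the obstruction to an honest isomorphism as in part~(3) is a class in $H^2(\Gamma, \bk^\times)$ coming from the failure of the isomorphisms $\widetilde{\cE} \simto \gamma^* \widetilde{\cE}$ to compose strictly. The whole point of conditions~\eqref{eqn:distinguished-substitute} and~\eqref{eqn:abs-irr} will be to kill this cocycle by producing a \emph{canonical} trivialization.

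The heart of the argument is therefore the geometry near the induced orbit. Using~\eqref{eqn:distinguished-substitute}, I would show that $\mathrm{Ind}_L^G(\cO)$ meets the smooth locus of $Y_{(L,\cO)}$ — more precisely that $\varpi$ restricted over (a piece of) $\mathrm{Ind}_L^G(\cO)$ is still a Galois $\Gamma$-covering, and that the fiber of $\varpi_* \widetilde{\cE}$ there decomposes with the regular representation of $\Gamma$ controlling the multiplicities. By~\eqref{eqn:l-fourier}--\eqref{eqn:fourier} and Lemma~\ref{lem:fourier}, the $\IC$-extensions of the simple quotients of $\varpi_* \widetilde{\cE}$ are exactly the Fourier transforms of the members of the induction series; the restriction-to-$\mathrm{Ind}_L^G(\cO)$ condition singles out one indecomposable summand, and one shows it occurs with multiplicity one by a stalk computation at a point of $\mathrm{Ind}_L^G(\cO) \cap (\cO + \fu_P)$, using that $G_y \subset P$ forces the relevant fiber of the covering to be a single $\Gamma$-torsor point (so the head there is $1$-dimensional over $\bk$, whence $\overline{\cE}$ is absolutely irreducible and part~(1) holds). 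This fixes $\overline{\cE}$ canonically and gives the adjunction isomorphism in part~(2): the adjoint map $\widetilde{\cE} \to \varpi^* \overline{\cE}$ is a nonzero map of simple local systems of the same rank (rank equality because $\varpi^*\overline{\cE}$, being simple $\Gamma$-equivariantly of the same "size", must match $\widetilde\cE$), hence an isomorphism — here~\eqref{eqn:abs-irr} guarantees $\End(\widetilde{\cE}) = \bk$ so "nonzero $\Rightarrow$ iso" is legitimate.

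With part~(2) in hand, part~(3) follows formally: $\varpi^* \overline{\cE} \cong \widetilde{\cE}$ as $\Gamma$-equivariant local systems (the equivariant structure on $\widetilde{\cE}$ being the one transported from $\overline{\cE}$), so $\varpi_* \varpi^* \overline{\cE} \cong \overline{\cE} \otimes \cL_{\bk[\Gamma]}$ on one side equals $\varpi_* \widetilde{\cE}$ on the other, and taking endomorphisms gives $\End(\varpi_*\widetilde{\cE}) \cong \End_{\Gamma}(\bk[\Gamma]) \cong \bk[\Gamma]$ (here the projection formula $\varpi_*\varpi^*\overline{\cE}\cong\overline{\cE}\otimes\varpi_*\ubk \cong \overline{\cE}\otimes\cL_{\bk[\Gamma]}$ is used). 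Then part~(4): simple quotients of $\varpi_*\widetilde{\cE} \cong \overline{\cE}\otimes\cL_{\bk[\Gamma]}$ correspond, via the fully faithful functor $U\mapsto \overline{\cE}\otimes\cL_U$ restricted to the subcategory generated by $\overline{\cE}\otimes(-)$, to simple quotients of the regular representation $\bk[\Gamma]$, i.e.\ to $\Irr(\bk[\Gamma])$; one checks the functor $U \mapsto \overline{\cE}\otimes\cL_U$ is fully faithful on $\Rep(\Gamma,\bk)$ because $\overline\cE$ is absolutely irreducible (so $\overline{\cE}\otimes(-)$ is fully faithful, and $\cL_{(-)}$ is an equivalence onto the subcategory of local systems split by $\varpi$). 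I expect the \textbf{main obstacle} to be the multiplicity-one and canonicity claims at the induced orbit: showing that exactly one indecomposable summand has nonzero $\IC$-restriction there, with multiplicity one, requires carefully identifying the fiber of $\varpi$ over a generic point of $\mathrm{Ind}_L^G(\cO)$ with a $\Gamma$-torsor — this is precisely where $G_y \subset P$ enters — and controlling the local system's stalk there; everything else is more or less formal manipulation of the Galois-covering/representation dictionary.
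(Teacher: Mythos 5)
There is a genuine gap, and it sits exactly at the point you yourself flag as the main obstacle. The induced orbit $\mathrm{Ind}_L^G(\cO)$ does not lie in $Y_{(L,\cO)}$ at all (it lies in the boundary, inside the open subset $T_{(L,\cO)}\subset X_{(L,\cO)}$), so the covering $\varpi_{(L,\cO)}$ does not "restrict over a piece of $\mathrm{Ind}_L^G(\cO)$"; one must first extend the picture across the boundary, which the paper does by taking the normalization $\widetilde{T}_{(L,\cO)}$ of $T_{(L,\cO)}$ relative to $\varpi_{(L,\cO)}$, together with the finite map $\tau_{(L,\cO)}$ and the extension $\widehat{\cE}$ of $\widetilde{\cE}$. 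More seriously, your mechanism is backwards: condition~\eqref{eqn:distinguished-substitute} ($G_y\subset P$) is used to prove that the fibre of $\tau_{(L,\cO)}$ over a point of $\mathrm{Ind}_L^G(\cO)\cap(\cO+\fu_P)$ is $G_yP/P$, i.e.\ a \emph{single point} on which $N_G(L,\cO)/L$ acts trivially (Lemma~\ref{lem:pinch}) --- not a $\Gamma$-torsor, as you assert. If the covering really stayed Galois over the induced orbit, the restriction of $\IC(Y_{(L,\cO)},(\varpi_{(L,\cO)})_*\widetilde{\cE})$ there would involve all twists and you could not conclude that a \emph{unique} summand survives with multiplicity one, nor single out the trivial-representation block canonically. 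The collapse to a point, combined with the fact that the restricted local system $\cF$ on $\tau_{(L,\cO)}^{-1}(\mathrm{Ind}_L^G(\cO))$ is absolutely irreducible (which follows not from a rank-one/stalk count, as you suggest, but from the surjectivity of $A_P(x+y)\to A_L(x)$ and the equality $A_P(x+y)=A_G(x+y)$, Lemma~\ref{lem:component-groups}), is what lets one normalize the isomorphisms $\widehat{\cE}\simto n^*\widehat{\cE}$ to induce the identity on $\cF$ (Proposition~\ref{prop:equivariance-hatE}); that is the canonical trivialization of your $H^2$ cocycle. The multiplicity-one statement then comes from the identification $j_{!*}\bigl((\varpi_{(L,\cO)})_*\widetilde{\cE}[d]\bigr)\cong(\tau_{(L,\cO)})_*\widehat{\cE}[d]$ (finite map, smooth source) whose restriction to $\mathrm{Ind}_L^G(\cO)$ is the indecomposable $\cF[d]$, and the identification of the distinguished summand with $\overline{\cE}\otimes\cL_Q$ for $Q$ the projective cover of the \emph{trivial} representation uses precisely the triviality of the $\Gamma$-action over the induced orbit. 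None of this is recoverable from the torsor picture you propose.

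Two smaller points. First, your appeal to Lemma~\ref{lem:fourier} and \eqref{eqn:fourier} is out of place: Theorem~\ref{thm:cocycle} is stated (and must be proved) for pairs $(\cO,\cE)$ that need not be cuspidal, so Fourier-transform descriptions of induction series are not available inside this proof; they are only the motivation for it. Second, the logical order in your sketch for part~(2) is circular as stated: you define $\overline{\cE}$ as the head of the distinguished summand and then claim $\widetilde{\cE}\to\varpi_{(L,\cO)}^*\overline{\cE}$ is an isomorphism "because both are simple of the same rank", but the simplicity of $\varpi_{(L,\cO)}^*\overline{\cE}$ and the rank equality are exactly what must be established; in the paper $\overline{\cE}$ is instead \emph{constructed} from the canonical equivariant structure on $\widetilde{\cE}$ (so that $\varpi_{(L,\cO)}^*\overline{\cE}\cong\widetilde{\cE}$ holds by construction), and only afterwards identified with the head of the distinguished summand. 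Once the canonical $\overline{\cE}$ is in place, your formal deductions of (1), (3), (4) via the projection formula and Lemma~\ref{lem:local-systems-abs-irr} do match the paper.
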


Our motivation is the occurrence of the right-hand side of~\eqref{eqn:cocycle} in Lemma~\ref{lem:fourier}, and after Section~\ref{sect:transitivity} we will need Theorem~\ref{thm:cocycle} only in the case where $(\cO,\cE)$ is a cuspidal pair for $L$. (In that case, condition~\eqref{eqn:distinguished-substitute} follows from Proposition~\ref{prop:distinguished} and Lemma~\ref{lem:cgu}; condition~\eqref{eqn:abs-irr} will be guaranteed by our assumptions on $\bk$; and condition~\eqref{eqn:fixed} will be implied by a certain property of cuspidal pairs. See the proof of Theorem~\ref{thm:conditional}.) However, it seemed worthwhile to explain the proof of Theorem~\ref{thm:cocycle} in as general a setting as possible. We will introduce our assumptions~\eqref{eqn:distinguished-substitute}--\eqref{eqn:fixed} on $(\cO,\cE)$ as they are needed in the arguments.

\begin{rmk} \label{rmk:trivial-case}
Notice that if $\cE=\ubk_{\cO}$ (and hence $\widetilde{\cE}=\ubk_{\widetilde{Y}_{(L,\cO)}}$), then $\ubk_{Y_{(L,\cO)}}$ satisfies the properties (1)--(4) of the local system $\overline{\cE}$ in the theorem, by the well-known generalities on Galois coverings recalled in \S\ref{ss:Loc-Galois}. 
We will see in Lemma~\ref{lem:trivial-case} below that, if $\cE=\ubk_{\cO}$, we do indeed have $\overline{\cE}=\ubk_{Y_{(L,\cO)}}$. Hence, in the case of $\GL(n)$, the bijection~\eqref{eqn:cocycle} coincides with the one used in~\cite[proof of Lemma 3.8]{genspring1}.
\end{rmk}

\begin{rmk} \label{rmk:cocycle}
Although this is not the point of view we will emphasize in the proof, the reader may find it enlightening to interpret Theorem~\ref{thm:cocycle} in terms of modular representations of finite groups. Namely,
the irreducible $L$-equivariant local system $\cE$ on $\cO$ corresponds to an irreducible $\bk$-representation $V$ of the group $A_L(x):=L_x/L_x^\circ$, where $x$ is a chosen element of $\cO$. 
There is a Galois covering of $Y_{(L,\cO)}$ with group $A_{N_G(L)}(x)$, denoted $\widetilde{Y}_{(L,\cO)}'$ in \S\ref{ss:geometry} below, from which $\widetilde{Y}_{(L,\cO)}$ is obtained by factoring out the action of the normal subgroup $A_L(x)$; note that $A_{N_G(L)}(x)/A_L(x)\cong N_G(L)_x/L_x\cong N_G(L,\cO)/L$. Hence each representation of $A_{N_G(L)}(x)$ determines a local system on $Y_{(L,\cO)}$, and one can check that $(\varpi_{(L,\cO)})_*\widetilde{\cE}$ corresponds in this way to the induced representation $\mathrm{Ind}_{A_L(x)}^{A_{N_G(L)}(x)}(V)$. The local system $\overline{\cE}$ produced by Theorem~\ref{thm:cocycle} corresponds to an irreducible representation $\overline{V}$ of $A_{N_G(L)}(x)$ whose restriction to $A_L(x)$ is isomorphic to $V$, by property (2). A necessary condition for such a representation $\overline{V}$ to exist is that the isomorphism class of $V$ be fixed by the conjugation action of $A_{N_G(L)}(x)$, which is exactly what assumption~\eqref{eqn:fixed} implies. 
Given this and the fact that $V$ is absolutely irreducible (by assumption~\eqref{eqn:abs-irr}), the obstruction to the existence of $\overline{V}$ is the cohomology class in $\mathsf{H}^2(N_G(L,\cO)/L,\bk^\times)$ determined by $V$ as in~\cite[Theorem 11.7]{isaacs}.
Thus, part of Theorem~\ref{thm:cocycle} is the purely algebraic statement that this cohomology class is trivial. However, an equally important part of Theorem~\ref{thm:cocycle} is that the construction of $\overline{\cE}$ is canonical: in other words, the geometric condition about $\IC$-extension singles out one particular choice of $\overline{V}$, and hence one particular bijection~\eqref{eqn:cocycle}.
\end{rmk}

\begin{rmk} \label{rmk:history}
The prototype for Theorem~\ref{thm:cocycle} is Lusztig's result~\cite[Theorem 9.2]{lusztig}, a main ingredient of his generalized Springer correspondence for $\Qlb$-sheaves. That result required $(\cO,\cE)$ to be a cuspidal pair for $L$, and was in the setting of the group $G$ rather than the Lie algebra $\fg$; since he worked over an algebraically closed field of characteristic $0$, Lusztig had no need to refer to absolute irreducibility or to heads of indecomposable summands. His original proof does not carry over to the modular setting. He subsequently gave a different proof in the Lie algebra context in~\cite[Sections 6-7]{lusztig-cusp2}, on which our arguments are essentially based. However, significant modifications are required, primarily because the property Lusztig stated as~\cite[Lemma 6.8(c)]{lusztig-cusp2} does not hold for modular cuspidal pairs (let alone the more general pairs allowed by Theorem~\ref{thm:cocycle}): that is, the parabolic subgroups $P$ of $G$ having $L$ as a Levi factor do not necessarily form a single orbit under the conjugation action of $N_G(L)$. For this reason, we cannot directly use the constructions in~\cite[Section 7]{lusztig-cusp2} involving the variety of all $G$-conjugates of $P$. Our alternative constructions were inspired by the treatment of Bonnaf\'e~\cite{bonnafe1}; he worked with $G$ rather than $\fg$, but many of his proofs require only minor adaptations to our case. 
\end{rmk}

\begin{rmk}
In~\cite[Proposition~9.5]{lusztig}, in the setting of $\Qlb$-sheaves, Lusztig characterized the image of the sign representation of $N_G(L,\cO)/L$ under his version of the bijection~\eqref{eqn:cocycle}. A general analogue of such a result in our context seems unlikely, since $N_G(L,\cO)/L$ need not be a Coxeter group even if $(\cO,\cE)$ is a cuspidal pair for $L$. For example, if $G$ is of type $\mathbf{E}_6$, a Levi subgroup $L$ of type $\mathbf{A}_2$ can have cuspidal pairs in characteristic $3$ by~\cite[Theorem 3.1]{genspring1}, and in that case $N_G(L,\cO)/L=N_G(L)/L$ is isomorphic to $(\fS_3\times\fS_3)\rtimes\Z/2\Z$ (see~\cite{howlett}).  
\end{rmk}

\subsection{Local systems and Galois coverings}
\label{ss:Loc-Galois}

In this subsection we collect some generalities on Galois coverings.

Let $A$ be a finite group, and let $\pi : X \to Y$ be a Galois covering with Galois group $A$. 
Since $A$ is finite, the datum of an $A$-equivariant local system on $X$ is equivalent to the datum of a local system $\cM$ on $X$, together with isomorphisms $\varphi_a : \cM \simto a^* \cM$ for all $a \in A$, which satisfy $\varphi_{ba} = a^*(\varphi_b) \circ \varphi_a$ for any $a,b \in A$. (Here, by abuse of notation we still denote by $a : X \simto X$ the action of $a \in A$.) Any object of the form $\pi^* \cL$, with $\cL$ in $\Loc(Y,\bk)$, has a canonical $A$-equivariant structure in which $\varphi_a$ is the isomorphism $\pi^*\cL=(\pi\circ a)^*\cL\simto a^*\pi^*\cL$. It is well known that the functor $\pi^*$ induces an equivalence of categories
\begin{equation}
\label{eqn:equivalence-equivariant}
\Loc(Y,\bk) \simto \Loc^A(X,\bk).
\end{equation}

Now we define a canonical fully faithful functor 
\begin{equation}
\label{eqn:functor-equivariant}
\Rep(A,\bk) \to \Loc^A(X,\bk)
\end{equation}
as follows:
to any finite dimensional $\bk$-representation $V$ of $A$ we associate the constant local system $\underline{V}_X$, with $\varphi_a : \underline{V}_X \simto a^*\underline{V}_X$ defined so that the composition 
\[ \underline{V}_X \overset{\varphi_a}{\longrightarrow} a^* \underline{V}_X \cong \underline{V}_X \] 
(where the second isomorphism is the canonical one arising from the fact that $\underline{V}_X$ is constant) is the automorphism of $\underline{V}_X$ induced by the action of $a$ on $V$. The essential image of~\eqref{eqn:functor-equivariant} is the subcategory consisting of $A$-equivariant local systems whose underlying local system is constant. 

Composing~\eqref{eqn:functor-equivariant} with an inverse of the equivalence~\eqref{eqn:equivalence-equivariant} (uniquely defined up to isomorphism of functors) we obtain a fully faithful functor
\begin{equation}
\label{eqn:local-systems-Galois}
\Rep(A,\bk) \to \Loc(Y,\bk):V\mapsto\cL_V,
\end{equation}
whose essential image is the subcategory whose objects are the local systems $\cL$ such that $\pi^* \cL$ is constant. By definition, for any representation $V$ of $A$ we have a canonical isomorphism $\pi^*\cL_V\cong\underline{V}_X$ of $A$-equivariant local systems on $X$; moreover, if $\cL$ is a local system on $Y$, then any isomorphism $\pi^*\cL\simto\underline{V}_X$ of $A$-equivariant local systems is induced by a unique isomorphism $\cL\simto\cL_V$.

\begin{lem}
\label{lem:reg-rep}
We have a canonical isomorphism $\pi_*\ubk_{X}\cong\cL_{\bk[A]}$, where $\bk[A]$ denotes the left regular representation of $A$.
\end{lem}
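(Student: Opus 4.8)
The plan is to unwind the definition of $\cL_{\bk[A]}$ from~\eqref{eqn:local-systems-Galois} and match it against $\pi_*\ubk_X$. By the characterization of the functor $V \mapsto \cL_V$, it suffices to exhibit a canonical isomorphism of $A$-equivariant local systems $\pi^*\pi_*\ubk_X \cong \underline{\bk[A]}_X$, since then $\pi_*\ubk_X$ (being a local system whose pullback along $\pi$ is constant) must be canonically isomorphic to $\cL_{\bk[A]}$. First I would recall that, because $A$ is finite and $\pi$ is a Galois covering, the fibre product $X \times_Y X$ is canonically isomorphic to $A \times X$, via $(a, x) \mapsto (ax, x)$ (or the opposite convention, depending on whether $A$ acts on the left or the right of $X$). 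Concretely, this is the statement that the preimage of a point $y$ in $X \times_Y X$ is $\pi^{-1}(y) \times \pi^{-1}(y)$, which is an $A$-torsor times an $A$-torsor, hence (once we use one copy to trivialize) is identified with $A \times \pi^{-1}(y)$.

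Next I would apply base change. The square
\[
\xymatrix{
X \times_Y X \ar[r]^-{\mathrm{pr}_2} \ar[d]_-{\mathrm{pr}_1} & X \ar[d]^-{\pi} \\
X \ar[r]_-{\pi} & Y
}
\]
is cartesian, so proper (even finite) base change gives $\pi^* \pi_* \ubk_X \cong (\mathrm{pr}_1)_* (\mathrm{pr}_2)^* \ubk_X = (\mathrm{pr}_1)_* \ubk_{X \times_Y X}$. Under the identification $X \times_Y X \cong A \times X$ described above, the map $\mathrm{pr}_1$ becomes the projection $A \times X \to X$ (the second factor, in our chosen convention), so $(\mathrm{pr}_1)_* \ubk_{X\times_Y X} \cong \bigoplus_{a \in A} \ubk_X$, which is the constant local system with fibre $\bk[A]$. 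I would then check that under this identification the canonical $A$-equivariant structure on $\pi^*\pi_*\ubk_X$ — the one coming from $\pi \circ a = \pi$ — corresponds to the $A$-equivariant structure on $\underline{\bk[A]}_X$ given by the left regular representation (in the sense of~\eqref{eqn:functor-equivariant}). This is a matter of tracking how $a \in A$ permutes the copies of $\ubk_X$ indexed by $A$: acting on the first factor of $X \times_Y X$ by $a$ translates the $A$-label by $a$, which is precisely the regular action.

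The main obstacle, such as it is, is purely bookkeeping: keeping the left/right conventions for the $A$-action consistent throughout, so that the equivariant structure genuinely matches the \emph{left} regular representation $\bk[A]$ rather than its dual or a twist by an automorphism of $A$. One must be careful that the identification $X\times_Y X \cong A \times X$ is chosen compatibly with the convention used in~\eqref{eqn:functor-equivariant} for turning a representation into an equivariant constant local system; a sign error here would produce $\cL_{\bk[A]^*}$ instead, though of course $\bk[A] \cong \bk[A]^*$ as representations, so even a slip would not affect the isomorphism class. Everything else — base change, the torsor trivialization, exactness of $\pi_*$ on local systems since $\pi$ is finite étale — is standard, so the lemma follows once these compatibilities are spelled out.
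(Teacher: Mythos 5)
Your proposal is correct and takes essentially the same route as the paper: its proof simply asserts that one can write down a canonical isomorphism $\pi^*\pi_*\ubk_X\cong\underline{\bk[A]}_X$ of $A$-equivariant local systems and then concludes via the characterization of $V\mapsto\cL_V$, which is exactly what you do, with the cartesian square, base change, and the torsor identification $X\times_Y X\cong A\times X$ supplying the details the paper leaves implicit. The left/right bookkeeping you flag is harmless, since a slip would only replace the left regular representation by the right regular one, which is canonically isomorphic to it.
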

\begin{proof}
From the definitions one can easily write down a canonical isomorphism $\pi^*\pi_*\ubk_{X}\cong\underline{\bk[A]}_X$ of $A$-equivariant local systems, which implies the claim.
\end{proof}

For any $A$-equivariant local system $\cM$ on $X$, the group $A$ has a natural action (by isomorphisms in $\Loc(Y,\bk)$) on the direct image $\pi_*\cM$, in which $a\in A$ acts as the composition $\pi_*\cM\simto\pi_*a^*\cM\simto \pi_*\cM$ where the first isomorphism is $\pi_*(\varphi_a)$ and the second is base change. In particular, $A$ acts on $\pi_*\ubk_{X}$; under the isomorphism of Lemma~\ref{lem:reg-rep}, this corresponds to the action of $A$ on $\bk[A]$ by right multiplication, and hence it induces an algebra isomorphism
\begin{equation} \label{eqn:end-alg}
\bk[A]\simto\End(\pi_*\ubk_X).
\end{equation}
For any local system $\cL$ on $Y$, the projection formula gives an isomorphism $\pi_*\pi^*\cL\cong \cL\otimes\pi_*\ubk_X$, and the $A$-action on $\pi_*\pi^*\cL$ is the one induced by the $A$-action on $\pi_*\ubk_X$. 

Assume now that we are given a group automorphism $\theta$ of $A$, and automorphisms $\dot{\vartheta}$ of $X$ and $\vartheta$ of $Y$, which satisfy
\[
\pi \circ \dot{\vartheta} = \vartheta \circ \pi, \quad \text{and} \quad \dot{\vartheta}(a \cdot x) = \theta(a) \cdot \dot{\vartheta}(x)
\]
for all $a \in A$ and $x \in X$. Then for any $V \in \Rep(A,\bk)$ one can define a local system $\vartheta^* \cL_V$ on $Y$, and a representation $V^\theta$ of $A$ (which is isomorphic to $V$ as a vector space, with the action of $a\in A$ corresponding to the action of $\theta(a)$ on $V$).

\begin{lem}
\label{lem:theta^*-local-system}
We have a canonical isomorphism
$\vartheta^* \cL_V \cong \cL_{V^\theta}$.
\end{lem}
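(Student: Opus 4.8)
The plan is to unwind the definition of $\cL_V$ from~\eqref{eqn:local-systems-Galois}, which characterizes $\cL_V$ as the unique local system (up to unique isomorphism) on $Y$ equipped with an isomorphism $\pi^* \cL_V \simto \underline{V}_X$ of $A$-equivariant local systems on $X$. So it suffices to produce such an isomorphism for $\vartheta^* \cL_V$, using the representation $V^\theta$ of $A$. First I would apply $\dot\vartheta^*$ to the canonical isomorphism $\pi^* \cL_V \cong \underline{V}_X$ of $A$-equivariant local systems on $X$; since $\pi \circ \dot\vartheta = \vartheta \circ \pi$, the left-hand side becomes $\dot\vartheta^* \pi^* \cL_V \cong \pi^* \vartheta^* \cL_V$, while the right-hand side becomes $\dot\vartheta^* \underline{V}_X$, which is again a constant local system with fibre $V$.

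The key point is then to track how the $A$-equivariant structures transform under $\dot\vartheta^*$. On the source side, because $\pi \circ \dot\vartheta = \vartheta \circ \pi$, the canonical $A$-equivariant structure on $\pi^* \vartheta^* \cL_V$ (arising from the fact that it is a pullback from $Y$) matches, under the identification $\pi^* \vartheta^* \cL_V \cong \dot\vartheta^* \pi^* \cL_V$, the structure obtained by transporting the canonical structure on $\pi^*\cL_V$ through $\dot\vartheta$; here one uses the relation $\dot\vartheta(a \cdot x) = \theta(a) \cdot \dot\vartheta(x)$, which shows how $a$-equivariance upstairs of a pulled-back object becomes $\theta(a)$-equivariance after applying $\dot\vartheta^*$. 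On the target side, $\dot\vartheta^* \underline{V}_X$ is canonically the constant local system $\underline{V}_X$ (via the obvious trivialization), and I would check that the $A$-equivariant structure it inherits from $\dot\vartheta^*$ of the standard structure on $\underline{V}_X$ is precisely the standard structure associated to the representation $V^\theta$: the automorphism of $\underline{V}_X$ induced by $a$ via $V^\theta$ is by definition the one induced by $\theta(a)$ via $V$, which is exactly what the composition with $\dot\vartheta^*$ produces because $\dot\vartheta$ intertwines the $a$-action and the $\theta(a)$-action. Comparing the two sides, we obtain an isomorphism $\pi^*\vartheta^* \cL_V \simto \underline{V^\theta}_X$ of $A$-equivariant local systems, and hence by the universal property recalled after~\eqref{eqn:local-systems-Galois} a canonical isomorphism $\vartheta^*\cL_V \cong \cL_{V^\theta}$.

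The only real obstacle is bookkeeping: one must be careful that all the identifications (base change, the canonical trivializations of pullbacks of constant sheaves, the compatibility of $\dot\vartheta$ with the $A$-actions, and the cocycle condition $\varphi_{ba} = a^*(\varphi_b) \circ \varphi_a$) are compatible, so that the resulting isomorphism is genuinely one of $A$-equivariant local systems and not merely of the underlying local systems. Since each of these is a standard diagram chase and the statement is of an entirely formal nature, I expect no substantive difficulty beyond this.

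\begin{proof}
By definition (see~\eqref{eqn:local-systems-Galois}), $\cL_V$ is characterized, up to unique isomorphism, as the local system on $Y$ equipped with an isomorphism $\pi^*\cL_V\simto\underline{V}_X$ of $A$-equivariant local systems on $X$. It therefore suffices to exhibit an isomorphism $\pi^*\vartheta^*\cL_V\simto\underline{V^\theta}_X$ of $A$-equivariant local systems.

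Applying $\dot\vartheta^*$ to the canonical isomorphism $\pi^*\cL_V\cong\underline{V}_X$ and using $\pi\circ\dot\vartheta=\vartheta\circ\pi$, we obtain an isomorphism of local systems
\[
\pi^*\vartheta^*\cL_V\cong\dot\vartheta^*\pi^*\cL_V\cong\dot\vartheta^*\underline{V}_X\cong\underline{V}_X,
\]
where the last arrow is the canonical trivialization of the pullback of a constant local system. A direct check on stalks, using the relation $\dot\vartheta(a\cdot x)=\theta(a)\cdot\dot\vartheta(x)$, shows that this composition carries the $A$-equivariant structure on $\pi^*\vartheta^*\cL_V$ (induced, as for any pullback from $Y$, by $\vartheta\circ\pi$) to the $A$-equivariant structure on $\underline{V}_X$ in which $a$ acts via the automorphism induced by $\theta(a)$ on $V$; this is precisely the standard structure attached to $V^\theta$ under~\eqref{eqn:functor-equivariant}. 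Hence we have produced an isomorphism $\pi^*\vartheta^*\cL_V\simto\underline{V^\theta}_X$ of $A$-equivariant local systems, which by the universal property of~\eqref{eqn:local-systems-Galois} is induced by a canonical isomorphism $\vartheta^*\cL_V\cong\cL_{V^\theta}$.
\end{proof}
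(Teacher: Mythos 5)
Your proof is correct and follows essentially the same route as the paper: you form the composition $\pi^*\vartheta^*\cL_V \cong \dot\vartheta^*\pi^*\cL_V \cong \dot\vartheta^*\underline{V}_X \cong \underline{V}_X$ using $\pi\circ\dot\vartheta = \vartheta\circ\pi$, check via $\dot\vartheta(a\cdot x)=\theta(a)\cdot\dot\vartheta(x)$ that the equivariant structures match the one attached to $V^\theta$, and conclude by the uniqueness property recalled after~\eqref{eqn:local-systems-Galois}. This is exactly the paper's argument, with the equivariance bookkeeping (left to a routine check in both versions) handled the same way.
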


\begin{proof}
Consider the composition of natural isomorphisms of local systems
\[
\pi^* \vartheta^* \cL_V \cong (\vartheta \circ \pi)^* \cL_V = (\pi \circ \dot{\vartheta})^* \cL_V \cong\dot{\vartheta}^* \pi^* \cL_V \cong \dot{\vartheta}^* \underline{V}_X \cong \underline{V}_X.
\]
Using the fact that $\dot{\vartheta} \circ a = \theta(a) \circ \dot{\vartheta}$,
it is easily checked that $\varphi_a:\pi^* \vartheta^* \cL_V \simto a^* \pi^* \vartheta^* \cL_V$ corresponds, under this composition, to the isomorphism $\underline{V}_X \simto a^* \underline{V}_X$ whose composition with the canonical isomorphism $a^*\underline{V}_X\cong \underline{V}_X$ is the automorphism of $\underline{V}_X$ induced by the action of $\theta(a)$ on $V$. Hence we have defined a canonical isomorphism of $A$-equivariant local systems $\pi^* \vartheta^* \cL_V \cong \underline{(V^\theta)}_X$, and the lemma follows.
\end{proof}

Finally, we will need the following easy result.

\begin{lem}
\label{lem:local-systems-abs-irr}
Let $\cM$ be a local system on $Y$ such that $\pi^* \cM$ is absolutely irreducible. Then the functor $V \mapsto \cM \otimes \cL_V$ induces an equivalence of categories between $\Rep(A,\bk)$ and
the full subcategory of $\Loc(Y,\bk)$ whose objects are the local systems $\cL$ such that $\pi^* \cL$ is isomorphic to a direct sum of copies of $\pi^* \cM$.
\end{lem}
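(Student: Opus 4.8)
The goal is to show that $V \mapsto \cM \otimes \cL_V$ gives an equivalence from $\Rep(A,\bk)$ onto the subcategory $\mathcal{C}$ of $\Loc(Y,\bk)$ consisting of those $\cL$ with $\pi^*\cL$ a direct sum of copies of $\pi^*\cM$. The plan is to exploit the fully faithful functor $V \mapsto \cL_V$ of~\eqref{eqn:local-systems-Galois} together with the equivalence $\pi^* : \Loc(Y,\bk) \simto \Loc^A(X,\bk)$ of~\eqref{eqn:equivalence-equivariant}, and to transport the whole question across $\pi^*$. Under $\pi^*$, the functor $V \mapsto \cM \otimes \cL_V$ becomes $V \mapsto \pi^*\cM \otimes \underline{V}_X$ (using the canonical isomorphism $\pi^*\cL_V \cong \underline{V}_X$ and that $\pi^*$ is monoidal), where $\underline{V}_X$ carries its canonical $A$-equivariant structure. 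So it suffices to show that $V \mapsto \pi^*\cM \otimes \underline{V}_X$ is an equivalence from $\Rep(A,\bk)$ onto the full subcategory of $\Loc^A(X,\bk)$ whose underlying (non-equivariant) local systems are direct sums of copies of $\pi^*\cM$.

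First I would verify the functor lands in the asserted subcategory and is essentially surjective. Fix an $A$-equivariant local system $\cM$ on $X$ with $\pi^*\cM$ absolutely irreducible; write $\cN := \pi^*\cM$ and let $\varphi_a : \cN \simto a^*\cN$ be its equivariant structure. Given an $A$-equivariant local system $\cF$ whose underlying local system is $\cN^{\oplus n}$, consider $\mathcal{H}om(\cN, \cF)$: since $\cN$ is absolutely irreducible, this is a local system that is constant (being a direct sum of $n$ copies of $\mathcal{H}om(\cN,\cN) \cong \ubk_X$), i.e.\ of the form $\underline{W}_X$ for an $n$-dimensional $\bk$-vector space $W$; moreover $\mathcal{H}om(\cN,-)$ and $\cN \otimes -$ carry $\cF$ to an object with an induced $A$-equivariant structure, making $\underline{W}_X$ into an $A$-equivariant constant local system, hence $W$ into an $A$-representation. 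The canonical evaluation map $\cN \otimes \mathcal{H}om(\cN,\cF) \to \cF$ is then an isomorphism of $A$-equivariant local systems (it is an isomorphism of underlying local systems since $\cN$ is absolutely irreducible, and $A$-equivariance is built into the construction). This shows $\cF \cong \cN \otimes \underline{W}_X$ with $W \in \Rep(A,\bk)$, giving essential surjectivity; the same computation shows the functor does land in the claimed subcategory.

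For full faithfulness, I would compute $\Hom_{\Loc^A(X,\bk)}(\cN \otimes \underline{V}_X, \cN \otimes \underline{V'}_X)$. Since $\pi^*$ is fully faithful (being an equivalence), and since the monoidal structure and adjunctions are compatible, this Hom-space equals $\Hom_A\bigl(\underline{V}_X, \mathcal{H}om(\cN, \cN \otimes \underline{V'}_X)\bigr)$ in the category of $A$-equivariant local systems; using absolute irreducibility of $\cN$ as above, $\mathcal{H}om(\cN, \cN \otimes \underline{V'}_X) \cong \underline{V'}_X$ as an $A$-equivariant constant local system, so this becomes $\Hom_{\Rep(A,\bk)}(V, V')$, with the isomorphism natural in $V$ and $V'$. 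The main obstacle, and the point requiring care, is checking that the $A$-equivariant structure on $\mathcal{H}om(\cN, \cN \otimes \underline{V'}_X)$ induced by the equivariant structures $\varphi_a$ on $\cN$ and the canonical one on $\underline{V'}_X$ really does reduce, under the identification $\mathcal{H}om(\cN,\cN) \cong \ubk_X$, to the standard $A$-action on $V'$ (with no twist by the class of $\cM$ or by some character): here is where absolute irreducibility is essential, since it forces the identification $\mathcal{H}om(\cN,\cN) \cong \ubk_X$ to be canonical up to scalar and hence $A$-equivariant for the induced action, which is the trivial action because $\varphi_a$ on both factors of $\mathcal{H}om(\cN,\cN)$ cancel. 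Alternatively, one can deduce full faithfulness more cheaply: $V \mapsto \cL_V$ is fully faithful by~\eqref{eqn:local-systems-Galois}, and tensoring by the fixed object $\cM$ is faithful (as $\cM$ is a local system, hence has no kernel), so full faithfulness of $V \mapsto \cM \otimes \cL_V$ follows once one knows $\Hom(\cM \otimes \cL_V, \cM \otimes \cL_{V'}) = \Hom(\cL_V, \cL_{V'})$, which is exactly the computation above. Combining essential surjectivity with full faithfulness yields the equivalence.
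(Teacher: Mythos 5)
Your overall strategy---transporting the problem across the equivalence~\eqref{eqn:equivalence-equivariant} and constructing a quasi-inverse out of a Hom-space from $\pi^*\cM$ endowed with its induced $A$-action, with absolute irreducibility entering through $\End(\pi^*\cM)=\bk$---is the same as the paper's, which simply defines the inverse functor by $\cL\mapsto\Hom(\pi^*\cM,\pi^*\cL)$. However, as written your key step fails: you work with the \emph{internal} (sheaf) Hom and claim $\mathcal{H}om(\pi^*\cM,\pi^*\cM)\cong\ubk_X$, hence that $\mathcal{H}om(\pi^*\cM,\pi^*\cL)$ is a constant local system of rank $n$ and that the evaluation map $\pi^*\cM\otimes\mathcal{H}om(\pi^*\cM,\pi^*\cL)\to\pi^*\cL$ is an isomorphism. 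This is true only when $\pi^*\cM$ has rank one. In general $\mathcal{H}om(\pi^*\cM,\pi^*\cM)\cong(\pi^*\cM)^\vee\otimes\pi^*\cM$ has rank $r^2$ and is \emph{not} constant for an irreducible local system of rank $r\geq 2$ (its monodromy invariants are one-dimensional by Schur's lemma, but triviality of the whole adjoint local system would force the monodromy to act by scalars, contradicting irreducibility); likewise your evaluation map compares local systems of ranks $r^3n$ and $rn$, so it cannot be an isomorphism. Since the lemma is applied to $\cM=\overline{\cE}$ with $\pi^*\cM=\widetilde{\cE}$ of arbitrary rank, the rank-one case is not the relevant one. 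The same conflation appears in your full-faithfulness computation, where you assert $\mathcal{H}om(\pi^*\cM,\pi^*\cM\otimes\underline{V'}_X)\cong\underline{V'}_X$.

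The repair is exactly the paper's construction: use the \emph{global} Hom, i.e.\ the $\bk$-vector space $W=\Hom(\pi^*\cM,\pi^*\cL)$ with the $A$-action induced by the two equivariant structures. Absolute irreducibility gives $\Hom(\pi^*\cM,\pi^*\cM)=\bk$, so $\dim W=n$ when $\pi^*\cL\cong(\pi^*\cM)^{\oplus n}$, and the evaluation map $\pi^*\cM\otimes\underline{W}_X\to\pi^*\cL$ is an $A$-equivariant isomorphism; similarly, in your adjunction computation one should pass to monodromy invariants, obtaining $\Hom\bigl(\pi^*\cM,\pi^*\cM\otimes\underline{V'}_X\bigr)\cong V'$ as $A$-modules and hence $\Hom_{\Rep(A,\bk)}(V,V')$, as desired. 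With these corrections (and keeping the fully faithful functor~\eqref{eqn:local-systems-Galois} in the background as you do), your argument coincides with the one in the paper.
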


\begin{proof}
One can construct a functor in the reverse direction as follows: if $\cL$ is an object of $\Loc(Y,\bk)$ such that $\pi^* \cL$ is isomorphic to a direct sum of copies of $\pi^* \cM$ then one can consider the vector space $\Hom(\pi^* \cM, \pi^* \cL)$, endowed with the natural $A$-action induced by the $A$-equivariant structures on $\pi^* \cL$ and $\pi^* \cM$. Using the fact that $\Hom(\pi^* \cM, \pi^* \cM)=\bk$ (since $\pi^* \cM$ is absolutely irreducible), one can easily check that this provides an inverse to the functor of the lemma.
\end{proof}

\subsection{Preliminary results}
\label{ss:preliminary}

We will need the following analogues in the Lie algebra setting of results stated in the group setting in~\cite{lusztig} or~\cite{bonnafe-cgu}.

\begin{lem}
\label{lem:dim-lusztig}
Let $P \subset G$ be a parabolic subgroup with Levi factor $L$.
Let $x \in \fl$ and $y \in \fg$. Then
\[
\dim \{ gP \mid g^{-1} \cdot y \in (L \cdot x) + \fu_P \} \leq \frac{1}{2} \bigl( \dim G_y - \dim L_x \bigr).
\]
\end{lem}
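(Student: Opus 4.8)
The plan is to prove Lemma~\ref{lem:dim-lusztig} by reducing the dimension count of the variety of parabolics to a dimension count of a single fibre, then bounding that fibre by an intersection-of-orbits argument analogous to the one already used in the proof of Proposition~\ref{prop:distinguished-restrict}. First I would introduce the variety
\[
Z = \{ (gP, v) \mid v \in (g \cdot \fu_P) \cap \overline{G \cdot y}, \ v \equiv g\cdot x' \text{ for some } x' \in L\cdot x \},
\]
or more cleanly work with the incidence variety inside $G/P \times \fg$ whose fibre over $gP$ is $(g\cdot((L\cdot x)+\fu_P))\cap(G\cdot y)$; the set in the lemma is exactly the image of the projection of this incidence variety to $G/P$. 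Projecting instead to $\fg$, the image lands in $G\cdot y$, and the fibre over a point $y'\in G\cdot y$ is the set of $gP$ with $g^{-1}\cdot y' \in (L\cdot x)+\fu_P$. By $G$-equivariance all these fibres are isomorphic, so $\dim Z = \dim(G\cdot y) + \dim\{gP \mid g^{-1}\cdot y \in (L\cdot x)+\fu_P\}$, reducing the problem to bounding $\dim Z$ from above by $\dim(G\cdot y) + \tfrac12(\dim G_y - \dim L_x)$, i.e.\ the other projection of $Z$ must have fibres of dimension at most $\tfrac12(\dim G_y - \dim L_x)$.

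Next I would analyze the projection $Z \to G/P$. Its fibre over $gP$ is $g\cdot\big(((L\cdot x)+\fu_P)\cap (g^{-1}\cdot G\cdot y)\big)$, which is nonempty only for those $gP$ in the set of interest, and there it is isomorphic to $((L\cdot x)+\fu_P)\cap(G\cdot y)$ intersected with a $G$-translate — but it is cleaner to note that the fibre is a single $U_P$-orbit-bundle over $(L\cdot x)+\fu_P$-worth of data. The key input is the Lie-algebra analogue of Lusztig's dimension estimate, which here is precisely~\cite[Proposition~5.1.15]{letellier} (already invoked in the proof of Proposition~\ref{prop:distinguished-restrict}): for $x'\in\fl$ and a nilpotent orbit meeting $(L\cdot x')+\fu_P$, the intersection of that orbit with $x'+\fu_P$ has dimension at most $\tfrac12(\dim(\text{orbit}) - \dim(L\cdot x'))$. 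Summing the dimension of $L\cdot x$ (the base) with this fibre bound gives that $((L\cdot x)+\fu_P)\cap(G\cdot y)$ has dimension at most $\dim(L\cdot x) + \tfrac12(\dim(G\cdot y) - \dim(L\cdot x)) = \tfrac12(\dim(G\cdot y)+\dim(L\cdot x))$. Hence the fibres of $Z\to G/P$ have dimension at most this quantity, so
\[
\dim Z \leq \dim(G/P) + \tfrac12\big(\dim(G\cdot y)+\dim(L\cdot x)\big).
\]

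Combining the two computations of $\dim Z$:
\[
\dim(G\cdot y) + \dim\{gP \mid g^{-1}\cdot y \in (L\cdot x)+\fu_P\} \ \leq\ \dim(G/P) + \tfrac12\big(\dim(G\cdot y)+\dim(L\cdot x)\big),
\]
so the quantity we want to bound is at most
\[
\dim(G/P) - \tfrac12\dim(G\cdot y) + \tfrac12\dim(L\cdot x).
\]
Now I would substitute $\dim(G\cdot y) = \dim G - \dim G_y$, $\dim(L\cdot x) = \dim L - \dim L_x$, and $\dim(G/P) = \dim G - \dim P = \dim G - \dim L - \dim\fu_P = \dim(G/P)$, using $\dim U_P = \tfrac12(\dim G - \dim L - \dim\fz_L\text{-correction})$ — more precisely $\dim(G/P)+\dim\fu_P = \dim G - \dim L$, i.e.\ $2\dim(G/P)=\dim G-\dim L$ when $P$ and its opposite are used, or just $\dim(G/P)=\dim\fu_P$. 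Plugging $\dim(G/P) = \tfrac12(\dim G - \dim L)$ in, the right-hand side becomes $\tfrac12(\dim G - \dim L) - \tfrac12(\dim G - \dim G_y) + \tfrac12(\dim L - \dim L_x) = \tfrac12(\dim G_y - \dim L_x)$, which is exactly the claimed bound. The main obstacle I anticipate is getting the incidence-variety bookkeeping exactly right — in particular verifying that the fibre of $Z\to G/P$ really is (a translate of) $((L\cdot x)+\fu_P)\cap(G\cdot y)$ rather than something larger, and correctly handling the two different roles of $\fu_P$ versus $\fu_{\bar P}$ so that the identity $\dim(G/P)=\dim\fu_P$ is applied consistently; once the variety $Z$ and its two projections are set up cleanly, the estimate itself follows mechanically from~\cite[Proposition~5.1.15]{letellier} and the dimension formulas for orbits.
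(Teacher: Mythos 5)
Your reduction is set up correctly and the final arithmetic checks out: with the incidence variety $Z\subset(G\cdot y)\times G/P$, the projection to $G\cdot y$ has all nonempty fibres isomorphic to the set in the lemma, the projection to $G/P$ has all nonempty fibres isomorphic to $\bigl((L\cdot x)+\fu_P\bigr)\cap(G\cdot y)$, and the identity $\dim(G/P)=\dim\fu_P=\frac{1}{2}(\dim G-\dim L)$ turns the resulting inequality into the stated bound. (Two harmless slips: the set in the lemma is a fibre of the projection to $G\cdot y$, not the image of the projection to $G/P$ --- that image is all of $G/P$ whenever it is nonempty; and there is no ``$\fz_L$-correction'' in $\dim\fu_P=\frac{1}{2}(\dim G-\dim L)$.)

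The genuine gap is the key estimate $\dim\bigl((x'+\fu_P)\cap(G\cdot y)\bigr)\le\frac{1}{2}\bigl(\dim(G\cdot y)-\dim(L\cdot x')\bigr)$, which you take from \cite[Proposition~5.1.15]{letellier}. As that result is quoted in the proof of Proposition~\ref{prop:distinguished-restrict}, it is a statement about nilpotent orbits ($x$ nilpotent in $\fl$, $\cO$ a nilpotent $G$-orbit), whereas in the present lemma $x\in\fl$ and $y\in\fg$ are arbitrary; the general case is precisely the one the lemma is needed for (in the proof of Proposition~\ref{prop:relative-normalization} it is applied to $x\in\cO+\fz_L$ and $y\in T_{(L,\cO)}$, which have nonzero semisimple parts). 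Moreover, running your own fibration bookkeeping backwards shows that the general-orbit fibrewise bound is \emph{equivalent} to the lemma (both projections of $Z$ have constant fibres and surject onto their targets when $Z\neq\emptyset$), so appealing to it in that generality does not discharge any content unless the reference proves it in that generality --- which the nilpotent-orbit statement used elsewhere in the paper does not. What is missing is the Jordan-decomposition reduction that constitutes the substance of Lusztig's proof of \cite[(1.3.1)]{lusztig}, which the paper's proof transposes to the Lie algebra: for $v$ in the intersection, the semisimple part $v_s$ maps to $x'_s$ under $\fp\to\fl$ and is $G$-conjugate to $y_s$; one passes to the Levi subgroup $G_{v_s}$, applies the nilpotent estimate there to $v_n$, and compares $\dim G_y$ with $\dim(G_{y_s})_{y_n}$ and $\dim L_{x'}$ with the centralizer of $x'_n$ in $L\cap G_{x'_s}$. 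As written, your proposal establishes the lemma only when $x$ and $y$ are nilpotent.
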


\begin{proof}
The proof is entirely analogous to that of~\cite[(1.3.1)]{lusztig}. 
\end{proof}

\begin{lem}
\label{lem:component-groups}
Let $P \subset G$ be a parabolic subgroup with Levi factor $L$. Let $x \in \cO$ and $y \in \fu_P$ be such that $x+y \in \mathrm{Ind}_L^G(\cO)$. Then:
\begin{enumerate}
\item $G_{x+y}^\circ\subset P$, and hence the natural morphism $A_P(x+y)\to A_G(x+y)$ is injective;
\item the natural morphism $A_P(x+y)\to A_L(x)$ is surjective.
\end{enumerate}
\end{lem}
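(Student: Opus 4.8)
The statement is the Lie-algebra analogue of Lusztig's \cite[Proposition 1.2]{lusztig} and of results of Bonnaf\'e; the proof should run parallel to those. Write $z = x+y \in \mathrm{Ind}_L^G(\cO)$. For part~(1), the point is that $G_z^\circ$ is connected, so it suffices to produce enough parabolic subgroups containing it. Concretely, I would use a cocharacter argument: choose a one-parameter subgroup $\gamma : \mathbb{G}_m \to L$ that is dominant for $P$, so that $\lim_{t\to 0}\gamma(t)\cdot v$ exists for all $v \in \fu_P$ and $\fl$ is the fixed space; then for $g \in G_z^\circ$ one studies the limit of $\gamma(t) g \gamma(t)^{-1}$. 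More robustly, I would invoke the dimension count: by Lemma~\ref{lem:dim-lusztig} applied with the roles of $x$ (here $x \in \fl \cap \cO$) and $y$ (here $z$), the variety $\{gP \mid g^{-1}\cdot z \in (L\cdot x)+\fu_P\}$ has dimension at most $\frac12(\dim G_z - \dim L_x)$; on the other hand, since $z \in \mathrm{Ind}_L^G(\cO)$ and $z \in (L\cdot x)+\fu_P$, the orbit $G\cdot z$ meets $\cO+\fu_P$ densely, which forces $\dim G_z - \dim L_x = \dim\fu_P - \dim(\cO+\fu_P \cap G\cdot z)$ to be computed exactly, and in fact the standard argument shows the above variety has dimension exactly $\frac12(\dim G_z - \dim L_x)$ and that $P$ (i.e. the identity coset) is an isolated point of the subvariety $\{gP \mid g^{-1}\cdot z \in \cO + \fu_P\}$ that contains it. Since $G_z$ acts on this finite set of cosets and $G_z^\circ$ is connected, $G_z^\circ$ must fix the coset $P$, i.e. $G_z^\circ \subset P$. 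The injectivity of $A_P(z) = P_z/P_z^\circ \to A_G(z) = G_z/G_z^\circ$ then follows formally: its kernel is $(P_z \cap G_z^\circ)/P_z^\circ = G_z^\circ/P_z^\circ$, which is trivial because $G_z^\circ \subset P$ gives $G_z^\circ = P_z^\circ$.

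\textbf{Part~(2).} For surjectivity of $A_P(z) \to A_L(x)$, I would use the Levi decomposition $P = L \ltimes U_P$ together with the fact, just established, that $G_z^\circ \subset P$. First, the projection $P \to L$ sends $P_z = P_{x+y}$ into $L_x$: indeed if $p = \ell u \in P_{x+y}$ with $\ell \in L$, $u \in U_P$, then $\ell u \cdot (x+y) = x+y$; projecting onto the $\fl$-component (using that $x \in \fl$, $y \in \fu_P$, and $\fu_P$ is $P$-stable) gives $\ell \cdot x = x$, so $\ell \in L_x$. To see this map $P_z \to L_x$ is surjective, take $\ell \in L_x$; then $\ell \cdot z = \ell\cdot x + \ell \cdot y = x + \ell\cdot y$ with $\ell \cdot y \in \fu_P$, so $\ell \cdot z \in \cO + \fu_P$ lies in $G\cdot z \cap (\cO+\fu_P)$, which is a single $U_P$-orbit (the dense orbit, by the isolated-point analysis in part~(1), or directly by \cite[Theorem~7.1.1]{cm} and a transitivity-of-the-$U_P$-action argument). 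Hence there is $u \in U_P$ with $u \cdot (\ell \cdot z) = z$, i.e. $u\ell \in P_z$, and $u\ell$ projects to $\ell$ in $L$. Thus $P_z \twoheadrightarrow L_x$, and since $U_P$ is connected this passes to component groups: $A_P(z) \to A_L(x)$ is surjective.

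\textbf{Main obstacle.} The delicate point is the ``isolated point'' / ``single $U_P$-orbit'' assertion underlying both parts: one needs that $G\cdot z \cap (\cO + \fu_P)$ is exactly one $U_P$-orbit, namely the dense one, rather than merely containing one. In Lusztig's group setting this is \cite[Proposition~1.2]{lusztig}; here I would deduce it from the dimension estimate Lemma~\ref{lem:dim-lusztig} combined with the defining density property of $\mathrm{Ind}_L^G(\cO)$ (see \cite[Theorem~7.1.1, Theorem~7.1.3]{cm}), checking that the bound $\frac12(\dim G_z - \dim L_x)$ is attained and is the dimension of the relevant variety of parabolics, which pins down the $U_P$-orbit. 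Everything else is a routine transfer of the arguments of \cite{lusztig} and \cite{bonnafe-cgu} from the group to the Lie algebra, using the cocharacter that contracts $\fu_P$; I expect no genuinely new difficulty there.
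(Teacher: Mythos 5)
Your part~(1) is essentially correct, and it takes a mildly different route from the paper: you get finiteness of $\{gP \mid g^{-1}\cdot(x+y)\in\cO+\fu_P\}$ from Lemma~\ref{lem:dim-lusztig} and then let the connected group $G_{x+y}^\circ$ act on this finite set, whereas the paper simply computes $\dim P_{x+y}=\dim P-\dim(\cO+\fu_P)=\dim G_{x+y}$ using the transitivity of $P$ on $\mathrm{Ind}_L^G(\cO)\cap(\cO+\fu_P)$ and $\dim\mathrm{Ind}_L^G(\cO)=\dim\cO+2\dim\fu_P$ from \cite[Theorem~7.1.1]{cm}. Note that your displayed dimension identity is garbled; the only input you actually need to make the bound of Lemma~\ref{lem:dim-lusztig} equal to zero is the equality $\dim G_{x+y}=\dim L_x$, which again comes from \cite[Theorem~7.1.1]{cm}. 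So this half is fine once cleaned up.

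Part~(2), however, has a genuine gap. The set $G\cdot(x+y)\cap(\cO+\fu_P)=\mathrm{Ind}_L^G(\cO)\cap(\cO+\fu_P)$ is a single $P$-orbit, not a single $U_P$-orbit: it is dense in $\cO+\fu_P$, hence of dimension $\dim\cO+\dim\fu_P>\dim U_P$ whenever $\dim\cO>0$. Even the fibre $\mathrm{Ind}_L^G(\cO)\cap(x+\fu_P)$ is not a single $U_P$-orbit in general, and the group-level surjectivity $P_{x+y}\twoheadrightarrow L_x$ you deduce from it is simply false. For instance, take $G=\SL(4)$ (or $\GL(4)$), $L$ of type $\GL(2)\times\GL(2)$ and $\cO$ the regular orbit of $\cN_L$, so that $z=x+y$ is regular nilpotent in $\fg$: then $G_z=P_z$ consists of the invertible elements $a+bz+cz^2+dz^3$, its image under the projection $P\to L$ is the proper subgroup $\{a+bx\}$ of $L_x$, and $(U_P)_z\supset\{1+cz^2+dz^3\}$ is $2$-dimensional, so $U_P\cdot z$ has dimension $2<\dim\fu_P$. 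What is true is only surjectivity modulo $L_x^\circ$, and proving it requires the step your proposal is missing: from $P$-transitivity on $\mathrm{Ind}_L^G(\cO)\cap(\cO+\fu_P)$ one gets that $L_xU_P$ acts transitively on $\mathrm{Ind}_L^G(\cO)\cap(x+\fu_P)$; this set is irreducible (open dense in the affine space $x+\fu_P$), so the finite-index subgroup $L_x^\circ U_P$ is already transitive on it; hence for $g\in L_x$ there are $h\in L_x^\circ$, $u\in U_P$ with $g\cdot(x+y)=hu\cdot(x+y)$, and $u^{-1}h^{-1}g\in P_{x+y}$ maps to $gL_x^\circ$ in $A_L(x)$. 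This passage via $L_x^\circ U_P$ and irreducibility cannot be replaced by a ``single dense $U_P$-orbit'' argument.
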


\begin{proof}
This proof is analogous to that of~\cite[Corollary 7.3(d)]{lusztig}. Since $P$ acts transitively on $\mathrm{Ind}_L^G (\cO) \cap (\cO + \fu_P)$ and $\dim\mathrm{Ind}_L^G (\cO)=\dim\cO+2\dim\fu_P$ (see~\cite[Theorem~7.1.1]{cm}), we have
\[
\dim P_{x+y}=\dim P - \dim(\cO + \fu_P)=\dim G-\dim\mathrm{Ind}_L^G (\cO)=\dim G_{x+y},
\]
proving (1). Now $L_x U_P$ acts transitively on $\mathrm{Ind}_L^G (\cO) \cap (x + \fu_P)$, which is irreducible (being dense in $x + \fu_P$), implying that $L_x^\circ U_P$ also acts transitively on it. Hence if $g \in L_x$, there exist $h \in L_x^\circ$ and $u \in U_P$ such that $g \cdot (x+y) = hu \cdot (x+y)$. Then $u^{-1} h^{-1} g \in P_{x+y}$, and its image in $A_L(x)$ is $gL_x^\circ$, which proves (2).
\end{proof}

The following lemma will in fact not be used until later sections, but we place it here to highlight the connection with Lemma~\ref{lem:component-groups}(1).

\begin{lem}
\label{lem:cgu}
Let $P \subset G$ be a parabolic subgroup with Levi factor $L$.  Let $x \in \cO$ and $y \in \fu_P$ be such that $x+y \in \mathrm{Ind}_L^G(\cO)$. If $\cO$ is a distinguished orbit in $\cN_L$, then $G_{x+y} \subset P$, and hence $A_P(x+y)=A_G(x+y)$.
\end{lem}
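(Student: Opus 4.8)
The plan is to deduce this from Lemma \ref{lem:component-groups}(1), which already gives $G_{x+y}^\circ \subset P$, by upgrading from the identity component to the full stabilizer using the hypothesis that $\cO$ is distinguished in $\cN_L$. Since $G_{x+y}^\circ \subset P$, we have $G_{x+y} = N_{G_{x+y}}(G_{x+y}^\circ)$ acting, and it suffices to show that every element $g \in G_{x+y}$ normalizes $P$; because $P$ is its own normalizer in $G$ (parabolic subgroups are self-normalizing), this forces $g \in P$.

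First I would recall the Bala--Carter/Jacobson--Morozov picture: choose an $\fsl_2$-triple $(x, h, f)$ in $\fl$ with $x \in \cO$, so that the associated cocharacter $\lambda : \mathbb{G}_m \to L$ (with $\mathrm{Ad}(\lambda(t))x = t^2 x$) lands in $L$, and similarly extend to an $\fsl_2$-triple $(x+y, h', f')$ in $\fg$ with cocharacter $\mu : \mathbb{G}_m \to G$. Since $\cO = \mathrm{Ind}_L^G(\cO) \cap \cN_L$ up to the relevant adjustments and $x+y \in \mathrm{Ind}_L^G(\cO)$, one knows (this is part of the induction theorem, \cite[Theorem 7.1.1]{cm} and its proof) that one may take $h' = h$, i.e. the two triples can be chosen compatibly so that $\mu$ and $\lambda$ have the same image $h$. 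The key consequence of $\cO$ being \emph{distinguished} in $\cN_L$ is that the centralizer $L_x$ contains no nontrivial torus beyond (a torus commuting with) $\fsl_2$, and more precisely the reductive part of $L_{x+y}$, call it $C$, is a subgroup of $L$ whose connected centre is contained in the centre of $L$; equivalently, $Z(C)^\circ \subset Z(L)^\circ$. I would then argue that any $g \in G_{x+y}$ normalizes the reductive part $C$ of $G_{x+y}$ (the reductive part is canonical up to conjugacy by the unipotent radical of $G_{x+y}$, and one can arrange $g$ to normalize a fixed choice after multiplying by an element of $(G_{x+y})_u \subset G_{x+y}^\circ \subset P$), hence $g$ normalizes $Z(C)^\circ$.

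Next, using distinguishedness: since $\cO$ is distinguished in $\cN_L$, we have $Z(C)^\circ \subset Z(L)^\circ$, and in fact $Z(C)^\circ = Z(L)^\circ$ is a maximal torus of $Z(G_{x+y})$ whose centralizer in $G$ is exactly $L$ — this is the statement that $L = C_G(Z(L)^\circ)$, which holds because $L$ is a Levi subgroup. Therefore $g$ normalizes $Z(L)^\circ$, hence $g$ normalizes $C_G(Z(L)^\circ) = L$, hence $g$ normalizes the parabolic $P$ as well: indeed $P$ is determined by $L$ together with the choice of which roots to include, but we need a little more care — what we actually get is $g \in N_G(L)$, and then I would invoke Lemma \ref{lem:dist-norm}, which says $N_G(L, \cO) = N_G(L)$ since $\cO$ is distinguished, to see that $g$ preserves $\cO$; combined with $g \in N_G(L)$ and $g \cdot (x+y) = x+y$ with $x \in \cO$, $y \in \fu_P$, one then checks $g$ must lie in $P$ (the decomposition $x+y$ into its $L$-part and $\fu_P$-part is, up to $P$-conjugacy, rigid once we know $g$ stabilizes $\cO$). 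Once $G_{x+y} \subset P$ is established, the equality $A_P(x+y) = A_G(x+y)$ is immediate since $G_{x+y} = P_{x+y}$ forces $G_{x+y}^\circ = P_{x+y}^\circ$ as well.

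The main obstacle I anticipate is the step controlling the \emph{reductive part} $C$ of $G_{x+y}$ and pinning down $Z(C)^\circ$ precisely as $Z(L)^\circ$ — this is where distinguishedness does its real work, and it requires knowing that a distinguished nilpotent in $\fl$ has the property that the reductive centralizer in $L$ has central torus exactly $Z(L)^\circ$ (so that passing to $\fg$, the extra directions in $\fu_P$ do not enlarge this central torus, by Lemma \ref{lem:component-groups}(1)). An alternative, possibly cleaner, route that sidesteps the $\fsl_2$-bookkeeping: take $g \in G_{x+y}$; since $G_{x+y}^\circ \subset P$ we get $g G_{x+y}^\circ g^{-1} = G_{x+y}^\circ \subset P \cap gPg^{-1}$, so $P$ and $gPg^{-1}$ are two parabolics containing the common subgroup $G_{x+y}^\circ$; now use that $G_{x+y}^\circ$ contains the image of the Jacobson--Morozov cocharacter $\mu(\mathbb{G}_m) \cdot Z(L)^\circ$ (a torus), which already has centralizer $L$ in $G$ by distinguishedness, so both $P$ and $gPg^{-1}$ have the same Levi $L = C_G(Z(L)^\circ)$; finally, two parabolics with a common Levi that contain a common Borel-adjacent structure... — here one still needs Lemma \ref{lem:dist-norm} to conclude $g \in P$, so either way that lemma is the crucial external input. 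I would write up the first route, citing \cite[Theorem 7.1.1]{cm}, Lemma \ref{lem:component-groups}(1), and Lemma \ref{lem:dist-norm}, and keeping the $\fsl_2$-triple argument brief.
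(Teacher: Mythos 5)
There is a genuine gap: the central mechanism of your argument is false. You claim that for $x+y\in\mathrm{Ind}_L^G(\cO)$ with $\cO$ distinguished in $\cN_L$, the reductive part $C$ of $G_{x+y}$ satisfies $Z(C)^\circ=Z(L)^\circ$, so that any $g\in G_{x+y}$ normalizes $Z(L)^\circ$ and hence $L=C_G(Z(L)^\circ)$. But $Z(L)^\circ$ is not even contained in $G_{x+y}$ when $P\neq G$: it acts on $\fu_P$ with nontrivial weights, so it does not fix $y$. In fact the situation is the exact opposite of what you assert: when $\cO$ is distinguished in $L$, the element $x+y$ is a \emph{distinguished} nilpotent element of $\fg$ (it is a Richardson element of a distinguished parabolic of $G$), so the reductive part of $G_{x+y}$ is finite modulo $Z(G)^\circ$ and contains no torus of the size of $Z(L)^\circ$. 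The simplest counterexample already kills several steps at once: take $G=\SL(2)$, $P=B$, $L=T$, $\cO=\{0\}$ (which is distinguished in $T$), $x=0$ and $y$ regular nilpotent. Then the reductive part of $G_{x+y}=G_y$ is the finite centre, so $Z(C)^\circ=\{1\}\neq T=Z(L)^\circ$; a nontrivial unipotent element of $G_y$ does \emph{not} normalize $L=T$ (so the intermediate conclusion ``$g\in N_G(L)$'' is simply false, even though $G_y\subset P$); and your $\mathfrak{sl}_2$-compatibility claim ``one may take $h'=h$'' fails, since $h=0$ while $h'\neq 0$. The alternative route at the end has the same defect: neither the Jacobson--Morozov cocharacter of $x+y$ nor $Z(L)^\circ$ lies in $G_{x+y}$, so $G_{x+y}^\circ$ does not contain the torus you want to use. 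Thus the passage from $G_{x+y}^\circ\subset P$ (which is correct, by Lemma~\ref{lem:component-groups}(1)) to $G_{x+y}\subset P$ is not achieved, and Lemma~\ref{lem:dist-norm} cannot repair it, because elements of $G_{x+y}$ need not normalize any Levi factor of $P$ at all.

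For comparison, the paper's proof goes through Bala--Carter theory rather than through normalizers of $L$: since $\cO$ is distinguished in $L$, it is the Richardson orbit of a distinguished parabolic $Q\subset L$ with $x\in\fu_Q$; then $Q':=QU_P$ is a distinguished parabolic subgroup of $G$, transitivity of induction shows $x+y\in\fu_{Q'}$ lies in the Richardson orbit of $Q'$, and Bonnaf\'e's theorem on distinguished parabolics gives $G_{x+y}\subset Q'\subset P$ directly, with no need to control the component group separately. If you want to keep an $\mathfrak{sl}_2$-flavoured argument, the correct statement to aim for is that the Jacobson--Morozov (canonical) parabolic of $x+y$ is contained in $P$ and contains $G_{x+y}$ --- which is essentially what the citation to Bonnaf\'e encapsulates --- not that $G_{x+y}$ normalizes $L$.
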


\begin{proof}
The proof is analogous to that of~\cite[Theorem (3)]{bonnafe-cgu}.
By~\cite[Theorem~8.2.6 and Corollary~7.1.7]{cm} there exists a distinguished parabolic subgroup $Q \subset L$, with Levi factor $M$, such that $x\in\fu_Q$ and $\cO$ is the Richardson orbit of $Q$, i.e.~$\cO=\mathrm{Ind}_{M}^L(\{0\})$. Then $Q':=QU_P$ is a parabolic subgroup of $G$, which is distinguished by~\cite[Proposition~2.3]{bonnafe-cgu}, and $x+y\in\fu_{Q'}$ belongs to the Richardson orbit of $Q'$ since $\mathrm{Ind}_L^G(\mathrm{Ind}_{M}^L(\{0\}))=\mathrm{Ind}_{M}^G(\{0\})$ (see~\cite[Proposition~7.1.4(ii)]{cm}). By~\cite[Theorem 2.2(e)]{bonnafe-cgu} we deduce that $G_{x+y} \subset Q' \subset P$.
\end{proof}

\subsection{Geometry}
\label{ss:geometry}

Following~\cite[6.11]{lusztig-cusp2}, we define
\[
T_{(L,\cO)}:=\bigcup_{L'\supset L} Y_{(L',\mathrm{Ind}_{L}^{L'}(\cO))},
\]
where the union is over Levi subgroups $L'$ of $G$ containing $L$, and $\mathrm{Ind}_{L}^{L'}(\cO)$ denotes the nilpotent orbit in $\cN_{L'}$ induced by $\cO$.

\begin{lem}
$T_{(L,\cO)}$ is an open subvariety of $X_{(L,\cO)}$.
\end{lem}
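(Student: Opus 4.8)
The plan is to exhibit $T_{(L,\cO)}$ as the preimage, under an appropriate ``generic position'' stratification map, of an open set, using the known structure of the Lusztig stratification. First I would recall that by definition $X_{(L,\cO)} = \overline{Y_{(L,\cO)}}$ is the closure of a single piece of the Lusztig stratification of $\fg$, and that by the general theory of that stratification (\cite[\S 6]{lusztig-cusp2}), the pieces $Y_{(M,\cO_M)}$ contained in $X_{(L,\cO)}$ are exactly those for which $(M,\cO_M)$ satisfies $M \supset L$ (up to $G$-conjugacy) and $\cO_M \supset \cO$ in the sense that $\cO_M$ lies in the closure of the orbit induced upwards, i.e.\ precisely those with $\cO_M$ an orbit whose closure contains $\mathrm{Ind}_L^M(\cO)$ or is dominated by it. Thus to show $T_{(L,\cO)}$ is open in $X_{(L,\cO)}$ it suffices to show that its complement in $X_{(L,\cO)}$ is a union of Lusztig pieces $Y_{(M,\cO_M)}$ each of which is \emph{not} of the form $Y_{(L',\mathrm{Ind}_L^{L'}(\cO))}$, and that this complement is closed.

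Concretely, I would argue as follows. Each point of $X_{(L,\cO)}$ lies in a unique Lusztig piece $Y_{(M,\cO_M)}$ with $M \supseteq L$ up to conjugacy; the dimension formula $\dim Y_{(M,\cO_M)} = \dim G - \dim M + \dim \cO_M + \dim \fz_M^\circ$ (equivalently, using $\dim\cN_M$ and the codimension of $\cO_M$) shows that among all pieces with a given $M$, the piece $Y_{(M,\mathrm{Ind}_L^M(\cO))}$ is the one of largest dimension — because $\mathrm{Ind}_L^M(\cO)$ is the largest orbit in $\cN_M$ whose closure meets $X_{(L,\cO)} \cap \cN_M$, by the characterization of induced orbits recalled at the start of Section~\ref{sec:study} (\cite[Theorem 7.1.3]{cm}). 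Hence $T_{(L,\cO)}$ is obtained from $X_{(L,\cO)}$ by deleting, within each $\cN_M$-slice, all the orbits strictly smaller than $\mathrm{Ind}_L^M(\cO)$; since orbit closures are closed and there are finitely many pieces, the deleted set $X_{(L,\cO)} \setminus T_{(L,\cO)}$ is a finite union of closed subsets, hence closed. Therefore $T_{(L,\cO)}$ is open.

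An alternative, perhaps cleaner, route — and the one I would actually try to make rigorous — is to reduce to the ``transverse slice'' picture: pick $z \in \fz_L^\circ$ and a point $x + z \in \cO + \fz_L^\circ$, and use that near such a point $X_{(L,\cO)}$ looks (étale-locally, or up to the $G$-action) like $\fz_L \times \overline{\cN_L \cap X_{(L,\cO)}}$ — or rather, use the local structure of the Lusztig stratification along $Y_{(L,\cO)}$ to transport the question to the analogous statement for $\cN_G$ itself in place of $\fg$, where $T_{(L,\{0\})}$-type statements are standard. The point is that $T_{(L,\cO)}$ is $G$-stable and its intersection with a slice transverse to $Y_{(L,\cO)}$ is the union of the open dense orbits in the various $\cN_M$'s, which is manifestly open in $\overline{\cN_G}$-type cones. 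One then invokes that openness is local and $G$-stable.

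The main obstacle I anticipate is making precise the claim ``$Y_{(L',\mathrm{Ind}_L^{L'}(\cO))}$ is the unique open piece of $X_{(L,\cO)}$ among those with Levi conjugate to $L'$'' — i.e.\ correctly bookkeeping which pairs $(M,\cO_M)$ index Lusztig pieces inside the closure $X_{(L,\cO)}$ and verifying the dimension comparison that singles out the induced orbit. This is essentially a careful invocation of the combinatorics of the Lusztig stratification from \cite{lusztig-cusp2} together with the induced-orbit dimension formula $\dim \mathrm{Ind}_L^{L'}(\cO) = \dim\cO + \dim\fu_{P'}$ for a parabolic $P' \subset L'$ with Levi $L$; once that is in hand, the topology is immediate. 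I would expect the write-up to cite \cite[6.11]{lusztig-cusp2} directly, where Lusztig proves exactly this openness in the group setting, and to note that the Lie algebra argument is word-for-word the same.
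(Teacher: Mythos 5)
Your main line of attack has a real gap at exactly the point where the content of the lemma lies. The complement $X_{(L,\cO)}\setminus T_{(L,\cO)}$ is a union of Lusztig pieces $Y_{(M,\cO_M)}$, and these are only \emph{locally closed}; your assertion that the deleted set is ``a finite union of closed subsets, hence closed'' conflates the strata with their closures. What actually has to be proved is that no induced piece $Y_{(L',\mathrm{Ind}_L^{L'}(\cO))}$ meets the closure of a deleted piece $Y_{(M,\cO_M)}$ with $\cO_M\subsetneq\overline{\mathrm{Ind}_L^{M}(\cO)}$, and your dimension comparison only compares pieces attached to the \emph{same} Levi $M$, whereas closure relations in the Lusztig stratification mix different Levis (a priori an induced piece for a larger Levi could lie in the closure of a non-induced piece for a smaller one). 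Ruling this out needs a genuine codimension or semicontinuity argument (for instance, upper semicontinuity of $y\mapsto\dim G_y$ on $X_{(L,\cO)}$ together with the fact that $\dim G_y=\dim L-\dim\cO$ exactly on $T_{(L,\cO)}$, which in turn uses the identification of the strata occurring in $X_{(L,\cO)}$) --- none of which is supplied. Relatedly, your description of which pairs $(M,\cO_M)$ index strata of $X_{(L,\cO)}$ (``closure contains $\mathrm{Ind}_L^M(\cO)$ or is dominated by it'') is vague and partly in the wrong direction: the relevant condition is $\cO_M\subseteq\overline{\mathrm{Ind}_L^M(\cO)}$. The ``transverse slice'' alternative has the same issue: the openness of the union of the dense orbits in the various $\cN_M$-slices is precisely the statement to be proved, merely transported to the slice.

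Your fallback --- cite Lusztig --- is close to what the paper does, but misses the one point that makes a proof necessary here. First, \cite[\S 6]{lusztig-cusp2} is already in the Lie algebra setting ($[6.11]$ is only the \emph{definition} of the set; the openness is Proposition~6.12), so there is no group-to-Lie-algebra translation to do. More importantly, Lusztig proves Proposition~6.12 under the standing hypothesis that $\cO$ supports a cuspidal pair in characteristic $0$ (used via his Lemma~6.10), whereas the present lemma is needed for arbitrary orbits $\cO$ --- in particular for modular cuspidal data, which need not come from characteristic-$0$ cuspidal pairs. The paper's proof consists exactly of removing that hypothesis: the only step of Lusztig's argument that uses cuspidality is the equality labelled (c), and it holds in general because both sides are equal to $\mathrm{codim}_{\mathrm{Lie}(L')}\,C'$. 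As written, your proposal neither closes the topological gap in the direct argument nor notices the hypothesis in the reference, so it does not establish the lemma in the generality required.
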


\begin{proof}
This is proved in~\cite[Proposition~6.12]{lusztig-cusp2} under the assumption that $\cO$ supports a cuspidal pair in characteristic $0$, but in fact the only step of the proof that uses that assumption can be easily seen to hold in general. (Namely, to show the equality labelled (c), instead of invoking~\cite[Lemma 6.10]{lusztig-cusp2}, simply observe that both sides equal $\mathrm{codim}_{\mathrm{Lie}(L')}\, C'$.)
\end{proof}

Clearly $T_{(L,\cO)}$ is $G$-stable. It contains $Y_{(L,\cO)}$ as an open subset and contains $Y_{(G,\mathrm{Ind}_L^G(\cO))}=\fz_G+\mathrm{Ind}_L^G(\cO)$ as a closed subset (closed by~\cite[Proposition 6.5]{lusztig-cusp2}). Note that for every element $y$ of $T_{(L,\cO)}$ we have
\begin{equation} \label{eqn:stabilizer-dim}
\dim G_y=\dim L-\dim \cO,
\end{equation}
since if $y\in x+\fz_{L'}^\circ$ for $x\in\mathrm{Ind}_L^{L'}(\cO)$, then $G_y^\circ=(L'_x)^\circ$.

\begin{rmk}
In the setting of the group $G$ rather than the Lie algebra $\fg$, the variety analogous to $T_{(L,\cO)}$ is the one denoted $X_{\mathrm{min}}$ by Bonnaf\'e (see~\cite[Remark 2.4]{bonnafe1} for the description of $X_{\mathrm{min}}$ analogous to the above definition of $T_{(L,\cO)}$). 
\end{rmk}

Now we fix a parabolic subgroup $P \subset G$ having $L$ as a Levi factor.  Recall that
\[
X_{(L,\cO)} = G  \cdot (\overline{\cO} + \fz_L + \fu_P).
\]
As in~\cite[\S2.6]{genspring1}, we also consider the variety
\[
\widetilde{X}_{(L,\cO)} := G \times^P (\overline{\cO} + \fz_L + \fu_P).
\]
Recall from~\cite[(2.12)]{genspring1} that we have a cartesian square
\begin{equation} \label{eqn:basic-square}
\vcenter{\xymatrix{
\widetilde{Y}_{(L,\cO)} \ar@{^{(}->}[r] \ar[d]_-{\varpi_{(L,\cO)}} & \widetilde{X}_{(L,\cO)} \ar[d]^-{\pi_{(L,\cO)}} \\
Y_{(L,\cO)} \ar@{^{(}->}[r] & X_{(L,\cO)}
}}
\end{equation}
where the horizontal morphisms are open embeddings.  The image of the top embedding is $G\times^P (\cO + \fz_L^\circ + \fu_P)$; see~\cite[Lemma 5.1.27]{letellier}. We set
\[
\widetilde{T}_{(L,\cO)} := \pi_{(L,\cO)}^{-1}(T_{(L,\cO)}),
\]
and denote by $\tau_{(L,\cO)} : \widetilde{T}_{(L,\cO)} \to T_{(L,\cO)}$ the restriction of $\pi_{(L,\cO)}$.
The following result is adapted from~\cite[Theorem~2.3(a)]{bonnafe1}.

\begin{prop}
\label{prop:relative-normalization}
The variety $\widetilde{T}_{(L,\cO)}$ is the normalization of $T_{(L,\cO)}$ relative to the Galois covering $\varpi_{(L,\cO)}$. In particular, this variety is independent of the choice of $P$ up to canonical isomorphism, and it is endowed with a natural action of $N_G(L,\cO)/L$ commuting with the $G$-action, such that $\tau_{(L,\cO)}$ is $G \times N_G(L,\cO)/L$-equivariant \textup{(}where $N_G(L,\cO)/L$ acts trivially on $T_{(L,\cO)}$\textup{)}.
\end{prop}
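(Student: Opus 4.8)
The plan is to establish Proposition~\ref{prop:relative-normalization} by exhibiting $\widetilde{T}_{(L,\cO)}$ concretely as a relative normalization and then transporting the functorialities and symmetries of that universal construction. First I would recall the precise meaning of the \emph{normalization of $T_{(L,\cO)}$ relative to $\varpi_{(L,\cO)}$}: since $Y_{(L,\cO)}$ is open and dense in $T_{(L,\cO)}$ (it is open in $X_{(L,\cO)}$, and $T_{(L,\cO)}$ is open in $X_{(L,\cO)}$ too), the étale $Y_{(L,\cO)}$-scheme $\widetilde{Y}_{(L,\cO)}$ pulls back along $Y_{(L,\cO)} \hookrightarrow T_{(L,\cO)}$, and one takes the normalization of $T_{(L,\cO)}$ in the function field (or rather étale algebra) of $\widetilde{Y}_{(L,\cO)}$. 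Concretely this is $\mathrm{Spec}$ of the integral closure of $\cO_{T_{(L,\cO)}}$ inside $(\varpi_{(L,\cO)})_* \cO_{\widetilde{Y}_{(L,\cO)}}$. The goal is to show $\widetilde{T}_{(L,\cO)} = \pi_{(L,\cO)}^{-1}(T_{(L,\cO)})$ agrees with this.

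The key steps, in order, would be: (1) Show $\widetilde{T}_{(L,\cO)}$ is normal. For this I would use that $\widetilde{X}_{(L,\cO)} = G \times^P (\overline{\cO} + \fz_L + \fu_P)$ is a fiber bundle over $G/P$ with fiber $\overline{\cO} + \fz_L + \fu_P$, so its normality reduces to normality of $\overline{\cO} + \fz_L + \fu_P$, i.e.\ essentially of $\overline{\cO}$ times an affine space; and $\widetilde{T}_{(L,\cO)}$, being an open subvariety, inherits normality. Here one must be slightly careful, as $\overline{\cO}$ need not be normal in general, but $\widetilde{T}_{(L,\cO)}$ only involves the locus lying over $T_{(L,\cO)}$, and over that locus the relevant fibers of $\pi_{(L,\cO)}$ are smooth (or one reduces to the known smoothness facts in the Lusztig stratification); alternatively one follows Bonnaf\'e's argument in~\cite[Theorem~2.3(a)]{bonnafe1} almost verbatim. (2) Show $\tau_{(L,\cO)} : \widetilde{T}_{(L,\cO)} \to T_{(L,\cO)}$ is finite. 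Finiteness over $T_{(L,\cO)}$ follows because $\pi_{(L,\cO)}$ restricted over $T_{(L,\cO)}$ has finite fibers (by the dimension count~\eqref{eqn:stabilizer-dim}, each fiber of $\pi_{(L,\cO)}$ over a point of $T_{(L,\cO)}$ has the same dimension as the generic fiber of $\varpi_{(L,\cO)}$, which is $0$) and is proper (being a base change of the proper map $G \times^P (\cdots) \to X_{(L,\cO)}$). A proper quasi-finite morphism is finite. (3) Observe that $\tau_{(L,\cO)}$ restricts to $\varpi_{(L,\cO)}$ over the open dense $Y_{(L,\cO)}$, by the cartesian square~\eqref{eqn:basic-square}. (4) Conclude by the universal property of relative normalization: a finite morphism from a normal variety which agrees with $\varpi_{(L,\cO)}$ over a dense open set, with source having no embedded/extra components over that open set (guaranteed by irreducibility of $\widetilde{T}_{(L,\cO)}$, or of its components mapping onto $T_{(L,\cO)}$), is canonically the relative normalization.

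Once this identification is in place, the remaining assertions are formal. Independence of the choice of $P$ up to canonical isomorphism is immediate since the relative normalization depends only on $T_{(L,\cO)}$ and $\varpi_{(L,\cO)}$, neither of which references $P$. The action of $N_G(L,\cO)/L$: this group already acts on $\widetilde{Y}_{(L,\cO)}$ (as the Galois group of $\varpi_{(L,\cO)}$) commuting with $G$ and covering the trivial action on $Y_{(L,\cO)}$; by functoriality of relative normalization this action extends uniquely to $\widetilde{T}_{(L,\cO)}$, commuting with $G$ (which also acts functorially) and covering the trivial action on $T_{(L,\cO)}$, so $\tau_{(L,\cO)}$ is $G \times N_G(L,\cO)/L$-equivariant. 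I expect the main obstacle to be Step~(1), verifying normality of $\widetilde{T}_{(L,\cO)}$: one must pin down exactly which part of $\widetilde{X}_{(L,\cO)}$ lies over $T_{(L,\cO)}$ and check normality there without assuming normality of $\overline{\cO}$ — this is where Bonnaf\'e's argument and the identification of $T_{(L,\cO)}$ as a union of strata $Y_{(L',\mathrm{Ind}_L^{L'}(\cO))}$ must be used carefully; everything else is a routine application of the universal property and functoriality of relative normalization.
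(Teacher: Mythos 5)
Your outline coincides with the paper's proof (which is itself adapted from~\cite{bonnafe1}): establish smoothness/normality of the source, finiteness of $\tau_{(L,\cO)}$, and then conclude by the universal property and functoriality of relative normalization; your steps (3)--(4) and the deduction of independence of $P$ and of the $N_G(L,\cO)/L$-action are fine. The problem is that the two geometric inputs carrying the real content are not supplied. For your step (1), you correctly flag the issue ($\overline{\cO}$ need not be normal) but then defer to unspecified ``smoothness facts'' or to Bonnaf\'e almost verbatim; the statement actually needed is the concrete containment $\widetilde{T}_{(L,\cO)}\subset\widetilde{X}^\circ_{(L,\cO)}=G\times^P(\cO+\fz_L+\fu_P)$, which the paper proves by a stabilizer-dimension comparison: if $x\in\overline{\cO}$, $z\in\fz_L$, $y\in\fu_P$ and $x+z+y\in T_{(L,\cO)}$, then \eqref{eqn:stabilizer-dim} gives $\dim G_{x+z+y}=\dim L-\dim\cO$, while $G_{x+z+y}\supset P_{x+z+y}$ and $P\cdot(x+z+y)\subset(L\cdot x)+z+\fu_P$ force $\dim G_{x+z+y}\ge\dim L-\dim(L\cdot x)$, whence $\dim(L\cdot x)\ge\dim\cO$ and $x\in\cO$. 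This containment is what gives smoothness, and also irreducibility of $\widetilde{T}_{(L,\cO)}$ (open in the irreducible $\widetilde{X}^\circ_{(L,\cO)}$), which you need for the universal property; without it your step (1) is a restatement of the problem rather than a proof.

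The second gap is in your step (2): the claim that ``each fiber of $\pi_{(L,\cO)}$ over a point of $T_{(L,\cO)}$ has the same dimension as the generic fiber of $\varpi_{(L,\cO)}$, which is $0$'' does not follow from \eqref{eqn:stabilizer-dim}. Fiber dimension is only upper semicontinuous, so special fibers could a priori jump up in dimension, and constancy of $\dim G_y$ on $T_{(L,\cO)}$ by itself does not bound $\dim\{gP\mid g^{-1}\cdot y\in\cO+\fz_L+\fu_P\}$. The paper's argument first uses step (1) to identify the fiber with this set, then reduces the $\fz_L$-component to finitely many possibilities (it is conjugate to the semisimple part $y_s$ of $y$, and $(G\cdot y_s)\cap\fz_L$ is finite), and finally bounds $\dim\{gP\mid g^{-1}\cdot y\in(L\cdot x)+\fu_P\}$ by Lemma~\ref{lem:dim-lusztig}, which together with \eqref{eqn:stabilizer-dim} gives dimension at most $\frac{1}{2}(\dim G_y-\dim L_x)=0$. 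You never invoke Lemma~\ref{lem:dim-lusztig} or any substitute estimate, so quasi-finiteness --- and hence finiteness of the projective morphism $\tau_{(L,\cO)}$ --- is not established. With these two points repaired, the remainder of your argument (universal property, independence of $P$, and the induced $G\times N_G(L,\cO)/L$-equivariance) goes through exactly as in the paper.
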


\begin{proof}
Consider the smooth open subvariety
\[
\widetilde{X}^\circ_{(L,\cO)} := G \times^P (\cO + \fz_L + \fu_P) \quad \subset \widetilde{X}_{(L,\cO)}.
\]
First, we claim that $\widetilde{T}_{(L,\cO)}$ is included in $\widetilde{X}^\circ_{(L,\cO)}$ (and hence is smooth). Indeed, let $x \in \overline{\cO}$, $z \in \fz_L$, and $y \in \fu_P$, and assume that $x+z+y \in Y_{(L',\mathrm{Ind}_{L}^{L'}(\cO))}$ for some $L' \supset L$. Then $\dim G_{x+z+y}= \dim L-\dim \cO$ by~\eqref{eqn:stabilizer-dim}. But we have $G_{x+z+y} \supset P_{x+z+y}$, and $P \cdot (x+z+y) \subset (L \cdot x)+z + \fu_P$, which implies that
\[
\dim G_{x+z+y} \geq \dim P - \dim\bigl( P \cdot (x+z+y) \bigr) \geq \dim L -\dim(L \cdot x).
\]
Hence we obtain finally that $\dim(L \cdot x) \geq \dim \cO$, which implies that $x \in \cO$ and finishes the proof of the claim.

Now we claim that $\tau_{(L,\cO)}$ is finite. Since this morphism is projective, it suffices to prove that it is quasi-finite. For $y\in T_{(L,\cO)}$, we have
\[
\tau_{(L,\cO)}^{-1}(y) \cong \{gP \in G/P \mid g^{-1}\cdot y \in \cO + \fz_L + \fu_P \}
\]
by the first claim, and it suffices to prove that the set on the right-hand side is finite. But 
the $\fz_L$ component of any element of $(G\cdot y)\cap(\cO+\fz_L+\fu_P)$ is the semisimple part of its projection to $\fl \cong \fp/\fu_P$, and hence is conjugate to the semisimple part $y_s$ of $y$ (by the Lie algebra analogue of~\cite[Lemma~1.6]{bonnafe1}). Since $(G \cdot y_s) \cap \fz_L$ is finite,
it suffices to show that
\[
\{gP \in G/P \mid g^{-1}\cdot y \in (L\cdot x) + \fu_P \}
\]
is finite for any $x\in\cO+\fz_L$, and this follows immediately from Lemma~\ref{lem:dim-lusztig} and~\eqref{eqn:stabilizer-dim}.

Finally, since we have a diagram
\begin{equation}
\label{eqn:zyt-diagram}
\vcenter{\xymatrix{
\widetilde{Y}_{(L,\cO)} \ar@{^{(}->}[r] \ar[d]_-{\varpi_{(L,\cO)}} & \widetilde{T}_{(L,\cO)} \ar[d]^-{\tau_{(L,\cO)}}  \\
Y_{(L,\cO)} \ar@{^{(}->}[r] & T_{(L,\cO)}
}}
\end{equation}
where the horizontal arrows are open embeddings, $\widetilde{T}_{(L,\cO)}$ is smooth, and $\tau_{(L,\cO)}$ is finite, we obtain that $\widetilde{T}_{(L,\cO)}$ is the normalization of $T_{(L,\cO)}$ relative to $\varpi_{(L,\cO)}$. The rest of the statement follows from the functoriality of relative normalization.
\end{proof}

The next statement is the analogue of~\cite[7.5(f)]{lusztig-cusp2}. The proof follows that of~\cite[Lemma~2.5 and Corollary~2.6]{bonnafe1}.

\begin{lem} \label{lem:pinch}
Assume that~\eqref{eqn:distinguished-substitute} holds.
Then $\tau_{(L,\cO)}$ restricts to an isomorphism $\tau_{(L,\cO)}^{-1}(\mathrm{Ind}_L^G(\cO))\simto\mathrm{Ind}_L^G(\cO)$. In particular, the action of $N_G(L,\cO)/L$ on $\tau_{(L,\cO)}^{-1}(\mathrm{Ind}_L^G(\cO))$ is trivial.
\end{lem}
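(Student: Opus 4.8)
The plan is to show that $\tau_{(L,\cO)}$ is bijective over $\mathrm{Ind}_L^G(\cO)$, and then invoke Zariski's main theorem using that $\widetilde{T}_{(L,\cO)}$ is smooth (hence normal) and $\tau_{(L,\cO)}$ is finite, as established in Proposition~\ref{prop:relative-normalization}. Once the isomorphism is proved, the last sentence is automatic: since $N_G(L,\cO)/L$ acts on $\widetilde T_{(L,\cO)}$ commuting with the $G$-action and lying over the trivial action on $T_{(L,\cO)}$, and since an isomorphism over $\mathrm{Ind}_L^G(\cO)$ is in particular injective on the fibre, each element of $N_G(L,\cO)/L$ must fix every point of $\tau_{(L,\cO)}^{-1}(\mathrm{Ind}_L^G(\cO))$.

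So fix $y \in \mathrm{Ind}_L^G(\cO)$. Using the description from the proof of Proposition~\ref{prop:relative-normalization}, namely $\tau_{(L,\cO)}^{-1}(y) \cong \{ gP \in G/P \mid g^{-1}\cdot y \in \cO + \fz_L + \fu_P\}$, and the fact that $y$ is nilpotent (so that the $\fz_L$-component of any element of $(G\cdot y)\cap(\cO+\fz_L+\fu_P)$ is the semisimple part of the projection to $\fl$, which must vanish), this fibre is identified with $\{gP \in G/P \mid g^{-1}\cdot y \in \cO + \fu_P\}$. We may assume $P$ is the parabolic given by hypothesis~\eqref{eqn:distinguished-substitute}, and we may replace $y$ by a $G$-conjugate so that $y \in \mathrm{Ind}_L^G(\cO)\cap(\cO+\fu_P)$; then $eP$ lies in the fibre, so it suffices to show that if $g^{-1}\cdot y \in \cO+\fu_P$ then $g \in P$. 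But $P$ acts transitively on $\mathrm{Ind}_L^G(\cO)\cap(\cO+\fu_P)$ (see~\cite[Theorem~7.1.1]{cm} and Lemma~\ref{lem:component-groups}), so there is $p \in P$ with $p\cdot(g^{-1}\cdot y) = y$, i.e.\ $pg^{-1} \in G_y$. By~\eqref{eqn:distinguished-substitute}, $G_y \subset P$, hence $g \in P$. This shows $\tau_{(L,\cO)}^{-1}(y)$ is a single reduced point.

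Knowing that $\tau_{(L,\cO)}$ restricted to $\tau_{(L,\cO)}^{-1}(\mathrm{Ind}_L^G(\cO)) \to \mathrm{Ind}_L^G(\cO)$ is a finite morphism that is bijective onto a smooth (in particular normal) variety, with smooth source, it is an isomorphism: a finite birational morphism onto a normal variety is an isomorphism, and here bijectivity together with the fact that both varieties are smooth of the same dimension forces the morphism to be birational (indeed an isomorphism on an open dense set, hence everywhere by normality). I expect the main obstacle to be the bookkeeping in identifying the fibre $\tau_{(L,\cO)}^{-1}(y)$ correctly — in particular verifying that the $\fz_L$-component genuinely vanishes when $y$ is nilpotent, which relies on the Lie algebra analogue of~\cite[Lemma~1.6]{bonnafe1} already used in Proposition~\ref{prop:relative-normalization} — and in checking reducedness of the fibre rather than merely its set-theoretic triviality; but the scheme-theoretic statement follows from smoothness of $\widetilde T_{(L,\cO)}$ and of $\mathrm{Ind}_L^G(\cO)$ together with the dimension count.
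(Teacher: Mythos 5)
Your proposal is correct and follows essentially the same route as the paper: after reducing (by $G$-equivariance and independence of $P$) to a point $y\in\mathrm{Ind}_L^G(\cO)\cap(\cO+\fu_P)$, you identify the fibre with $\{gP \mid g^{-1}\cdot y\in\cO+\fu_P\}$ and show it is the single point $G_yP/P$ using transitivity of $P$ on $\mathrm{Ind}_L^G(\cO)\cap(\cO+\fu_P)$ together with the hypothesis $G_y\subset P$, exactly as in the paper's proof. The only difference is cosmetic: you spell out the passage from single-point fibres to an isomorphism (finiteness plus normality/Zariski's main theorem), which the paper leaves implicit by following Bonnaf\'e; your unneeded assertion that $\tau_{(L,\cO)}^{-1}(\mathrm{Ind}_L^G(\cO))$ is smooth is harmless, since in characteristic $0$ bijectivity of a finite morphism onto the smooth (hence normal) orbit already yields birationality and thus the isomorphism.
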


\begin{proof}
Since the statement is independent of $P$, we can assume that $P$ satisfies the condition in~\eqref{eqn:distinguished-substitute}.
It suffices to prove that for any $y \in \mathrm{Ind}_L^G(\cO) \cap (\cO + \fu_P)$, the fibre $\tau_{(L,\cO)}^{-1}(y)$ is a single point. As in the proof of Proposition~\ref{prop:relative-normalization}, we have
\[
\tau_{(L,\cO)}^{-1}(y) \cong \{gP \in G/P \mid g^{-1}\cdot y \in \cO + \fu_P \}.
\]
Since $P$ acts transitively on $\mathrm{Ind}_L^G(\cO) \cap (\cO + \fu_P)$ (see~\cite[Theorem~7.1.1]{cm}), the right-hand side is $G_y P/P$, which is a single point by our assumption.
\end{proof}

Now choose $x\in\cO$ and define $\widetilde{Y}_{(L,\cO)}'=G\times^L(L/L_x^\circ\times\fz_L^\circ)$. Let $\nu_{(L,\cO)}:\widetilde{Y}_{(L,\cO)}'\to \widetilde{Y}_{(L,\cO)}$ be the map induced by the covering $L/L_x^\circ\to L/L_x\cong\cO$. Then $\nu_{(L,\cO)}$ is a Galois covering with group $A_L(x)$, and $\varpi_{(L,\cO)}\circ\nu_{(L,\cO)}$ is a Galois covering with group $A_{N_G(L)}(x)=N_G(L)_x/L_x^\circ$. Here $n\in N_G(L)_x$ acts on $\widetilde{Y}_{(L,\cO)}'$ by
\[
n \cdot (g * (mL_x^\circ,z))=gn^{-1}*(nmn^{-1}L_x^\circ,n\cdot z),\text{ for }g\in G,\, m\in L,\,z\in\fz_L^\circ,
\] 
and $\nu_{(L,\cO)}$ is the quotient map for the action of the normal subgroup $L_x$ (with the smaller normal subgroup $L_x^\circ$ acting trivially).

Recall the parabolic subgroup $P$ with Levi factor $L$. Following~\cite[Remark~3.3]{bonnafe1} we consider the $P$-action on $L/L_x^\circ\times \fz_L \times \fu_P$ in which $lu\in LU_P=P$ acts by
\[
(lu) \cdot (mL_x^\circ, z, v) := \Bigl( lmL_x^\circ, z, l \cdot \bigl( (um \cdot x - m \cdot x ) + (u \cdot z - z) + u \cdot v \bigr) \Bigr),
\]
and the smooth variety
\[
\widetilde{X}^{\circ \prime}_{(L,\cO)}:= G \times^P (L/L_x^\circ\times \fz_L \times \fu_P).
\]
Then we have a Galois covering $\mu_{(L,\cO)}$ with Galois group $A_L(x)$ and a diagram
\[
\xymatrix{
\widetilde{Y}_{(L,\cO)}' \ar[d]_-{\nu_{(L,\cO)}} \ar@{^{(}->}[rr] & & \widetilde{X}^{\circ \prime}_{(L,\cO)} \ar[d]^-{\mu_{(L,\cO)}} \\
\widetilde{Y}_{(L,\cO)} \ar@{^{(}->}[r] & \widetilde{T}_{(L,\cO)} \ar@{^{(}->}[r] & \widetilde{X}^\circ_{(L,\cO)}.
}
\]
 
We denote by $\widetilde{T}_{(L,\cO)}'$ the inverse image of $T_{(L,\cO)}$ in $\widetilde{X}^{\circ \prime}_{(L,\cO)}$, and by~$\sigma_{(L,\cO)}:\widetilde{T}_{(L,\cO)}'\to\widetilde{T}_{(L,\cO)}$ the restriction of $\mu_{(L,\cO)}$. By construction, this morphism is a Galois covering with Galois group $A_L(x)$.

The following result follows from the same arguments as for Proposition~\ref{prop:relative-normalization}.

\begin{lem} \label{lem:normalization2}
The variety $\widetilde{T}_{(L,\cO)}'$ is the normalization of $T_{(L,\cO)}$ relative to the Galois covering $\varpi_{(L,\cO)}\circ\nu_{(L,\cO)}$. In particular, this variety is independent of the choice of $P$ up to canonical isomorphism, and is endowed with a natural action of $A_{N_G(L)}(x)$ extending the action of $A_L(x)$ and commuting with the action of $G$, and such that $\sigma_{(L,\cO)}$ is $G \times A_{N_G(L)}(x)$-equivariant. \textup{(}Here $A_{N_G(L)}(x)$ acts on $\widetilde{T}_{(L,\cO)}$ via its quotient $A_{N_G(L)}(x)/A_L(x)\cong N_G(L,\cO)/L$.\textup{)}
\end{lem}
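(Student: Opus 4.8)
The plan is to reproduce, verbatim, the structure of the proof of Proposition~\ref{prop:relative-normalization}, but with $\varpi_{(L,\cO)}$ replaced by the composite Galois covering $\varpi_{(L,\cO)}\circ\nu_{(L,\cO)}$, which has Galois group $A_{N_G(L)}(x)$, and with $\widetilde{X}^\circ_{(L,\cO)}$ replaced by $\widetilde{X}^{\circ\prime}_{(L,\cO)}$. Recall (from the definition of relative normalization and its functoriality) that to prove $\widetilde{T}_{(L,\cO)}'$ is the normalization of $T_{(L,\cO)}$ relative to $\varpi_{(L,\cO)}\circ\nu_{(L,\cO)}$, it suffices to exhibit a commutative square
\[
\xymatrix{
\widetilde{Y}_{(L,\cO)}' \ar@{^{(}->}[r] \ar[d]_-{\varpi_{(L,\cO)}\circ\nu_{(L,\cO)}} & \widetilde{T}_{(L,\cO)}' \ar[d]^-{\sigma_{(L,\cO)}'}  \\
Y_{(L,\cO)} \ar@{^{(}->}[r] & T_{(L,\cO)}
}
\]
in which the horizontal arrows are open embeddings, $\widetilde{T}_{(L,\cO)}'$ is smooth, and the right-hand vertical map $\sigma_{(L,\cO)}' := \tau_{(L,\cO)} \circ \sigma_{(L,\cO)}$ is finite.

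First I would check smoothness: $\widetilde{T}_{(L,\cO)}'$ is by definition an open subvariety of $\widetilde{X}^{\circ\prime}_{(L,\cO)} = G \times^P (L/L_x^\circ \times \fz_L \times \fu_P)$, which is a fibre bundle over $G/P$ with smooth fibre, hence smooth; so this step is immediate and requires no analogue of the dimension estimate used in Proposition~\ref{prop:relative-normalization}. Next, that the top horizontal arrow is an open embedding follows from the analogous fact for $\widetilde{Y}_{(L,\cO)}\hookrightarrow\widetilde{T}_{(L,\cO)}$ established in~\eqref{eqn:zyt-diagram}, pulled back along the Galois covering $\sigma_{(L,\cO)}$ (since $\widetilde{Y}_{(L,\cO)}' = \sigma_{(L,\cO)}^{-1}(\widetilde{Y}_{(L,\cO)})$, an open subset). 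The commutativity of the square is a formal consequence of the constructions: $\sigma_{(L,\cO)}$ refines $\nu_{(L,\cO)}$ over $\widetilde{Y}_{(L,\cO)}$, and $\tau_{(L,\cO)}$ restricts to $\varpi_{(L,\cO)}$ there.

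The finiteness of $\sigma_{(L,\cO)}'$ is the step that needs a genuine (if short) argument. Since $\sigma_{(L,\cO)}'$ is the composite of $\sigma_{(L,\cO)}$, which is a Galois covering with finite group $A_L(x)$ hence finite, and $\tau_{(L,\cO)}$, which is finite by Proposition~\ref{prop:relative-normalization}, the composite is finite. (Alternatively, one repeats the quasi-finiteness-plus-properness argument of Proposition~\ref{prop:relative-normalization} directly, using Lemma~\ref{lem:dim-lusztig}, but factoring through the already-established finiteness of $\tau_{(L,\cO)}$ is cleaner.) Having verified all four hypotheses, the universal property of relative normalization identifies $\widetilde{T}_{(L,\cO)}'$ with the normalization of $T_{(L,\cO)}$ relative to $\varpi_{(L,\cO)}\circ\nu_{(L,\cO)}$; the independence from $P$ and the $G \times A_{N_G(L)}(x)$-equivariance of $\sigma_{(L,\cO)}'$ — extending the $A_L(x)$-action on $\widetilde{T}_{(L,\cO)}'$ as the deck group of $\sigma_{(L,\cO)}$ over the $N_G(L,\cO)/L$-action on $\widetilde{T}_{(L,\cO)}$ — then follow from the functoriality of relative normalization, exactly as at the end of the proof of Proposition~\ref{prop:relative-normalization}. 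I expect the only mild subtlety to be bookkeeping the compatibility of the $N_G(L)_x$-action on $\widetilde{Y}_{(L,\cO)}'$ defined above with the action arising from functoriality of normalization, but this is routine given that both are characterized by the same equivariance of $\sigma_{(L,\cO)}'$.
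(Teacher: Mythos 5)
Your proposal is correct and matches the paper's (one-sentence) proof, which states that the lemma follows from the same arguments as Proposition~\ref{prop:relative-normalization}: exhibit a finite morphism from a smooth variety that restricts to the given Galois covering over the dense open subset $Y_{(L,\cO)}$, then invoke the characterization and functoriality of relative normalization. Your streamlining --- smoothness of $\widetilde{T}_{(L,\cO)}'$ is immediate since it is by definition open in the smooth variety $\widetilde{X}^{\circ\prime}_{(L,\cO)}$, and finiteness of $\tau_{(L,\cO)}\circ\sigma_{(L,\cO)}$ follows by composing the finite Galois covering $\sigma_{(L,\cO)}$ with the finiteness of $\tau_{(L,\cO)}$ already proved --- is a legitimate mild simplification of those same arguments rather than a different route.
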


We summarize our constructions in the following diagram:
\begin{equation*}
\label{eqn:diagram-geometry}
\vcenter{
\xymatrix{
\widetilde{Y}_{(L,\cO)}' \ar[d]_-{\nu_{(L,\cO)}} \ar@{^{(}->}[r] & \widetilde{T}_{(L,\cO)}' \ar[d]_-{\sigma_{(L,\cO)}} \ar@{^{(}->}[r] & \widetilde{X}^{\circ \prime}_{(L,\cO)} \ar[d]^-{\mu_{(L,\cO)}} & \\
\widetilde{Y}_{(L,\cO)} \ar@{^{(}->}[r] \ar[d]_-{\varpi_{(L,\cO)}} & \widetilde{T}_{(L,\cO)} \ar[d]_-{\tau_{(L,\cO)}} \ar@{^{(}->}[r] & \widetilde{X}^\circ_{(L,\cO)} \ar@{^{(}->}[r] & \widetilde{X}_{(L,\cO)} \ar[d]^-{\pi_{(L,\cO)}} \\
Y_{(L,\cO)} \ar@{^{(}->}[r] & T_{(L,\cO)} \ar@{^{(}->}[rr] & & X_{(L,\cO)}.
}
}
\end{equation*}
Note once again that the first two columns of this diagram do not depend on the choice of the parabolic subgroup $P$ (but the varieties $\widetilde{X}^{\circ \prime}_{(L,\cO)}$, $\widetilde{X}^{\circ}_{(L,\cO)}$ and $\widetilde{X}_{(L,\cO)}$ \emph{do} depend on the choice of $P$).

\subsection{Local systems}
\label{subsect:extension-action}

We denote by  $\cE$ an irreducible $L$-equivariant local system on $\cO$. Choosing $x\in\cO$ as above, we obtain a corresponding irreducible representation $V=\cE_x$ of $A_L(x)$. Note that, up to isomorphism, $\cE$ is the local system on $\cO$ associated to $V$ by the functor~\eqref{eqn:local-systems-Galois} for the Galois covering $L/L_x^\circ\to L/L_x\cong\cO$; in other words, the $A_L(x)$-equivariant local system obtained by pulling $\cE$ back to $L/L_x^\circ$ is isomorphic to the constant sheaf $\underline{V}_{L/L_x^\circ}$. 
 
Recall that $\widetilde{\cE}$ is defined to be the unique $G$-equivariant local system on $\widetilde{Y}_{(L,\cO)}$ whose pull-back to $G\times(\cO+\fz_L^\circ)$ is $\ubk_G\boxtimes(\cE\boxtimes\ubk_{\fz_L^\circ})$. Alternatively, $\widetilde{\cE}$ is the local system associated to $V$ by the functor~\eqref{eqn:local-systems-Galois} for the Galois covering $\nu_{(L,\cO)}:\widetilde{Y}_{(L,\cO)}'\to \widetilde{Y}_{(L,\cO)}$. (To see this, observe that $(\nu_{(L,\cO)})^*\widetilde{\cE}$ is a $G\times A_L(x)$-equivariant local system on $\widetilde{Y}_{(L,\cO)}'$ whose pull-back to $G\times(L/L_x^\circ\times\fz_L^\circ)$ is $\ubk_G\boxtimes(\underline{V}_{L/L_x^\circ}\boxtimes\ubk_{\fz_L^\circ})$, hence $(\nu_{(L,\cO)})^*\widetilde{\cE}\cong\underline{V}_{\widetilde{Y}_{(L,\cO)}'}$.)

Given a parabolic subgroup $P\subset G$ with $L$ as Levi factor, we similarly define $\dot{\cE}$ to be the unique $G$-equivariant local system on $\widetilde{X}_{(L,\cO)}^\circ$ whose pull-back to $G\times(\cO+\fz_L+\fu_P)$ is $\ubk_G\boxtimes(\cE\boxtimes\ubk_{\fz_L}\boxtimes\ubk_{\fu_P})$. By similar reasoning, $\dot{\cE}$ is the local system associated to $V$ by the functor~\eqref{eqn:local-systems-Galois} for the Galois covering $\mu_{(L,\cO)}:\widetilde{X}^{\circ\prime}_{(L,\cO)}\to \widetilde{X}_{(L,\cO)}^\circ$. Clearly $\dot{\cE}$ is an extension of $\widetilde{\cE}$ under the open embedding $\widetilde{Y}_{(L,\cO)}\hookrightarrow \widetilde{X}_{(L,\cO)}^\circ$.

Let $\widehat{\cE}$ denote the restriction of $\dot{\cE}$ to $\widetilde{T}_{(L,\cO)}$, which is therefore also an extension of $\widetilde{\cE}$. Alternatively, $\widehat{\cE}$ is the local system associated to $V$ by the functor~\eqref{eqn:local-systems-Galois} for the Galois covering $\sigma_{(L,\cO)}:\widetilde{T}_{(L,\cO)}'\to \widetilde{T}_{(L,\cO)}$. Since this covering is independent of $P$ (see Proposition~\ref{prop:relative-normalization} and Lemma~\ref{lem:normalization2}), the local system $\widehat{\cE}$ is independent of $P$.

Now let $\cF$ denote the $G$-equivariant local system on $\tau_{(L,\cO)}^{-1}(\mathrm{Ind}_L^G(\cO))$ obtained by restricting $\widehat{\cE}$. We have the following analogue of part of~\cite[Lemma 7.10(a)]{lusztig-cusp2}:

\begin{lem} \label{lem:crucial}
Assume that~\eqref{eqn:distinguished-substitute} and~\eqref{eqn:abs-irr} hold. Then
the local system $\cF$ is absolutely irreducible. 
\end{lem}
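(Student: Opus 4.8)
The plan is to reduce the assertion to a statement about a \emph{connected} Galois covering of the single orbit $\mathrm{Ind}_L^G(\cO)$, together with the absolutely irreducible representation $V := \cE_x$ of $A_L(x)$ that corresponds to $\cE$.

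First I would fix, using~\eqref{eqn:distinguished-substitute}, a parabolic subgroup $P$ with Levi factor $L$ such that $G_y \subseteq P$ for every $y \in \mathrm{Ind}_L^G(\cO) \cap (\cO + \fu_P)$; this is harmless, since $\widetilde{T}_{(L,\cO)}$, $\tau_{(L,\cO)}$, $\widetilde{T}'_{(L,\cO)}$, $\sigma_{(L,\cO)}$ and $\widehat{\cE}$ are all independent of $P$ (Proposition~\ref{prop:relative-normalization}, Lemma~\ref{lem:normalization2}, and the discussion defining $\widehat{\cE}$). Write $\mathrm{Ind} := \mathrm{Ind}_L^G(\cO)$. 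By Lemma~\ref{lem:pinch}, $\tau_{(L,\cO)}$ restricts to an isomorphism $\tau_{(L,\cO)}^{-1}(\mathrm{Ind}) \simto \mathrm{Ind}$, so $\cF$ may be regarded as a local system on $\mathrm{Ind}$. Since $\widehat{\cE}$ is the local system attached to $V$ by~\eqref{eqn:local-systems-Galois} for the Galois covering $\sigma_{(L,\cO)}$, restricting along $\tau_{(L,\cO)}^{-1}(\mathrm{Ind}) \hookrightarrow \widetilde{T}_{(L,\cO)}$ and applying the isomorphism of Lemma~\ref{lem:pinch} shows that $\cF \cong \cL_V$ for the Galois covering $p : Z \to \mathrm{Ind}$ with group $A_L(x)$, where $Z := \sigma_{(L,\cO)}^{-1}(\tau_{(L,\cO)}^{-1}(\mathrm{Ind}))$.

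Next I would prove that $Z$ is connected. Since $p$ is $G$-equivariant (Lemma~\ref{lem:normalization2}) and $\mathrm{Ind} = G/G_y$ with $G$ connected, it suffices to show that $G_y$ acts transitively on the fibre $p^{-1}(y)$, for a chosen $y = x + v \in \mathrm{Ind} \cap (\cO + \fu_P)$ with $x \in \cO$ and $v \in \fu_P$ (the intersection is dense in, hence nonempty within, $\cO + \fu_P$). Unwinding the description $\widetilde{X}^{\circ\prime}_{(L,\cO)} = G \times^P (L/L_x^\circ \times \fz_L \times \fu_P)$ together with the $P$-action recalled in~\S\ref{ss:geometry}, and using $G_y \subseteq P$ (from~\eqref{eqn:distinguished-substitute}) and the description of $\tau_{(L,\cO)}^{-1}(y)$ provided by Lemma~\ref{lem:pinch}, one finds that $p^{-1}(y) = \{[e * (mL_x^\circ, 0, v)] : m \in L_x\}$, that these classes are pairwise distinct (so that $p^{-1}(y) \cong L_x/L_x^\circ = A_L(x)$), and that an element $g = lu \in G_y = P_y$ (with $l \in L$, $u \in U_P$, necessarily $l \in L_x$) acts by $[e * (mL_x^\circ, 0, v)] \mapsto [e * (lmL_x^\circ, 0, v)]$. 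Thus $G_y$ acts on $p^{-1}(y) \cong A_L(x)$ through the homomorphism $P_y \to L_x$ sending $g$ to its Levi component, followed by left translation; by Lemma~\ref{lem:component-groups}(2) this homomorphism is surjective modulo identity components, so the action is transitive and $Z$ is connected.

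Finally, since $Z$ is a connected Galois covering of $\mathrm{Ind}$ with group $A_L(x)$, the canonical map $\pi_1(\mathrm{Ind}) \to A_L(x)$ is surjective, and by construction the monodromy representation of $\cF \cong \cL_V$ is the composite $\pi_1(\mathrm{Ind}) \twoheadrightarrow A_L(x) \xrightarrow{V} \GL(V)$. As $V$ is absolutely irreducible by~\eqref{eqn:abs-irr} and the first map is surjective, this representation is absolutely irreducible, and hence so is $\cF$. The main obstacle is the middle step: carefully unwinding the parabolic-dependent model $\widetilde{X}^{\circ\prime}_{(L,\cO)}$ and the $P$-action on $L/L_x^\circ \times \fz_L \times \fu_P$ in order to pin down the fibre $p^{-1}(y)$ and the way $G_y$ permutes it. The role of~\eqref{eqn:distinguished-substitute} is precisely to force, via $G_y \subseteq P$ and Lemma~\ref{lem:pinch}, that every point of this fibre has a representative of the form $[e * (mL_x^\circ, 0, v)]$; without that, the fibre could a priori be larger and $Z$ could be disconnected.
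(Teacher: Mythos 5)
Your proposal is correct and takes essentially the same approach as the paper: the key inputs are identical — Lemma~\ref{lem:pinch} together with $G_{x+y}\subset P$ to identify $\tau_{(L,\cO)}^{-1}(\mathrm{Ind}_L^G(\cO))$ with the induced orbit, the surjectivity of $A_P(x+y)\to A_L(x)$ from Lemma~\ref{lem:component-groups}(2), and the absolute irreducibility of $V=\cE_x$ from~\eqref{eqn:abs-irr}. The only difference is packaging: the paper reads $\cF$ as the $G$-equivariant local system corresponding to the pullback of $V$ along $A_G(x+y)=A_P(x+y)\to A_L(x)$, whereas you recast the same mechanism as connectedness of the $A_L(x)$-covering $Z\to\mathrm{Ind}_L^G(\cO)$ plus surjectivity of monodromy, with your explicit fibre computation spelling out what the paper handles by citing Lusztig.
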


\begin{proof}
Since the statement is independent of $P$, we can assume that $P$ satisfies the condition in~\eqref{eqn:distinguished-substitute}. In addition to choosing $x\in\cO$ as above, let $y\in\fu_P$ be such that $x+y\in\mathrm{Ind}_L^G(\cO)$. Then $G_{x+y}\subset P$. Recall from Lemma~\ref{lem:pinch} that $\tau_{(L,\cO)}^{-1}(\mathrm{Ind}_L^G(\cO))$ is the $G$-orbit of $1*(x+y)$, whose stabilizer in $G$ is $G_{x+y}$. Since $\cF$ is $G$-equivariant, it corresponds to some representation $V'$ of $A_G(x+y)=A_P(x+y)$. We must show that the representation $V'$ is absolutely irreducible. However, by construction, the representation $V'$ is obtained by pulling back the representation $V$ through the natural homomorphism $A_P(x+y)\to A_L(x)$ (compare~\cite[Corollary 7.4]{lusztig}; we have no induction of representations here, since $A_P(x+y)=A_G(x+y)$). This homomorphism is surjective by Lemma~\ref{lem:component-groups}(2), and $V$ is absolutely irreducible by assumption~\eqref{eqn:abs-irr}, so the claim follows.
\end{proof}

Recall that the $(N_G(L,\cO)/L)$-action on $\widetilde{Y}_{(L,\cO)}$ extends to $\widetilde{T}_{(L,\cO)}$ (see Proposition~\ref{prop:relative-normalization}). Hence for any $n\in N_G(L,\cO)/L$ we have a local system $n^*\widehat{\cE}$ on $\widetilde{T}_{(L,\cO)}$. If~\eqref{eqn:distinguished-substitute} holds, from Lemma~\ref{lem:pinch} we know that $N_G(L,\cO)/L$ acts trivially on $\tau_{(L,\cO)}^{-1}(\mathrm{Ind}_L^G(\cO))$, so the restriction of this local system to $\tau_{(L,\cO)}^{-1}(\mathrm{Ind}_L^G(\cO))$ is $\cF$. We then have the following analogue of~\cite[Lemma 7.10(b)(c)]{lusztig-cusp2}.

\begin{prop}
\label{prop:equivariance-hatE}
Assume that~\eqref{eqn:distinguished-substitute}, \eqref{eqn:abs-irr} and~\eqref{eqn:fixed} hold. Then for any $n\in N_G(L,\cO)/L$ there is a unique isomorphism
\[
\varphi_n:\widehat{\cE}\simto n^*\widehat{\cE}
\]
such that the induced automorphism of $\cF$ is the identity. Moreover, we have
\[
\varphi_{mn}=n^*(\varphi_{m})\circ\varphi_n
\]
for all $m,n \in N_G(L,\cO)/L$; in other words, the isomorphisms $\varphi_n$ constitute an $(N_G(L,\cO)/L)$-equivariant structure on $\widehat{\cE}$.
\end{prop}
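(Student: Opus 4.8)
The plan is to exploit the fact, established in Lemma~\ref{lem:crucial}, that the restriction $\cF$ of $\widehat{\cE}$ to $\tau_{(L,\cO)}^{-1}(\mathrm{Ind}_L^G(\cO))$ is absolutely irreducible, and to use this to pin down the isomorphisms $\varphi_n$ uniquely by a normalization condition on an open stratum. First I would fix $n\in N_G(L,\cO)/L$ and observe that, by assumption~\eqref{eqn:fixed}, the $L$-equivariant local system $\cE$ on $\cO$ is isomorphic to its twist by (a representative of) $n$; tracing through the construction of $\widehat{\cE}$ as the local system on $\widetilde{T}_{(L,\cO)}$ associated via~\eqref{eqn:local-systems-Galois} to the representation $V$ of $A_L(x)$ for the Galois covering $\sigma_{(L,\cO)}$, together with Lemma~\ref{lem:theta^*-local-system} applied to the automorphism $n$ (with the conjugation automorphism $\theta$ of $A_L(x)$), this yields an abstract isomorphism $\widehat{\cE}\simto n^*\widehat{\cE}$. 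So the existence of \emph{some} isomorphism is not the issue; the content is uniqueness and the cocycle relation. The key point for uniqueness is that any two such isomorphisms differ by an automorphism of $\widehat{\cE}$, and since $\widetilde{T}_{(L,\cO)}$ is connected (it contains the dense open $\widetilde{Y}_{(L,\cO)}$, which is connected because $\widetilde{\cE}$ has irreducible monodromy... more directly, $\widetilde{T}_{(L,\cO)}$ is a normalization of the irreducible variety $T_{(L,\cO)}$) a global automorphism of $\widehat{\cE}$ restricting to the identity on the open subset $\tau_{(L,\cO)}^{-1}(\mathrm{Ind}_L^G(\cO))$ must be the identity, \emph{provided} the restriction functor $\Loc(\widetilde{T}_{(L,\cO)},\bk)\to\Loc(\tau_{(L,\cO)}^{-1}(\mathrm{Ind}_L^G(\cO)),\bk)$ is faithful on endomorphism spaces — which holds because the target is an open dense subvariety of a connected variety, so restriction of local systems is faithful. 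Hence there is at most one $\varphi_n$ with the normalization, and combining with the abstract existence statement, exactly one.

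Next I would check that the normalized $\varphi_n$ exists: starting from any abstract isomorphism $\psi_n:\widehat{\cE}\simto n^*\widehat{\cE}$, its restriction to $\tau_{(L,\cO)}^{-1}(\mathrm{Ind}_L^G(\cO))$ is an isomorphism $\cF\simto n^*\cF=\cF$ (the last equality because $N_G(L,\cO)/L$ acts trivially on that orbit, by Lemma~\ref{lem:pinch}), hence a scalar $c_n\in\bk^\times$ since $\cF$ is absolutely irreducible (Lemma~\ref{lem:crucial}); then $\varphi_n:=c_n^{-1}\psi_n$ is the desired normalized isomorphism. For the cocycle relation, consider $\varphi_{mn}$ and $n^*(\varphi_m)\circ\varphi_n$: both are isomorphisms $\widehat{\cE}\simto (mn)^*\widehat{\cE}$, and by the uniqueness just proved it suffices to check they induce the same (identity) automorphism of $\cF$ after restriction to $\tau_{(L,\cO)}^{-1}(\mathrm{Ind}_L^G(\cO))$. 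But $\varphi_{mn}$ restricts to the identity by its normalization, and $n^*(\varphi_m)\circ\varphi_n$ restricts to the composition of the identity with the identity (using again that $m$, $n$ act trivially on that orbit so that $n^*$ and $(mn)^*$ act trivially on $\cF$ and on its identity automorphism), hence also to the identity. Therefore $\varphi_{mn}=n^*(\varphi_m)\circ\varphi_n$, which is exactly the compatibility defining an $(N_G(L,\cO)/L)$-equivariant structure. (A small point to be careful about: the convention for composing the $\varphi$'s must match the one used in \S\ref{ss:Loc-Galois}, namely $\varphi_{mn}=n^*(\varphi_m)\circ\varphi_n$; I would double-check the order of $m$ and $n$ against that convention.)

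The main obstacle, and the place where assumption~\eqref{eqn:fixed} is genuinely used, is establishing the abstract existence of \emph{some} isomorphism $\widehat{\cE}\simto n^*\widehat{\cE}$ before normalizing — without~\eqref{eqn:fixed} there need be no such isomorphism at all, and indeed the algebraic heart of Theorem~\ref{thm:cocycle} (as explained in Remark~\ref{rmk:cocycle}) is precisely that the relevant cohomology class in $\mathsf{H}^2(N_G(L,\cO)/L,\bk^\times)$ vanishes; here that vanishing is being \emph{constructed} geometrically rather than assumed, so I expect the careful part of the argument to be verifying that the isomorphism supplied by Lemma~\ref{lem:theta^*-local-system} (or equivalently by the equivariant structure of $\cE$ coming from~\eqref{eqn:fixed}) is compatible with the whole tower of Galois coverings $\nu_{(L,\cO)}$, $\sigma_{(L,\cO)}$, $\mu_{(L,\cO)}$ appearing in \S\ref{ss:geometry}, so that it descends to an isomorphism of local systems on $\widetilde{T}_{(L,\cO)}$ itself rather than merely on the cover $\widetilde{T}_{(L,\cO)}'$. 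This is essentially a bookkeeping exercise in the functoriality of relative normalization (Proposition~\ref{prop:relative-normalization} and Lemma~\ref{lem:normalization2}) combined with Lemmas~\ref{lem:theta^*-local-system} and~\ref{lem:local-systems-abs-irr}, closely paralleling~\cite[Lemma~7.10]{lusztig-cusp2} and the corresponding arguments of Bonnaf\'e~\cite{bonnafe1}, but with the modular subtleties (absolute irreducibility in place of irreducibility) handled as in Lemma~\ref{lem:crucial}.
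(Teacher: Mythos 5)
Your proposal follows essentially the same route as the paper: existence of some isomorphism $\widehat{\cE}\simto n^*\widehat{\cE}$ from assumption~\eqref{eqn:fixed} via Lemma~\ref{lem:theta^*-local-system} applied through the compatible actions of Lemma~\ref{lem:normalization2}, then normalization using the absolute irreducibility of $\cF$ (Lemma~\ref{lem:crucial}) and the triviality of the $N_G(L,\cO)/L$-action on $\tau_{(L,\cO)}^{-1}(\mathrm{Ind}_L^G(\cO))$ (Lemma~\ref{lem:pinch}), with the cocycle relation following from uniqueness.

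One justification in your uniqueness step is wrong, though the step itself survives. The subset $\tau_{(L,\cO)}^{-1}(\mathrm{Ind}_L^G(\cO))$ is \emph{not} an open dense subvariety of $\widetilde{T}_{(L,\cO)}$: it lies over the stratum $Y_{(G,\mathrm{Ind}_L^G(\cO))}=\fz_G+\mathrm{Ind}_L^G(\cO)$, which is closed in $T_{(L,\cO)}$ and of strictly smaller dimension than the dense open stratum $Y_{(L,\cO)}$ (when $L\neq G$). So ``restriction to an open dense subvariety is faithful'' is not the reason you may pass from the identity on $\cF$ to the identity on $\widehat{\cE}$. The faithfulness you need does hold, but for a different reason: $\widetilde{T}_{(L,\cO)}$ is connected (it contains the irreducible $\widetilde{Y}_{(L,\cO)}$ as a dense open subset by construction of the relative normalization), and a morphism of local systems on a connected variety is determined by its stalk map at any single point, so restriction to any nonempty subvariety is faithful. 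Alternatively, and this is how the paper argues, assumption~\eqref{eqn:abs-irr} makes $V$, hence $\widehat{\cE}$, absolutely irreducible, so $\varphi_n$ is unique up to scalar and the scalar is pinned down by the normalization on $\cF$; this also immediately gives the cocycle relation, since $n^*(\varphi_m)\circ\varphi_n$ is then a scalar multiple of $\varphi_{mn}$ and the scalar is seen to be $1$ on $\cF$. With that correction your argument is complete and matches the paper's.
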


\begin{proof}
Choose some $\dot{n}$ in $A_{N_G(L)}(x)$ whose image in $N_G(L,\cO)/L$ is $n$. Then we can apply Lemma~\ref{lem:theta^*-local-system} to the automorphism of $\widetilde{T}'_{(L,\cO)}$, resp.~$\widetilde{T}_{(L,\cO)}$, resp.~$A_L(x)$, induced by the action of $\dot{n}$, resp.~the action of $n$, resp.~the conjugation by $\dot{n}$, to obtain an isomorphism between $n^* \widehat{\cE}$ and the local system associated to the twist $V^{\dot{n}}$ of $V$ by the conjugation action of $\dot{n}$ on $A_L(x)$.
From assumption~\eqref{eqn:fixed} we deduce that $V^{\dot{n}}$ is isomorphic to $V$, so there exists an isomorphism $\varphi_n:\widehat{\cE}\simto n^*\widehat{\cE}$. 

By assumption~\eqref{eqn:abs-irr}, the representation $V$ is absolutely irreducible, and hence so is $\widehat{\cE}$. So for each $n\in N_G(L,\cO)/L$, the isomorphism $\varphi_n$ is unique up to scalar. In particular, $n^*(\varphi_{m})\circ\varphi_n$ must be a scalar multiple of $\varphi_{mn}$ for all $m,n\in N_G(L,\cO)/L$. 
By the remark before the statement of the proposition, $\varphi_n$ induces an automorphism of $\cF$; by Lemma~\ref{lem:crucial}, this induced automorphism must be a scalar multiplication, so we can uniquely normalize $\varphi_n$ to make it the identity. With this normalization, the equation $\varphi_{mn}=n^*(\varphi_{m})\circ\varphi_n$ is clear. 
\end{proof}

For the remainder of this section, we continue to assume that~\eqref{eqn:distinguished-substitute}, \eqref{eqn:abs-irr} and~\eqref{eqn:fixed} all hold. Restricting the isomorphisms $\varphi_n$ of Proposition~\ref{prop:equivariance-hatE} to $\widetilde{Y}_{(L,\cO)}$, we obtain a collection of isomorphisms
\[
\psi_n:\widetilde{\cE}\simto n^*\widetilde{\cE}
\]
for all $n\in N_G(L,\cO)/L$, satisfying the rule
\[
\psi_{mn}=n^*(\psi_{m})\circ\psi_n.
\]
In other words, by considering the extension $\widehat{\cE}$ to the larger variety $\widetilde{T}_{(L,\cO)}$, we have defined a \emph{canonical} $(N_G(L,\cO)/L)$-equivariant structure on $\widetilde{\cE}$. 

By the equivalence~\eqref{eqn:equivalence-equivariant}, this $(N_G(L,\cO)/L)$-equivariant structure on $\widetilde{\cE}$ gives rise to a local system $\overline{\cE}$ on $Y_{(L,\cO)}$ equipped with a canonical isomorphism
\begin{equation}
\label{eqn:isom-barE}
(\varpi_{(L,\cO)})^*\overline{\cE}\cong\widetilde{\cE}.
\end{equation}
Since $\widetilde{\cE}$ is absolutely irreducible by~\eqref{eqn:abs-irr}, we can apply Lemma~\ref{lem:local-systems-abs-irr} with $\cM=\overline{\cE}$.

Using the projection formula, from~\eqref{eqn:isom-barE} we deduce a canonical isomorphism
\begin{equation}
\label{eqn:isom-direct-image-tildeE}
(\varpi_{(L,\cO)})_* \widetilde{\cE} \cong \overline{\cE}\otimes(\varpi_{(L,\cO)})_*\ubk_{\widetilde{Y}_{(L,\cO)}}.
\end{equation}
The canonical $(N_G(L,\cO)/L)$-equivariant structure on $\widetilde{\cE}$ defines a canonical action of $N_G(L,\cO)/L$ on $(\varpi_{(L,\cO)})_* \widetilde{\cE}$. Under isomorphism~\eqref{eqn:isom-direct-image-tildeE}, this action is induced by the canonical action on $(\varpi_{(L,\cO)})_*\ubk_{\widetilde{Y}_{(L,\cO)}}$
(see Lemma~\ref{lem:reg-rep} and the subsequent comments). From~\eqref{eqn:end-alg} and Lemma~\ref{lem:local-systems-abs-irr} we deduce the algebra isomorphism
\begin{equation} \label{eqn:end-alg2}
\bk[N_G(L,\cO)/L]\simto\End\bigl((\varpi_{(L,\cO)})_* \widetilde{\cE}\bigr).
\end{equation}
This is the analogue of~\cite[Proposition 7.14]{lusztig-cusp2}. (Note that, in isolation,~\eqref{eqn:end-alg2} is a less conclusive statement in the modular case, since $(\varpi_{(L,\cO)})_* \widetilde{\cE}$ is not semisimple.)

\begin{lem} \label{lem:trivial-case}
If $\cE=\ubk_{\cO}$, then $\overline{\cE}=\ubk_{Y_{(L,\cO)}}$.
\end{lem}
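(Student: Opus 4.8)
The plan is to unwind the construction of $\overline{\cE}$ in \S\ref{subsect:extension-action} in the special case $\cE = \ubk_\cO$ and to check that every local system and every structural isomorphism in sight is forced to be the trivial/canonical one. When $\cE = \ubk_\cO$, the representation $V = \cE_x$ of $A_L(x)$ is the trivial one-dimensional representation $\bk$, so the functor~\eqref{eqn:local-systems-Galois} sends it to a constant local system for each of the relevant Galois coverings. Concretely, $\widetilde{\cE} = \ubk_{\widetilde{Y}_{(L,\cO)}}$, $\widehat{\cE} = \ubk_{\widetilde{T}_{(L,\cO)}}$, and hence $\cF = \widehat{\cE}|_{\tau_{(L,\cO)}^{-1}(\mathrm{Ind}_L^G(\cO))} = \ubk_{\tau_{(L,\cO)}^{-1}(\mathrm{Ind}_L^G(\cO))}$.

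The crux is to show that the canonical isomorphisms $\varphi_n \colon \widehat{\cE} \simto n^*\widehat{\cE}$ furnished by Proposition~\ref{prop:equivariance-hatE} are, in this case, the tautological identifications. The variety $\widetilde{T}_{(L,\cO)}$ is irreducible --- it is finite over the irreducible variety $T_{(L,\cO)}$ and contains the dense open subset $\widetilde{Y}_{(L,\cO)} = G \times^L (\cO + \fz_L^\circ)$, which is connected --- so every endomorphism of $\ubk_{\widetilde{T}_{(L,\cO)}}$ is multiplication by a scalar; thus $\varphi_n$, read through the canonical identification $n^*\ubk_{\widetilde{T}_{(L,\cO)}} = \ubk_{\widetilde{T}_{(L,\cO)}}$, is some $c_n \in \bk^\times$. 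Restricting to the nonempty subvariety $\tau_{(L,\cO)}^{-1}(\mathrm{Ind}_L^G(\cO))$ (nonempty by Lemma~\ref{lem:pinch}), the induced automorphism of $\cF$ is again multiplication by $c_n$. But the normalization condition in Proposition~\ref{prop:equivariance-hatE} is exactly that this induced automorphism of $\cF$ be the identity; hence $c_n = 1$, and $\varphi_n$ is the canonical isomorphism.

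Restricting the $\varphi_n$ to $\widetilde{Y}_{(L,\cO)}$, we conclude that the $(N_G(L,\cO)/L)$-equivariant structure $(\psi_n)$ on $\widetilde{\cE} = \ubk_{\widetilde{Y}_{(L,\cO)}}$ underlying the construction of $\overline{\cE}$ is the canonical equivariant structure on $(\varpi_{(L,\cO)})^*\ubk_{Y_{(L,\cO)}}$ (the one attached to the trivial representation in \S\ref{ss:Loc-Galois}). By the equivalence~\eqref{eqn:equivalence-equivariant} together with the defining isomorphism~\eqref{eqn:isom-barE}, this descends to $\ubk_{Y_{(L,\cO)}}$, so $\overline{\cE} \cong \ubk_{Y_{(L,\cO)}}$ --- confirming the refinement of the Galois-covering generalities announced in Remark~\ref{rmk:trivial-case}. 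The one step needing a little care is matching the normalizing automorphism of $\cF$ with the global scalar by which $\varphi_n$ acts on $\widehat{\cE}$; this rests on irreducibility of $\widetilde{T}_{(L,\cO)}$ and non-emptiness of $\tau_{(L,\cO)}^{-1}(\mathrm{Ind}_L^G(\cO))$, and everything else is a direct unwinding of definitions.
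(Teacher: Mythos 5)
Your proof is correct and follows essentially the same route as the paper: the paper's argument simply observes that $\widehat{\cE}=\ubk_{\widetilde{T}_{(L,\cO)}}$ and that the $\varphi_n$ of Proposition~\ref{prop:equivariance-hatE} are the canonical identifications, then descends via~\eqref{eqn:equivalence-equivariant}. Your scalar-plus-normalization argument just spells out why the $\varphi_n$ are canonical, which the paper leaves as ``clearly''.
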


\begin{proof}
In this case $\widehat{\cE}=\ubk_{\widetilde{T}_{(L,\cO)}}$ and the isomorphism $\varphi_n$ defined in Proposition~\ref{prop:equivariance-hatE} is clearly the canonical one. Hence $\widetilde{\cE}=\ubk_{\widetilde{Y}_{(L,\cO)}}$ as an $(N_G(L,\cO)/L)$-equivariant local system, so $\overline{\cE}=\ubk_{Y_{(L,\cO)}}$.
\end{proof}

\begin{rmk} \label{rmk:indeterminacy}
When $\cE$ is nontrivial, it is a challenging problem to describe concretely the local system $\overline{\cE}$ on $Y_{(L,\cO)}$ (for example, by specifying explicitly the corresponding representation $\overline{V}$ of $A_{N_G(L)}(x)$ -- see Remark~\ref{rmk:cocycle}). Even in Lusztig's setting where $(\cO,\cE)$ is a characteristic-$0$ cuspidal pair, this problem is unsolved in general, although some cases were settled by Bonnaf\'e in~\cite{bonnafe1,bonnafe2}.  
\end{rmk}

\subsection{Proof of Theorem~\ref{thm:cocycle}}
\label{ss:simple-quotients}

Finally we are in a position to prove Theorem~\ref{thm:cocycle}. 
We continue to assume~\eqref{eqn:distinguished-substitute}, \eqref{eqn:abs-irr} and~\eqref{eqn:fixed}.

Using~\eqref{eqn:isom-direct-image-tildeE}, from Lemma~\ref{lem:local-systems-abs-irr} we deduce properties $(1)$ and $(4)$ of Theorem~\ref{thm:cocycle} for the above local system $\overline{\cE}$. Properties $(2)$ and $(3)$ already emerged from the above discussion (see~\eqref{eqn:isom-barE},~\eqref{eqn:isom-direct-image-tildeE} and~\eqref{eqn:end-alg2}). Hence what remains to be proved is that $(\varpi_{(L,\cO)})_*\widetilde{\cE}$ has a unique direct summand whose $\IC$-extension has non-zero restriction to $\mathrm{Ind}_L^G(\cO)$, appearing with multiplicity one, and that $\overline{\cE}$ is the head of this direct summand.

First, from $(3)$ and Lemma~\ref{lem:local-systems-abs-irr} we deduce that the indecomposable direct summands of $(\varpi_{(L,\cO)})_* \widetilde{\cE}$ are of the form $\overline{\cE}\otimes \cL_Q$ for $Q$ an indecomposable direct summand of $\bk[N_G(L,\cO)/L]$, i.e.~an indecomposable projective $\bk[N_G(L,\cO)/L]$-module. 
Now, let $j : Y_{(L,\cO)} \to T_{(L,\cO)}$ be the inclusion, and let $d:=\dim(Y_{(L,\cO)})$. Then the restriction of $\IC(Y_{(L,\cO)}, (\varpi_{(L,\cO)})_* \widetilde{\cE})$ to the induced orbit $\mathrm{Ind}_L^G(\cO)$ is the same as that of $j_{!*} \bigl( (\varpi_{(L,\cO)})_* \widetilde{\cE} [d]\bigr)$. Moreover, since $\tau_{(L,\cO)}$ is finite, $\widetilde{T}_{(L,\cO)}$ is smooth, and $\widehat{\cE}$ extends $\widetilde{\cE}$, there is a canonical isomorphism of perverse sheaves
\begin{equation*}
j_{!*} \bigl( (\varpi_{(L,\cO)})_* \widetilde{\cE} [d] \bigr) \cong (\tau_{(L,\cO)})_* \widehat{\cE} [d].
\end{equation*}
By definition, the restriction of the right-hand side to $\mathrm{Ind}_L^G(\cO)$ is $\cF[d]$, which is indecomposable by Lemma~\ref{lem:crucial}. (Here we abuse notation by denoting also by $\cF$ the local system on $\mathrm{Ind}_L^G(\cO)$ corresponding to the previous $\cF$ on the isomorphic variety $\tau_{(L,\cO)}^{-1}(\mathrm{Ind}_L^G(\cO))$.) This implies that there exists a unique direct summand $\cG$ of $(\varpi_{(L,\cO)})_* \widetilde{\cE}$ whose $\IC$-extension has a nonzero restriction to $\mathrm{Ind}_L^G(\cO)$ (appearing with multiplicity one), and that the restriction of $\cG$ to $\mathrm{Ind}_L^G(\cO)$ is $\cF[d]$. 

Let $Q$ be the corresponding indecomposable projective $\bk[N_G(L,\cO)/L]$-module, so that $\cG \cong \overline{\cE} \otimes \cL_Q$, and consider the vector space
$\Hom \bigl( (\varpi_{(L,\cO)})_* \widetilde{\cE}, \cG \bigr)$.
This vector space has a canonical action of $N_G(L,\cO)/L$ induced by the action on $(\varpi_{(L,\cO)})_* \widetilde{\cE}$, and using Lemma~\ref{lem:local-systems-abs-irr} it is easily checked that this $\bk[N_G(L,\cO)/L]$-module is isomorphic to $Q$. Now consider the following morphism:
\[
\Hom \bigl( (\varpi_{(L,\cO)})_* \widetilde{\cE}, \cG \bigr) \xrightarrow{j_{!*}(\cdot [d])} \Hom\bigl( (\tau_{(L,\cO)})_* \widehat{\cE}[d], j_{!*}(\cG[d]) \bigr) \to \End(\cF[d]) \cong \bk,
\]
where the second arrow is induced by restriction to $\mathrm{Ind}_L^G(\cO)$. Then by construction this morphism is nonzero, and $(N_G(L,\cO)/L)$-equivariant if $N_G(L,\cO)/L$ acts trivially on the right-hand side. It follows that $Q$ is the projective cover of the trivial representation, which concludes the proof.

\section{Induction and restriction}
\label{sect:transitivity}

Consider a chain of Levi subgroups $L \subset M \subset G$.  Let $P \subset Q$ be parabolic subgroups of $G$ whose Levi factors are $L$ and $M$, respectively.  Also, let $R = P \cap M$.  

Fix a pair $(\cO,\cE) \in \fN_{L,\bk}$ satisfying the following conditions:
\begin{enumerate}
\item for any $y \in \mathrm{Ind}_L^M(\cO) \cap (\cO + \fu_R)$, $M_y \subset R$;
\item for any $y \in \mathrm{Ind}_L^G(\cO) \cap (\cO + \fu_P)$, $G_y \subset P$;
\item $\cE$ is absolutely irreducible;
\item the isomorphism class of $\cE$ is fixed by $N_G(L,\cO)$.
\end{enumerate}
The last condition implies, of course, that the isomorphism class of $\cE$ is also fixed by the smaller group $N_M(L,\cO)$.  These conditions say that $(L,\cO)$ satisfies \eqref{eqn:distinguished-substitute}--\eqref{eqn:fixed} with respect to both $M$ and $G$, so we can invoke Theorem~\ref{thm:cocycle} in both settings. Notice that if $\cO$ is a distinguished nilpotent orbit for $L$, then conditions $(1)$ and $(2)$ follow from Lemma~\ref{lem:cgu}.

In this section we prove that the resulting objects (local systems, equivariant structures, group actions) are compatible with induction from $M$ to $G$; see Theorems~\ref{thm:compatible-equivariance},~\ref{thm:uind} and~\ref{thm:uind-indec}. This culminates in Theorem~\ref{thm:restriction-compatibility}, a modular version of Lusztig's restriction theorem~\cite[Theorem 8.3]{lusztig}, which one would expect to need in order to determine the modular generalized Springer correspondence. In the present paper, the only application of these results is in Section~\ref{sec:det2}, where Theorem~\ref{thm:uind-indec} is used; in particular, they are not needed for the proof of Theorem~\ref{thm:main}. 

\subsection{Notation}

We will need notation for several versions of the varieties $Y_{(L,\cO)}$, $\widetilde{Y}_{(L,\cO)}$, etc. Define
\[
\fz_L^\bullet := \{z \in \fz_L \mid M_z^\circ = L \},
\]
an open subset of $\fz_L$ containing $\fz_L^\circ$.
Recalling that $Y_{(L,\cO)}=G \cdot(\cO + \fz_L^\circ)$, we define
\[
Y_{(L,\cO)}^M := M \cdot(\cO + \fz_L^\bullet)
\qquad\text{and}\qquad
Y_{(L,\cO)}^{M;G} := M \cdot(\cO + \fz_L^\circ).
\]
Likewise, $\widetilde{Y}_{(L,\cO)}^M$, $T_{(L,\cO)}^M$, etc., are defined in the same way as $\widetilde{Y}_{(L,\cO)}$, $T_{(L,\cO)}$, etc., but with the roles of $P$, $G$, and $\fz_L^\circ$ replaced by $R$, $M$, and $\fz_L^\bullet$.  The subset $\widetilde{Y}_{(L,\cO)}^{M;G} \subset \widetilde{Y}_{(L,\cO)}^M$ is defined similarly, except that we retain $\fz_L^\circ$.  Note that $\widetilde{Y}_{(L,\cO)}^{M;G}$ is a $(N_M(L,\cO)/L)$-stable dense open subset of $\widetilde{Y}_{(L,\cO)}^M$, and hence $Y_{(L,\cO)}^{M;G}$ is a dense open subset of $Y_{(L,\cO)}^M$, the image of $\widetilde{Y}_{(L,\cO)}^{M;G}$ under the Galois covering $\varpi_{(L,\cO)}^M:\widetilde{Y}_{(L,\cO)}^M\to Y_{(L,\cO)}^M$. Let $\varpi_{(L,\cO)}^{M;G}:\widetilde{Y}_{(L,\cO)}^{M;G}\to Y_{(L,\cO)}^{M;G}$ denote the restriction of $\varpi_{(L,\cO)}^M$.

Consider the varieties
\[
\begin{split}
\breve{Y}_{(L,\cO)} &:= G \times^M (M\cdot (\cO + \fz_L^\circ)) = G\times^M Y_{(L,\cO)}^{M;G},\\
\breve{X}_{(L,\cO)} &:= G \times^Q (Q\cdot (\overline{\cO} + \fz_L + \fu_P)) = G\times^Q (X_{(L,\cO)}^M+\fu_Q).
\end{split}
\]
Here, the last equality comes from the fact that $\fu_P=\fu_R+\fu_Q$, so
\[
Q\cdot (\overline{\cO} + \fz_L + \fu_P)=M\cdot(\overline{\cO} + \fz_L + \fu_R)+\fu_Q.
\]
We have a diagram of cartesian squares analogous to that in~\cite[\S 8.4]{lusztig}:
\begin{equation} \label{eqn:lusztig-diag}
\vcenter{\xymatrix@C=1.5cm{
\widetilde{Y}_{(L,\cO)} \ar[d]_\psi \ar@{^{(}->}[r] \ar@/_6ex/[dd]_{\varpi_{(L,\cO)}} &
  \widetilde{X}_{(L,\cO)} \ar[d]^\chi \ar@/^6ex/[dd]^{\pi_{(L,\cO)}} \\
\breve{Y}_{(L,\cO)} \ar[d]_{\varphi} \ar@{^{(}->}[r]  &
  \breve{X}_{(L,\cO)} \ar[d]^{\upsilon} \\
Y_{(L,\cO)} \ar@{^{(}->}[r] &
  X_{(L,\cO)}.
  }}
\end{equation}
The outer square is~\eqref{eqn:basic-square}. The map $\psi:\widetilde{Y}_{(L,\cO)}\to\breve{Y}_{(L,\cO)}$ is induced by $\varpi_{(L,\cO)}^{M;G}$, using the obvious identification $\widetilde{Y}_{(L,\cO)}=G\times^M \widetilde{Y}_{(L,\cO)}^{M;G}$. The maps $\varphi$, $\chi$ and $\upsilon$ are the natural ones, and the middle open embedding is provided by the following result. It is trivial that the diagram commutes, and the top and bottom squares are cartesian because the outer square~\eqref{eqn:basic-square} is cartesian and $\chi$ is surjective.

\begin{lem} \label{lem:generalized-letellier}
The natural map $G\times^M \mathfrak{m}\to G\times^Q \mathfrak{q}$ induces an isomorphism
\[
\breve{Y}_{(L,\cO)}=G\times^M Y_{(L,\cO)}^{M;G}\simto G\times^Q (Y_{(L,\cO)}^{M;G}+\fu_Q).
\]
\end{lem}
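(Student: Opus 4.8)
The plan is to prove directly that the natural morphism
$\alpha \colon G\times^M Y_{(L,\cO)}^{M;G} \to G\times^Q(Y_{(L,\cO)}^{M;G}+\fu_Q)$, $g*y\mapsto g*y$, is an isomorphism; this is the exact analogue of the argument for \cite[Lemma~5.1.27]{letellier}. The map $\alpha$ is well defined because $M\subset Q$ and $Y_{(L,\cO)}^{M;G}+\fu_Q$ is a $Q$-stable subvariety of $\mathfrak{q}$ containing $Y_{(L,\cO)}^{M;G}$ (both facts being straightforward). The conceptual heart of the matter is a single transversality statement: for every $y\in Y_{(L,\cO)}^{M;G}$, the operator $\mathrm{ad}(y)$ is invertible on $\fu_Q$. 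Indeed, writing $y=m\cdot(x+z)$ with $m\in M$, $x\in\cO$ and $z\in\fz_L^\circ$, and using that $x$ is nilpotent, $z$ is semisimple and $[x,z]=0$, the Jordan decomposition of $y$ has semisimple part $y_s=m\cdot z$ and nilpotent part $y_n=m\cdot x$. Since $z\in\fz_L^\circ$ we have $G_z^\circ=L$, hence $G_{y_s}^\circ=mLm^{-1}\subset M$ and (as $y_s$ is semisimple) $\mathfrak{g}_{y_s}=\mathrm{Lie}(G_{y_s})\subset\mathfrak{m}$; since $\mathfrak{m}\cap\fu_Q=0$ this gives $\ker(\mathrm{ad}(y_s)|_{\fu_Q})=\mathfrak{g}_{y_s}\cap\fu_Q=0$, so $\mathrm{ad}(y_s)$ is invertible on $\fu_Q$. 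As $\mathrm{ad}(y_n)$ is nilpotent and commutes with $\mathrm{ad}(y_s)$, the sum $\mathrm{ad}(y)=\mathrm{ad}(y_s)+\mathrm{ad}(y_n)$ is again invertible on $\fu_Q$. This is precisely the place where the hypothesis $z\in\fz_L^\circ$ is used, as opposed to merely $z\in\fz_L^\bullet$.

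Granting this, the next step is to note that for each $y\in Y_{(L,\cO)}^{M;G}$ the orbit map $U_Q\to y+\fu_Q$, $u\mapsto u\cdot y$, is a bijection with trivial stabilizer: the stabilizer $(U_Q)_y$ is unipotent with Lie algebra $\ker(\mathrm{ad}(y)|_{\fu_Q})=0$, hence is trivial, and $U_Q\cdot y$ is a closed (Kostant--Rosenlicht) subvariety of the irreducible affine space $y+\fu_Q$ of dimension $\dim\fu_Q$, hence equals it. The bijectivity of $\alpha$ is then a short formal check: for surjectivity, a point $g*(y+v)$ with $v\in\fu_Q$ equals $gu*y$ once $u\in U_Q$ is chosen with $u\cdot y=y+v$; for injectivity, if $g_1*y_1=g_2*y_2$ via some $q=mu\in Q$ ($m\in M$, $u\in U_Q$), then $y_2=m\cdot(u\cdot y_1)$ lies in $\mathfrak{m}$ while $u\cdot y_1\in y_1+\fu_Q$ and $m\cdot\fu_Q=\fu_Q$, so $u\cdot y_1\in\mathfrak{m}\cap(y_1+\fu_Q)=\{y_1\}$, whence $u=1$ and $q=m\in M$.

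To upgrade bijectivity to an isomorphism I would use smoothness. The variety $Y_{(L,\cO)}^{M;G}$ is a dense open subvariety of the piece $Y_{(L,\cO)}^M$ of the Lusztig stratification of $\mathfrak{m}$, hence smooth and irreducible; consequently $Y_{(L,\cO)}^{M;G}+\fu_Q$, being the preimage of $Y_{(L,\cO)}^{M;G}$ under the linear projection $\mathfrak{q}=\mathfrak{m}\oplus\fu_Q\twoheadrightarrow\mathfrak{m}$, is also smooth and irreducible. Therefore $G\times^M Y_{(L,\cO)}^{M;G}$ and $G\times^Q(Y_{(L,\cO)}^{M;G}+\fu_Q)$ are smooth irreducible complex varieties, and a bijective morphism between irreducible complex varieties with normal target is an isomorphism (Zariski's main theorem, using that in characteristic $0$ such a morphism is automatically birational). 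Hence $\alpha$ is an isomorphism.

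I do not expect any step to be a serious obstacle: the only substantive input is the invertibility of $\mathrm{ad}(y)$ on $\fu_Q$, and everything afterwards is bookkeeping with the decomposition $Q=M\ltimes U_Q$. A more structural but lengthier alternative would be to recognise $\kappa\colon U_Q\times Y_{(L,\cO)}^{M;G}\to Y_{(L,\cO)}^{M;G}+\fu_Q$, $(u,y)\mapsto u\cdot y$, as a $Q$-equivariant isomorphism identifying its source with $Q\times^M Y_{(L,\cO)}^{M;G}$, and then to obtain $\alpha$ by applying $G\times^Q(-)$ together with the standard isomorphism $G\times^Q(Q\times^M(-))\cong G\times^M(-)$; the only care needed there is in matching up the $Q$-actions.
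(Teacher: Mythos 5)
Your proof is correct and follows essentially the same route as the paper, which simply defers to the argument of \cite[Lemma~5.1.27]{letellier} and notes that the one point to check is that no nontrivial element of $U_Q$ stabilizes a point of $Y_{(L,\cO)}^{M;G}$, the identity component of that stabilizer being contained in $M$ by the definition of $\fz_L^\circ$. You establish exactly this key point (via $\ker(\mathrm{ad}(y)|_{\fu_Q})=0$ and connectedness of unipotent groups in characteristic $0$) and then write out in full the Letellier-style bookkeeping that the paper leaves to the citation.
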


\begin{proof}
In the $M=L$ case this is~\cite[Lemma 5.1.27]{letellier}, and the proof in general is similar. One need only check that no nontrivial element of $U_Q$ belongs to the stabilizer of an element of $Y_{(L,\cO)}^{M;G}$; but the identity component of this stabilizer is contained in $M$ by the definition of $\fz_L^\circ$.
\end{proof}  

\subsection{Compatibility of actions and equivariant structures}

Recall that the map $\varpi_{(L,\cO)}$ is a Galois covering, the quotient of a free $(N_G(L,\cO)/L)$-action on $\widetilde{Y}_{(L,\cO)}$. Under the identification $\widetilde{Y}_{(L,\cO)}=G\times^M \widetilde{Y}_{(L,\cO)}^{M;G}$, the $(N_M(L,\cO)/L)$-action on $\widetilde{Y}_{(L,\cO)}$ obtained by restricting this $(N_G(L,\cO)/L)$-action corresponds to the $(N_M(L,\cO)/L)$-action on $G\times^M \widetilde{Y}_{(L,\cO)}^{M;G}$ induced by that on $\widetilde{Y}_{(L,\cO)}^{M;G}$. Hence the map $\psi$ is the quotient map for the $(N_M(L,\cO)/L)$-action on $\widetilde{Y}_{(L,\cO)}$, and it is a Galois covering with group $N_M(L,\cO)/L$. The map $\varphi$ is \'etale but not Galois in general, since $N_M(L,\cO)/L$ is not necessarily normal in $N_G(L,\cO)/L$.

Theorem~\ref{thm:cocycle} applied to $L\subset G$ gives us a canonical local system $\overline{\cE}$ on $Y_{(L,\cO)}$ and a corresponding canonical $(N_G(L,\cO)/L)$-equivariant structure on the local system $\widetilde{\cE}$ on $\widetilde{Y}_{(L,\cO)}$ (see the arguments following Proposition~\ref{prop:equivariance-hatE}). Applied to $L\subset M$, the same theorem gives us a local system $\overline{\cE}^M$ on $Y_{(L,\cO)}^M$, and a corresponding $(N_M(L,\cO)/L)$-equivariant structure on the local system $\widetilde{\cE}^M$ on $\widetilde{Y}_{(L,\cO)}^M$. Restricting to open subsets, we obtain a local system $\overline{\cE}^{M;G}$ on $Y_{(L,\cO)}^{M;G}$, and a corresponding $(N_M(L,\cO)/L)$-equivariant structure on the restriction $\widetilde{\cE}^{M;G}$ of $\widetilde{\cE}^M$ to $\widetilde{Y}_{(L,\cO)}^{M;G}$. Let $G \times^M \overline{\cE}^{M;G}$ denote the unique $G$-equivariant local system on $\breve{Y}_{(L,\cO)}$ whose pull-back to $G \times Y_{(L,\cO)}^{M;G}$ is $\ubk_G \boxtimes \overline{\cE}^{M;G}$. To this corresponds an $(N_M(L,\cO)/L)$-equivariant structure on the analogously-defined local system $G \times^M \widetilde{\cE}^{M;G}$ on $G\times^M \widetilde{Y}_{(L,\cO)}^{M;G}$. Under the identification $G\times^M \widetilde{Y}_{(L,\cO)}^{M;G}=\widetilde{Y}_{(L,\cO)}$, the local system $G \times^M \widetilde{\cE}^{M;G}$ is identified with $\widetilde{\cE}$, so we end up with an $(N_M(L,\cO)/L)$-equivariant structure on $\widetilde{\cE}$.

Our first goal is to prove:

\begin{thm} \label{thm:compatible-equivariance}
The $(N_M(L,\cO)/L)$-equivariant structure on $\widetilde{\cE}$ obtained as above, via the identification of $\widetilde{Y}_{(L,\cO)}$ with $G\times^M \widetilde{Y}_{(L,\cO)}^{M;G}$, is the restriction of the canonical $(N_G(L,\cO)/L)$-equivariant structure. Consequently, we have an isomorphism 
\[ G \times^M \overline{\cE}^{M;G}\cong\varphi^*\overline{\cE} \] 
of local systems on $\breve{Y}_{(L,\cO)}$.
\end{thm}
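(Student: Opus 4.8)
The plan is to exploit the fact, established in Section~\ref{sec:study}, that the canonical $(N_G(L,\cO)/L)$-equivariant structure on $\widetilde{\cE}$ is uniquely determined by a \emph{normalization condition over the induced orbit}: namely, it is the unique equivariant structure whose restriction to $\tau_{(L,\cO)}^{-1}(\mathrm{Ind}_L^G(\cO))$ acts as the identity on $\cF$ (Proposition~\ref{prop:equivariance-hatE}). So the strategy is to check that the $(N_M(L,\cO)/L)$-equivariant structure obtained by transport through $\widetilde{Y}_{(L,\cO)}=G\times^M\widetilde{Y}_{(L,\cO)}^{M;G}$ satisfies the same normalization condition, and then invoke uniqueness of $\varphi_n$ from Proposition~\ref{prop:equivariance-hatE} (applied to $L\subset G$) to conclude that the two agree on the nose for every $n\in N_M(L,\cO)/L$.

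First I would set up the geometry compatibly. Extend the picture~\eqref{eqn:lusztig-diag} over the open subvarieties $T$: the variety $\widetilde{T}_{(L,\cO)}^{M}$ (resp.\ $\widetilde{T}_{(L,\cO)}$) is the relative normalization of $T_{(L,\cO)}^{M}$ (resp.\ $T_{(L,\cO)}$) by Proposition~\ref{prop:relative-normalization} and Lemma~\ref{lem:normalization2}, and one observes that $G\times^M \widetilde{T}_{(L,\cO)}^{M;G}$ (the base change of $\widetilde{T}_{(L,\cO)}^{M}$ to its $\fz_L^\circ$-locus) is naturally identified with $\widetilde{T}_{(L,\cO)}$; this is the $T$-level analogue of the identification $\widetilde{Y}_{(L,\cO)}=G\times^M\widetilde{Y}_{(L,\cO)}^{M;G}$, and follows from functoriality of relative normalization together with the fact that $T_{(L,\cO)}^{M;G}+\fu_Q$ embeds openly in $T_{(L,\cO)}$. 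Under this identification, $\widehat{\cE}$ corresponds to $G\times^M\widehat{\cE}^{M;G}$ (both being the local system associated to $V$ via the appropriate Galois covering, by the descriptions in~\S\ref{subsect:extension-action}). Crucially, both $\mathrm{Ind}_L^M(\cO)$ and $\mathrm{Ind}_L^G(\cO)$ are visible in this picture, and transitivity of induction of orbits gives $\mathrm{Ind}_M^G(\mathrm{Ind}_L^M(\cO))=\mathrm{Ind}_L^G(\cO)$; moreover $\mathrm{Ind}_L^G(\cO)\subset G\cdot(\mathrm{Ind}_L^M(\cO)+\fu_Q)$ meets $Y_{(M,\mathrm{Ind}_L^M(\cO))}^{M;G}$-fibres so that $\tau_{(L,\cO)}^{-1}(\mathrm{Ind}_L^G(\cO))$ lies in the open part $G\times^M(\cdot)$ and is the $G$-translate of $\bigl(\tau_{(L,\cO)}^{M}\bigr)^{-1}(\mathrm{Ind}_L^M(\cO))$. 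Hence the local system $\cF$ for $L\subset G$ is simply $G\times^M\cF^M$ over this orbit, where $\cF^M$ is the analogous absolutely irreducible local system for $L\subset M$.

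Now the normalization argument: the isomorphisms $\varphi_n^{M}$ ($n\in N_M(L,\cO)/L$) from Proposition~\ref{prop:equivariance-hatE} applied to $L\subset M$ are normalized to act as the identity on $\cF^M$. Applying the functor $G\times^M(-)$ and restricting to $\widetilde{Y}_{(L,\cO)}$, these give the transported $(N_M(L,\cO)/L)$-equivariant structure $\psi_n'$, say, on $\widetilde{\cE}$; extending to $\widetilde{T}_{(L,\cO)}$ via $G\times^M\widehat{\cE}^{M;G}=\widehat{\cE}$, the induced isomorphisms $G\times^M\varphi_n^{M;G}$ of $\widehat{\cE}$ restrict over $\tau_{(L,\cO)}^{-1}(\mathrm{Ind}_L^G(\cO))=G\times^M(\cdot)$ to $G\times^M(\mathrm{id}_{\cF^M})=\mathrm{id}_{\cF}$. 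On the other hand, the restriction of the canonical $(N_G(L,\cO)/L)$-structure to $N_M(L,\cO)/L$ has, for each such $n$, an isomorphism $\varphi_n$ of $\widehat{\cE}$ satisfying exactly the same normalization (identity on $\cF$) by construction. By the uniqueness clause in Proposition~\ref{prop:equivariance-hatE} (absolute irreducibility of $\widehat{\cE}$, via~\eqref{eqn:abs-irr}, forces $\varphi_n$ to be unique with this property), the two agree; restricting back to $\widetilde{Y}_{(L,\cO)}$ identifies the two equivariant structures on $\widetilde{\cE}$. The consequence about $G\times^M\overline{\cE}^{M;G}\cong\varphi^*\overline{\cE}$ then follows by descent: both local systems on $\breve{Y}_{(L,\cO)}$ pull back under $\psi$ to $\widetilde{\cE}$ with the \emph{same} $(N_M(L,\cO)/L)$-equivariant structure, and $\psi$ is the Galois covering with group $N_M(L,\cO)/L$, so the equivalence~\eqref{eqn:equivalence-equivariant} yields the isomorphism.

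The main obstacle I anticipate is the bookkeeping in the first paragraph — carefully verifying that the identification $\widetilde{Y}_{(L,\cO)}=G\times^M\widetilde{Y}_{(L,\cO)}^{M;G}$ propagates to $\widetilde{T}_{(L,\cO)}=G\times^M\widetilde{T}_{(L,\cO)}^{M;G}$ compatibly with all the maps, the $G$-actions, the normalizer actions, and the local systems $\widehat{\cE}$. This requires knowing that $T_{(L,\cO)}$ restricted to the relevant open part really is $G\times^Q(T_{(L,\cO)}^{M;G}+\fu_Q)$, which is a mild extension of Lemma~\ref{lem:generalized-letellier}, and that relative normalization commutes with the smooth base change $G\times^M(-)$ and with passing to the $\fz_L^\circ$-locus. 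Once this diagram-chase is in place, the identification of equivariant structures is forced by the uniqueness built into Proposition~\ref{prop:equivariance-hatE}, and nothing further is needed.
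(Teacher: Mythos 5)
Your high-level strategy is in fact the paper's: reduce to comparing two $(N_M(L,\cO)/L)$-equivariant structures on $\widehat{\cE}$ and invoke the uniqueness in Proposition~\ref{prop:equivariance-hatE} by checking triviality on $\cF$ over $\tau_{(L,\cO)}^{-1}(\mathrm{Ind}_L^G(\cO))$. But the geometric backbone you build this on is wrong. The identification $\widetilde{Y}_{(L,\cO)}=G\times^M\widetilde{Y}^{M;G}_{(L,\cO)}$ does \emph{not} propagate to an identification $\widetilde{T}_{(L,\cO)}\cong G\times^M\widetilde{T}^{M;G}_{(L,\cO)}$: any variety of the form $G\times^M(\text{subvariety of }\widetilde{T}^M_{(L,\cO)})$ maps into $G\cdot(\text{subset of }\mathfrak{m})$, whereas $T_{(L,\cO)}$ contains the stratum $\fz_G+\mathrm{Ind}_L^G(\cO)$ and $\mathrm{Ind}_L^G(\cO)$ in general does not meet $\mathfrak{m}$ (e.g.\ $L=T\subset M$ of type $\GL(2)\times\GL(1)$ in $G=\SL(3)$: $\mathrm{Ind}_T^G(\{0\})$ is the regular orbit, which misses $\mathfrak{m}$). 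For the same reason your assertion that $\tau_{(L,\cO)}^{-1}(\mathrm{Ind}_L^G(\cO))$ is ``the $G$-translate of $(\tau^M_{(L,\cO)})^{-1}(\mathrm{Ind}_L^M(\cO))$'', and hence that $\cF=G\times^M\cF^M$, is false: $G\cdot\mathrm{Ind}_L^M(\cO)$ is a strictly smaller orbit than $\mathrm{Ind}_L^G(\cO)=\mathrm{Ind}_M^G(\mathrm{Ind}_L^M(\cO))$, because induction of orbits is not saturation. The correct receptacle (as in the paper) is only an open embedding $\widetilde{T}_{(L,\cO)}\hookrightarrow G\times^Q(\widetilde{T}^M_{(L,\cO)}\times\fu_Q)$, with $\widehat{\cE}$ the restriction of $G\times^Q(\widehat{\cE}^M\boxtimes\ubk_{\fu_Q})$; the $\fu_Q$-coordinate cannot be stripped away.

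Once this is set up correctly, the step you pass over becomes the actual content of the proof: one must show that, inside $G\times^Q(\widetilde{T}^M_{(L,\cO)}\times\fu_Q)$, the locus $\tau_{(L,\cO)}^{-1}(\mathrm{Ind}_L^G(\cO))$ lies over $(\tau^M_{(L,\cO)})^{-1}(\mathrm{Ind}_L^M(\cO))\times\fu_Q$, since the $M$-level normalization of Proposition~\ref{prop:equivariance-hatE} controls the stalks of $\widehat{\cE}^M$ only over $\mathrm{Ind}_L^M(\cO)$ and says nothing over smaller strata of $T^M_{(L,\cO)}$. This amounts to the inclusion $(X^M_{(L,\cO)}+\fu_Q)\cap\mathrm{Ind}_L^G(\cO)\subset\mathrm{Ind}_L^M(\cO)+\fu_Q$, i.e.~\eqref{eqn:inclusion}, which the paper proves by a codimension count with induced orbits; transitivity $\mathrm{Ind}_M^G(\mathrm{Ind}_L^M(\cO))=\mathrm{Ind}_L^G(\cO)$ and the density of $\mathrm{Ind}_L^G(\cO)\cap(\mathrm{Ind}_L^M(\cO)+\fu_Q)$, which is all you invoke, do not exclude points of $\cO'+\fu_Q$ for smaller $M$-orbits $\cO'\subset\overline{\mathrm{Ind}_L^M(\cO)}$. (You should also record, as the paper does, that the $(N_M(L,\cO)/L)$-action on $\widetilde{T}_{(L,\cO)}$ induced from this $Q$-picture coincides with the restriction of the $(N_G(L,\cO)/L)$-action, because both agree on the dense open subset $\widetilde{Y}_{(L,\cO)}$.) Your final descent step via~\eqref{eqn:equivalence-equivariant} is fine.
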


\subsection{Further geometry}

Since the canonical $(N_G(L,\cO)/L)$-equivariant structure on $\widetilde{\cE}$ is defined using the extension $\widehat{\cE}$ of $\widetilde{\cE}$ to $\widetilde{T}_{(L,\cO)}$ (see Proposition~\ref{prop:equivariance-hatE}), to prove Theorem~\ref{thm:compatible-equivariance} we need to relate $T_{(L,\cO)}$, $\widetilde{T}_{(L,\cO)}$ to the corresponding varieties $T_{(L,\cO)}^M$, $\widetilde{T}_{(L,\cO)}^M$. Hence we introduce
\[
\breve{T}_{(L,\cO)} := \upsilon^{-1}(T_{(L,\cO)})=G\times^Q \bigl( (X_{(L,\cO)}^{M}+\fu_Q)\cap T_{(L,\cO)} \bigr).
\]

\begin{lem}
We have an inclusion
\[ 
\breve{T}_{(L,\cO)}
\subset G\times^Q(T_{(L,\cO)}^M + \fu_Q)
\]
of open subsets of $\breve{X}_{(L,\cO)}$.
\end{lem}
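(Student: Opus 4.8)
The plan is to unwind the definitions of the two sides and check the inclusion orbit by orbit, using the structure of $T_{(L,\cO)}$ as a union of Lusztig strata. First I would recall that, by definition, $\breve{T}_{(L,\cO)} = \upsilon^{-1}(T_{(L,\cO)})$, which by the description of $\upsilon$ equals $G \times^Q \bigl( (X_{(L,\cO)}^M + \fu_Q) \cap T_{(L,\cO)} \bigr)$. So it suffices to prove the inclusion of subsets of $\mathfrak{q}$ (modulo the $Q$-action),
\[
(X_{(L,\cO)}^M + \fu_Q) \cap T_{(L,\cO)} \subset T_{(L,\cO)}^M + \fu_Q,
\]
and then observe that the right-hand side is an open subset of $X_{(L,\cO)}^M + \fu_Q$, hence $G \times^Q(T_{(L,\cO)}^M + \fu_Q)$ is open in $\breve{X}_{(L,\cO)} = G \times^Q(X_{(L,\cO)}^M + \fu_Q)$, giving the claim that both sides are open subsets of $\breve{X}_{(L,\cO)}$.

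For the set-theoretic inclusion, I would take an element $w = m + v$ with $m \in X_{(L,\cO)}^M$, $v \in \fu_Q$, and assume $w \in T_{(L,\cO)} = \bigcup_{L' \supset L} Y_{(L',\mathrm{Ind}_L^{L'}(\cO))}$. The goal is to show $m \in T_{(L,\cO)}^M = \bigcup_{L \subset L'' \subset M} Y^M_{(L'',\mathrm{Ind}_L^{L''}(\cO))}$. The key point is a semisimple-part argument, exactly as in the proof of quasi-finiteness of $\tau_{(L,\cO)}$ in Proposition~\ref{prop:relative-normalization}: the semisimple part $w_s$ of $w$ lies (up to conjugacy) in some $\fz_{L'}^\circ$ with $L' \supset L$; on the other hand, since $v \in \fu_Q$ is nilpotent and commutes appropriately, $w_s$ is $Q$-conjugate to $m_s$, the semisimple part of $m$, viewed via $\fq \to \fq/\fu_Q \cong \mathfrak{m}$. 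Thus $m_s$ is conjugate into $\fz_{L'} \cap \mathfrak{m}$, which forces $M_{m_s}^\circ$ to contain a Levi subgroup $G$-conjugate to $L'$, and one concludes that $m$ lies in the Lusztig stratum $Y^M_{(L'',\mathrm{Ind}_L^{L''}(\cO))}$ for $L'' = M \cap (\text{conjugate of } L')$, using that the nilpotent part of $m$ relative to this stratification is forced to be $\mathrm{Ind}_L^{L''}(\cO)$ by comparing with the fact that the nilpotent part of $w$ is $\mathrm{Ind}_L^{L'}(\cO)$ together with the transitivity of induction $\mathrm{Ind}_{L''}^{L'}(\mathrm{Ind}_L^{L''}(\cO)) = \mathrm{Ind}_L^{L'}(\cO)$ (see~\cite[Proposition~7.1.4]{cm}).

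The main obstacle I expect is the bookkeeping of the passage between $G$-conjugacy and $M$-conjugacy of Levi subgroups, and verifying that the nilpotent part of $m$ is precisely the induced orbit $\mathrm{Ind}_L^{L''}(\cO)$ rather than merely something contained in its closure; this requires a dimension count. Here the dimension formula~\eqref{eqn:stabilizer-dim}, $\dim G_y = \dim L - \dim \cO$ for $y \in T_{(L,\cO)}$, is the right tool: it pins down the dimension of the $G$-orbit of $w$, and combined with $\dim G_{w} = \dim P_{w}$-type estimates (as used in the proof of Proposition~\ref{prop:relative-normalization}) one extracts that $m$ has an $M$-stabilizer of the expected dimension $\dim L - \dim \cO$, hence lies in $T_{(L,\cO)}^M$. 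Since the structure of the argument closely parallels the analogous step in~\cite[\S 8.4]{lusztig} and in the proof of Proposition~\ref{prop:relative-normalization}, I would keep the write-up brief and refer back to those computations rather than repeating them.
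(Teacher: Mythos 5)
Your proposal is correct and is essentially the paper's argument: the paper proves the lemma by running, for the pair $M \subset Q$, the same stabilizer-dimension count as in the first step of the proof of Proposition~\ref{prop:relative-normalization}, i.e.\ exactly the estimate $\dim M_m \le \dim G_{m+v} = \dim L - \dim \cO$ (via $Q\cdot(m+v) \subset (M\cdot m)+\fu_Q$ and~\eqref{eqn:stabilizer-dim}) that you invoke at the end, combined with the fact that inside $X^M_{(L,\cO)}$ the locus of minimal stabilizer dimension is precisely $T^M_{(L,\cO)}$. The semisimple-part/transitivity-of-induction detour is not needed for this (and, as written, the claim that $M_{m_s}^\circ$ \emph{contains} a Levi subgroup $G$-conjugate to $L'$ is backwards --- $M_{m_s}^\circ$ is contained in a conjugate of $L'$, as your own choice $L''=M\cap(\text{conjugate of }L')$ indicates); the dimension count alone already does the job.
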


\begin{proof}
The $M=L$ case is the statement that $\widetilde{T}_{(L,\cO)}\subset \widetilde{X}_{(L,\cO)}^\circ$, which was the first step of the proof of Proposition~\ref{prop:relative-normalization}. The general case is proved similarly.\end{proof}

Now the preimage $\chi^{-1}(\breve{T}_{(L,\cO)})$ of $\breve{T}_{(L,\cO)}$ in $\widetilde{X}_{(L,\cO)}$ is by definition $\widetilde{T}_{(L,\cO)}$. We need to describe the larger subset $\chi^{-1}(G\times^Q(T_{(L,\cO)}^M + \fu_Q))$. Note that we have obvious isomorphisms
\[
Q\times^P \mathfrak{p}\cong M\times^R \mathfrak{p}\cong (M\times^R \mathfrak{r})\times\fu_Q,
\]
coming respectively from $Q=MU_Q$, $P=RU_Q$ and from $\mathfrak{p}=\mathfrak{r}+\fu_Q$ (and the fact that the $R$-action on $\fu_Q$ extends to $M$). We use these isomorphisms to define a $Q$-action on $(M\times^R \mathfrak{r})\times\fu_Q$: the Levi factor $M\subset Q$ acts in the obvious diagonal way, and an element $u\in U_Q$ acts by the rule
\begin{equation} \label{eqn:uq-action}
u\cdot(m*x,y)=(m*x,u\cdot(m\cdot x) - m\cdot x + u\cdot y), \text{ for }m\in M,\, x\in\mathfrak{r},\, y\in\fu_Q. 
\end{equation}
In particular, we can identify $Q\times^P(\overline{\cO}+\fz_L+\fu_P)$ with $\widetilde{X}_{(L,\cO)}^M\times\fu_Q$ and hence identify $\widetilde{X}_{(L,\cO)}=G\times^P(\overline{\cO}+\fz_L+\fu_P)=G\times^Q \bigl( Q\times^P(\overline{\cO}+\fz_L+\fu_P) \bigr)$ with $G\times^Q(\widetilde{X}_{(L,\cO)}^M\times\fu_Q)$. Under this identification, the map $\chi$ becomes the map $G\times^Q(\widetilde{X}_{(L,\cO)}^M\times\fu_Q)\to G\times^Q(X_{(L,\cO)}^M+\fu_Q)$ induced by $\pi_{(L,\cO)}^M:\widetilde{X}_{(L,\cO)}^M\to X_{(L,\cO)}^M$. Hence the preimage $\chi^{-1}(G\times^Q(T_{(L,\cO)}^M + \fu_Q))$ is identified with $G\times^Q(\widetilde{T}_{(L,\cO)}^M \times \fu_Q)$, in such a way that the restriction $\sigma$ of $\chi$ to this preimage is the map induced by $\tau_{(L,\cO)}^M:\widetilde{T}_{(L,\cO)}^M\to T_{(L,\cO)}^M$.

To sum up, we have expanded~\eqref{eqn:lusztig-diag} into a diagram of cartesian squares in which all the horizontal maps are open embeddings:
\begin{equation} \label{eqn:expanded-diag}
\vcenter{
\xymatrix@C=0.7cm{
\widetilde{Y}_{(L,\cO)} \ar@{^{(}->}[r] \ar[d]_-{\psi} \ar@/_4ex/[dd]_{\varpi_{(L,\cO)}} & 
\widetilde{T}_{(L,\cO)} \ar@{^{(}->}[r] \ar[d]^-{\rho} \ar@/_4ex/[dd]|\hole_(.25){\tau_{(L,\cO)}} &
G\times^Q(\widetilde{T}_{(L,\cO)}^M \times \fu_Q)  \ar@{^{(}->}[r] \ar[d]^-{\sigma} &
\widetilde{X}_{(L,\cO)} \ar[d]^-{\chi} \ar@/^4ex/[dd]^{\pi_{(L,\cO)}} \\
\breve{Y}_{(L,\cO)} \ar@{^{(}->}[r] \ar[d]_-{\varphi} & 
\breve{T}_{(L,\cO)} \ar@{^{(}->}[r] \ar[d]^-{\mu} & 
G\times^Q(T_{(L,\cO)}^M + \fu_Q)  \ar@{^{(}->}[r] & 
\breve{X}_{(L,\cO)} \ar[d]^-{\upsilon} \\
Y_{(L,\cO)} \ar@{^{(}->}[r] &
T_{(L,\cO)} \ar@{^{(}->}[rr] &&
X_{(L,\cO)}.
}
}
\end{equation}
All the vertical maps here are finite except for those on the right-hand side (that is, except for $\chi$, $\upsilon$ and $\pi_{(L,\cO)}$). 

\subsection{Proof of Theorem~\ref{thm:compatible-equivariance}}

Since $U_Q$ acts trivially on the $M\times^R \mathfrak{r}$ component in the action~\eqref{eqn:uq-action}, the local system $\widehat{\cE}^M\boxtimes\ubk_{\fu_Q}$ on $\widetilde{T}_{(L,\cO)}^M\times\fu_Q$ is not just $M$-equivariant but $Q$-equivariant, and we have a well-defined local system $G\times^Q(\widehat{\cE}^M\boxtimes\ubk_{\fu_Q})$ on $G\times^Q(\widetilde{T}_{(L,\cO)}^M \times \fu_Q)$, whose restriction to $\widetilde{T}_{(L,\cO)}$ is easily seen to be $\widehat{\cE}$.

The $(N_M(L,\cO)/L)$-action on $\widetilde{T}_{(L,\cO)}^M$ defined by Proposition~\ref{prop:relative-normalization} commutes with the action of $M$ and preserves each fibre of the map $\tau_{(L,\cO)}^M$. So the induced $(N_M(L,\cO)/L)$-action on $\widetilde{T}_{(L,\cO)}^M\times\fu_Q$ (with trivial action on the second factor) commutes both with the $M$-action and with the $U_Q$-action defined by~\eqref{eqn:uq-action}; that is, it commutes with the whole $Q$-action. Hence it induces an $(N_M(L,\cO)/L)$-action on $G\times^Q(\widetilde{T}_{(L,\cO)}^M \times \fu_Q)$ which commutes with the $G$-action and preserves each fibre of the map $\sigma$. This in turn induces an $(N_M(L,\cO)/L)$-action on the subset $\widetilde{T}_{(L,\cO)}$ (which is a union of fibres of $\sigma$). By definition of the horizontal embeddings, this action extends the $(N_M(L,\cO)/L)$-action on $\widetilde{Y}_{(L,\cO)}$ viewed as $G\times^M \widetilde{Y}_{(L,\cO)}^{M;G}$. 
Therefore it coincides with the restriction of the $(N_G(L,\cO)/L)$-action on $\widetilde{T}_{(L,\cO)}$ defined by Proposition~\ref{prop:relative-normalization}, since the same compatibility holds for the actions on the dense subset $\widetilde{Y}_{(L,\cO)}$.

To prove Theorem~\ref{thm:compatible-equivariance} it suffices to prove that two $(N_M(L,\cO)/L)$-equivariant structures on $\widehat{\cE}$ are the same. The first is induced by the $(N_M(L,\cO)/L)$-equivariant structure on $G\times^Q(\widehat{\cE}^M\boxtimes\ubk_{\fu_Q})$, which in turn is induced by the $(N_M(L,\cO)/L)$-equivariant structure on $\widehat{\cE}^M$ defined by Proposition~\ref{prop:equivariance-hatE}. The second is the restriction of the $(N_G(L,\cO)/L)$-equivariant structure on $\widehat{\cE}$ defined by Proposition~\ref{prop:equivariance-hatE}. Hence it suffices to prove that the first $(N_M(L,\cO)/L)$-equivariant structure induces the trivial $(N_M(L,\cO)/L)$-equivariant structure on the restriction of $\widehat{\cE}$ to $\tau_{(L,\cO)}^{-1}(\mathrm{Ind}_L^G(\cO))$, and for this it suffices to prove the following statement:
\begin{equation*}
\begin{split}
&\text{the embedding }\widetilde{T}_{(L,\cO)}\hookrightarrow G\times^Q(\widetilde{T}_{(L,\cO)}^M\times\fu_Q)\\
&\text{maps }\tau_{(L,\cO)}^{-1}(\mathrm{Ind}_L^G(\cO))\text{ into } G\times^Q((\tau_{(L,\cO)}^M)^{-1}(\mathrm{Ind}_L^M(\cO))\times\fu_Q).
\end{split}
\end{equation*}
Then from~\eqref{eqn:expanded-diag} we see that it suffices to prove that
the embedding 
$\breve{T}_{(L,\cO)}\hookrightarrow G\times^Q(T_{(L,\cO)}^M+\fu_Q)$
maps 
$\mu^{-1}(\mathrm{Ind}_L^G(\cO))$
into 
$G\times^Q(\mathrm{Ind}_L^M(\cO)+\fu_Q)$,
or in other words that
\begin{equation} \label{eqn:inclusion}
(X_{(L,\cO)}^M+\fu_Q)\cap\mathrm{Ind}_L^G(\cO)\subset \mathrm{Ind}_L^M(\cO)+\fu_Q.
\end{equation}
But this is easy. The left-hand side of~\eqref{eqn:inclusion} is unchanged if $X_{(L,\cO)}^M$ is replaced by its intersection with $\cN_M$, namely $\overline{\mathrm{Ind}_L^M(\cO)}$. For any $M$-orbit $\cO'$ in $\overline{\mathrm{Ind}_L^M(\cO)}\setminus \mathrm{Ind}_L^M(\cO)$, $\cO'+\fu_Q$ is contained in the closure of the orbit $\mathrm{Ind}_M^G(\cO')$ whose codimension in $\cN_G$ is $\dim \cN_M-\dim\cO'$, strictly greater than the codimension $\dim \cN_M-\dim \mathrm{Ind}_L^M(\cO)$ of $\mathrm{Ind}_M^G(\mathrm{Ind}_L^M(\cO))=\mathrm{Ind}_L^G(\cO)$. Hence $\cO'+\fu_Q$ does not intersect $\mathrm{Ind}_L^G(\cO)$, proving~\eqref{eqn:inclusion}.

\subsection{Induction isomorphisms}

By~\cite[Proposition 2.17]{genspring1}, there is a canonical isomorphism
\begin{equation} \label{eqn:g-uind}
\uInd_{L\subset P}^G\bigl(\IC(\cO+\fz_L,\cE\boxtimes\ubk_{\fz_L})\bigr)\cong\IC\bigl(Y_{(L,\cO)},(\varpi_{(L,\cO)})_*\widetilde{\cE}\bigr).
\end{equation}
The same result applied to $M$ instead of $G$ gives us a canonical isomorphism
\begin{equation} \label{eqn:m-uind}
\uInd_{L\subset R}^M\bigl(\IC(\cO+\fz_L,\cE\boxtimes\ubk_{\fz_L})\bigr)\cong\IC\bigl(Y_{(L,\cO)}^M,(\varpi_{(L,\cO)}^M)_*\widetilde{\cE}^M\bigr).
\end{equation}
There is also a natural isomorphism of functors expressing the transitivity of induction:
\begin{equation} \label{eqn:trans-uind}
\uInd_{L\subset P}^G\cong\uInd_{M\subset Q}^G\circ\uInd_{L\subset R}^M:D_L^b(\mathfrak{l},\bk)\to D_G^b(\fg,\bk).
\end{equation}
This isomorphism is defined by a standard diagram, exactly analogous to~\cite[(7.6)]{ahr} but with the groups $T,C,B,L,P$ replaced respectively by $L,R,P,M,Q$, and nilpotent cones replaced by Lie algebras throughout. 

Combining~\eqref{eqn:g-uind},~\eqref{eqn:m-uind} and~\eqref{eqn:trans-uind}, we obtain a canonical isomorphism
\begin{equation} \label{eqn:uind}
\uInd_{M\subset Q}^G\bigl(\IC\bigl(Y_{(L,\cO)}^M,(\varpi_{(L,\cO)}^M)_*\widetilde{\cE}^M\bigr)\bigr)\cong 
\IC\bigl(Y_{(L,\cO)},(\varpi_{(L,\cO)})_*\widetilde{\cE}\bigr).
\end{equation}  
On the left-hand side of~\eqref{eqn:uind} we have an action of $N_M(L,\cO)/L$, induced functorially by the action on $(\varpi_{(L,\cO)}^M)_*\widetilde{\cE}^M$ derived from the canonical $(N_M(L,\cO)/L)$-equivariant structure on $\widetilde{\cE}^M$. On the right-hand side of~\eqref{eqn:uind}, likewise, we have an action of $N_G(L,\cO)/L$. The following crucial result says that these actions are compatible. (A special case of Theorem~\ref{thm:uind} was used in determining the modular generalized Springer correspondence for $\GL(n)$; see~\cite[proof of Lemma 3.11]{genspring1}. That case was relatively easy because the local system $\cE$ was trivial and the groups $N_M(L,\cO)/L$ and $N_G(L,\cO)/L$ were the same.) 

\begin{thm} \label{thm:uind}
The isomorphism~\eqref{eqn:uind} is $(N_M(L,\cO)/L)$-equivariant, in the sense that the $(N_M(L,\cO)/L)$-action on the left-hand side corresponds to the restriction of the $(N_G(L,\cO)/L)$-action on the right-hand side.
\end{thm}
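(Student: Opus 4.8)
The plan is to reduce the theorem to a statement about local systems on the varieties in diagram~\eqref{eqn:expanded-diag}, since the isomorphism~\eqref{eqn:uind} ultimately comes from $\IC$-extending isomorphisms of local systems along the maps $\varphi$, $\psi$, and the induction diagram~\eqref{eqn:trans-uind}. First I would observe that $\uInd_{M\subset Q}^G$ is, by its definition via the standard diagram, built from pullback along $\chi$ (or rather the étale map $\varphi\circ(\text{something})$), followed by a proper pushforward along $\upsilon$ and a shift; the $(N_M(L,\cO)/L)$-action on the left-hand side of~\eqref{eqn:uind} is transported from the action on $(\varpi_{(L,\cO)}^M)_*\widetilde{\cE}^M$ through these operations. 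Similarly the $(N_G(L,\cO)/L)$-action on the right-hand side comes from the canonical equivariant structure on $\widetilde{\cE}$ via~\eqref{eqn:g-uind}. So the content is that passing through the geometry of~\eqref{eqn:expanded-diag} intertwines the $N_M$-action coming from the middle column with the restriction of the $N_G$-action.

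The key step is to use Theorem~\ref{thm:compatible-equivariance}: it already tells us that the $(N_M(L,\cO)/L)$-equivariant structure on $\widetilde{\cE}$ obtained by viewing $\widetilde{Y}_{(L,\cO)}=G\times^M\widetilde{Y}_{(L,\cO)}^{M;G}$ and transporting the canonical $N_M$-structure from $\widetilde{\cE}^{M;G}$ agrees with the restriction of the canonical $N_G$-structure. Equivalently, $G\times^M\overline{\cE}^{M;G}\cong\varphi^*\overline{\cE}$ as local systems on $\breve{Y}_{(L,\cO)}$, compatibly with the relevant group actions. I would then argue that both sides of~\eqref{eqn:uind} are obtained by $\IC$-extension (along the open inclusion $Y_{(L,\cO)}\hookrightarrow X_{(L,\cO)}$, or better $\hookrightarrow T_{(L,\cO)}$) of perverse sheaves constructed on $\breve{X}_{(L,\cO)}$ (resp.\ $\breve{T}_{(L,\cO)}$) from these local systems, and that the isomorphism~\eqref{eqn:uind} is, before $\IC$-extension, induced by exactly the isomorphism $\varphi^*\overline{\cE}\cong G\times^M\overline{\cE}^{M;G}$ of Theorem~\ref{thm:compatible-equivariance}. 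Since $\IC$-extension is functorial and commutes with the group actions (which are finite, hence act by functoriality on everything in sight), equivariance of~\eqref{eqn:uind} follows from equivariance of that local-system isomorphism. The place where Theorem~\ref{thm:compatible-equivariance} does the real work is precisely in matching the two equivariant structures; once that is granted, propagating equivariance through $\upsilon_*$, $\chi^*$ and the shifts in~\eqref{eqn:trans-uind} is formal because each of those operations is a functor and the $N_M$-action on the middle-column varieties commutes with the $G$-action and with $\sigma$, $\mu$ (as established in the proof of Theorem~\ref{thm:compatible-equivariance}).

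Concretely, I would proceed as follows. Step 1: unwind the definition of~\eqref{eqn:trans-uind} and hence of~\eqref{eqn:uind}, identifying the perverse sheaf $\uInd_{M\subset Q}^G(\IC(Y_{(L,\cO)}^M,(\varpi_{(L,\cO)}^M)_*\widetilde{\cE}^M))$ with $\upsilon_*$ of a shift of $\chi^*$ applied to (the $\IC$-extension to $\breve X$ of) $G\times^M(\varpi_{(L,\cO)}^{M;G})_*\widetilde{\cE}^{M;G}$; note this pushforward/pullback presentation carries a natural $(N_M(L,\cO)/L)$-action from the $M$-side. Step 2: similarly present the right-hand side $\IC(Y_{(L,\cO)},(\varpi_{(L,\cO)})_*\widetilde{\cE})$ via the outer square of~\eqref{eqn:lusztig-diag}, with its $(N_G(L,\cO)/L)$-action, and restrict to the $N_M$-subaction. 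Step 3: identify $(\varpi_{(L,\cO)})_*\widetilde{\cE}\cong\varphi_*\psi_*\widetilde{\cE}\cong\varphi_*(G\times^M(\varpi_{(L,\cO)}^{M;G})_*\widetilde{\cE}^{M;G})$, and check using Theorem~\ref{thm:compatible-equivariance} that this identification is $(N_M(L,\cO)/L)$-equivariant — this is the heart. Step 4: conclude that~\eqref{eqn:uind}, being built from these equivariant identifications by applying the functor $\upsilon_*\circ(\text{shift})\circ\chi^*$ and $\IC$-extension, is $(N_M(L,\cO)/L)$-equivariant. I expect the main obstacle to be bookkeeping in Step 3: one must verify that the canonical equivariant structures (which were pinned down by a \emph{geometric} normalization condition involving $\mathrm{Ind}_L^G(\cO)$ and $\mathrm{Ind}_L^M(\cO)$, not by an abstract choice) really do match up under $\varphi$, and for this one leans squarely on the inclusion~\eqref{eqn:inclusion} and on Theorem~\ref{thm:compatible-equivariance}, checking that the identification of $\IC$-extensions respects the map $\tau_{(L,\cO)}^{-1}(\mathrm{Ind}_L^G(\cO))\hookrightarrow G\times^Q((\tau_{(L,\cO)}^M)^{-1}(\mathrm{Ind}_L^M(\cO))\times\fu_Q)$. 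Everything else is functoriality of finite-group actions on derived categories.
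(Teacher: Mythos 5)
Your proposal is correct and follows essentially the same route as the paper's proof: you reduce to the restriction over the open stratum $Y_{(L,\cO)}$, unwind $\uInd_{M\subset Q}^G$ through the diagram~\eqref{eqn:lusztig-diag} (via Lemma~\ref{lem:generalized-letellier} and the maps $\varphi$, $\psi$) so that the restricted isomorphism becomes $\varphi_*\psi_*\widetilde{\cE}\cong(\varpi_{(L,\cO)})_*\widetilde{\cE}$, and derive the equivariance from Theorem~\ref{thm:compatible-equivariance}, deferring the identification of this construction with the isomorphism~\eqref{eqn:uind} to routine diagram-chasing. The paper does exactly this, likewise omitting that verification with a reference to the $L=T$ case in~\cite{ahr}.
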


\begin{proof}
The special case of this result where $L$ is a maximal torus $T$ (and thus, necessarily, $\cO=\{0\}$ and $\cE=\ubk_{\{0\}}$) was proved in~\cite[\S 7.6]{ahr}. The proof of the general case is similar, with the additional complication of nontrivial local systems.

Let $j:Y_{(L,\cO)}\hookrightarrow\fg$ be the inclusion. Since $j^*:\End(\IC(Y_{(L,\cO)},(\varpi_{(L,\cO)})_*\widetilde{\cE}))\to\End((\varpi_{(L,\cO)})_*\widetilde{\cE} [\dim Y_{(L,\cO)}])$ is an isomorphism, to prove the theorem it suffices to prove that the following isomorphism of shifted local systems on $Y_{(L,\cO)}$, induced by~\eqref{eqn:uind}, is $(N_M(L,\cO)/L)$-equivariant:
\begin{equation} \label{eqn:juind}
j^*\uInd_{M\subset Q}^G\bigl(\IC\bigl(Y_{(L,\cO)}^M,(\varpi_{(L,\cO)}^M)_*\widetilde{\cE}^M\bigr)\bigr)\cong(\varpi_{(L,\cO)})_*\widetilde{\cE}[\dim Y_{(L,\cO)}].
\end{equation}
Our aim now is to give another construction of the isomorphism~\eqref{eqn:juind}, one which can be seen to be $(N_M(L,\cO)/L)$-equivariant using Theorem~\ref{thm:compatible-equivariance}.

Using~\cite[Lemma 2.14]{genspring1}, the left-hand side of~\eqref{eqn:juind} can be rewritten as
\begin{equation} \label{eqn:form1}
j^*(\pi_{M\subset Q})_!\mathsf{Ind}_Q^G\bigl(\IC\bigl(Y_{(L,\cO)}^M+\fu_Q,(\varpi_{(L,\cO)}^M)_*\widetilde{\cE}^M\boxtimes\ubk_{\fu_Q}\bigr)\bigr)[\dim \fu_Q],
\end{equation}
where $\pi_{M\subset Q}:G\times^Q\mathfrak{q}\to\fg$ is the usual proper map and $\mathsf{Ind}_Q^G:D_Q^b(\mathfrak{q},\bk)\simto D_G^b(G\times^Q\mathfrak{q},\bk)$ is the standard equivalence of equivariant derived categories. Now the cartesianness of the bottom square in~\eqref{eqn:lusztig-diag}, together with Lemma~\ref{lem:generalized-letellier}, says that 
\begin{equation*}
(X_{(L,\cO)}^M+\fu_Q)\cap Y_{(L,\cO)}=Y_{(L,\cO)}^{M;G}+\fu_Q.
\end{equation*}
So the support of~\eqref{eqn:form1} is contained in $G \cdot \bigl(Y_{(L,\cO)}^{M;G}+\fu_Q \bigr)$, and this object can be rewritten as 
\begin{equation} \label{eqn:form2}
(\pi_{M\subset Q}')_!\mathsf{Ind}_Q^G\bigl((\varpi_{(L,\cO)}^{M;G})_*\widetilde{\cE}^{M;G}\boxtimes\ubk_{\fu_Q}\bigr)[\dim Y_{(L,\cO)}],
\end{equation}
where $\pi_{M\subset Q}':G\times^Q(Y_{(L,\cO)}^{M;G}+\fu_Q)\to Y_{(L,\cO)}$ is the restriction of $\pi_{M\subset Q}$. (In calculating the shift, we have used the formula $\dim Y_{(L,\cO)}=\dim G-\dim L+\dim\cO$ and its analogue for $Y_{(L,\cO)}^M$.) Under the isomorphism of Lemma~\ref{lem:generalized-letellier}, the map $\pi_{M\subset Q}'$ corresponds to the \'etale map $\varphi$, and thus~\eqref{eqn:form2} can be rewritten as
\begin{equation} \label{eqn:form3}
\varphi_*\mathsf{Ind}_M^G\bigl((\varpi_{(L,\cO)}^{M;G})_*\widetilde{\cE}^{M;G}\bigr)[\dim Y_{(L,\cO)}],
\end{equation} 
which in turn is isomorphic to
\begin{equation} \label{eqn:form4}
\varphi_*\psi_*\mathsf{Ind}_M^G(\widetilde{\cE}^{M;G})[\dim Y_{(L,\cO)}]\cong(\varpi_{(L,\cO)})_*\widetilde{\cE}[\dim Y_{(L,\cO)}],
\end{equation}
as required. 

The verification that the isomorphism~\eqref{eqn:juind} equals the isomorphism obtained by the preceding argument (namely the composition~\eqref{eqn:form1}$\cong$\eqref{eqn:form2}$\cong$\eqref{eqn:form3}$\cong$\eqref{eqn:form4}) will be omitted: it is routine diagram-chasing, no harder than the $L=T$ case proved in~\cite[Lemmas 7.8 and 7.9]{ahr}. The $(N_M(L,\cO)/L)$-equivariance of~\eqref{eqn:juind} now boils down to the $(N_M(L,\cO)/L)$-equivariance of~\eqref{eqn:form4}, which follows immediately from Theorem~\ref{thm:compatible-equivariance}. 
\end{proof}

Recall from Theorem~\ref{thm:cocycle} and its proof in \S\ref{ss:simple-quotients} that the indecomposable direct summands of $(\varpi_{(L,\cO)})_* \widetilde{\cE}$ are of the form $\overline{\cE}\otimes \cL_{P_V}$ where $P_V$ is the projective cover of an irreducible $\bk[N_G(L,\cO)/L]$-module $V$. Since $\IC$ is fully faithful and additive, it follows that the indecomposable direct summands of $\IC(Y_{(L,\cO)},(\varpi_{(L,\cO)})_*\widetilde{\cE})$ are of the form $\IC(Y_{(L,\cO)},\overline{\cE}\otimes \cL_{P_V})$. Then~\eqref{eqn:uind} has the following refinement:

\begin{thm} \label{thm:uind-indec}
For any irreducible $\bk[N_M(L,\cO)/L]$-module $U$, we have 
\[
\begin{split}
\uInd_{M\subset Q}^G\bigl(\IC(Y_{(L,\cO)}^M,\overline{\cE}^M\otimes \cL_{P_U}^M)\bigr)&\cong
\bigoplus_{V\in\Irr([\bk[N_G(L,\cO)/L])}
\IC(Y_{(L,\cO)},\overline{\cE}\otimes \cL_{P_V})^{\oplus m_{V,U}},
\end{split}
\]
where $m_{V,U}$ denotes the multiplicity of $P_V$ as a summand of the induced representation $\mathrm{Ind}_{N_M(L,\cO)/L}^{N_G(L,\cO)/L}(P_U)$.
\end{thm}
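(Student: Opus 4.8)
The plan is to reduce the statement to a purely algebraic computation with the group algebras $\bk[N_M(L,\cO)/L]$ and $\bk[N_G(L,\cO)/L]$, by exploiting the ``regular object'' description provided by Theorem~\ref{thm:cocycle}. Write $H_M := N_M(L,\cO)/L$ and $H_G := N_G(L,\cO)/L$, so that $H_M$ is a subgroup of $H_G$. By Theorem~\ref{thm:cocycle}(3) (applied to $L \subset M$) and the discussion culminating in~\eqref{eqn:end-alg2}, the local system $(\varpi_{(L,\cO)}^M)_*\widetilde{\cE}^M$, equipped with its canonical $H_M$-equivariant structure, is identified with $\overline{\cE}^M \otimes \cL^M_{\bk[H_M]}$ carrying the left regular $\bk[H_M]$-module structure, and the resulting map $\bk[H_M] \to \End\bigl((\varpi_{(L,\cO)}^M)_*\widetilde{\cE}^M\bigr)$ is the isomorphism~\eqref{eqn:end-alg2}. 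Applying the fully faithful additive functor $\IC(Y_{(L,\cO)}^M,-)$ and combining with Lemma~\ref{lem:local-systems-abs-irr} (for $\cM = \overline{\cE}^M$, whose pullback $\widetilde{\cE}^M$ is absolutely irreducible by~\eqref{eqn:abs-irr}), one gets that $V \mapsto \IC(Y_{(L,\cO)}^M, \overline{\cE}^M \otimes \cL^M_V)$ is a fully faithful additive embedding of $\Rep(H_M,\bk)$, under which $\IC(Y_{(L,\cO)}^M, (\varpi_{(L,\cO)}^M)_*\widetilde{\cE}^M)$ corresponds to the left regular module $\bk[H_M]$ and $\End$ of it to $\bk[H_M]$ acting by right multiplication. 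The same holds over $G$.

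Now fix $U \in \Irr(\bk[H_M])$ and choose a primitive idempotent $e_U \in \bk[H_M]$ with $\bk[H_M]e_U \cong P_U$ as left modules. Viewing $e_U$ as an idempotent endomorphism of $\IC(Y_{(L,\cO)}^M, (\varpi_{(L,\cO)}^M)_*\widetilde{\cE}^M)$ via the isomorphism above, its image is $\IC(Y_{(L,\cO)}^M, \overline{\cE}^M \otimes \cL^M_{P_U})$. First I would apply the additive functor $\uInd_{M\subset Q}^G$ to deduce that $\uInd_{M\subset Q}^G\bigl(\IC(Y_{(L,\cO)}^M, \overline{\cE}^M \otimes \cL^M_{P_U})\bigr)$ is the image of the idempotent $\uInd_{M\subset Q}^G(e_U)$ acting on $\uInd_{M\subset Q}^G\bigl(\IC(Y_{(L,\cO)}^M, (\varpi_{(L,\cO)}^M)_*\widetilde{\cE}^M)\bigr)$. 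Then I would transport this along the isomorphism~\eqref{eqn:uind}: since the $H_M$-action on the left-hand side of~\eqref{eqn:uind} is by definition the one functorially induced by the $H_M$-equivariant structure on $(\varpi_{(L,\cO)}^M)_*\widetilde{\cE}^M$, i.e.\ by the map~\eqref{eqn:end-alg2} into its endomorphism algebra, Theorem~\ref{thm:uind} tells us that $\uInd_{M\subset Q}^G(e_U)$ corresponds under~\eqref{eqn:uind} to the element $e_U \in \bk[H_M] \subseteq \bk[H_G] = \End\bigl(\IC(Y_{(L,\cO)}, (\varpi_{(L,\cO)})_*\widetilde{\cE})\bigr)$.

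It then remains to compute the image of this idempotent. Under the identification of $\IC(Y_{(L,\cO)}, (\varpi_{(L,\cO)})_*\widetilde{\cE})$ with the left regular module $\bk[H_G]$ (i.e.\ with $\IC(Y_{(L,\cO)}, \overline{\cE} \otimes \cL_{\bk[H_G]})$), on which $\bk[H_G] = \End$ acts by right multiplication, the idempotent $e_U$ acts by right multiplication, so its image corresponds to the left $\bk[H_G]$-submodule $\bk[H_G]e_U$. But $\bk[H_G]e_U \cong \bk[H_G] \otimes_{\bk[H_M]} \bk[H_M]e_U \cong \mathrm{Ind}_{H_M}^{H_G}(P_U)$, which, being projective, decomposes as $\bigoplus_{V \in \Irr(\bk[H_G])} P_V^{\oplus m_{V,U}}$. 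Pushing this decomposition back through the fully faithful additive functor $V \mapsto \IC(Y_{(L,\cO)}, \overline{\cE} \otimes \cL_V)$ gives the asserted isomorphism. I expect the main obstacle to be the bookkeeping in the middle step: one must verify carefully that the $H_M$-module structure functorially induced on the left-hand side of~\eqref{eqn:uind}, the one obtained by restricting the $H_G$-action in Theorem~\ref{thm:uind}, and the one coming from the regular-object description of $\IC(Y_{(L,\cO)}, (\varpi_{(L,\cO)})_*\widetilde{\cE})$ all coincide, and in particular that the embedding of endomorphism algebras induced by~\eqref{eqn:uind} really is the evident inclusion $\bk[H_M] \hookrightarrow \bk[H_G]$, so that taking images of idempotents genuinely computes induced modules; keeping the left/right module conventions of~\eqref{eqn:end-alg}--\eqref{eqn:end-alg2} consistent throughout is the delicate part.
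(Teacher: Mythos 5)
Your argument is correct, but it is organized differently from the paper's. The paper first observes that $\uInd_{M\subset Q}^G\bigl(\IC(Y_{(L,\cO)}^M,\overline{\cE}^M\otimes \cL_{P_U}^M)\bigr)$ is a direct summand of the left-hand side of~\eqref{eqn:uind}, hence a direct sum of the objects $\IC(Y_{(L,\cO)},\overline{\cE}\otimes\cL_{P_V})$ with unknown multiplicities, and then computes those multiplicities by applying $j^*$: it re-runs the geometric chain from the proof of Theorem~\ref{thm:uind} to write $j^*$ of the induced object as $\varphi_*\mathsf{Ind}_M^G(\overline{\cE}^{M;G}\otimes\cL_{P_U}^{M;G})[\dim Y_{(L,\cO)}]$, invokes Theorem~\ref{thm:compatible-equivariance} to replace $\overline{\cE}^{M;G}$ by $\varphi^*\overline{\cE}$, and concludes by the projection formula that this is $\overline{\cE}\otimes\cL_{I}[\dim Y_{(L,\cO)}]$ with $I=\mathrm{Ind}_{N_M(L,\cO)/L}^{N_G(L,\cO)/L}(P_U)$. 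You instead work entirely at the level of endomorphism algebras: you cut out the summand by a primitive idempotent $e_U\in\bk[N_M(L,\cO)/L]$ acting through~\eqref{eqn:end-alg2}, transport $\uInd(e_U)$ across~\eqref{eqn:uind} using the statement of Theorem~\ref{thm:uind} (extended from group elements to the group algebra by linearity, and using that $j^*$ is an isomorphism on $\End$ so that $\End$ of the right-hand side is $\bk[N_G(L,\cO)/L]$ acting by right multiplication on the regular object $\overline{\cE}\otimes\cL_{\bk[N_G(L,\cO)/L]}$), and identify the image of right multiplication by $e_U$ with $\overline{\cE}\otimes\cL_{\bk[N_G(L,\cO)/L]e_U}$, where $\bk[N_G(L,\cO)/L]e_U\cong\mathrm{Ind}(P_U)\cong\bigoplus_V P_V^{\oplus m_{V,U}}$. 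The compatibility bookkeeping you flag at the end is genuinely the crux, but it is exactly what Theorem~\ref{thm:uind} asserts (given the explicit description of the actions in \S\ref{ss:Loc-Galois} and \S\ref{subsect:extension-action}), so your route is sound. What it buys is that the geometric input (Theorem~\ref{thm:compatible-equivariance}, the projection formula along $\varphi$) is used only implicitly, packaged inside Theorem~\ref{thm:uind}, at the price of a slightly heavier formal apparatus of idempotents and module conventions; the paper's restriction-to-$Y_{(L,\cO)}$ computation is more hands-on but repeats part of the earlier proof.
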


\begin{proof}
Since the left-hand side is a direct summand of the left-hand side of~\eqref{eqn:uind}, we know it has the form stated on the right-hand side with some multiplicities. We can determine these multiplicities by applying the functor $j^*$ of restriction to $Y_{(L,\cO)}$. By the same argument as in the proof of Theorem~\ref{thm:uind},
\begin{equation*}
j^*\uInd_{M\subset Q}^G\bigl(\IC(Y_{(L,\cO)}^M,\overline{\cE}^M\otimes \cL_{P_U}^M)\bigr)
\cong
\varphi_*\mathsf{Ind}_M^G(\overline{\cE}^{M;G}\otimes \cL_{P_U}^{M;G})[\dim Y_{(L,\cO)}],
\end{equation*}
where $\cL_{P_U}^{M;G}$ is the local system on $Y_{(L,\cO)}^{M;G}$ corresponding to the representation $P_U$ via the Galois covering $\varpi_{(L,\cO)}^{M;G}$. 
Applying Theorem~\ref{thm:compatible-equivariance}, this becomes
\begin{equation*}
\varphi_*(\varphi^*\overline{\cE}\otimes \breve{\cL}_{P_U})[\dim Y_{(L,\cO)}],
\end{equation*}
where $\breve{\cL}_{P_U}$ is the local system on $\breve{Y}_{(L,\cO)}$ corresponding to the representation $P_U$ via the Galois covering $\psi$. Applying the projection formula, this in turn becomes $\overline{\cE}\otimes\cL_I[\dim Y_{(L,\cO)}]$, where $I$ denotes the induced representation $\mathrm{Ind}_{N_M(L,\cO)/L}^{N_G(L,\cO)/L}(P_U)$. The result follows.
\end{proof}

\subsection{A restriction theorem}

For the remainder of this section, let $(\cO_L,\cE_L)$ denote a cuspidal pair in $\fN_{L,\bk}^\cusp$ satisfying the conditions introduced at the outset:
\begin{enumerate}
\setcounter{enumi}{2}
\item $\cE_L$ is absolutely irreducible;
\item the isomorphism class of $\cE_L$ is fixed by $N_G(L)$.
\end{enumerate}
Recall from Proposition~\ref{prop:distinguished} and Lemma~\ref{lem:dist-norm} that $N_G(L,\cO_L)=N_G(L)$ for cuspidal pairs, and from Lemma~\ref{lem:cgu} that conditions $(1)$ and $(2)$ are automatically satisfied. 

As mentioned in \S\ref{ss:series}, there is another pair $(\cO_L',\cE_L')\in\fN_{L,\bk}^\cusp$ (frequently, and possibly always, equal to $(\cO_L,\cE_L)$), such that 
\begin{equation*}
\bT_{\fl}(\IC(\cO_L,\cE_L))\cong\IC(\cO_L'+\fz_L,\cE_L'\boxtimes\ubk_{\fz_L}).
\end{equation*} 
Since Fourier transform is compatible with field extensions and with the adjoint action of $G$, conditions $(3)$ and $(4)$ hold for $(\cO_L',\cE_L')$ also. Hence we can apply the results of this section and the previous one with $(\cO,\cE)=(\cO_L',\cE_L')$.

Combining Lemma~\ref{lem:fourier} and Theorem~\ref{thm:cocycle}, we have a canonical bijection
\begin{equation} \label{eqn:bijection-g}
\Irr(\bk[N_G(L)/L])\longleftrightarrow\fN_{G,\bk}^{(L,\cO_L,\cE_L)},
\end{equation}
in which $V\in\Irr(\bk[N_G(L)/L])$ corresponds to the unique pair $(\cO_V,\cE_V)\in\fN_{G,\bk}$ such that
\begin{equation*} \label{eqn:fourier-definition-of-bijection}
\bT_{\fg}(\IC(\cO_V,\cE_V))\cong\IC(Y_{(L,\cO_L')},\overline{\cE_L'}\otimes\cL_{V}).
\end{equation*}
We now show that the bijection~\eqref{eqn:bijection-g} between simple objects is implemented by a functor between abelian categories. 

Recall the isomorphism~\eqref{eqn:fourier}. Since $\bT_{\fg}$ is an equivalence, the $(N_G(L)/L)$-action on $\IC(Y_{(L,\cO_L')},(\varpi_{(L,\cO_L')})_*\widetilde{\cE_L'})$ induces an $(N_G(L)/L)$-action on $\Ind_{L\subset P}^G(\IC(\cO_L,\cE_L))$. Hence we have a functor
\[
\bS_{G}^{(L,\cO_L,\cE_L)}=\Hom(\Ind_{L\subset P}^G(\IC(\cO_L,\cE_L)),-):\Perv_G(\cN_G,\bk)\to\Rep(N_G(L)/L,\bk).
\]
In the special case where $L=T$ is a maximal torus (and hence $(\cO_L,\cE_L)=(\{0\},\ubk)$), this is the Springer functor $\bS_G:\Perv_G(\cN_G,\bk)\to\Rep(N_G(T)/T,\bk)$, as defined in~\cite[Section 5]{ahjr}. So $\bS_{G}^{(L,\cO_L,\cE_L)}$ is a `generalized Springer functor'. 

\begin{prop}
Let $(\cO,\cE)\in\fN_{G,\bk}$.
\begin{enumerate}
\item If $(\cO,\cE)\in\fN_{G,\bk}^{(L,\cO_L,\cE_L)}$, then $\bS_{G}^{(L,\cO_L,\cE_L)}(\IC(\cO,\cE))$ is the irreducible representation of $N_G(L)/L$ corresponding to $(\cO,\cE)$ under~\eqref{eqn:bijection-g}.  
\item If $(\cO,\cE)\in\fN_{G,\bk}^{(L_1,\cO_{L_1},\cE_{L_1})}$ for some other triple $(L_1,\cO_{L_1},\cE_{L_1})$ such that $Y_{(L,\cO_L')}\not\subset X_{(L_1,\cO_{L_1}')}$, then $\bS_{G}^{(L,\cO_L,\cE_L)}(\IC(\cO,\cE))=0$.
\end{enumerate}
\end{prop}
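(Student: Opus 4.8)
The plan is to transport both statements through the Fourier--Sato transform $\bT_\fg$ and then read off the answer from the description of $(\varpi_{(L,\cO_L')})_*\widetilde{\cE_L'}$ provided by Theorem~\ref{thm:cocycle}. First I would observe that, since $\Hom$ in $\Perv_G(\cN_G,\bk)$ between objects supported on $\cN_G$ coincides with $\Hom$ in $\Perv_G(\fg,\bk)$, and since $\bT_\fg$ is an exact, fully faithful autoequivalence of conic $G$-equivariant perverse sheaves on $\fg$, applying $\bT_\fg$ and invoking~\eqref{eqn:fourier} yields --- by the very definition of the $(N_G(L)/L)$-action on $\Ind_{L\subset P}^G(\IC(\cO_L,\cE_L))$ --- an $(N_G(L)/L)$-equivariant isomorphism
\[
\bS_{G}^{(L,\cO_L,\cE_L)}(\IC(\cO,\cE)) \;\cong\; \Hom\bigl(\IC(Y_{(L,\cO_L')},(\varpi_{(L,\cO_L')})_*\widetilde{\cE_L'}),\,\bT_\fg(\IC(\cO,\cE))\bigr),
\]
where $N_G(L)/L=N_G(L,\cO_L')/L$ (recall $\cO_L'$ is distinguished) acts on the right by precomposition with the canonical equivariant structure on $\IC(Y_{(L,\cO_L')},(\varpi_{(L,\cO_L')})_*\widetilde{\cE_L'})$.

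For Part (2), Lemma~\ref{lem:fourier} gives $\bT_\fg(\IC(\cO,\cE))\cong\IC(Y_{(L_1,\cO_{L_1}')},\cD_1)$ for some simple local system $\cD_1$. I would then use that $\IC$ preserves heads (\cite[Proposition~2.28]{juteau-aif}): since the head of $(\varpi_{(L,\cO_L')})_*\widetilde{\cE_L'}$ is a nonzero semisimple local system on $Y_{(L,\cO_L')}$, every simple quotient of $\IC(Y_{(L,\cO_L')},(\varpi_{(L,\cO_L')})_*\widetilde{\cE_L'})$ is of the form $\IC(Y_{(L,\cO_L')},\cN)$ for a simple summand $\cN$ of that head, and hence has support $X_{(L,\cO_L')}$ with $Y_{(L,\cO_L')}$ as its open dense Lusztig stratum. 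A nonzero Hom in the displayed isomorphism would make $\IC(Y_{(L_1,\cO_{L_1}')},\cD_1)$ such a quotient, forcing $X_{(L_1,\cO_{L_1}')}=X_{(L,\cO_L')}$ and therefore (uniqueness of the open dense stratum of a stratum closure) $Y_{(L_1,\cO_{L_1}')}=Y_{(L,\cO_L')}$, whence $Y_{(L,\cO_L')}\subset X_{(L_1,\cO_{L_1}')}$, contrary to hypothesis. Hence $\bS_{G}^{(L,\cO_L,\cE_L)}(\IC(\cO,\cE))=0$.

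For Part (1), let $V\in\Irr(\bk[N_G(L)/L])$ be the representation attached to $(\cO,\cE)$ by~\eqref{eqn:bijection-g}, so $\bT_\fg(\IC(\cO,\cE))\cong\IC(Y_{(L,\cO_L')},\overline{\cE_L'}\otimes\cL_V)$. Writing $j:Y_{(L,\cO_L')}\hookrightarrow X_{(L,\cO_L')}$ for the open inclusion, functoriality of $j_{!*}$, the identity $j^*\circ j_{!*}\cong\mathrm{id}$, and the absence of nonzero subobjects supported on the boundary in an intermediate extension together show that $j^*$ induces an isomorphism $\Hom(\IC(Y_{(L,\cO_L')},\cM),\IC(Y_{(L,\cO_L')},\cN))\simto\Hom_{\Loc(Y_{(L,\cO_L')},\bk)}(\cM,\cN)$ for local systems $\cM,\cN$, intertwining the evident precomposition actions. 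Feeding in $(\varpi_{(L,\cO_L')})_*\widetilde{\cE_L'}\cong\overline{\cE_L'}\otimes\cL_{\bk[N_G(L)/L]}$ from Theorem~\ref{thm:cocycle}(3) and applying the equivalence of Lemma~\ref{lem:local-systems-abs-irr} (legitimate since $\widetilde{\cE_L'}=(\varpi_{(L,\cO_L')})^*\overline{\cE_L'}$ is absolutely irreducible), I would obtain $\bS_{G}^{(L,\cO_L,\cE_L)}(\IC(\cO,\cE))\cong\Hom_{\bk[N_G(L)/L]}(\bk[N_G(L)/L],V)\cong V$ as vector spaces, and then check $(N_G(L)/L)$-equivariance: by the discussion around~\eqref{eqn:isom-direct-image-tildeE} and the one preceding~\eqref{eqn:end-alg}, the relevant action on $(\varpi_{(L,\cO_L')})_*\widetilde{\cE_L'}$ corresponds to right multiplication on the regular module $\bk[N_G(L)/L]$, and precomposing a map $\bk[N_G(L)/L]\to V$ with right multiplication by $n$ and evaluating at $1$ multiplies the value by $n$, so the resulting action on $V$ is the given one. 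This identifies $\bS_{G}^{(L,\cO_L,\cE_L)}(\IC(\cO,\cE))$ with the irreducible representation corresponding to $(\cO,\cE)$ under~\eqref{eqn:bijection-g}.

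The step I expect to be most delicate is this last equivariance bookkeeping in Part (1): every identification up to the level of vector spaces is formal, but one must verify that the action built into the target of $\bS_{G}^{(L,\cO_L,\cE_L)}$ (precomposition with the Fourier-transported equivariant structure) matches the one defining $V$ via~\eqref{eqn:bijection-g} exactly, and not merely up to an automorphism of $N_G(L)/L$ such as $n\mapsto n^{-1}$; the explicit description of the action on $\pi_*\ubk_X$ recalled before~\eqref{eqn:end-alg} is precisely what pins this down. As in~\cite[\S7.6]{ahr} one could alternatively reduce to the $L=T$ case --- the ordinary Springer functor, where the analogous compatibility is known --- and note that the argument is unaffected by the presence of a nontrivial cuspidal datum.
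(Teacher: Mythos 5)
Your proposal is correct and follows essentially the same route as the paper: apply $\bT_\fg$ and \eqref{eqn:fourier} to rewrite $\bS_{G}^{(L,\cO_L,\cE_L)}(\IC(\cO,\cE))$ as a $\Hom$ between $\IC$-extensions on $X_{(L,\cO_L')}$, identify it in case (1) with $\Hom_{\Rep(N_G(L)/L,\bk)}(\bk[N_G(L)/L],V)\cong V$ via Theorem~\ref{thm:cocycle}(3) and Lemma~\ref{lem:local-systems-abs-irr}, and kill it in case (2) by a support argument (you phrase this via ``$\IC$ preserves heads'' forcing equality of Lusztig strata, whereas the paper cites Lusztig's closure result, but these are interchangeable). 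Your extra care with the restriction isomorphism for $\Hom$ of $\IC$-sheaves and with the equivariance bookkeeping only spells out steps the paper leaves implicit.
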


\begin{proof}
By definition, we have an isomorphism
\begin{equation} \label{eqn:functor-value}
\bS_{G}^{(L,\cO_L,\cE_L)}(\IC(\cO,\cE))\cong\Hom\bigl(\IC(Y_{(L,\cO_L')},(\varpi_{(L,\cO_L')})_*\widetilde{\cE_L'}),\bT_{\fg}(\IC(\cO,\cE))\bigr).
\end{equation}
In case (1), if $(\cO,\cE)=(\cO_V,\cE_V)$ for $V\in\Irr(\bk[N_G(L)/L])$, then the right-hand side of~\eqref{eqn:functor-value} is
\[
\begin{split}
\Hom\bigl(\IC(Y_{(L,\cO_L')},&(\varpi_{(L,\cO_L')})_*\widetilde{\cE_L'}),\IC(Y_{(L,\cO_L')},\overline{\cE_L'}\otimes\cL_{V})\bigr)\\
&\cong\Hom_{\Loc(Y_{(L,\cO_L')},\bk)}\bigl(\overline{\cE_L'}\otimes\cL_{\bk[N_G(L)/L]},\overline{\cE_L'}\otimes\cL_{V}\bigr)\\
&\cong\Hom_{\Rep(N_G(L)/L,\bk)}(\bk[N_G(L)/L],V)\cong V.
\end{split}
\]
(Here, the second isomorphism uses Lemma~\ref{lem:local-systems-abs-irr}.)
In case (2), the right-hand side of~\eqref{eqn:functor-value} vanishes because $\bT_{\fg}(\IC(\cO,\cE))$ is a simple perverse sheaf supported in $X_{(L_1,\cO_{L_1}')}$, which by~\cite[Proposition 6.5]{lusztig-cusp2} either does not intersect $X_{(L,\cO_L')}$ or intersects it only in the boundary of $Y_{(L,\cO_L')}$.
\end{proof}

The following result says that generalized Springer functors are compatible with restriction to Levi subgroups. In the $L=T$ case (i.e., the case of Springer functors), this was shown in~\cite[Section 7]{ahr}.

\begin{thm} \label{thm:restriction-compatibility}
We have an isomorphism of functors
\[
\mathrm{Res}_{N_M(L)/L}^{N_G(L)/L}\circ\bS_{G}^{(L,\cO_L,\cE_L)}\cong
\bS_{M}^{(L,\cO_L,\cE_L)}\circ\Res_{M\subset Q}^G.
\]
In particular, if $(\cO,\cE)\in\fN_{G,\bk}^{(L,\cO_L,\cE_L)}$ corresponds to $V\in\Irr(\bk[N_G(L)/L])$ under the bijection~\eqref{eqn:bijection-g}, then
\[
\mathrm{Res}_{N_M(L)/L}^{N_G(L)/L}(V)\cong\bS_{M}^{(L,\cO_L,\cE_L)} \bigl( \Res_{M\subset Q}^G(\IC(\cO,\cE)) \bigr).
\]
\end{thm}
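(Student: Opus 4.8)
The plan is to deduce Theorem~\ref{thm:restriction-compatibility} from the transitivity isomorphism~\eqref{eqn:trans-uind} together with the equivariance statement of Theorem~\ref{thm:uind}, applied to the triple $(\cO,\cE)=(\cO_L',\cE_L')$. The second assertion follows from the first by applying the functor isomorphism to $\IC(\cO,\cE)$ and using Proposition~\ref{prop:equivariance-hatE}'s consequence, namely the description of $\bS_G^{(L,\cO_L,\cE_L)}(\IC(\cO_V,\cE_V))\cong V$ recorded in the proposition immediately preceding the theorem; so I will concentrate on the isomorphism of functors.

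First I would unwind the definitions. By construction, $\bS_G^{(L,\cO_L,\cE_L)}=\Hom(\Ind_{L\subset P}^G(\IC(\cO_L,\cE_L)),-)$, with its $N_G(L)/L$-action coming, via the Fourier transform isomorphism~\eqref{eqn:fourier}, from the $(N_G(L)/L)$-action on $\IC(Y_{(L,\cO_L')},(\varpi_{(L,\cO_L')})_*\widetilde{\cE_L'})$. Using that $\bT_{\fg}$ is an equivalence and commutes suitably with $\Ind$ and $\Res$ (both $\Ind_{M\subset Q}^G$ and $\Ind_{L\subset R}^M$ have their Fourier-transformed descriptions via~\eqref{eqn:fourier}), I would rewrite both sides on the Fourier side. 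On the left, $\mathrm{Res}_{N_M(L)/L}^{N_G(L)/L}\circ\Hom(\Ind_{L\subset P}^G(\IC(\cO_L,\cE_L)),-)$ becomes $\Hom\bigl(\IC(Y_{(L,\cO_L')},(\varpi_{(L,\cO_L')})_*\widetilde{\cE_L'}),\bT_\fg(-)\bigr)$ with the $N_M(L)/L$-action obtained by restricting the $N_G(L)/L$-action. On the right, I would use that $\Res_{M\subset Q}^G$ is adjoint to $\Ind_{M\subset Q}^G$ together with the transitivity $\Ind_{L\subset P}^G\cong\Ind_{M\subset Q}^G\circ\Ind_{L\subset R}^M$ to rewrite $\bS_M^{(L,\cO_L,\cE_L)}\circ\Res_{M\subset Q}^G$ as $\Hom(\Ind_{L\subset R}^M(\IC(\cO_L,\cE_L)),\Res_{M\subset Q}^G(-))\cong\Hom(\Ind_{M\subset Q}^G\Ind_{L\subset R}^M(\IC(\cO_L,\cE_L)),-)=\Hom(\Ind_{L\subset P}^G(\IC(\cO_L,\cE_L)),-)$; the underlying functors therefore agree, and the whole content of the theorem is that the two $N_M(L)/L$-actions coincide.

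The key step is then to identify these two $N_M(L)/L$-actions, and this is exactly what Theorem~\ref{thm:uind} provides. Indeed, translating the $N_M(L)/L$-action coming from $\Res_{M\subset Q}^G$ and the $M$-level functor $\bS_M^{(L,\cO_L,\cE_L)}$ through the Fourier transform, it becomes the action on $\IC(Y_{(L,\cO_L')},(\varpi_{(L,\cO_L')})_*\widetilde{\cE_L'})$ obtained by applying $\uInd_{M\subset Q}^G$ to the canonical $N_M(L,\cO_L')/L$-action on $\IC(Y_{(L,\cO_L')}^M,(\varpi_{(L,\cO_L')}^M)_*\widetilde{\cE_L'}^M)$ and transporting along the isomorphism~\eqref{eqn:uind}. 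By Theorem~\ref{thm:uind} this agrees with the restriction of the canonical $N_G(L,\cO_L')/L$-action, which (since $N_G(L,\cO_L')=N_G(L)$ for the cuspidal pair $(\cO_L',\cE_L')$ by Proposition~\ref{prop:distinguished} and Lemma~\ref{lem:dist-norm}) is precisely the action defining $\mathrm{Res}_{N_M(L)/L}^{N_G(L)/L}\circ\bS_G^{(L,\cO_L,\cE_L)}$. This gives the isomorphism of functors.

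The main obstacle, as usual in this circle of ideas, is bookkeeping rather than a conceptual difficulty: one must check that the identification of underlying functors in the previous paragraph is genuinely compatible with the Fourier-transform reformulation of Theorem~\ref{thm:uind}, i.e.\ that the adjunction isomorphism ${}'\Res\dashv\Ind\dashv\Res$, the transitivity isomorphism~\eqref{eqn:trans-uind}, and the isomorphism~\eqref{eqn:fourier} all interact coherently so that the two putative $N_M(L)/L$-structures really are being compared via the \emph{same} isomorphism~\eqref{eqn:uind} that appears in Theorem~\ref{thm:uind}. This is the analogue of the diagram-chase carried out in the $L=T$ case in~\cite[Section 7]{ahr}; as there, I would present the verification by reducing, via $\bT_\fg$, to the statement about induced perverse sheaves on the Lie algebra, and then cite the coherence of the standard diagram defining~\eqref{eqn:trans-uind} (the analogue of~\cite[(7.6)]{ahr}), omitting the routine details exactly as in the proof of Theorem~\ref{thm:uind}.
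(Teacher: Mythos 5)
Your proposal is correct and follows essentially the same route as the paper's own proof: one applies the Fourier transform to the isomorphism~\eqref{eqn:uind} (using~\eqref{eqn:fourier} for $G$ and for $M$) to recover the transitivity isomorphism $\Ind_{M\subset Q}^G\circ\Ind_{L\subset R}^M(\IC(\cO_L,\cE_L))\cong\Ind_{L\subset P}^G(\IC(\cO_L,\cE_L))$, invokes Theorem~\ref{thm:uind} to see that this isomorphism is $(N_M(L)/L)$-equivariant, and then concludes by the adjunction $\Ind_{M\subset Q}^G\dashv\Res_{M\subset Q}^G$ applied inside $\Hom(-,\cF)$; the ``coherence'' you flag is handled in the paper simply by noting that~\eqref{eqn:uind} was constructed from~\eqref{eqn:g-uind},~\eqref{eqn:m-uind} and~\eqref{eqn:trans-uind}, so its Fourier transform is by construction the transitivity isomorphism.
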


\begin{proof}
Applying Fourier transform to the isomorphism~\eqref{eqn:uind}, and using~\eqref{eqn:fourier} for $G$ and for $M$, we obtain exactly the isomorphism
\begin{equation} \label{eqn:transitivity}
\Ind_{M\subset Q}^G\bigl(\Ind_{L\subset R}^M(\IC(\cO_L,\cE_L))\bigr)\cong 
\Ind_{L\subset P}^G(\IC(\cO_L,\cE_L))
\end{equation}
derived from the transitivity isomorphism $\Ind_{M\subset Q}^G\circ\Ind_{L\subset R}^M\cong\Ind_{L\subset P}^G$ (see~\cite[Lemma 2.6]{genspring1}). Theorem~\ref{thm:uind} implies that the isomorphism~\eqref{eqn:transitivity} is $(N_M(L)/L)$-equivariant, where the $(N_M(L)/L)$-action on the left-hand side is obtained from that on $\Ind_{L\subset R}^M(\IC(\cO_L,\cE_L))$ and the $(N_M(L)/L)$-action on the right-hand side is obtained by restricting the $(N_G(L)/L)$-action. So for $\cF\in\Perv_G(\cN_G,\bk)$ we have
\[
\begin{split}
\mathrm{Res}_{N_M(L)/L}^{N_G(L)/L}&(\bS_{G}^{(L,\cO_L,\cE_L)}(\cF))
\cong \Hom\bigl(\Ind_{M\subset Q}^G\bigl(\Ind_{L\subset R}^M(\IC(\cO_L,\cE_L))\bigr),\cF\bigr)\\
&\cong \Hom\bigl(\Ind_{L\subset R}^M(\IC(\cO_L,\cE_L)),\Res_{M\subset Q}^G(\cF))\\
&\cong\bS_{M}^{(L,\cO_L,\cE_L)}(\Res_{M\subset Q}^G(\cF)),
\end{split}
\]
which proves the claim. (Here the first isomorphism follows from~\eqref{eqn:transitivity}, and the second one from adjunction.)
\end{proof}

\section{Strategy of the proof of Theorem~\ref{thm:main}}
\label{sect:strategy}

Continue to let $G$ be a connected reductive group over $\C$. In this section we will introduce some hypotheses on $G$ and $\bk$ which imply the modular generalized Springer correspondence for $G$, and explain how to reduce Theorem~\ref{thm:main} to case-by-case checking.

\subsection{Central characters}

Let $L$ be a Levi subgroup of $G$, and $(\cO_L,\cE_L)\in\fN_{L,\bk}$. Recall that $\cE_L$ corresponds to an irreducible representation $V$ over $\bk$ of the finite group $A_L(x):=L_x/L_x^\circ$, where $x\in\cO_L$. The inclusion $Z(L)\subset L_x$ induces a homomorphism $Z(L)/Z(L)^\circ\to A_L(x)$ whose image is a central subgroup. If $\chi:Z(L)/Z(L)^\circ\to\bk^\times$ is a homomorphism, we say that $\cE_L$ has \emph{central character} $\chi$ if $Z(L)/Z(L)^\circ$ acts on $V$ via $\chi$. If $\cE_L$ is absolutely irreducible, it is guaranteed to have some central character by Schur's Lemma.

\begin{lem} \label{lem:central-consistency}
Let $(\cO_L,\cE_L)\in\fN_{L,\bk}^{\cusp}$, and suppose $\cE_L$ has central character $\chi$. 
For any pair $(\cO,\cE)\in\fN_{G,\bk}^{(L,\cO_L,\cE_L)}$, $\cE$ has central character $\chi\circ\varrho$ where $\varrho$ denotes the natural homomorphism $Z(G)/Z(G)^\circ\to Z(L)/Z(L)^\circ$.
\end{lem}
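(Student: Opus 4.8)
The plan is to reduce the statement to a known geometric fact about the central action on the Fourier transform, or equivalently to track how the center acts through the induction functor. Here is how I would organize it.

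\medskip

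\textbf{Approach.} The cleanest route uses Lemma~\ref{lem:fourier} and the isomorphism~\eqref{eqn:fourier}. If $(\cO,\cE)\in\fN_{G,\bk}^{(L,\cO_L,\cE_L)}$, then $\bT_{\fg}(\IC(\cO,\cE))\cong\IC(Y_{(L,\cO_L')},\cD)$ for some simple local system $\cD$ which is a quotient of $(\varpi_{(L,\cO_L')})_*\widetilde{\cE_L'}$. The first step is to observe that the central character is preserved (up to the obvious twist) by Fourier transform: since $\bT_{\fg}$ intertwines the $G$-actions on $\fg$ and $\fg^*\cong\fg$, and in particular the action of $Z(G)$, one checks that $\IC(\cO,\cE)$ and $\bT_{\fg}(\IC(\cO,\cE))$ are acted on compatibly by $Z(G)/Z(G)^\circ$, so it suffices to compute the central character of the local system $\cD$ on $Y_{(L,\cO_L')}$. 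The second step is to identify this central character. Since $\cD$ is a quotient of $(\varpi_{(L,\cO_L')})_*\widetilde{\cE_L'}$, and by Theorem~\ref{thm:cocycle}(3) every such quotient is of the form $\overline{\cE_L'}\otimes\cL_V$ with $\cL_V$ pulled back from the Galois covering (hence trivialized, in particular with trivial monodromy along $Z(G)^\circ$-directions), the central character of $\cD$ equals that of $\overline{\cE_L'}$. Finally, $(\varpi_{(L,\cO_L')})^*\overline{\cE_L'}\cong\widetilde{\cE_L'}$ by~\eqref{eqn:isom-barE}, and $\widetilde{\cE_L'}$ is by construction obtained from $\cE_L'$ by a pull-back that is constant in the $\fz_L$-direction; so its monodromy, restricted to the image of $Z(G)$, is governed by the restriction of the monodromy of $\cE_L'$ along $Z(G)\hookrightarrow Z(L)$, which is precisely $\chi\circ\varrho$. (Here one uses that $\cE_L$ and $\cE_L'$ have the same central character, as noted after~\eqref{eqn:l-fourier} and in the discussion preceding Theorem~\ref{thm:restriction-compatibility}, again because $\bT_{\fl}$ is $Z(L)$-equivariant.)

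\medskip

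\textbf{Key steps, in order.} (i) Reduce to computing the central character of the local system $\cD=\overline{\cE_L'}\otimes\cL_V$ on $Y_{(L,\cO_L')}$, using Fourier-equivariance under $Z(G)/Z(G)^\circ$ and Lemma~\ref{lem:fourier}. (ii) Show $\cL_V$ contributes trivially to the central character, since it is pulled back along $\varpi_{(L,\cO_L')}$, whose source $\widetilde{Y}_{(L,\cO_L')}=G\times^L(\cO_L'+\fz_L^\circ)$ has the property that the $Z(G)$-action factors through an action on the $\fz_L^\circ$-direction, and any local system pulled back from the Galois covering is constant along those directions in the relevant sense; concretely, the $Z(G)/Z(G)^\circ$-equivariant structure on $\cL_V$ acts trivially. (iii) Compute the central character of $\overline{\cE_L'}$ from~\eqref{eqn:isom-barE} and the explicit description of $\widetilde{\cE_L'}$ in terms of $\cE_L'$: the monodromy of $\widetilde{\cE_L'}$ restricted to a loop coming from $Z(G)^\circ$ acting on $\widetilde{Y}_{(L,\cO_L')}$ is the monodromy of $\cE_L'$ along the image of that loop in $\cO_L'$, which is exactly the character $\chi$ composed with $\varrho$. (iv) Assemble, and invoke that $\cE_L$ and $\cE_L'$ share the same central character.

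\medskip

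\textbf{Main obstacle.} The delicate point is step (ii)--(iii): making precise the claim that the $Z(G)/Z(G)^\circ$-equivariant structure on $\IC(\cO,\cE)$ corresponds, after Fourier transform and restriction to $Y_{(L,\cO_L')}$, to the natural $Z(G)/Z(G)^\circ$-action on the local system, and that this action is detected on $\widetilde{\cE_L'}$ exactly by $\chi\circ\varrho$. This requires unwinding the $G$-equivariant structures on $\widetilde{\cE_L'}$, on $(\varpi_{(L,\cO_L')})_*\widetilde{\cE_L'}$, and on its quotients, and checking that the identification $(\varpi_{(L,\cO_L')})^*\overline{\cE_L'}\cong\widetilde{\cE_L'}$ of~\eqref{eqn:isom-barE} is compatible with the $Z(G)$-actions. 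None of this is conceptually hard, but it is the place where one must be careful rather than formal; everything else is a direct citation of Lemma~\ref{lem:fourier}, Theorem~\ref{thm:cocycle}, and the $Z(L)$-equivariance of $\bT_{\fl}$.
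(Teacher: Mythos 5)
Your route is genuinely different from the paper's: the paper proves this in one line, directly from the definition of $\Ind_{L\subset P}^G$ (as in Lusztig's \S 3.2 argument) --- $Z(G)$ lies in $Z(L)\subset L$ and acts on $\IC(\cO_L,\cE_L)$ by the scalar $\chi\circ\varrho$; since the induction functor is built from pullbacks and pushforwards along $G$-equivariant maps on which $Z(G)$ acts trivially, $Z(G)$ acts by the same scalar on $\Ind_{L\subset P}^G(\IC(\cO_L,\cE_L))$, hence on every simple quotient, hence on $\cE$. No Fourier transform, no Theorem~\ref{thm:cocycle}, and no comparison of $\cE_L$ with $\cE_L'$ is needed.

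As written, your argument has a genuine gap in step (ii): you invoke Theorem~\ref{thm:cocycle}(3) to write every simple quotient of $(\varpi_{(L,\cO_L')})_*\widetilde{\cE_L'}$ as $\overline{\cE_L'}\otimes\cL_V$, but Theorem~\ref{thm:cocycle} requires the hypotheses \eqref{eqn:distinguished-substitute}--\eqref{eqn:fixed} --- in particular absolute irreducibility of $\cE_L'$ and the condition that its isomorphism class be fixed by $N_G(L,\cO_L')$ --- and none of these is a hypothesis of the present lemma, which assumes only that the cuspidal pair has \emph{some} central character. (This also matters for the logical architecture: in the proof of Theorem~\ref{thm:conditional}, condition \eqref{eqn:fixed} is verified \emph{using} the distinctness of central characters together with Lemma~\ref{lem:central-conjugation}, so the central-character lemmas must not presuppose Theorem~\ref{thm:cocycle}.) The gap is repairable without Theorem~\ref{thm:cocycle}: since the pullback of $\widetilde{\cE_L'}$ to $G\times(\cO_L'+\fz_L^\circ)$ is $\ubk_G\boxtimes(\cE_L'\boxtimes\ubk_{\fz_L^\circ})$ and $Z(G)\subset Z(L)$ acts on $\cE_L'$ by $\chi\circ\varrho$ (Lemma~\ref{lem:fourier-consistency}, whose proof is independent), the element $z\in Z(G)$ acts on $\widetilde{\cE_L'}$, hence by functoriality on $(\varpi_{(L,\cO_L')})_*\widetilde{\cE_L'}$ and on each of its simple quotients $\cD$, by the scalar $\chi(\varrho(z))$; transporting this through the $G$-equivariant equivalence $\bT_{\fg}$ gives the same scalar action on $\IC(\cO,\cE)$ and so on $\cE$. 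But notice that once you argue this way, the Fourier transform is a detour: the identical ``scalar central action passes to quotients'' argument applied directly to the induced perverse sheaf is exactly the paper's proof, and it avoids both \eqref{eqn:fourier} and the need for Lemma~\ref{lem:fourier-consistency}.
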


\begin{proof}
This follows easily from the definition of $\Ind_{L\subset P}^G(\IC(\cO_L,\cE_L))$, as in the setting of $\Qlb$-sheaves~\cite[\S 3.2]{lusztig}.
\end{proof}

We will sometimes refer to $\chi\circ\varrho$, rather than to $\chi$, as the central character of $\cE_L$. This does no harm because the homomorphism $\varrho$ is well known to be surjective. 

\begin{lem} \label{lem:fourier-consistency}
Let $(\cO_L,\cE_L)\in\fN_{L,\bk}^{\cusp}$, and define $(\cO_L',\cE_L')\in\fN_{L,\bk}^{\cusp}$ by Fourier transform as in \S{\rm \ref{ss:series}}. If $\cE_L$ has central character $\chi$, then so does $\cE_L'$.
\end{lem}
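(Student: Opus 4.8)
The plan is to reduce the statement to the compatibility of the Fourier--Sato transform $\bT_{\fl}$ with the central action, by tracking how $Z(L)/Z(L)^\circ$ acts on the relevant perverse sheaves. Recall that $(\cO_L', \cE_L')$ is characterized by the isomorphism
\[
\bT_{\fl}(\IC(\cO_L, \cE_L)) \cong \IC(\cO_L' + \fz_L, \cE_L' \boxtimes \ubk_{\fz_L}).
\]
The key observation is that $Z(L)/Z(L)^\circ$ acts naturally on every object of $\Perv_L(\cN_L, \bk)$ (indeed on every $L$-equivariant perverse sheaf on $\fl$ or $\cN_L$), via the $L$-equivariant structure composed with the inclusion $Z(L) \hookrightarrow L$; since $Z(L)^\circ$ acts trivially (it is connected and the action is through an $L$-equivariant structure on a space with an $L$-action factoring appropriately), this descends to $Z(L)/Z(L)^\circ$. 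Concretely, for $\cF = \IC(\cO_L, \cE_L)$, the action of $z \in Z(L)/Z(L)^\circ$ on $\cF$ is by the scalar $\chi(z)$ precisely because $\cE_L$ has central character $\chi$; and for $\IC(\cO_L' + \fz_L, \cE_L' \boxtimes \ubk_{\fz_L})$, the action of $z$ is by the scalar $\chi'(z)$, where $\chi'$ is the central character of $\cE_L'$ (the factor $\ubk_{\fz_L}$ contributes trivially, since $Z(L)$ acts trivially on the local system $\ubk_{\fz_L}$).

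First I would make precise the claim that $\bT_{\fl}$ is equivariant for this $Z(L)/Z(L)^\circ$-action, i.e. that for any conic $L$-equivariant perverse sheaf $\cG$ on $\fl$ the natural $Z(L)/Z(L)^\circ$-action on $\bT_{\fl}(\cG)$ (coming from its $L$-equivariant structure) corresponds under the canonical isomorphism to the one on $\cG$. This follows from the functoriality of the Fourier--Sato transform with respect to the $G$-action---here applied to the subgroup $Z(L) \subset L$ acting on $\fl$---exactly as in the properties recorded in \cite{ahjr}; one uses that $\bT_{\fl}$ commutes with pullback and pushforward along the maps defining an equivariant structure. Given this, applying $\bT_{\fl}$ to $\IC(\cO_L, \cE_L)$ and comparing the two sides of the displayed isomorphism forces $\chi = \chi'$ as characters of $Z(L)/Z(L)^\circ$: the left-hand side carries the $z$-action by $\chi(z)$, transported by the ($Z(L)/Z(L)^\circ$-equivariant) isomorphism to the $z$-action by $\chi'(z)$ on the right-hand side, and since $\IC(\cO_L' + \fz_L, \cE_L' \boxtimes \ubk_{\fz_L})$ is simple, Schur's lemma identifies these scalars.

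The main obstacle I anticipate is purely bookkeeping: verifying carefully that the ``canonical'' $Z(L)/Z(L)^\circ$-action attached to an $L$-equivariant perverse sheaf is intertwined by $\bT_{\fl}$, since the definition of $\bT_{\fl}$ on equivariant objects goes through the quotient stack or through a Fourier transform on $\fl \times_{\mathrm{pt}} \mathrm{B}L$-type resolutions, and one must confirm that the contragredient/Fourier kernel does not introduce a twist by a character of $Z(L)$. In practice this is a known compatibility (it is implicit in the proof of the analogous statement for $\Qlb$-sheaves in \cite[\S 3.2]{lusztig} and in the treatment of \cite{ahjr}), so I would cite those sources rather than reprove it, and the argument above would be compressed to a few lines: ``$\bT_{\fl}$ commutes with the central action by functoriality, and the factor $\ubk_{\fz_L}$ has trivial central character, so the central characters of $\IC(\cO_L,\cE_L)$ and of $\IC(\cO_L'+\fz_L, \cE_L' \boxtimes \ubk_{\fz_L})$ agree.''
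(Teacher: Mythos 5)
Your proposal is correct and follows essentially the same route as the paper: the paper's proof is a one-line appeal to the definition of the Fourier transform (citing Lusztig's treatment in the $\Qlb$-setting), which is precisely the compatibility of $\bT_{\fl}$ with the $Z(L)/Z(L)^\circ$-action that you spell out, together with the triviality of the central action on $\ubk_{\fz_L}$. The only cosmetic point is that you need not presuppose that $\cE_L'$ has a central character $\chi'$: transporting the scalar action by $\chi$ through the equivariant isomorphism and restricting to the open stratum directly shows that $Z(L)/Z(L)^\circ$ acts on $\cE_L'$ by $\chi$, which gives both existence and the identification at once.
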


\begin{proof}
This follows easily from the definition of the Fourier transform, as in the setting of $\Qlb$-sheaves~\cite[Section 9]{lusztig-fourier}.
\end{proof}

\begin{lem}
\label{lem:central-conjugation}
Let $(\cO_L,\cE_L)\in\fN_{L,\bk}$, and assume $\cE_L$ has central character $\chi$. Then for any $n \in N_G(L, \cO_L)$, the local system $n^* \cE_L$ on $\cO_L$ has central character $\chi$.
\end{lem}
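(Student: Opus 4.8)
The plan is to pass from local systems to representations, to compute the central character of $n^*\cE_L$ directly on a stalk, and then to invoke the surjectivity of $\varrho$ recalled above; the argument is essentially routine once that surjectivity is in hand, the only thing to watch being the bookkeeping of conjugation conventions.

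First I would observe that, since $n$ normalizes $L$, the pullback $n^*\cE_L$ along the automorphism $y\mapsto n\cdot y$ of $\cO_L$ inherits a canonical $L$-equivariant structure, obtained by transporting that of $\cE_L$ along the identity $n\cdot(g\cdot y)=(ngn^{-1})\cdot(n\cdot y)$. Fixing $x\in\cO_L$ as in the setup of Lemma~\ref{lem:central-consistency}, and using $L_{n\cdot x}=nL_xn^{-1}$, one reads off that the stalk $(n^*\cE_L)_x$ equals $(\cE_L)_{n\cdot x}$, with $g\in L_x$ acting the way $ngn^{-1}\in L_{n\cdot x}$ acts on $(\cE_L)_{n\cdot x}$. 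On the other hand, since $\cE_L$ is $L$-equivariant and $Z(L)$ is central in $L$, transport along any equation $\ell\cdot x=y$ (with $\ell\in L$) identifies the action of $z\in Z(L)$ on $(\cE_L)_x$ with that of $\ell^{-1}z\ell=z$ on $(\cE_L)_y$; hence $Z(L)$ acts by the single scalar $\chi(\bar z)$ on \emph{every} stalk of $\cE_L$, where $\bar z$ denotes the image of $z$ in $Z(L)/Z(L)^\circ$. Combining these remarks, for $z\in Z(L)$ (so that $nzn^{-1}\in Z(L)$, as $n\in N_G(L)$) the element $z$ acts on $(n^*\cE_L)_x$ by the scalar $\chi(\overline{nzn^{-1}})$, i.e.\ $n^*\cE_L$ has central character $z\mapsto\chi(\overline{nzn^{-1}})$.

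What remains, and is the only point of substance, is to show that $\overline{nzn^{-1}}=\bar z$ for every $z\in Z(L)$, that is, that $N_G(L)$ acts trivially on $Z(L)/Z(L)^\circ$ by conjugation. I would deduce this from the surjectivity of $\varrho\colon Z(G)/Z(G)^\circ\to Z(L)/Z(L)^\circ$ (recalled just before Lemma~\ref{lem:central-consistency}): since $Z(G)\subseteq Z(L)$, surjectivity of $\varrho$ is exactly the equality $Z(L)=Z(G)\cdot Z(L)^\circ$, so any $z\in Z(L)$ can be written $z=z_0z_1$ with $z_0\in Z(G)$ and $z_1\in Z(L)^\circ$. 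Then $nz_0n^{-1}=z_0$, because conjugation by the element $n\in G$ fixes $Z(G)$ pointwise, while $nz_1n^{-1}\in nZ(L)^\circ n^{-1}=Z(L)^\circ$, the latter equality holding because $nZ(L)^\circ n^{-1}$ is a connected subgroup of $Z(L)$ containing the identity. Hence $nzn^{-1}\equiv z\pmod{Z(L)^\circ}$, so $\chi(\overline{nzn^{-1}})=\chi(\bar z)$, and $n^*\cE_L$ has central character $\chi$, as claimed. No further difficulty is expected; changing the convention $y\mapsto n\cdot y$ to $y\mapsto n^{-1}\cdot y$ only replaces $n$ by $n^{-1}$ throughout, which is harmless since $n^{-1}\in N_G(L,\cO_L)$ as well.
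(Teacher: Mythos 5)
Your proof is correct and follows essentially the same route as the paper: the heart of both arguments is that $N_G(L)$ acts trivially on $Z(L)/Z(L)^\circ$, deduced from the surjectivity of $\varrho$ (equivalently $Z(L)=Z(G)\cdot Z(L)^\circ$) together with the fact that conjugation by $n\in G$ fixes $Z(G)$ pointwise and preserves $Z(L)^\circ$. Your explicit stalk-level bookkeeping just spells out what the paper leaves implicit, so there is nothing to add.
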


\begin{proof}
This follows from the observation that the action of $N_G(L)$ on $Z(L)/Z(L)^\circ$ is trivial, since the surjective morphism  $\varrho$ of Lemma~\ref{lem:central-consistency} is $N_G(L)$-equivariant, and $N_G(L)$ acts trivially on $Z(G)$.
\end{proof}

\subsection{The two key statements}

Consider the following statements about a connected reductive group $H$ and the field $\bk$.

\begin{stmt} \label{stmt:abs-irr}
For any $(\cO,\cE)\in\fN_{H,\bk}^{\cusp}$, the local system $\cE$ is absolutely irreducible.
\end{stmt}

\begin{stmt} \label{stmt:dist-char}
For a fixed nilpotent orbit $\cO\subset\cN_H$, if one considers all the $H$-equivariant local systems $\cE$ on $\cO$ such that $(\cO,\cE)\in\fN_{H,\bk}^{\cusp}$, then these local systems all have a central character and these central characters are distinct.
\end{stmt}

Notice that, for a fixed $H$, one can always enlarge $\bk$ to ensure that Statement~\ref{stmt:abs-irr} holds, and then the existence of the central characters referred to in Statement~\ref{stmt:dist-char} follows automatically, but not the distinctness.

\begin{rmk}
A remarkable feature of Lusztig's generalized Springer correspondence for $\Qlb$-sheaves is that the distinctness of central characters of cuspidal pairs holds even without fixing the orbit $\cO$ (see~\cite[Introduction]{lusztig}). In Section~\ref{sec:sp2n} we will see that when $\bk$ has characteristic $2$, there are several cuspidal pairs for $\Sp(2n)$ with the same (trivial) central character, supported on different distinguished orbits. Thus, Statement~\ref{stmt:dist-char} appears to be the best we can hope for in the modular case.  
\end{rmk}

Let $\mathfrak{L}$ denote a set of representatives of $G$-conjugacy classes of Levi subgroups of $G$. We can now prove the following conditional version of Theorem~\ref{thm:main} (without any assumption that $G$ is classical).

\begin{thm} \label{thm:conditional}
Suppose that Statements~{\rm \ref{stmt:abs-irr}} and~{\rm \ref{stmt:dist-char}} hold with $H=L$ for all proper Levi subgroups $L\subsetneq G$. Then  
\begin{equation} \label{eqn:disjointness-conditional}
\fN_{G,\bk} = \bigsqcup_{L \in \mathfrak{L}} \bigsqcup_{(\cO_L,\cE_L)\in\fN_{L,\bk}^\cusp}\fN_{G,\bk}^{(L,\cO_L,\cE_L)},
\end{equation}
and for any $L\in\mathfrak{L}$ and $(\cO_L,\cE_L)\in\fN_{L,\bk}^\cusp$, we have a canonical bijection
\begin{equation} \label{eqn:bijection-conditional}
\fN_{G,\bk}^{(L,\cO_L,\cE_L)}\longleftrightarrow\Irr(\bk[N_G(L)/L]).
\end{equation}
In particular, we have
\begin{equation} \label{eqn:cuspidal-count}
|\fN_{G,\bk}|=\sum_{L \in \mathfrak{L}} \sum_{(\cO_L,\cE_L)\in\fN_{L,\bk}^\cusp} |\Irr(\bk[N_G(L)/L])|.
\end{equation}
\end{thm}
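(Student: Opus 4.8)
The plan is to derive all three assertions from Theorem~\ref{thm:cocycle} and Lemma~\ref{lem:fourier}, using Statements~\ref{stmt:abs-irr} and~\ref{stmt:dist-char} (for proper Levis) precisely to supply the hypotheses~\eqref{eqn:distinguished-substitute}--\eqref{eqn:fixed}. The case $L=G$ is degenerate and handled directly: $\Ind_{G\subset G}^G$ is the identity functor, so the induction series $\fN_{G,\bk}^{(G,\cO_G,\cE_G)}$ is the singleton $\{(\cO_G,\cE_G)\}$, while $\bk[N_G(G)/G]=\bk$ has a unique irreducible representation; the bijection~\eqref{eqn:bijection-conditional} is then trivial and canonical, and two induction series of this form overlap only if their triples are literally equal. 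So from now on I take $L\subsetneq G$ proper.

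For the bijection~\eqref{eqn:bijection-conditional}, fix $L\in\mathfrak{L}$ proper and $(\cO_L,\cE_L)\in\fN_{L,\bk}^\cusp$, and let $(\cO_L',\cE_L')\in\fN_{L,\bk}^\cusp$ be the Fourier-associated pair of~\eqref{eqn:l-fourier}. By Proposition~\ref{prop:distinguished} the orbit $\cO_L'$ is distinguished, so by Lemma~\ref{lem:dist-norm} we have $N_G(L,\cO_L')=N_G(L)$. I would then check that $(\cO_L',\cE_L')$, regarded as a pair for $L\subset G$, satisfies~\eqref{eqn:distinguished-substitute}--\eqref{eqn:fixed}: condition~\eqref{eqn:distinguished-substitute} follows from Lemma~\ref{lem:cgu} because $\cO_L'$ is distinguished; condition~\eqref{eqn:abs-irr} holds since $\cE_L$ is absolutely irreducible by Statement~\ref{stmt:abs-irr} for $L$ and absolute irreducibility is preserved by $\bT_{\fl}$ (compatibility of Fourier transform with scalar extension); and for~\eqref{eqn:fixed} I argue as follows. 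By Schur's Lemma $\cE_L'$ has a central character $\chi$, and for $n\in N_G(L)=N_G(L,\cO_L')$ the pair $(\cO_L',n^*\cE_L')$ is again a cuspidal pair for $L$ supported on $\cO_L'$ (cuspidality is preserved by $N_G(L)$-conjugation, which permutes the restriction functors appropriately), with the same central character $\chi$ by Lemma~\ref{lem:central-conjugation}; Statement~\ref{stmt:dist-char} for $L$ then forces $n^*\cE_L'\cong\cE_L'$. With the hypotheses verified, Theorem~\ref{thm:cocycle}(4) gives a canonical bijection between $\Irr(\bk[N_G(L,\cO_L')/L])=\Irr(\bk[N_G(L)/L])$ and the simple quotients of $(\varpi_{(L,\cO_L')})_*\widetilde{\cE_L'}$; composing with the canonical bijection of Lemma~\ref{lem:fourier} between that set and $\fN_{G,\bk}^{(L,\cO_L,\cE_L)}$ produces~\eqref{eqn:bijection-conditional}.

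For disjointness~\eqref{eqn:disjointness-conditional}, the union already covers $\fN_{G,\bk}$ by~\cite[Corollary 2.7]{genspring1} (every simple object is a quotient of some $\Ind_{L\subset P}^G(\IC(\cO_L,\cE_L))$ with $(\cO_L,\cE_L)$ cuspidal, and one may take $L\in\mathfrak{L}$), so I only need to exclude overlaps. Suppose $(\cO,\cE)\in\fN_{G,\bk}^{(L,\cO_L,\cE_L)}\cap\fN_{G,\bk}^{(M,\cO_M,\cE_M)}$. By Corollary~\ref{cor:partial-disjointness} the pairs $(L,\cO_L')$ and $(M,\cO_M')$ are $G$-conjugate; since $L,M\in\mathfrak{L}$ this yields $L=M$, and since $\cO_L'$ is distinguished, Lemma~\ref{lem:dist-norm} forces $\cO_L'=\cO_M'$. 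Using Lemma~\ref{lem:central-consistency} together with the surjectivity of $\varrho$, the central characters of $\cE_L$ and of $\cE_M$ both coincide with that of $\cE$, hence with each other; by Lemma~\ref{lem:fourier-consistency} the same holds for $\cE_L'$ and $\cE_M'$. Statement~\ref{stmt:dist-char} for $L$, applied on the orbit $\cO_L'=\cO_M'$, then gives $\cE_L'\cong\cE_M'$, and since $\bT_{\fl}$ is an equivalence (so injective on isomorphism classes via~\eqref{eqn:l-fourier}) we conclude $(\cO_L,\cE_L)=(\cO_M,\cE_M)$. Finally~\eqref{eqn:cuspidal-count} is immediate from~\eqref{eqn:disjointness-conditional} and~\eqref{eqn:bijection-conditional}.

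The main obstacle is the verification of~\eqref{eqn:fixed}, and its mirror image in the disjointness argument: both steps reduce to Statement~\ref{stmt:dist-char}, so in effect this theorem is the device that converts Statements~\ref{stmt:abs-irr} and~\ref{stmt:dist-char} for proper Levis into the full structure of the modular generalized Springer correspondence. Everything else is bookkeeping, chiefly keeping track of the distinguishedness of the Fourier-dual orbit $\cO_L'$ (needed so that $N_G(L,\cO_L')=N_G(L)$) and disposing of the degenerate case $L=G$.
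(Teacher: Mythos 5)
Your proposal is correct and follows essentially the same route as the paper's proof: the bijection comes from Lemma~\ref{lem:fourier} combined with Theorem~\ref{thm:cocycle} applied to the Fourier-dual pair $(\cO_L',\cE_L')$, with hypotheses \eqref{eqn:distinguished-substitute}--\eqref{eqn:fixed} supplied by Lemma~\ref{lem:cgu}, Statement~\ref{stmt:abs-irr}, and Statement~\ref{stmt:dist-char} via Lemma~\ref{lem:central-conjugation} (plus Lemma~\ref{lem:dist-norm} to identify $N_G(L,\cO_L')$ with $N_G(L)$), while disjointness uses Corollary~\ref{cor:partial-disjointness}, Proposition~\ref{prop:distinguished}, Lemma~\ref{lem:dist-norm}, and the central-character Lemmas~\ref{lem:central-consistency} and~\ref{lem:fourier-consistency} together with Statement~\ref{stmt:dist-char}. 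The only cosmetic differences are that you argue disjointness contrapositively where the paper argues by contradiction, and you justify absolute irreducibility of $\cE_L'$ via compatibility of Fourier transform with field extension rather than simply noting that $(\cO_L',\cE_L')$ is itself a cuspidal pair to which Statement~\ref{stmt:abs-irr} applies directly.
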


\begin{proof}
It is clear from what was said in \S\ref{ss:series} that
\begin{equation*}
\fN_{G,\bk} = \bigcup_{L \in \mathfrak{L}} \bigcup_{(\cO_L,\cE_L)\in\fN_{L,\bk}^\cusp}\fN_{G,\bk}^{(L,\cO_L,\cE_L)}.
\end{equation*}
By Corollary~\ref{cor:partial-disjointness}, the union over $\mathfrak{L}$ is disjoint; so to prove~\eqref{eqn:disjointness-conditional} we need only prove the disjointness of the union over $\fN_{L,\bk}^\cusp$ for each $L\in\mathfrak{L}$. When $L=G$, the union over $\fN_{G,\bk}^\cusp$ is disjoint by definition. Suppose for a contradiction that for some $L\in\mathfrak{L}$ with $L\subsetneq G$, there are distinct cuspidal pairs $(\cO_L^{(1)},\cE_L^{(1)}),(\cO_L^{(2)},\cE_L^{(2)})\in\fN_{L,\bk}^\cusp$ such that $\fN_{G,\bk}^{(L,\cO_L^{(1)},\cE_L^{(1)})}$ and $\fN_{G,\bk}^{(L,\cO_L^{(2)},\cE_L^{(2)})}$ are not disjoint. Let $((\cO_L^{(i)})',(\cE_L^{(i)})')$ denote the cuspidal pair obtained from $(\cO_L^{(i)},\cE_L^{(i)})$ by Fourier transform as in~\eqref{eqn:l-fourier}, for $i\in\{1,2\}$. Then by Corollary~\ref{cor:partial-disjointness} again, $(L,(\cO_L^{(1)})')$ and $(L,(\cO_L^{(2)})')$ must be $G$-conjugate, or in other words $(\cO_L^{(1)})'$ and $(\cO_L^{(2)})'$ are $N_G(L)$-conjugate. However, $(\cO_L^{(1)})'$ and $(\cO_L^{(2)})'$ are distinguished $L$-orbits by Proposition~\ref{prop:distinguished}, so Lemma~\ref{lem:dist-norm} forces $(\cO_L^{(1)})'=(\cO_L^{(2)})'$. Since Fourier transform is invertible, we must have $(\cE_L^{(1)})'\neq(\cE_L^{(2)})'$. Statement~\ref{stmt:dist-char} for $H=L$ implies that $(\cE_L^{(1)})'$ and $(\cE_L^{(2)})'$ have different central characters. By Lemma~\ref{lem:fourier-consistency}, we conclude that $\cE_L^{(1)}$ and $\cE_L^{(2)}$ have different central characters. But then Lemma~\ref{lem:central-consistency} implies that $\fN_{G,\bk}^{(L,\cO_L^{(1)},\cE_L^{(1)})}$ and $\fN_{G,\bk}^{(L,\cO_L^{(2)},\cE_L^{(2)})}$ are disjoint, contradicting our assumption. So~\eqref{eqn:disjointness-conditional} is proved.

The bijection~\eqref{eqn:bijection-conditional} is trivial if $L=G$ (both sides have one element). If $L\neq G$, the canonical bijection~\eqref{eqn:bijection-conditional} is provided by the combination of Lemma~\ref{lem:fourier} and Theorem~\ref{thm:cocycle} applied with $(\cO,\cE)=(\cO_L',\cE_L')$. (Here we use Lemma~\ref{lem:dist-norm} to replace $N_G(L,\cO_L')/L$ with $N_G(L)/L$.) So we need only verify the three assumptions of Theorem~\ref{thm:cocycle}. Assumption~\eqref{eqn:distinguished-substitute} follows from Lemma~\ref{lem:cgu}; assumption~\eqref{eqn:abs-irr} is guaranteed by Statement~\ref{stmt:abs-irr} for $H=L$; and assumption~\eqref{eqn:fixed} follows from Statement~\ref{stmt:dist-char} for $H=L$, since local systems with different central characters cannot be in the same $N_G(L)$-orbit, by Lemma~\ref{lem:central-conjugation}.
\end{proof}

Theorem~\ref{thm:conditional} suggests an inductive approach that, if successful, proves the modular generalized Springer correspondence for $G$ at the same time as determining its cuspidal pairs. Assuming by induction that the cuspidal pairs for every proper Levi subgroup $L$ of $G$ have been determined, one may hope that Statements~\ref{stmt:abs-irr} and~\ref{stmt:dist-char} hold for all such $L$ (in the case of Statement~\ref{stmt:abs-irr}, this is not so much a hope as a specification of how large $\bk$ needs to be to allow this approach). If so, then Theorem~\ref{thm:conditional} applies and one has the modular generalized Springer correspondence for $G$. Moreover, from~\eqref{eqn:cuspidal-count} one can work out the number of cuspidal pairs for $G$. In combination with other information such as Proposition~\ref{prop:distinguished} and~\cite[Proposition 2.22]{genspring1}, this may be enough to determine the cuspidal pairs for $G$, completing the inductive step.    

In Sections \ref{sec:sln}--\ref{sec:son} we will see that this approach succeeds when $G$ is classical, and thus prove Theorem \ref{thm:main}. We will consider the various Lie types in turn, taking $G=\SL(n)$ in Section~\ref{sec:sln} (type $\mathbf{A}$), $G=\Sp(2n)$ in Section~\ref{sec:sp2n} (type $\mathbf{C}$), and $G=\Spin(n)$ in Section~\ref{sec:son} (types $\mathbf{B}$ and $\mathbf{D}$). 

\subsection{Some reductions}
\label{ss:reductions}

We need some general reduction principles, to explain why proving Theorem~\ref{thm:main} for the cases where $G$ is simply connected and quasi-simple is enough to prove it for general classical groups. These principles are also required in the inductive proof for each simply connected quasi-simple $G$, because of course Levi subgroups of such $G$ are not themselves quasi-simple. 

First, consider the relationship between $G$ and its maximal semisimple quotient $G/Z(G)^\circ$. The nilpotent cone of $G/Z(G)^\circ$ is the same as $\cN_G$ (on which $Z(G)^\circ$ acts trivially). The forgetful functor
\[
\Perv_{G/Z(G)^\circ}(\cN_G,\bk) \to \Perv_G(\cN_G,\bk)
\]
associated with the quotient morphism $G \twoheadrightarrow G/Z(G)^\circ$ is exact and fully faithful, since the forgetful functors from both categories to the category of all perverse sheaves on $\cN_G$ are fully faithful. Since the natural homomorphism $A_G(x)\to A_{G/Z(G)^\circ}(x)$ is an isomorphism for all $x\in\cN_G$, the sets $\fN_{G,\bk}$ and $\fN_{G/Z(G)^\circ,\bk}$ can be identified. The notion of cuspidal pair is the same whether one considers $G$ or $G/Z(G)^\circ$, so $\fN_{G,\bk}^\cusp$ and $\fN_{G/Z(G)^\circ,\bk}^\cusp$ can be identified also. The Levi subgroups of $G/Z(G)^\circ$ are the subgroups of the form $L/Z(G)^\circ$ where $L$ is a Levi subgroup of $G$. Note that $N_G(L)/L\cong N_{G/Z(G)^\circ}(L/Z(G)^\circ)/(L/Z(G)^\circ)$. The above comments apply to the relationship between $L$ and $L/Z(G)^\circ$ also, and in particular $\fN_{L,\bk}^\cusp=\fN_{L/Z(G)^\circ,\bk}^\cusp$. Finally, the induction series associated to a given element of $\fN_{L,\bk}^\cusp$ is the same for $G/Z(G)^\circ$ as for $G$. We deduce that, in proving Theorem~\ref{thm:main}, we can replace $G$ by the semisimple group $G/Z(G)^\circ$.

Now assume $G$ is semisimple, and let $\widetilde{G}$ be a simply connected cover of $G$. Then $\widetilde{G}$ is a direct product of simply connected quasi-simple groups. We claim that, in proving Theorem \ref{thm:main}, we can replace $G$ by $\widetilde{G}$; granting this, it is then clear that we can reduce to the simply connected quasi-simple case, because all the relevant concepts behave well with respect to direct products. To show the claim, we need to relate cuspidal pairs and induction series for $G$ and for $\widetilde{G}$. 

Let $K=\ker(\widetilde{G}\to G)$, a subgroup of the finite centre $Z(\widetilde{G})$. The nilpotent cone of $\widetilde{G}$ is the same as $\cN_G$ (and $K$ acts trivially on it). Again the forgetful functor
\[
\Perv_{G}(\cN_G,\bk) \to \Perv_{\widetilde{G}}(\cN_G,\bk)
\]
is exact and fully faithful. Since the natural homomorphism $A_{\widetilde{G}}(x)\to A_G(x)$ is surjective for all $x\in\cN_G$, we can identify $\fN_{G,\bk}$ with a subset of $\fN_{\widetilde{G},\bk}$, characterized by the condition that $K$ acts trivially on the local system. A pair in $\fN_{G,\bk}$ is cuspidal for $G$ if and only if it is cuspidal for $\widetilde{G}$, so $\fN_{G,\bk}^\cusp=\fN_{\widetilde{G},\bk}^\cusp\cap\fN_{G,\bk}$. The Levi subgroups of $\widetilde{G}$ are all of the form $\widetilde{L}$ where $\widetilde{L}$ denotes the inverse image of a Levi subgroup $L$ of $G$. Note that $N_{\widetilde{G}}(\widetilde{L})/\widetilde{L}\cong N_G(L)/L$. The above comments apply to the relationship between $L$ and $\widetilde{L}$ also, and in particular $\fN_{L,\bk}^\cusp=\fN_{\widetilde{L},\bk}^\cusp\cap\fN_{L,\bk}$. Clearly,
the induction series associated to a given element of $\fN_{L,\bk}^\cusp$ is the same for $\widetilde{G}$ as for $G$. Since disjointness of induction series is the only point at issue in~\eqref{eqn:disjointness} (see the proof of Theorem~\ref{thm:conditional}), knowing Theorem~\ref{thm:main} for $\widetilde{G}$ implies it for $G$.

\subsection{Combinatorial notation}

As in \cite{genspring1}, we let $\Comp$ denote the set of sequences of nonnegative integers with finitely many nonzero terms. (Here the sequences will be parametrized by positive integers).  Elements of $\Comp$ are sometimes called \emph{compositions}.  For $\sa = (\sa_1, \sa_2, \ldots) \in \Comp$, let $\|\sa\| = \sum_{i=1}^\infty \sa_i$.  Given $\sa, \sfb \in \Comp$ and $k \in \N$, we can form the sum $\sa + \sfb$ and the product $k\sa$.

For $m \in \N$, let $\Part(m)$ denote the set of \emph{partitions} of $m$.  We identify $\Part(m)$ with the subset of $\Comp$ consisting of decreasing sequences $\lambda$ with $\|\lambda\| = m$.  For $\lambda \in \Part(m)$, $\mu \in \Part(m')$ and $k \in \N$, the sum $\lambda + \mu$ and the product $k\lambda$ are defined as above, via this identification.  For $\lambda \in \Part(m)$, let $\sm(\lambda) = (\sm_1(\lambda), \sm_2(\lambda), \ldots)$ be the composition in which $\sm_i(\lambda)$ is the multiplicity of $i$ in $\lambda$. We write $\lambda^\tr$ for the transpose partition, defined by the property that $\lambda_i^\tr-\lambda_{i+1}^\tr=\sm_i(\lambda)$ for all $i$. For $\lambda \in \Part(m)$ and $\mu \in \Part(m')$, we define $\lambda\cup\mu\in\Part(m+m')$ to be the partition whose parts are the union of those of $\lambda$ and those of $\mu$; thus, $(\lambda\cup\mu)^\tr=\lambda^\tr+\mu^\tr$.

Let $\Part_\ell(m) \subset \Part(m)$ be the set of \emph{$\ell$-regular partitions}, i.e., partitions in which $\sm_i(\lambda) < \ell$ for all $i$.  On the other hand, let $\Part(m,\ell) \subset \Part(m)$ be the set of partitions all of whose parts are powers of $\ell$: that is, $\sm_i(\lambda) = 0$ unless $i = \ell^j$ for some $j \ge 0$. For $\sa \in \Comp$, we define
\[
\uPart(\sa) = \prod_{i \ge 1} \Part(\sa_i)
\qquad\text{and}\qquad
\uPart_\ell(\sa) = \prod_{i \ge 1} \Part_\ell(\sa_i).
\]
We write an element of $\uPart(\sa)$ as $\blambda=(\lambda^{(1)},\lambda^{(2)},\cdots)$ where $\lambda^{(i)}\in\Part(\sa_i)$.
Recall that $\Part_\ell(m)$ is in bijection with $\Irr(\bk[\fS_m])$, where $\bk$ has characteristic $\ell$ and $\fS_m$ denotes the symmetric group; hence $\uPart_\ell(\sa)$ is in bijection with $\Irr(\bk[\fS_{\sa}])$, where $\fS_{\sa}=\prod_{i\geq 1} \fS_{\sa_i}$.

For $m \in \N$, we let $\Bipart(m)$ denote the set of bipartitions of $m$. For $\sa \in \Comp$ we define $\uBipart(\sa)$ in the obvious way.  When $\ell \ne 2$, we also let $\Bipart_\ell(m) \subset \Bipart(m)$ denote the subset consisting of $\ell$-regular bipartitions (i.e.~pairs $(\lambda^1,\lambda^2)$ where both $\lambda^1$ and $\lambda^2$ are $\ell$-regular), and for $\sa \in \Comp$, we define $\uBipart_\ell(\sa)$ correspondingly. Recall that when $\bk$ has characteristic $\ell\neq 2$, $\Bipart_\ell(m)$ is in bijection with $\Irr(\bk[(\Z/2\Z)\wr\fS_m])$, where $\wr$ denotes the wreath product; hence $\uBipart_\ell(\sa)$ is in bijection with $\Irr(\bk[(\Z/2\Z)\wr\fS_{\sa}])$.

If $n \geq 1$ and $\ell$ is a prime number, $n_{\ell'}$ denotes $n/\ell^a$ where $\ell^a$ is the largest power of $\ell$ that divides $n$.

\section{The special linear group}
\label{sec:sln}

In this section we fix $n \geq 1$ and a prime number $\ell$, and we consider the case where $G=\SL(n)$ and $\bk$ is a field of characteristic $\ell$ containing all the $n$-th roots of unity (or equivalently all the $n_{\ell'}$-th roots of unity). The main result appears in Theorem~\ref{thm:sln-cuspidal}.

\subsection{Preliminaries}
\label{ss:sln-prelim}

We identify the centre $Z(G)$ with the group $\mu_n$ of complex $n$-th roots of unity. Let $\widehat{\mu_n}$ be the set of group homomorphisms $\chi:\mu_n\to\bk^\times$. Note that $\widehat{\mu_n}$ is a cyclic group of order $n_{\ell'}$ under pointwise multiplication. For $\chi\in\widehat{\mu_n}$, let $e(\chi)$ denote the order of $\chi$. We now explain how to use the elements of $\widehat{\mu_n}$ to parametrize the local systems of interest to us.

Recall that the $G$-orbits in $\cN_G$ are in bijection with $\Part(n)$: for $\lambda\in\Part(n)$, the corresponding orbit $\cO_\lambda$ consists of nilpotent matrices with Jordan blocks of sizes $\lambda_1,\lambda_2,\ldots$. For any $x\in\cO_{\lambda}$, the natural homomorphism $Z(G)\to A_G(x)$ is surjective with kernel $\mu_{n/\gcd(\lambda)}$ where $\gcd(\lambda)$ denotes $\gcd(\lambda_1,\lambda_2,\ldots)$. Hence the irreducible $G$-equivariant $\bk$-local systems on $\cO_\lambda$ all have rank one, and they are distinguished by their central characters, which range over those $\chi\in\widehat{\mu_n}$ such that $e(\chi)\mid\gcd(\lambda)_{\ell'}$. We will write these local systems as $\cE_{\lambda,\chi}$ accordingly. Thus
\begin{equation*} \label{eqn:sln-pairs}
\fN_{G,\bk}=\{(\cO_\lambda,\cE_{\lambda,\chi}), \, (\lambda,\chi)\in\Part(n)'\},
\end{equation*}
where
\[
\Part(n)':=\{(\lambda,\chi)\in\Part(n)\times\widehat{\mu_n}, \, e(\chi)\mid\gcd(\lambda)_{\ell'}\}.
\]
The unique distinguished orbit in $\cN_G$ is the regular orbit $\cO_{(n)}$, consisting of nilpotent matrices with a single Jordan block. The irreducible $G$-equivariant $\bk$-local systems on $\cO_{(n)}$ are the $\cE_{(n),\chi}$ where $\chi$ runs over $\widehat{\mu_n}$.

The set of $G$-conjugacy classes of Levi subgroups of $G$ is also in bijection with $\Part(n)$: for $\nu=(\nu_1,\nu_2,\cdots,\nu_{s})\in\Part(n)$ (where $s=\ell(\nu)$), one can set
\begin{equation*}\label{eqn:sln-levi}
L_\nu = \mathrm{S}(\GL(\nu_1)\times\GL(\nu_2)\times\cdots\times\GL(\nu_{s})),
\end{equation*}
and choose $\mathfrak{L}:=\{L_\nu, \, \nu \in \Part(n)\}$.
The relative Weyl group $N_G(L_\nu)/L_\nu$ is isomorphic to $\fS_{\sm(\nu)}$, so $\Irr(\bk[N_G(L_\nu)/L_\nu])$ is in bijection with $\uPart_\ell(\sm(\nu))$.

Note that we have isomorphisms
\begin{equation} \label{eqn:isogeny}
\begin{split}
Z(L_\nu)&\cong \{(z_1,\cdots,z_{s})\in(\C^\times)^{s}\,|\,\prod_{i=1}^{s}z_i^{\nu_i}=1\},\\
Z(L_\nu)^\circ&\cong \{(z_1,\cdots,z_{s})\in(\C^\times)^{s}\,|\,\prod_{i=1}^{s}z_i^{\nu_i/\gcd(\nu)}=1\},\\
L_\nu/Z(L_\nu)^\circ&\cong \frac{\SL(\nu_1)\times\cdots\times\SL(\nu_{s})}{\{(\zeta_1,\cdots,\zeta_{s})\in\mu_{\nu_1}\times\cdots\times\mu_{\nu_{s}}\,|\,\prod_{i=1}^{s}\zeta_i^{\nu_i/\gcd(\nu)}=1\}}.
\end{split}
\end{equation}
The natural surjective homomorphism $Z(G)\to Z(L_\nu)/Z(L_\nu)^\circ$ has kernel $\mu_{n/\gcd(\nu)}$. Hence the group homomorphisms $Z(L_\nu)/Z(L_\nu)^\circ\to\bk^\times$ are in bijection with those $\chi\in\widehat{\mu_n}$ such that $e(\chi)\mid\gcd(\nu)_{\ell'}$.

We can identify the nilpotent cone $\cN_{L_\nu}$ with the product $\cN_{\SL(\nu_1)}\times\cdots\times\cN_{\SL(\nu_s)}$. We let $\cO^{L_\nu}_{[\nu]}$ denote the regular $L_\nu$-orbit in $\cN_{L_\nu}$, i.e.\ $\cO^{L_\nu}_{[\nu]}=\cO_{(\nu_1)}\times\cdots\times\cO_{(\nu_{s})}$. For $x\in\cO^{L_\nu}_{[\nu]}$, the natural homomorphism $Z(L_\nu)/Z(L_\nu)^\circ\to A_{L_\nu}(x)$ is an isomorphism. Hence the irreducible $L_\nu$-equivariant $\bk$-local systems on $\cO^{L_\nu}_{[\nu]}$ all have rank one, and they are distinguished by their central characters; we will write them as $\cE^{L_\nu}_\chi$ where $\chi\in\widehat{\mu_n}$ is such that $e(\chi)\mid\gcd(\nu)_{\ell'}$. We can use the third isomorphism in~\eqref{eqn:isogeny} to regard $\cE^{L_\nu}_\chi$ as an $(\SL(\nu_1)\times\cdots\times\SL(\nu_{s}))$-equivariant local system; we then have
\begin{equation} \label{eqn:isogeny2}
\cE^{L_\nu}_\chi=\cE^{\SL(\nu_1)}_{(\nu_1),\chi_1}\boxtimes\cdots\boxtimes\cE^{\SL(\nu_s)}_{(\nu_s),\chi_s},
\end{equation}
where $\chi_i\in\widehat{\mu_{\nu_i}}$ is defined uniquely by the rule that $\chi_i(\zeta_i)=\chi(\zeta)$ whenever $\zeta\in\mu_n$ and $\zeta_i\in\mu_{\nu_i}$ satisfy $\zeta^{n/\gcd(\nu)}=\zeta_i^{\nu_i/\gcd(\nu)}$. Here, the notation $\cE^{\SL(\nu_i)}_{(\nu_i),\chi_i}$ is the analogue for $\SL(\nu_i)$ of the notation $\cE_{(n),\chi}$ for $G=\SL(n)$, i.e., it denotes the $\SL(\nu_i)$-equivariant local system on $\cO_{(\nu_i)}$ associated to $\chi_i\in\widehat{\mu_{\nu_i}}$.

For the purposes of modular reduction arguments, we let $\K$ be the extension of $\Ql$ obtained by adjoining all $n$-th roots of unity, $\O$ be its ring of integers, and $\F$ be the residue field of $\O$. Then $\F$ is isomorphic to the extension of $\Fl$ obtained by adjoining all $n$-th roots of unity, and $\bk$ is an extension of $\F$.

\subsection{The two statements and the classification of cuspidal pairs}

By Proposition~\ref{prop:distinguished}, every cuspidal pair for $G$ must be supported on the regular orbit $\cO_{(n)}$. 

\begin{lem}
\label{lem:sln-cuspidal}
If $\chi\in\widehat{\mu_n}$ satisfies $e(\chi)=n_{\ell'}$, then the pair $(\cO_{(n)},\cE_{(n),\chi})$ is cuspidal.
\end{lem}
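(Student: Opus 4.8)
The plan is to realize this pair as a modular reduction of a cuspidal pair for $\SL(n)$ in characteristic zero, using the rings $\K \supset \O \twoheadrightarrow \F$ fixed in \S\ref{ss:sln-prelim} (and the principle, recalled in the introduction, that modular reductions of cuspidal pairs are cuspidal).

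First I would lift $\chi$ to a \emph{faithful} character $\psi$ of $\mu_n$ with values in $\K^\times$. Writing $n = \ell^a n_{\ell'}$ with $\ell \nmid n_{\ell'}$, we have $\mu_n \cong \mu_{\ell^a} \times \mu_{n_{\ell'}}$; and since $e(\chi) = n_{\ell'}$ is prime to $\ell$, the character $\chi$ is trivial on $\mu_{\ell^a}$ and is inflated from a character $\chi'$ of $\mu_{n_{\ell'}}$ of order $n_{\ell'}$, taking values in $\F^\times$ because $\F$ contains all $n_{\ell'}$-th roots of unity. As $\gcd(n_{\ell'},\ell) = 1$, reduction modulo the maximal ideal of $\O$ gives a bijection $\Hom(\mu_{n_{\ell'}},\O^\times) \simto \Hom(\mu_{n_{\ell'}},\F^\times)$; let $\psi'$ be the lift of $\chi'$, still of order $n_{\ell'}$. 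Choosing any faithful character $\psi''$ of $\mu_{\ell^a}$ valued in $\K^\times$ and setting $\psi = \psi'' \times \psi'$, one obtains a character of $\mu_n$ of order $\ell^a n_{\ell'} = n$ whose reduction modulo the maximal ideal is $\overline{\psi''}\cdot\overline{\psi'} = \chi$ (here $\overline{\psi''}$ is trivial, since $\psi''$ has $\ell$-power order and $\F^\times$ has no nontrivial $\ell$-torsion).

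Next I would invoke Lusztig's classification of cuspidal pairs. For $x \in \cO_{(n)}$ we have $A_G(x) = \mu_n = Z(G)$, and the rank-one $\K$-local system $\cE^\K_{(n),\psi}$ on $\cO_{(n)}$ attached to the faithful character $\psi$ of $A_G(x)$ gives a cuspidal pair $(\cO_{(n)},\cE^\K_{(n),\psi})$ for $G$ over $\K$, by Lusztig's description of cuspidal pairs in type $\mathbf{A}$~\cite{lusztig}. Finally, $\cE^\K_{(n),\psi}$ admits the evident rank-one $\O$-form attached to $\psi \colon A_G(x) \to \O^\times$, whose modular reduction is the rank-one $\F$-local system $\cE^\F_{(n),\chi}$ attached to $\overline\psi = \chi$. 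By~\cite[Proposition 2.22]{genspring1}, $(\cO_{(n)},\cE^\F_{(n),\chi})$ is cuspidal for $G$ over $\F$; and since cuspidality is preserved under extension of scalars, while $\chi$ is valued in $\F^\times$ so that $\cE_{(n),\chi} = \bk \otimes_\F \cE^\F_{(n),\chi}$, the pair $(\cO_{(n)},\cE_{(n),\chi})$ is cuspidal over $\bk$, as required.

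The only step needing genuine care is the lifting in the second paragraph: Lusztig's criterion for a cuspidal pair on the regular orbit of $\SL(n)$ demands a character of $Z(\SL(n))$ of order \emph{exactly} $n$, so one must produce a lift of $\chi$ of full order $n$, not merely some lift; splitting off the $\ell$-part of $\mu_n$ as above handles this, and the rest is a formal consequence of the cited results.
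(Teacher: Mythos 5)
Your proof is correct and follows essentially the same route as the paper: reduce to $\bk=\F$, lift $\cE_{(n),\chi}$ to an $\O$-form whose generic fibre is the rank-one $\K$-local system attached to an order-$n$ character of $\mu_n=Z(G)=A_G(x)$, invoke Lusztig's characteristic-$0$ classification (\cite[(10.3.2)]{lusztig}) for cuspidality there, and conclude by \cite[Proposition 2.22]{genspring1}. The only difference is that you spell out the existence of a full-order lift $\psi$ of $\chi$ via the splitting $\mu_n\cong\mu_{\ell^a}\times\mu_{n_{\ell'}}$, a step the paper's proof asserts without detail.
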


\begin{proof}
This proof is similar to the proof of~\cite[Proposition 2.25]{genspring1}. In fact, it is enough to prove the proposition in case $\bk=\F$. In this case,
there exists an $\O$-free local system $\cE^\O$ on $\cO_{(n)}$ such that $\cE^\K:= \K \otimes_\O \cE^\O$ is a rank-one local system associated with a generator of the group of homomorphisms $\mu_n \to \K^\times$, and such that $\F \otimes_\O \cE^\O \cong \cE_{(n),\chi}$. Then $(\cO_{(n)},\cE^\K)$ is a cuspidal pair by \cite[(10.3.2)]{lusztig}, and $\IC(\cO_{(n)},\cE_{(n),\chi})$ occurs in the modular reduction of $\IC(\cO_{(n)}, \cE^\K)$. By~\cite[Proposition 2.22]{genspring1} this implies that $(\cO_{(n)},\cE_{(n),\chi})$ is cuspidal.
\end{proof}

\begin{rmk}
The preceding proof involved the observation that $\IC(\cO_{(n)},\cE_{(n),\chi})$ occurs in the modular reduction of $\IC(\cO_{(n)}, \cE^\K)$.  In fact, since the only distinguished orbit in $\cN_{\SL(n)}$ is $\cO_{(n)}$,
the modular reduction of $\IC(\cO_{(n)}, \cE^\K)$ is equal to $\IC(\cO_{(n)},\cE_{(n),\chi})$.
\end{rmk}

\begin{thm}
\label{thm:sln-cuspidal}
Let $\bk$ be a field containing all $n$-th roots of unity.
Then Theorem~{\rm \ref{thm:main}} and Statements~{\rm \ref{stmt:abs-irr}} and~{\rm \ref{stmt:dist-char}} hold for $G=\SL(n)$. The only cuspidal pairs are those described in Lemma~{\rm \ref{lem:sln-cuspidal}}, so the number of cuspidal pairs is $\phi(n_{\ell'})$.
\end{thm}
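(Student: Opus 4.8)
The plan is to argue by induction on $n$, with Theorem~\ref{thm:conditional} doing the structural work and a Glaisher-type partition identity controlling the number of cuspidal pairs; the base case $n=1$ is trivial.

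I first observe that Statements~\ref{stmt:abs-irr} and~\ref{stmt:dist-char} hold for every proper Levi subgroup $L_\nu \subsetneq G$, essentially for free. Indeed, by Proposition~\ref{prop:distinguished} any cuspidal pair for $L_\nu$ is supported on the unique distinguished nilpotent orbit of $L_\nu$, namely the regular orbit $\cO^{L_\nu}_{[\nu]}$ (all simple factors of $L_\nu$ being of type $\mathbf{A}$); on that orbit the natural map $Z(L_\nu)/Z(L_\nu)^\circ \to A_{L_\nu}(x)$ is an isomorphism onto a cyclic group, so every irreducible $L_\nu$-equivariant $\bk$-local system there has rank one. Rank-one local systems are absolutely irreducible (Statement~\ref{stmt:abs-irr}) and are separated by their central characters (Statement~\ref{stmt:dist-char}). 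The identical reasoning, together with the fact that $\cO_{(n)}$ is the only distinguished orbit of $\SL(n)$, also yields Statements~\ref{stmt:abs-irr} and~\ref{stmt:dist-char} for $G=\SL(n)$ itself. Consequently Theorem~\ref{thm:conditional} applies to $G$: it gives the disjoint decomposition~\eqref{eqn:disjointness-conditional} and the canonical bijections~\eqref{eqn:bijection-conditional} --- so Theorem~\ref{thm:main} holds for $\SL(n)$ --- and the counting identity~\eqref{eqn:cuspidal-count}, which here reads
\[
|\fN_{G,\bk}| = \sum_{\nu \in \Part(n)} |\fN_{L_\nu,\bk}^\cusp| \cdot |\uPart_\ell(\sm(\nu))|.
\]

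It then remains to extract $|\fN_{G,\bk}^\cusp|$ from this formula. The left-hand side is $|\Part(n)'| = \sum_{\lambda\in\Part(n)} \gcd(\lambda)_{\ell'}$; writing $\gcd(\lambda)_{\ell'} = \sum_{d \mid \gcd(\lambda)_{\ell'}}\phi(d)$ and interchanging the two sums gives $|\fN_{G,\bk}| = \sum_{d \mid n_{\ell'}} \phi(d)\,|\Part(n/d)|$. For a Levi term with $\nu \ne (n)$, all parts of $\nu$ are $< n$, so the inductive hypothesis is available for each $\SL(\nu_i)$ (after enlarging $\bk$ if needed --- the classification of cuspidal pairs is insensitive to extension of scalars, since that operation is faithful and commutes with the restriction functors). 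Using~\eqref{eqn:isogeny2}, the reduction principles of \S\ref{ss:reductions}, and the fact that an external tensor product of local systems is cuspidal iff each factor is, one sees that $\cE^{L_\nu}_\chi$ is cuspidal for $L_\nu$ iff $e(\chi_i) = (\nu_i)_{\ell'}$ for all $i$; and the recipe relating $\chi$ to the $\chi_i$ forces $e(\chi_i) = e(\chi)$. Hence $|\fN_{L_\nu,\bk}^\cusp| = \phi(\gcd(\nu)_{\ell'})$ if all parts of $\nu$ share the same $\ell'$-part (equivalently $\nu = d\mu$ with $d = \gcd(\nu)_{\ell'}$ and $\mu\in\Part(n/d,\ell)$), and $0$ otherwise --- a formula that in fact also gives the correct value $\phi(n_{\ell'})$ when $\nu = (n)$, by Lemma~\ref{lem:sln-cuspidal}.

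Substituting these values and regrouping by $d$ (using $\sm_{d\ell^j}(d\mu) = \sm_{\ell^j}(\mu)$), the counting formula reduces to the purely combinatorial assertion
\[
|\Part(m)| = \sum_{\mu \in \Part(m,\ell)}\ \prod_{j \ge 0} |\Part_\ell(\sm_{\ell^j}(\mu))|, \qquad m = n/d,
\]
which I would prove by generating functions: the right-hand side factors as $\prod_{j\ge 0} P_\ell(q^{\ell^j})$ where $P_\ell(q) = \prod_{\ell\nmid k}(1-q^k)^{-1}$ is the generating function for $\ell$-regular partitions (Glaisher's identity), and since every positive integer is uniquely of the form $k\ell^j$ with $\ell\nmid k$, the product collapses to $\prod_{m\ge 1}(1-q^m)^{-1}$. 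Granting this identity, the terms of the counting formula with $\nu \ne (n)$ account for exactly $|\fN_{G,\bk}| - \phi(n_{\ell'})$, so $|\fN_{G,\bk}^\cusp| = \phi(n_{\ell'})$; and since Lemma~\ref{lem:sln-cuspidal} already exhibits that many distinct cuspidal pairs, those must be all of them. The main obstacle is the bookkeeping in the middle step: pinning down $|\fN_{L_\nu,\bk}^\cusp|$ correctly (the $\SL$-versus-$\GL$ subtlety encoded in $Z(L_\nu)^\circ$, plus the field-of-definition point) and then recognizing the resulting sum as the Glaisher-type identity above.
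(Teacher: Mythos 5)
Your proposal is correct and follows essentially the same route as the paper's proof: induction on $n$, Theorem~\ref{thm:conditional} for the disjointness and the canonical bijections, the isogeny description~\eqref{eqn:isogeny2} to classify cuspidal pairs of proper Levis via the inductive hypothesis (with the same handling of the field-extension/descent point, justified here since the relevant local systems have rank one), and the counting identity~\eqref{eqn:cuspidal-count} to pin down the number of cuspidal pairs for $G$. The only differences are minor: you verify Statements~\ref{stmt:abs-irr} and~\ref{stmt:dist-char} for proper Levis directly from the rank-one local systems on the unique distinguished (regular) orbit rather than as a by-product of the inductive classification, and you prove the required cardinality identity by a Glaisher-type generating-function argument instead of citing the explicit bijections of Lemmas~\ref{lem:gln-comb} and~\ref{lem:sln-comb}.
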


\begin{proof}
We prove this by induction on $n$, the $n=1$ case being trivial. 

Let $\nu\in\Part(n)$ with $\nu\neq(n)$, so that the corresponding Levi subgroup $L_\nu$ is a proper subgroup of $G$. As explained in \S\ref{ss:reductions}, the cuspidal pairs for $L_\nu$ can be identified with those for $L_\nu/Z(L_\nu)^\circ$, which can in turn be identified with a subset of the cuspidal pairs for the simply-connected cover $\SL(\nu_1)\times\cdots\times\SL(\nu_{s})$; see~\eqref{eqn:isogeny}.

After possibly replacing $\bk$ by a larger field $\bk'$ containing additional roots of unity, the inductive hypothesis applies to each factor $\SL(\nu_i)$, and tells us the classification of cuspidal pairs for $\SL(\nu_1)\times\cdots\times\SL(\nu_{s})$ over $\bk'$: they have the form
\[
(\cO_{(\nu_1)}\times\cdots\times\cO_{(\nu_{s})},\cE^{\SL(\nu_1)}_{(\nu_1),\chi_1}\boxtimes\cdots\boxtimes\cE^{\SL(\nu_s)}_{(\nu_s),\chi_s}),
\]
where each $\chi_i\in\widehat{\mu_{\nu_i}}$ satisfies $e(\chi_i)=(\nu_i)_{\ell'}$.  However, the discussion preceding~\eqref{eqn:isogeny2} shows that among these, the $L_\nu$-equivariant local systems are already defined over $\bk$.  Thus every cuspidal pair for $L_\nu$ (over $\bk$) is supported on the regular orbit $\cO^{L_\nu}_{[\nu]}$; moreover, applying~\eqref{eqn:isogeny2}, we see that $(\cO^{L_\nu}_{[\nu]},\cE^{L_\nu}_\chi)$ is cuspidal if and only if $e(\chi)=(\nu_i)_{\ell'}$ for every $i$.

We conclude that $L_\nu$ has cuspidal pairs if and only if $\nu$ has the form $d\rho$ where $d\mid n_{\ell'}$ and $\rho\in\Part(n/d,\ell)$, and in this case the cuspidal pairs are
$(\cO^{L_{d\rho}}_{[d\rho]},\cE^{L_{d\rho}}_\chi)$ where $\chi\in\widehat{\mu_n}$ satisfies $e(\chi)=d$. In particular, the number of cuspidal pairs for $L_{d\rho}$ is $\phi(d)$, and they are distinguished by their central characters.

We have established Statements~\ref{stmt:abs-irr} and~\ref{stmt:dist-char} for all proper Levi subgroups of $G$, so we can invoke Theorem~\ref{thm:conditional} and conclude that Theorem~\ref{thm:main} holds for $G$.

For the cuspidal pairs described in Lemma~\ref{lem:sln-cuspidal}, it is clear that Statements~\ref{stmt:abs-irr} and~\ref{stmt:dist-char} hold.  Thus, to complete the inductive step, it remains to show that those $\phi(n_{\ell'})$ pairs are the only cuspidal pairs for $G = L_{(n)}$. It suffices to show that this number of cuspidal pairs makes the equality~\eqref{eqn:cuspidal-count} hold, which follows immediately from Lemma~\ref{lem:sln-comb} below (using the obvious bijection between $\uPart_\ell(\sm(d\rho))$ and $\uPart_\ell(\sm(\rho))$).
\end{proof}

Recall the combinatorial bijection that was used in~\cite{genspring1} to describe the modular generalized Springer correspondence for $\GL(n)$:
\begin{lem}[{\cite[Lemma 3.9]{genspring1}}] \label{lem:gln-comb} 
The following map is a bijection:
\[
\Psico = \bigsqcup_{\nu \in \Part(n,\ell)} \psico_\nu: \bigsqcup_{\nu \in \Part(n,\ell)} \uPart_\ell(\sm(\nu)) \to \Part(n),
\]
where
\[
\psico_\nu: \uPart_\ell(\sm(\nu)) \to \Part(n):\blambda\mapsto \sum_{i \ge 0} \ell^i(\lambda^{(\ell^i)})^\tr.
\]
\end{lem}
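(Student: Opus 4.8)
The plan is to prove that $\Psico$ is a bijection by writing down an explicit two-sided inverse; after a routine translation into multiplicity sequences of transpose partitions, the statement reduces to the uniqueness of base-$\ell$ expansions of nonnegative integers. The key preliminary is the following elementary dictionary. For a partition $\mu$, let $\mathbf{c}(\mu) \in \Comp$ be the sequence with $\mathbf{c}(\mu)_j = \sm_j(\mu^\tr) = \mu_j - \mu_{j+1}$ (with $\mu_{j+1} := 0$ past the last part). Since a partition is determined by the multiplicities of its transpose, $\mu \mapsto \mathbf{c}(\mu)$ is a bijection from $\Part(m)$ onto $\{\mathbf{a} \in \Comp \mid \sum_{j \ge 1} j\mathbf{a}_j = m\}$. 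It intertwines the termwise operations on partitions with those on $\Comp$: $\mathbf{c}(\lambda + \mu) = \mathbf{c}(\lambda) + \mathbf{c}(\mu)$ because $(\lambda+\mu)_j - (\lambda+\mu)_{j+1} = (\lambda_j - \lambda_{j+1}) + (\mu_j - \mu_{j+1})$, and $\mathbf{c}(k\mu) = k\,\mathbf{c}(\mu)$ for $k \in \N$. Finally, $\mathbf{c}(\rho^\tr) = \sm(\rho)$ for any partition $\rho$, and $\rho$ is $\ell$-regular exactly when every entry of $\sm(\rho)$ lies in $\{0,1,\dots,\ell-1\}$.

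Next I would rephrase both sides of $\Psico$ through this dictionary. A pair $(\nu,\blambda)$ with $\nu \in \Part(n,\ell)$ and $\blambda \in \uPart_\ell(\sm(\nu))$ is the same datum as a sequence $(\rho^{(i)})_{i \ge 0}$ of $\ell$-regular partitions with $\sum_{i \ge 0} \ell^i |\rho^{(i)}| = n$: for $\nu \in \Part(n,\ell)$ every $\lambda^{(k)}$ with $k$ not a power of $\ell$ is forced empty, $\rho^{(i)} := \lambda^{(\ell^i)}$ lies in $\Part_\ell(\sm_{\ell^i}(\nu))$, and $\nu$ is recovered from the sizes via $\sm_{\ell^i}(\nu) = |\rho^{(i)}|$ (using $\|\nu\| = \sum_i \ell^i \sm_{\ell^i}(\nu) = n$). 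Applying $\mathbf{c}$ to $\psico_\nu(\blambda) = \sum_{i \ge 0} \ell^i (\rho^{(i)})^\tr$ and using the dictionary gives
\[
\mathbf{c}\bigl(\psico_\nu(\blambda)\bigr)_j \;=\; \sum_{i \ge 0} \ell^i\, \sm_j(\rho^{(i)}) \qquad (j \ge 1),
\]
where each ``digit'' $\sm_j(\rho^{(i)})$ lies in $\{0,1,\dots,\ell-1\}$. Thus, transported along the bijections $\mathbf{c}$, the map $\Psico$ becomes the assignment sending a sequence $(\rho^{(i)})_i$ of $\ell$-regular partitions with $\sum_i \ell^i|\rho^{(i)}| = n$ to the composition whose $j$-th entry is $\sum_i \ell^i \sm_j(\rho^{(i)})$.

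Finally I would invoke uniqueness of base-$\ell$ expansions. Given $\mathbf{a} \in \Comp$ with $\sum_j j\mathbf{a}_j = n$, write $\mathbf{a}_j = \sum_{i \ge 0} c_{j,i}\ell^i$ with $0 \le c_{j,i} < \ell$ (only finitely many pairs $(j,i)$ give nonzero digits), and let $\rho^{(i)}$ be the partition with multiplicity sequence $(c_{j,i})_{j \ge 1}$. Each $\rho^{(i)}$ is $\ell$-regular, and $\sum_i \ell^i|\rho^{(i)}| = \sum_i \ell^i \sum_j j c_{j,i} = \sum_j j\mathbf{a}_j = n$, so $(\rho^{(i)})_i$ lies in the domain; this assignment is evidently inverse to the transported map of the previous paragraph. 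Composing with the bijection $\mathbf{c}$ on the target and the identification $(\nu,\blambda) \leftrightarrow (\rho^{(i)})_i$ on the source yields a two-sided inverse of $\Psico$, which is therefore a bijection.

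The only points requiring care — and where I would slow down in the write-up — are checking that each $\sum_i \ell^i(\rho^{(i)})^\tr$ is genuinely a partition (it is, as a termwise sum of decreasing sequences) and that the identifications ($\mathbf{c}$, the transpose, and the passage between $(\nu,\blambda)$ and $(\rho^{(i)})_i$) are mutually compatible; all of this is routine once the dictionary of the first step is in place, and no single step is a serious obstacle. As an independent check of cardinalities, the generating function for partitions whose transpose is $\ell$-regular is $\prod_{j\ge 1}(1-q^{\ell j})/(1-q^j)$, and the product $\prod_{i\ge 0}\prod_{j\ge 1}(1-q^{\ell^{i+1}j})/(1-q^{\ell^i j})$ telescopes to $\prod_{j\ge1}(1-q^j)^{-1}$, the generating function for $\Part$.
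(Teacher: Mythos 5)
Your proof is correct. Note that the present paper does not actually prove this lemma — it is quoted from Part~I of the series (\cite[Lemma 3.9]{genspring1}) — so there is no in-paper argument to compare against; your reduction, via the dictionary $\mu \mapsto (\mu_j - \mu_{j+1})_j$ between partitions and compositions $\mathbf{a}$ with $\sum_j j\mathbf{a}_j = n$, to the uniqueness of base-$\ell$ expansions of the entries $\mathbf{a}_j$ is exactly the standard (and essentially the cited) argument, and all the verifications you flag (that $\sum_i \ell^i(\lambda^{(\ell^i)})^\tr$ is a partition, the compatibility of the identifications, and the constraint $\sum_i \ell^i|\rho^{(i)}| = n$) check out.
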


The analogous fact needed to complete the proof of Theorem~\ref{thm:sln-cuspidal} is:

\begin{lem} \label{lem:sln-comb}
The following map is a bijection:
\[
\Xico = \bigsqcup_{\substack{\chi\in\widehat{\mu_n}\\\rho\in \Part(n/e(\chi),\ell)}} \xico_{\chi,\rho}: \bigsqcup_{\substack{\chi\in\widehat{\mu_n}\\\rho\in \Part(n/e(\chi),\ell)}} \uPart_\ell(\sm(\rho)) \to \Part(n)',
\]
where
\[
\xico_{\chi,\rho}: \uPart_\ell(\sm(\rho)) \to \Part(n)':\blambda\mapsto (\sum_{i \ge 0} e(\chi)\ell^i(\lambda^{(\ell^i)})^\tr,\chi).
\]
\end{lem}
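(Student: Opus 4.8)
The plan is to reduce Lemma~\ref{lem:sln-comb} to the already-known bijection $\Psico$ of Lemma~\ref{lem:gln-comb} by stratifying both sides according to the ``$\ell'$-content'' of the relevant data. On the target side, I would first observe that $\Part(n)'$ decomposes as a disjoint union over $d \mid n_{\ell'}$ of the sets $\{(\lambda,\chi) : \gcd(\lambda)_{\ell'} \text{ is a multiple of } d, \ e(\chi)=d\}$; more convenient is to group instead by $e(\chi)$ alone, writing $\Part(n)' = \bigsqcup_{\chi \in \widehat{\mu_n}} \{\lambda \in \Part(n) : e(\chi) \mid \gcd(\lambda)_{\ell'}\} \times \{\chi\}$. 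Since $\widehat{\mu_n}$ is cyclic of order $n_{\ell'}$, for each divisor $d$ of $n_{\ell'}$ there are exactly $\phi(d)$ characters $\chi$ with $e(\chi)=d$, and the set $\{\lambda \in \Part(n) : e(\chi) \mid \gcd(\lambda)_{\ell'}\}$ depends only on $d = e(\chi)$.

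The key step is then the following reduction: fix $\chi$ with $e(\chi)=d$, and consider the map $\lambda \mapsto \tfrac{1}{d}\lambda$ (division of every part by $d$). I claim this is a bijection
\[
\{\lambda \in \Part(n) : d \mid \gcd(\lambda)_{\ell'}\} \simto \bigsqcup_{\rho \in \Part(n/d,\ell)} \uPart_\ell(\sm(\rho)),
\]
obtained by composing with the inverse of $\Psico$ for the integer $n/d$ in place of $n$. Indeed, the condition $d \mid \gcd(\lambda)_{\ell'}$ says precisely that $d$ divides every part of $\lambda$ (after removing the $\ell$-part of $d$, but since $d \mid n_{\ell'}$ we have $d$ coprime to $\ell$, so this is just $d \mid \lambda_i$ for all $i$), hence $\lambda = d\mu$ for a unique $\mu \in \Part(n/d)$; and $\mu$ ranges over all of $\Part(n/d)$. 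Applying $(\Psico)^{-1}$ for $n/d$, we get $\mu = \psico_\rho(\blambda) = \sum_{i\ge 0}\ell^i(\lambda^{(\ell^i)})^\tr$ for unique $\rho \in \Part(n/d,\ell)$ and $\blambda \in \uPart_\ell(\sm(\rho))$, so $\lambda = d\mu = \sum_{i\ge 0} d\ell^i(\lambda^{(\ell^i)})^\tr$, which is exactly the formula defining $\xico_{\chi,\rho}$ with $e(\chi)=d$. The index set on the right, $\bigsqcup_{\rho \in \Part(n/d,\ell)} \uPart_\ell(\sm(\rho))$, matches the $\rho$-summand in the statement (with $e(\chi)=d$), and as noted there is an obvious bijection $\uPart_\ell(\sm(d\rho)) \leftrightarrow \uPart_\ell(\sm(\rho))$ since multiplying a partition by the $\ell'$-integer $d$ just relabels its multiplicities ($\sm_i(d\rho) = \sm_{i/d}(\rho)$).

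Assembling these pieces: the map $\Xico$ restricted to the piece indexed by a fixed $\chi$ (with $e(\chi)=d$) and all $\rho \in \Part(n/d,\ell)$ is, by the previous paragraph, a bijection onto $\{\lambda : d \mid \gcd(\lambda)_{\ell'}\} \times \{\chi\}$; and these target pieces are exactly the blocks of the decomposition $\Part(n)' = \bigsqcup_\chi \{\lambda : e(\chi)\mid\gcd(\lambda)_{\ell'}\}\times\{\chi\}$, which are pairwise disjoint and cover $\Part(n)'$. Hence $\Xico$ is a bijection. I expect the only mildly delicate point to be bookkeeping the identification $d\ell^i(\lambda^{(\ell^i)})^\tr$ versus the transpose/multiplicity conventions when passing between $\Part(n/d,\ell)$ and $\Part(n/d)$ — i.e.\ checking that $(\Psico)^{-1}$ for $n/d$ really does produce the stated formula after multiplication by $d$ — but this is entirely formal given Lemma~\ref{lem:gln-comb}, since multiplication by $d$ commutes with transposition scaled by $\ell^i$ and with the decomposition into $\ell$-power parts. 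No genuine obstacle arises; the content is the bijectivity of $\Psico$, which is already established.
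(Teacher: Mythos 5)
Your argument is correct and is essentially the paper's proof: for fixed $\chi$ with $e(\chi)=d$ (coprime to $\ell$), the pairs $(\lambda,\chi)\in\Part(n)'$ are exactly those with $\lambda=d\tau$, $\tau\in\Part(n/d)$, and the bijection then follows from Lemma~\ref{lem:gln-comb} applied with $n/d$ in place of $n$, since multiplication by $d$ intertwines $\psico_\rho$ with $\xico_{\chi,\rho}$. The extra remarks about $\sm(d\rho)$ versus $\sm(\rho)$ are harmless but not needed for the lemma itself.
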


\begin{proof}
For fixed $\chi\in\widehat{\mu_n}$, the partitions $\sigma\in\Part(n)$ for which $(\sigma,\chi)\in\Part(n)'$ are exactly those of the form $e(\chi)\tau$ for $\tau\in\Part(n/e(\chi))$. So the result follows from Lemma~\ref{lem:gln-comb} applied with $n/e(\chi)$ in place of $n$.
\end{proof}

Note the following consequence of Theorem~\ref{thm:sln-cuspidal} and its proof:

\begin{cor} \label{cor:sln-fourier-invariance}
For $\SL(n)$,
cuspidal perverse sheaves are invariant under Fourier transform. In other words, for any Levi $L \subset \SL(n)$ and any $(\cO_L, \cE_L) \in \fN_{L,\bk}^{\cusp}$, we have $(\cO_L', \cE_L')=(\cO_L,\cE_L)$.
\end{cor}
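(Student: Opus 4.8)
The plan is to deduce this from the classification of cuspidal pairs obtained in the proof of Theorem~\ref{thm:sln-cuspidal}, together with the compatibility of Fourier transform with central characters (Lemma~\ref{lem:fourier-consistency}) and with cuspidality (\cite[Corollary~2.12]{genspring1}). Concretely, I would fix a Levi subgroup $L \subset \SL(n)$ and a pair $(\cO_L, \cE_L) \in \fN_{L,\bk}^{\cusp}$, and let $(\cO_L', \cE_L')$ be the pair associated to it by Fourier transform as in~\eqref{eqn:l-fourier}. The first step is simply to observe that $(\cO_L', \cE_L')$ again lies in $\fN_{L,\bk}^{\cusp}$, which is precisely the content of~\cite[Corollary~2.12]{genspring1} as recalled in~\S\ref{ss:series}.

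Next I would identify the orbits. By~\S\ref{ss:reductions}, replacing $L$ by a central quotient and then by a simply connected cover changes neither its set of cuspidal pairs nor its nilpotent cone; since $L$ is a Levi of $\SL(n)$, this reduces us to a product of special linear groups, all of whose quasi-simple factors are of type $\mathbf{A}$. Consequently the only distinguished nilpotent orbit of $L$ is the regular one. As $(\cO_L, \cE_L)$ and $(\cO_L', \cE_L')$ are both cuspidal, Proposition~\ref{prop:distinguished} forces $\cO_L$ and $\cO_L'$ to be distinguished, hence both equal to the regular orbit of $L$; so $\cO_L' = \cO_L$.

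Finally I would pin down the local systems using central characters. The cuspidal local systems in question have rank one (see~\S\ref{ss:sln-prelim}), so each has a well-defined central character; write $\chi$ for that of $\cE_L$. Lemma~\ref{lem:fourier-consistency} says $\cE_L'$ has the same central character $\chi$. On the other hand, Statement~\ref{stmt:dist-char} holds for $L$ --- it is verified for all $\SL$-type Levi subgroups in the proof of Theorem~\ref{thm:sln-cuspidal} --- so the various cuspidal local systems on the fixed orbit $\cO_L = \cO_L'$ are separated by their central characters. Hence $\cE_L' = \cE_L$, and the corollary follows. No step here presents a genuine difficulty: the substantive work is already contained in the proof of Theorem~\ref{thm:sln-cuspidal}, and the only point meriting a line of care is the reduction of the previous paragraph, where~\S\ref{ss:reductions} is combined with the elementary fact that a group of type $\mathbf{A}$ has a unique distinguished nilpotent orbit.
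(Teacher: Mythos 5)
Your proof is correct and follows essentially the same route as the paper: the paper's argument is exactly the combination of Lemma~\ref{lem:fourier-consistency} with the classification in Theorem~\ref{thm:sln-cuspidal}, under which the cuspidal pairs for each Levi of $\SL(n)$ all live on the regular orbit and are distinguished by their central characters. Your extra step identifying $\cO_L'=\cO_L$ via Proposition~\ref{prop:distinguished} and the uniqueness of the distinguished orbit in type $\mathbf{A}$ is a harmless elaboration of what is already implicit in that classification.
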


\begin{proof}
This follows from Lemma~\ref{lem:fourier-consistency}, since the cuspidal pairs for $\SL(n)$, classified in Theorem~\ref{thm:sln-cuspidal}, have distinct central characters.
\end{proof}

\begin{rmk} \label{rmk:pgln}
Consider the group $\mathrm{PGL}(n)=\SL(n)/\mu_n$. As explained in \S\ref{ss:reductions}, the cuspidal pairs for $\mathrm{PGL}(n)$ can be identified with the cuspidal pairs for $\SL(n)$ that have trivial central character. By Theorem~\ref{thm:sln-cuspidal}, there is a unique such cuspidal pair (namely $(\cO_{(n)},\ubk)$) if $n$ is a power of $\ell$, and none otherwise; thus we recover the classification of cuspidal pairs for $\GL(n)$ given in~\cite[Theorem 3.1]{genspring1}.
\end{rmk}

\section{The symplectic group}
\label{sec:sp2n}

In this section we fix $n \geq 1$ and take $G=\Sp(2n)$. Since all the groups $A_G(x)$ for $x\in\cN_G$ are $2$-groups, the behaviour is markedly different in the $\ell=2$ and $\ell\neq 2$ cases.

Recall that the $G$-orbits in $\cN_G$ are classified by Jordan form: they are in bijection with the set
\[
\Part_{\Sp}(2n)=\{\lambda\in\Part(2n)\,|\,\sm_{2i+1}(\lambda)\text{ is even for all }i\}.
\]
By~\cite[Theorem 8.2.14]{cm}, the distinguished orbits are the orbits $\cO_\lambda$ where $\lambda$ belongs to the set $\Part_{2,\Sp}(2n)$ of partitions of $2n$ into distinct even parts.

\subsection{The $\ell=2$ case} Since the only irreducible representation of a $2$-group in characteristic $2$ is the trivial representation, the only simple $G$-equivariant local system on a nilpotent orbit in the $\ell=2$ case is the constant sheaf.

\begin{thm} \label{thm:sp2n2-cuspidal}
Let $\bk$ be any field of characteristic~$2$. Then Theorem~{\rm \ref{thm:main}} and
Statements~{\rm \ref{stmt:abs-irr}} and~{\rm \ref{stmt:dist-char}} hold for $G=\Sp(2n)$. Every pair $(\cO_\lambda,\ubk)$ for $\lambda\in\Part_{2,\Sp}(2n)$ is cuspidal, so the number of cuspidal pairs is $|\Part_{2,\Sp}(2n)|=|\Part_2(n)|$.
\end{thm}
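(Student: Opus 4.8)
The plan is to argue by induction on $n$, following the strategy of Section~\ref{sect:strategy}; the base case $n=0$ (where $\Sp(0)$ is trivial) is immediate. The key simplification in characteristic~$2$ is that every component group $A_H(x)$, $x\in\cN_H$, is a $2$-group both for $H=\Sp(2m)$ and, more generally, for every Levi subgroup $L$ of $\Sp(2n)$ (such an $L$ being a product of general linear groups, whose nilpotent centralizers are connected, with a smaller symplectic group). Hence in characteristic~$2$ the only simple equivariant $\bk$-local system on any nilpotent orbit of such a group is the constant sheaf $\ubk$, which corresponds to the trivial representation; the latter is absolutely irreducible, so Statement~\ref{stmt:abs-irr} holds for $G$ and for all its Levi subgroups, and since there is at most one simple equivariant local system per orbit, Statement~\ref{stmt:dist-char} holds as well (vacuously, there being at most one cuspidal pair on any given orbit). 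In particular Statements~\ref{stmt:abs-irr} and~\ref{stmt:dist-char} hold for every proper Levi subgroup of $G$, so Theorem~\ref{thm:conditional} applies and at once gives Theorem~\ref{thm:main} for $G$, together with the counting formula~\eqref{eqn:cuspidal-count}.

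It then remains to identify the cuspidal pairs. For the upper bound, Proposition~\ref{prop:distinguished} forces a cuspidal pair to be supported on a distinguished orbit, and by~\cite[Theorem~8.2.14]{cm} these are exactly the $\cO_\lambda$ with $\lambda\in\Part_{2,\Sp}(2n)$; since $\ubk$ is the only available local system, this yields $\fN_{G,\bk}^{\cusp}\subseteq\{(\cO_\lambda,\ubk):\lambda\in\Part_{2,\Sp}(2n)\}$. To prove this inclusion is an equality — equivalently that $|\fN_{G,\bk}^{\cusp}|=|\Part_{2,\Sp}(2n)|=|\Part_2(n)|$ — I would feed the inductive classification of cuspidal pairs for proper Levi subgroups into~\eqref{eqn:cuspidal-count}. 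By the reductions of \S\ref{ss:reductions}, by~\cite[Theorem~3.1]{genspring1} for the general linear factors, and by the inductive hypothesis for the symplectic factor, a proper Levi $L\cong\GL(a_1)\times\cdots\times\GL(a_r)\times\Sp(2m)$ (with $a_1+\cdots+a_r=n-m\geq 1$) admits a cuspidal pair only when every $a_i$ is a power of~$2$; writing $d_j$ for the number of indices $i$ with $a_i=2^j$, one then has $|\fN_{L,\bk}^{\cusp}|=|\Part_{2,\Sp}(2m)|=|\Part_2(m)|$ and $N_G(L)/L\cong\prod_{j\geq 0}\bigl((\Z/2\Z)\wr\fS_{d_j}\bigr)$, so that $|\Irr(\bk[N_G(L)/L])|=\prod_{j\geq 0}|\Part_2(d_j)|$, using that a normal $2$-subgroup acts trivially on every simple $\bk$-module in characteristic~$2$.

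Summing the contributions $|\Part_2(m)|\prod_j|\Part_2(d_j)|$ over all proper Levi subgroups and isolating the term $L=G$ in~\eqref{eqn:cuspidal-count}, the claim reduces to the purely combinatorial identity $|\Part_{\Sp}(2n)|=\sum_{k=0}^{n}|\Part_2(n-k)|\,p(k)$, where $p(k)$ is the number of partitions of $k$ and where one first identifies $\sum_{\sum_j d_j 2^j=k}\prod_j|\Part_2(d_j)|$ with $p(k)$. Both identities follow from elementary generating-function manipulations: the series of $|\Part_2(\cdot)|$ is $\prod_{i\geq 1}(1+q^i)=\prod_{i\ \mathrm{odd}}(1-q^i)^{-1}$, the unique factorization $N=(\text{odd})\cdot 2^j$ turns $\prod_{j\geq 0}\prod_{i\ \mathrm{odd}}(1-q^{i2^j})^{-1}$ into $\prod_{N\geq 1}(1-q^N)^{-1}=\sum_k p(k)q^k$, and comparison with the standard product expansion of $\sum_n|\Part_{\Sp}(2n)|q^n$ gives the second identity. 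Plugging this back into~\eqref{eqn:cuspidal-count} pins down $|\fN_{G,\bk}^{\cusp}|=|\Part_2(n)|$, and combined with the upper bound this shows that every $(\cO_\lambda,\ubk)$ with $\lambda\in\Part_{2,\Sp}(2n)$ is cuspidal, completing the induction.

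The conceptual content here is routine given the machinery already set up: the char-$2$ collapse of equivariant local systems, the invocation of Theorem~\ref{thm:conditional}, and the reduction to counting via~\eqref{eqn:cuspidal-count}. The one place demanding genuine care — and the main, if not deep, obstacle — is the combinatorial bookkeeping of the third paragraph: correctly enumerating conjugacy classes of Levi subgroups of $\Sp(2n)$, computing each relative Weyl group $N_G(L)/L$ and its modular irreducibles, and verifying the two generating-function identities.
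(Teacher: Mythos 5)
Your proposal is correct and follows essentially the same route as the paper: induction on $n$, observing that in characteristic $2$ all equivariant local systems on nilpotent orbits of $\Sp(2n)$ and its Levi subgroups are trivial, invoking Theorem~\ref{thm:conditional}, bounding cuspidal pairs by distinguished orbits via Proposition~\ref{prop:distinguished}, and pinning down the count with~\eqref{eqn:cuspidal-count}. The only difference is cosmetic: you verify the resulting identity $|\Part_{\Sp}(2n)|=\sum_m|\Part_{2,\Sp}(2(n-m))|\,p(m)$ by generating functions, whereas the paper uses Lemma~\ref{lem:gln-comb} together with the explicit bijection $(\mu,\lambda')\mapsto\mu\cup\lambda'\cup\lambda'$.
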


\begin{proof}
We prove this by induction on $n$, the $n=1$ case being part of Theorem~\ref{thm:sln-cuspidal}. The set of $G$-conjugacy classes of Levi subgroups of $G$ is in bijection with $\bigsqcup_{0\leq m\leq n}\Part(m)$: for $\nu=(\nu_1,\nu_2,\cdots,\nu_{s})\in\Part(m)$ (where $s=\ell(\nu)$), a corresponding Levi subgroup has the form
\[ L_\nu\cong \GL(\nu_1)\times\GL(\nu_2)\times\cdots\times\GL(\nu_{s})\times\Sp(2(n-m)), \]
where we omit the last factor if $m=n$.  The only irreducible $L_\nu$-equivariant local system on any orbit in $\cN_{L_\nu}$ is the trivial one (as we noted above in the case $L_\nu = G$), so it is clear that Statements~\ref{stmt:abs-irr} and~\ref{stmt:dist-char} hold for all Levi subgroups of $G$, including $G$ itself.  By Theorem~\ref{thm:conditional}, Theorem~\ref{thm:main} holds for $G$.

It remains to classify the cuspidal pairs for $G$.  For a proper Levi subgroup $L_\nu \subset G$, the inductive hypothesis tells us the classification of cuspidal pairs for the $\Sp(2(n-m))$ factor, and~\cite[Theorem 3.1]{genspring1} tells us the classification of cuspidal pairs for the $\GL(\nu_i)$ factors. We conclude that $L_\nu$ has cuspidal pairs if and only if $\nu\in\Part(m,2)$, and in that case the cuspidal pairs have the form $(\cO_{(\nu_1)}\times\cdots\times\cO_{(\nu_s)}\times\cO_{\mu},\ubk)$ where $\mu$ runs over $\Part_{2,\Sp}(2(n-m))$.

We have $N_G(L_\nu)/L_\nu \cong (\Z/2\Z) \wr \fS_{\sm(\nu)}$.  Note that $|\Irr(\bk[(\Z/2\Z)\wr\fS_{\sm(\nu)}])|=|\Irr(\bk[\fS_{\sm(\nu)}])|=|\uPart_2(\sm(\nu))|$.  We wish to show that every pair $(\cO_\lambda,\ubk)$ for $\lambda\in\Part_{2,\Sp}(2n)$ is cuspidal.  By Proposition~\ref{prop:distinguished}, \eqref{eqn:cuspidal-count} and the preceding observations, it suffices to show that
\begin{equation} \label{eqn:sp2n2-count}
|\Part_{\Sp}(2n)|=\sum_{0\leq m\leq n}|\Part_{2,\Sp}(2(n-m))|\sum_{\nu\in\Part(m,2)}|\uPart_2(\sm(\nu))|.
\end{equation}
By Lemma~\ref{lem:gln-comb}, 
the right-hand side of~\eqref{eqn:sp2n2-count} is equal to
$\sum_{0\leq m\leq n}|\Part_{2,\Sp}(2(n-m))|\times|\Part(m)|$. There is an obvious bijection
\begin{equation} \label{eqn:sp2n2-bijection}
f:\bigsqcup_{0\leq m\leq n}\Part_{2,\Sp}(2(n-m))\times\Part(m)\simto \Part_{\Sp}(2n)
\end{equation}
defined by $f(\mu,\lambda')=\mu\cup\lambda'\cup\lambda'$, so~\eqref{eqn:sp2n2-count} holds.
\end{proof}

\subsection{The $\ell\neq 2$ case} 
\label{ss:sp-not2}
Here we do have nontrivial local systems on our nilpotent orbits: for $x\in\cO_\lambda$ with $\lambda\in\Part_{\Sp}(2n)$, the group $A_G(x)$ is isomorphic to $(\Z/2\Z)^{|\{i\,|\,\sm_{2i}(\lambda)\neq 0\}|}$, so the number of isomorphism classes of simple $G$-equivariant local systems on $\cO_\lambda$ is $2^{|\{i\,|\,\sm_{2i}(\lambda)\neq 0\}|}$, and all these local systems have rank one.

If $n=\binom{k+1}{2}$ for some positive integer $k$, then by~\cite[Corollary 12.4(b)]{lusztig} there is a unique rank-one $G$-equivariant $\Ql$-local system $\cD_k^{\Ql}$ on the orbit $\cO_{(2k,2(k-1),\cdots,4,2)}$ such that $(\cO_{(2k,2(k-1),\cdots,4,2)},\cD_k^{\Ql})$ is a cuspidal pair in Lusztig's sense. This local system has an obvious $\Zl$-form $\cD_k^{\Zl}$, and we set $\cE_k:=\bk \otimes_{\Zl} \cD_k^{\Zl}$. Then $(\cO_{(2k,2(k-1),\cdots,4,2)},\cE_k)$ is a cuspidal pair in our sense by \cite[Proposition~2.22]{genspring1}.

\begin{thm} \label{thm:sp2n-cuspidal}
Let $\bk$ be a field of characteristic different from $2$. Then
Theorem~{\rm \ref{thm:main}} and Statements~{\rm \ref{stmt:abs-irr}} and~{\rm \ref{stmt:dist-char}} hold for $G=\Sp(2n)$. If $n$ is not of the form $\binom{k+1}{2}$, there is no cuspidal pair; if $n=\binom{k+1}{2}$, the unique cuspidal pair is $(\cO_{(2k,2(k-1),\cdots,4,2)},\cE_k)$.
\end{thm}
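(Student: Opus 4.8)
The plan is to mimic the inductive structure used in the $\ell=2$ case (Theorem~\ref{thm:sp2n2-cuspidal}) and in the $\SL(n)$ case (Theorem~\ref{thm:sln-cuspidal}), doing an induction on $n$, with the $n=1$ case again being part of Theorem~\ref{thm:sln-cuspidal}. First I would recall the classification of $G$-conjugacy classes of Levi subgroups: they are indexed by $\bigsqcup_{0\leq m\leq n}\Part(m)$, with $L_\nu\cong\GL(\nu_1)\times\cdots\times\GL(\nu_s)\times\Sp(2(n-m))$ for $\nu\in\Part(m)$ (omitting the last factor when $m=n$). For a proper Levi subgroup, the inductive hypothesis applies to the $\Sp(2(n-m))$ factor (after possibly enlarging $\bk$, though here only fourth roots of unity are ever needed, and in characteristic $\neq 2$ these are automatically present once $\ell\nmid 2$, so enlarging is harmless), and \cite[Theorem~3.1]{genspring1} applies to the $\GL(\nu_i)$ factors. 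Since every $\GL(\nu_i)$ factor carries a cuspidal pair only when $\nu_i$ is a power of $\ell$, and since in characteristic $\ell\neq 2$ the only cuspidal pair on $\GL(\ell^j)$ is $(\cO_{(\ell^j)},\ubk)$, I would conclude that $L_\nu$ has a cuspidal pair precisely when $\nu\in\Part(m,\ell)$ and $n-m$ is a triangular number $\binom{k+1}{2}$, and in that case the unique cuspidal pair is $(\cO_{(\ell^{j_1})}\times\cdots\times\cO_{(\ell^{j_s})}\times\cO_{(2k,\ldots,4,2)},\ubk^{\boxtimes}\boxtimes\cE_k)$.

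Next I would verify Statements~\ref{stmt:abs-irr} and~\ref{stmt:dist-char} for all proper Levi subgroups. Absolute irreducibility (Statement~\ref{stmt:abs-irr}) holds because each cuspidal local system on a factor is rank one (the $\GL$ factors carry only the trivial local system, and $\cE_k$ has rank one), hence absolutely irreducible. For Statement~\ref{stmt:dist-char}, on each of the two distinguished-orbit families involved there is at most one cuspidal pair on a given orbit of $L_\nu$ by the above classification, so the distinctness of central characters is vacuous (there is nothing to compare). With both statements in hand, Theorem~\ref{thm:conditional} gives Theorem~\ref{thm:main} for $G=\Sp(2n)$, together with~\eqref{eqn:cuspidal-count}.

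It then remains to classify the cuspidal pairs for $G$ itself, i.e.\ to show that the only one is the stated pair $(\cO_{(2k,2(k-1),\ldots,4,2)},\cE_k)$ when $n=\binom{k+1}{2}$ and that there is none otherwise. As in the two earlier proofs, this reduces to a counting identity: by Proposition~\ref{prop:distinguished} every cuspidal pair for $G$ is supported on a distinguished orbit $\cO_\lambda$ with $\lambda\in\Part_{2,\Sp}(2n)$, and by~\eqref{eqn:cuspidal-count} it suffices to check that attributing to $G$ exactly the claimed number of cuspidal pairs makes that equation balance. Concretely, I would show that
\[
\sum_{\lambda\in\Part_{\Sp}(2n)} 2^{|\{i\,:\,\sm_{2i}(\lambda)\neq 0\}|}
= \sum_{\substack{0\leq m\leq n\\ n-m=\binom{k+1}{2}}}\;\sum_{\nu\in\Part(m,\ell)} |\Irr(\bk[N_G(L_\nu)/L_\nu])|
\;+\; (\text{contribution of }G),
\]
where the left side counts all of $\fN_{G,\bk}$ (using the description of the simple local systems on $\cO_\lambda$ from \S\ref{ss:sp-not2}) and $N_G(L_\nu)/L_\nu\cong(\Z/2\Z)\wr\fS_{\sm(\nu)}$ when the $\Sp$ factor is trivial, resp.\ a hyperoctahedral-type group otherwise; using Lemma~\ref{lem:gln-comb} one replaces the inner sum over $\Part(m,\ell)$ by $|\Part(m)|$ (resp.\ by the appropriate bipartition count when there is a nontrivial symplectic factor, via the bijection between $\uBipart_\ell(\sm(\nu))$ and $\Irr(\bk[(\Z/2\Z)\wr\fS_{\sm(\nu)}])$). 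This is the analogue of~\eqref{eqn:sp2n2-count}, and the identity should follow from a Jordan-type bijection on $\Part_{\Sp}(2n)$ refining~\eqref{eqn:sp2n2-bijection}: split off the distinguished ``core'' of even distinct parts (recording the dyadic valuations of the remaining even multiplicities, which give the $\GL$ factors, hence the $\Part(m,\ell)$-to-$\Part(m)$ step) and double up the odd parts.

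The main obstacle I expect is precisely this combinatorial identity: unlike the $\ell=2$ case, here the left-hand side is weighted by the powers of $2$ counting the nontrivial local systems, and the right-hand side must account both for the $\GL$-induced multiplicities (via $\ell$-regular partitions, Lemma~\ref{lem:gln-comb}) and, crucially, for the wreath-product relative Weyl groups $(\Z/2\Z)\wr\fS_{\sm(\nu)}$ attached to the even parts of the symplectic factor. Getting the bookkeeping of even parts vs.\ odd parts exactly right—so that the factor $2^{|\{i:\sm_{2i}(\lambda)\neq 0\}|}$ on the left is reproduced by the $\Irr$ counts of these wreath products on the right—is the delicate point; everything else is a routine application of Theorem~\ref{thm:conditional}, the inductive hypothesis, and \cite[Theorem~3.1]{genspring1}.
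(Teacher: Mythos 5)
Your overall strategy is the same as the paper's: induct on $n$, classify cuspidal pairs of proper Levi subgroups via the inductive hypothesis and the $\GL$ result of~\cite{genspring1}, note that Statements~\ref{stmt:abs-irr} and~\ref{stmt:dist-char} are easy for proper Levis (rank-one local systems, at most one cuspidal pair per Levi), invoke Theorem~\ref{thm:conditional}, and then pin down the cuspidal pairs of $G$ itself by the counting identity~\eqref{eqn:cuspidal-count}. However, your counting is set up incorrectly at exactly the point you flag as delicate. For a Levi with no symplectic factor (i.e.\ $m=n$, e.g.\ the maximal torus), the relative Weyl group $N_G(L_\nu)/L_\nu$ is \emph{still} the wreath product $(\Z/2\Z)\wr\fS_{\sm(\nu)}$ -- there is no dichotomy of the kind you describe -- and since $\ell\neq 2$ its irreducible $\bk$-representations are counted by $\ell$-regular \emph{bipartitions}, not partitions. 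So the inner sum over $\Part(m,\ell)$ must be replaced by $|\Bipart(m)|$ (this is~\eqref{eqn:bipart-ladic}, a ``trivial modification'' of Lemma~\ref{lem:gln-comb}), not by $|\Part(m)|$; the shortcut $|\Irr(\bk[(\Z/2\Z)\wr\fS_{\sm(\nu)}])|=|\Irr(\bk[\fS_{\sm(\nu)}])|$ is special to $\ell=2$. With your $|\Part(m)|$ the identity is already false for $n=1$: the left-hand side is $3$, while your right-hand side gives $|\Part(1)|+1=2$.

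The second gap is that, even with the counts corrected to $\sum_{m+\binom{k+1}{2}=n}|\Bipart(m)|$, the weighted identity
\[
\sum_{\lambda\in\Part_{\Sp}(2n)}2^{|\{i\,|\,\sm_{2i}(\lambda)\neq 0\}|}=\sum_{m+\binom{k+1}{2}=n}|\Bipart(m)|
\]
is not proved by the ``double up the odd parts'' bijection refining~\eqref{eqn:sp2n2-bijection}: that bijection ignores the $2$-power weights entirely. The paper does not construct such a bijection; it simply observes that after the reduction via~\eqref{eqn:bipart-ladic} the required equality is precisely Lusztig's identity~\cite[(10.4.1)]{lusztig}, used by him to classify characteristic-$0$ cuspidal pairs, and cites it. Your sketch leaves this step unproven, and your setup would in fact require a false identity. (Two minor points: no enlargement of $\bk$ is needed here at all -- and fourth roots of unity are not automatic in characteristic $\neq 2$, e.g.\ $\mathbb{F}_3$ -- and the uniqueness/existence of the cuspidal pair for $G$ itself, with $\cE_k$ constructed before the theorem by modular reduction and~\cite[Proposition~2.22]{genspring1}, then falls out of the corrected count exactly as in the paper.)
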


\begin{proof}
Again we prove this by induction on $n$, the $n=1$ case being part of Theorem~\ref{thm:sln-cuspidal}. Recall the description of Levi subgroups from the proof of Theorem~\ref{thm:sp2n2-cuspidal}. By the inductive hypothesis, we know that for $\nu\in\Part(m)$, $1\leq m\leq n$, the corresponding Levi $L_\nu$ has a cuspidal pair if and only if $\nu\in\Part(m,\ell)$ and $n-m=\binom{k+1}{2}$ for some positive integer $k$, and in that case the unique cuspidal pair is $(\cO_{(\nu_1)}\times\cdots\times\cO_{(\nu_s)}\times\cO_{(2k,\cdots,4,2)},\ubk\boxtimes\cdots\boxtimes\ubk\boxtimes\cE_k)$. It is clear that Statement~\ref{stmt:abs-irr} holds for all Levi subgroups (including $G$), and that Statement~\ref{stmt:dist-char} holds at least for proper Levi subgroups.  By Theorem~\ref{thm:conditional}, Theorem~\ref{thm:main} holds for $G$.

Statement~\ref{stmt:dist-char} for $G$ itself will be immediate once we show that the number of cuspidal pairs for $G$ is $1$ if $n=\binom{k+1}{2}$ and $0$ otherwise. By~\eqref{eqn:cuspidal-count} and the fact that $|\Irr(\bk[(\Z/2\Z)\wr\fS_{\sm(\nu)}])|=|\uBipart_\ell(\sm(\nu))|$ it suffices to show that
\begin{equation} \label{eqn:sp2n-count}
\sum_{\lambda\in\Part_{\Sp}(2n)}2^{|\{i\,|\,\sm_{2i}(\lambda)\neq 0\}|}
=\sum_{\substack{m\in\N,k\in\Z_{>0}\\m+\binom{k+1}{2}=n}}\sum_{\nu\in\Part(m,\ell)}|\uBipart_\ell(\sm(\nu))|.
\end{equation}
But a trivial modification of the bijection of Lemma~\ref{lem:gln-comb} shows that 
\begin{equation}\label{eqn:bipart-ladic}
\sum_{\nu\in\Part(m,\ell)}|\uBipart_\ell(\sm(\nu))|=|\Bipart(m)|,
\end{equation}
so~\eqref{eqn:sp2n-count} reduces to the identity~\cite[(10.4.1)]{lusztig} that Lusztig used to classify cuspidal pairs for $\Sp(2n)$ in the characteristic-$0$ case.
\end{proof}

\begin{rmk} \label{rmk:psp2n}
Consider the group $\mathrm{PSp}(2n)=\Sp(2n)/\{\pm I\}$. The cuspidal pairs for $\mathrm{PSp}(2n)$ can be identified with the cuspidal pairs for $\Sp(2n)$ on which the nontrivial central element $-I$ acts trivially. If $\ell=2$, all the cuspidal pairs described in Theorem~\ref{thm:sp2n2-cuspidal} have this property. If $\ell\neq 2$, then by the construction of $\cE_k$, we have the same rule as in the characteristic-$0$ case (see \cite[Introduction]{lusztig}): the cuspidal pair $(\cO_{(2k,2(k-1),\cdots,4,2)},\cE_k)$ of Theorem~\ref{thm:sp2n-cuspidal} descends to $\mathrm{PSp}(2n)$ if and only if $n$ is even, i.e.\ $k\equiv 0$ (mod $4$) or $k\equiv 3$ (mod $4$).  
\end{rmk}

\section{The special orthogonal and spin groups}
\label{sec:son}

In this section we fix $N\geq 3$, and set $G=\Spin(N)$ and $\overline{G}=\SO(N)$. As usual, we consider the cases $N=2n+1$ (type $\mathbf{B}_n$) and $N=2n$ (type $\mathbf{D}_n$) separately. As in the symplectic group case, all the groups $A_G(x)$ for $x\in\cN_G$ are $2$-groups, so we also have a natural dichotomy according to whether $\ell=2$ or $\ell\neq 2$.

\subsection{The $N=2n+1$, $\ell=2$ case}

Recall that the $G$-orbits in $\cN_G$, which are the same as the $\overline{G}$-orbits, are classified by Jordan form: they are in bijection with
\[
\Part_{\SO}(2n+1)=\{\lambda\in\Part(2n+1)\,|\,\sm_{2i}(\lambda)\text{ is even for all }i\}.
\]
By~\cite[Theorem 8.2.14]{cm}, the distinguished orbits are the orbits $\cO_\lambda$ where $\lambda$ belongs to the set $\Part_{2,\SO}(2n+1)$ of partitions of $2n+1$ into distinct odd parts. 

In the $\ell=2$ case, there are no non-constant simple $G$-equivariant local systems on nilpotent orbits. Hence there is essentially no difference between the story for $G$ and that for $\overline{G}$.

\begin{thm} \label{thm:spinodd2-cuspidal}
Let $\bk$ be any field of characteristic $2$. Then
Theorem~{\rm \ref{thm:main}} and Statements~{\rm \ref{stmt:abs-irr}} and~{\rm \ref{stmt:dist-char}} hold for $G=\Spin(2n+1)$. Every pair $(\cO_\lambda,\ubk)$ for $\lambda\in\Part_{2,\SO}(2n+1)$ is cuspidal, so the number of cuspidal pairs is $|\Part_{2,\SO}(2n+1)|$.
\end{thm}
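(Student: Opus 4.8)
The plan is to follow the inductive pattern established in the proofs of Theorems~\ref{thm:sln-cuspidal}, \ref{thm:sp2n2-cuspidal} and \ref{thm:sp2n-cuspidal}, doing induction on $n$ (with small base cases following from earlier results, e.g.\ $\Spin(3)\cong\SL(2)$ up to isogeny). First I would describe the $G$-conjugacy classes of Levi subgroups of $\Spin(2n+1)$: they are, up to isogeny, of the form $\GL(\nu_1)\times\cdots\times\GL(\nu_s)\times\Spin(2(n-m)+1)$ for $\nu=(\nu_1,\dots,\nu_s)\in\Part(m)$, $0\le m\le n$ (with the last factor omitted when $m=n$). Since $\ell=2$ and all the groups $A_G(x)$ are $2$-groups, the only simple $G$-equivariant $\bk$-local system on any nilpotent orbit is the constant sheaf $\ubk$; hence Statements~\ref{stmt:abs-irr} and~\ref{stmt:dist-char} hold trivially for $G$ and for every Levi subgroup (the reductions of \S\ref{ss:reductions} let us pass between $\Spin$, $\SO$ and their simply connected covers freely). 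Then Theorem~\ref{thm:conditional} immediately yields Theorem~\ref{thm:main} for $G$, together with the bijection \eqref{eqn:bijection-conditional} and the counting formula \eqref{eqn:cuspidal-count}.

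Next I would classify the cuspidal pairs. By the inductive hypothesis applied to the $\Spin(2(n-m)+1)$ factor and \cite[Theorem 3.1]{genspring1} applied to the $\GL(\nu_i)$ factors, a proper Levi $L_\nu$ has a cuspidal pair iff $\nu\in\Part(m,2)$ (so that each $\GL(\nu_i)$ contributes its unique cuspidal pair $(\cO_{(\nu_i)},\ubk)$) and the $\Spin$ factor supports one; by induction that means $n-m$ is such that $\Part_{2,\SO}(2(n-m)+1)\ne\emptyset$, and the cuspidal pairs are the $(\cO_{(\nu_1)}\times\cdots\times\cO_{(\nu_s)}\times\cO_\mu,\ubk)$ with $\mu\in\Part_{2,\SO}(2(n-m)+1)$. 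Now $N_G(L_\nu)/L_\nu\cong(\Z/2\Z)\wr\fS_{\sm(\nu)}$, and since $\ell=2$ we have $|\Irr(\bk[(\Z/2\Z)\wr\fS_{\sm(\nu)}])|=|\uPart_2(\sm(\nu))|$. By Proposition~\ref{prop:distinguished}, every cuspidal pair for $G$ is of the form $(\cO_\lambda,\ubk)$ with $\lambda\in\Part_{2,\SO}(2n+1)$, so to finish the inductive step it suffices, via \eqref{eqn:cuspidal-count}, to verify the identity
\begin{equation*}
|\Part_{\SO}(2n+1)|=\sum_{0\le m\le n}|\Part_{2,\SO}(2(n-m)+1)|\sum_{\nu\in\Part(m,2)}|\uPart_2(\sm(\nu))|.
\end{equation*}
By Lemma~\ref{lem:gln-comb} the inner sum equals $|\Part(m)|$, so this reduces to
\begin{equation*}
|\Part_{\SO}(2n+1)|=\sum_{0\le m\le n}|\Part_{2,\SO}(2(n-m)+1)|\cdot|\Part(m)|.
\end{equation*}
This last identity I would prove bijectively, exactly as in \eqref{eqn:sp2n2-bijection}: given $\lambda\in\Part_{\SO}(2n+1)$, the odd parts of $\lambda$ with odd multiplicity form a partition $\mu$ into distinct odd parts with $\|\mu\|=2(n-m)+1$ for a unique $m$, and the remaining parts can be paired up, $\lambda\setminus\mu=\lambda'\cup\lambda'$ for a unique $\lambda'\in\Part(m)$; the assignment $\lambda\mapsto(\mu,\lambda')$ is the desired bijection.

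The main obstacle is not conceptual but bookkeeping: one must check carefully that the combinatorial constraint picking out which Levi subgroups $\Spin(2(n-m)+1)$-factors support cuspidal pairs matches exactly the set $\Part_{2,\SO}(2(n-m)+1)$ (including the degenerate case $m=n$, where $L_\nu=G$ and the statement is what we are proving), and that the parity/multiplicity conditions defining $\Part_{\SO}$ and $\Part_{2,\SO}$ behave correctly under the bijection above. In particular one should note that $\Part_{2,\SO}(2(n-m)+1)$ can be empty for some values of $n-m$ — this is harmless, as those terms simply drop out of the sum — and that the base of the induction is covered by Theorem~\ref{thm:sln-cuspidal} via the exceptional isogeny for small rank. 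Once the bijection is in hand the count is immediate, completing the induction.
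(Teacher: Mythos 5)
Your proposal is correct and follows essentially the same route as the paper: induction on $n$ with base case from Theorem~\ref{thm:sln-cuspidal}, reduction to $\SO(2n+1)$ (trivial local systems when $\ell=2$, so Statements~\ref{stmt:abs-irr} and~\ref{stmt:dist-char} are automatic and Theorem~\ref{thm:conditional} applies), classification of cuspidal pairs on proper Levi subgroups via the inductive hypothesis and \cite[Theorem 3.1]{genspring1}, and verification of the counting identity~\eqref{eqn:spinodd2-count} by Lemma~\ref{lem:gln-comb} together with the bijection $(\mu,\lambda')\mapsto\mu\cup\lambda'\cup\lambda'$, exactly as in~\eqref{eqn:sp2n2-bijection}. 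Your explicit description of the inverse bijection (extracting the odd parts of odd multiplicity) is a correct spelling-out of what the paper leaves as "obvious".
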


\begin{proof}
We prove this by induction on $n$, the $n=1$ case being part of Theorem~\ref{thm:sln-cuspidal}. It suffices to consider $\overline{G}$, for which the proof proceeds much like that of Theorem~\ref{thm:sp2n2-cuspidal}. The set of $\overline{G}$-conjugacy classes of Levi subgroups of $\overline{G}$ is again in bijection with $\bigsqcup_{0\leq m\leq n}\Part(m)$: for $\nu=(\nu_1,\nu_2,\cdots,\nu_{s})\in\Part(m)$, a corresponding Levi subgroup has the form
\[ \overline{L}_\nu\cong \GL(\nu_1)\times\GL(\nu_2)\times\cdots\times\GL(\nu_{s})\times\SO(2(n-m)+1), \]
where we omit the last factor if $m=n$. As in the setting of Theorem~\ref{thm:sp2n2-cuspidal}, the only irreducible $\overline{L}_\nu$-equivariant local system on any orbit in $\cN_{L_\nu}$ is the trivial one.  Once again, it follows that Statements~\ref{stmt:abs-irr} and~\ref{stmt:dist-char} hold for all Levi subgroups of $\overline{G}$ (including $\overline{G}$ itself), and that Theorem~\ref{thm:main} holds for $\overline{G}$.

To finish the proof, it remains to classify the cuspidal pairs for $\overline{G}$. The inductive hypothesis and~\cite[Theorem~3.1]{genspring1} tell us that a proper Levi subgroup $\overline{L}_\nu \subset \overline{G}$ has cuspidal pairs if and only if $\nu\in\Part(m,2)$, and that in that case the cuspidal pairs have the form $(\cO_{(\nu_1)}\times\cdots\times\cO_{(\nu_s)}\times\cO_{\mu},\ubk)$ where $\mu$ runs over $\Part_{2,\SO}(2(n-m)+1)$.  To show that every pair $(\cO_\lambda,\ubk)$ for $\lambda\in\Part_{2,\SO}(2n+1)$ is cuspidal, we must show that
\begin{equation} \label{eqn:spinodd2-count}
|\Part_{\SO}(2n+1)|=\sum_{0\leq m\leq n}|\Part_{2,\SO}(2(n-m)+1)|\sum_{\nu\in\Part(m,2)}|\uPart_2(\sm(\nu))|.
\end{equation}
But this can be proved in exactly the same way as~\eqref{eqn:sp2n2-count}.
\end{proof}

\subsection{The $N=2n$, $\ell=2$ case}

Here the classification of $G$-orbits ($=\overline{G}$-orbits) in $\cN_G$ is slightly different from the $N=2n+1$ case: for every partition $\lambda\in\Part_{\SO}(2n)$
we have a single $\mathrm{O}(2n)$-orbit $\cO_\lambda$, which on restriction to $\overline{G}$ either remains a single $G$-orbit or splits into two, the latter occurring precisely when $\lambda$ belongs to the set $\Part_{\ve}(2n)$ of partitions $\lambda\in\Part_{\SO}(2n)$ satisfying $\sm_{2i+1}(\lambda)=0$ for all $i$. (Of course, $\Part_{\ve}(2n)$ is empty if $n$ is odd.) By~\cite[Theorem 8.2.14]{cm}, the distinguished orbits are the orbits $\cO_\lambda$ where $\lambda$ belongs to the set $\Part_{2,\SO}(2n)$ of partitions of $2n$ into distinct odd parts. (Recall that we are assuming that $N=2n\geq 4$.)

\begin{thm} \label{thm:spineven2-cuspidal}
Let $\bk$ be any field of characteristic $2$. Then
Theorem~{\rm \ref{thm:main}} and Statements~{\rm \ref{stmt:abs-irr}} and~{\rm \ref{stmt:dist-char}} hold for $G=\Spin(2n)$. Every pair $(\cO_\lambda,\ubk)$ for $\lambda\in\Part_{2,\SO}(2n)$ is cuspidal, so the number of cuspidal pairs is $|\Part_{2,\SO}(2n)|$.
\end{thm}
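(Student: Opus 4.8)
The plan is to run the same inductive scheme as in the proofs of Theorems~\ref{thm:sp2n2-cuspidal} and~\ref{thm:spinodd2-cuspidal}, with induction on $n$; the base cases $n=2$ and $n=3$, where $\mathbf{D}_n$ degenerates to a product of type-$\mathbf A$ root systems and $\SO(2n)$ is a central quotient of a product of special linear groups, are handled by Theorem~\ref{thm:sln-cuspidal} together with the reductions of~\S\ref{ss:reductions}. First I would note that it suffices to treat $\overline G=\SO(2n)$: since $\ell=2$ every $A_{\overline G}(x)$ and $A_G(x)$ is a $2$-group, so the only irreducible equivariant local system on any orbit of $G$ or of $\overline G$ is $\ubk$, whence $\fN_{G,\bk}=\fN_{\SO(2n),\bk}$, $\fN_{G,\bk}^\cusp=\fN_{\SO(2n),\bk}^\cusp$, the relative Weyl groups coincide and the induction series match, so Theorem~\ref{thm:main} for $\SO(2n)$ is literally the same statement as for $\Spin(2n)$. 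Because all local systems in sight are constant, Statements~\ref{stmt:abs-irr} and~\ref{stmt:dist-char} hold vacuously for $\overline G$ and every Levi subgroup of $\overline G$, so Theorem~\ref{thm:conditional} immediately gives Theorem~\ref{thm:main} for $\overline G$ (hence for $G$) together with the counting identity~\eqref{eqn:cuspidal-count}, which is the tool for the rest.

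It then remains to show that the cuspidal pairs of $\overline G$ are exactly the $(\cO_\lambda,\ubk)$ with $\lambda\in\Part_{2,\SO}(2n)$. By Proposition~\ref{prop:distinguished} every cuspidal pair is supported on some such $\cO_\lambda$; as $\lambda$ has distinct odd parts it does not lie in $\Part_\ve(2n)$, so $\cO_\lambda$ does not split and carries only the pair $(\cO_\lambda,\ubk)$, leaving at most $|\Part_{2,\SO}(2n)|$ candidates. It therefore suffices to check that declaring all of them cuspidal makes~\eqref{eqn:cuspidal-count} an equality, where $|\fN_{\overline G,\bk}|=|\Part_{\SO}(2n)|+|\Part_\ve(2n)|$ (the extra term coming from the split orbits). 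On the right-hand side one runs over conjugacy classes of Levi subgroups of $\SO(2n)$, which are of two shapes: (A) a product $\GL(\mu_1)\times\cdots\times\GL(\mu_t)$ with $\sum\mu_i=n$; (B) such a product together with a factor $\SO(2j)$, $2\le j\le n$ (with $j=n$ giving $\overline G$). For a shape-(B) Levi with proper $\SO(2j)$-factor, \cite[Theorem~3.1]{genspring1} forces the $\GL$-blocks into $\Part(n-j,2)$, the inductive hypothesis gives $|\Part_{2,\SO}(2j)|$ cuspidal pairs from the orthogonal factor, and the relative Weyl group is $(\Z/2)\wr\fS_{\sm(\nu)}$ as in the symplectic case, so by $|\Irr\bk[(\Z/2)\wr\fS_{\sm(\nu)}]|=|\uPart_2(\sm(\nu))|$ and Lemma~\ref{lem:gln-comb} these Levi subgroups contribute $\sum_{j=2}^{n-1}|\Part_{2,\SO}(2j)|\cdot|\Part(n-j)|$. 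Using the bijection
\[
\bigsqcup_{0\le j\le n}\Part_{2,\SO}(2j)\times\Part(n-j)\;\simto\;\Part_{\SO}(2n),\qquad (\mu,\lambda')\mapsto\mu\cup\lambda'\cup\lambda',
\]
(the analogue of~\eqref{eqn:sp2n2-bijection}, with inverse extracting the parts of odd multiplicity as $\mu$) together with $|\Part_{2,\SO}(0)|=1$ and $|\Part_{2,\SO}(2)|=0$, the equality~\eqref{eqn:cuspidal-count} reduces, after cancelling the common $\sum_{j=2}^{n-1}$-terms and the terms forcing the number of cuspidal pairs of $\overline G$ to be what we want, to the purely combinatorial assertion
\[
\sum_{\mu\in\Part(n,2)}\ \sum_{\substack{\text{classes }\overline L\text{ of shape (A)}\\\text{with block sizes }\mu}}|\Irr\bk[N_{\overline G}(\overline L)/\overline L]|\;=\;|\Part(n)|+|\Part_\ve(2n)|.
\]

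The hard part will be this last identity, where the genuinely type-$\mathbf D$ phenomena enter: a full-rank product $\prod\GL(\mu_i)$ may form one or two conjugacy classes of Levi subgroups in $\SO(2n)$ (the two being exchanged by $\mathrm{O}(2n)/\SO(2n)$, i.e. by a diagram automorphism of $\mathbf D_n$), and $N_{\overline G}(\overline L)/\overline L$ is in general only a subgroup of $(\Z/2)\wr\fS_{\sm(\mu)}$, since one may freely flip an even-dimensional $\GL$-block but flipping odd-dimensional blocks or performing the diagram automorphism is subject to a determinant constraint — the precise structure and the number of classes depending on $\mu$ and on the parity of $n$, consistently with $\Part_\ve(2n)$ being empty for $n$ odd and of size $|\Part(n/2)|$ for $n$ even. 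Verifying that, after passing to $\ell$-regular (equivalently, odd-order) class counts in characteristic $2$ and summing over $\mu$ and over the conjugacy classes, this bookkeeping produces exactly $|\Part(n)|+|\Part_\ve(2n)|$ is the substantive content of the proof; everything else (reduction to $\SO(2n)$, vacuity of Statements~\ref{stmt:abs-irr}--\ref{stmt:dist-char}, Theorem~\ref{thm:conditional}, Lemma~\ref{lem:gln-comb}, the $\mu\cup\lambda'\cup\lambda'$ bijection) is the same formal apparatus already used in the $\mathbf B$ and $\mathbf C$ cases. Once the identity is established, \eqref{eqn:cuspidal-count} forces $\overline G$ to have precisely $|\Part_{2,\SO}(2n)|$ cuspidal pairs, and by the second paragraph these must be exactly the pairs $(\cO_\lambda,\ubk)$ with $\lambda\in\Part_{2,\SO}(2n)$.
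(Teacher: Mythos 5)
Your overall strategy is the paper's: reduce to $\overline{G}=\SO(2n)$, note that all equivariant local systems are constant in characteristic $2$ so that Statements~\ref{stmt:abs-irr} and~\ref{stmt:dist-char} are automatic and Theorem~\ref{thm:conditional} applies, bound the cuspidal candidates by Proposition~\ref{prop:distinguished}, and then verify the counting identity~\eqref{eqn:cuspidal-count}. Your bookkeeping correctly reduces that identity (via the $(\mu,\lambda')\mapsto\mu\cup\lambda'\cup\lambda'$ bijection) to the displayed assertion that the full-rank Levi subgroups with all block sizes powers of $2$ contribute $|\Part(n)|+|\Part_\ve(2n)|$. But there you stop: you declare this ``the substantive content of the proof'' and leave it unproven, pinning down neither (i) exactly when a class $\mu\in\Part(n,2)$ gives rise to two $\overline{G}$-conjugacy classes of Levi subgroups, nor (ii) the number of irreducible $\bk$-representations of $N_{\overline{G}}(\overline{L})/\overline{L}$ when that group is only an index-$2$ subgroup of $(\Z/2\Z)\wr\fS_{\sm(\mu)}$. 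Since everything else in your argument is the ``same formal apparatus'' as types $\mathbf{B}$ and $\mathbf{C}$, this missing identity is precisely the new content of the type-$\mathbf{D}$ case, so as written the proposal has a genuine gap.

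The gap is real but small, and your plan of ``passing to odd-order class counts'' is an unnecessary detour. The clean resolution, which is how the paper argues (see~\eqref{eqn:spineven2-count},~\eqref{eqn:spineven2-extra} and, for the group structure,~\eqref{eqn:son-rel-weyl}): (i) for $\mu\in\Part(n,2)$ there are two classes of Levi subgroups $\GL(\mu_1)\times\cdots\times\GL(\mu_s)$ exactly when $\sm_1(\mu)=0$ (all parts even, which for power-of-$2$ parts means no part equal to $1$), and one class otherwise; (ii) in characteristic $2$ the normal $2$-subgroup of sign changes acts trivially on any irreducible representation, so whether $N_{\overline{G}}(\overline{L}_\mu)/\overline{L}_\mu$ is the full wreath product or its index-$2$ subgroup, its irreducibles are pulled back from $\fS_{\sm(\mu)}$ and are counted by $|\uPart_2(\sm(\mu))|$; (iii) Lemma~\ref{lem:gln-comb} then gives $\sum_{\mu\in\Part(n,2)}|\uPart_2(\sm(\mu))|=|\Part(n)|$, and restricting the same bijection to the terms with $\sm_1(\mu)=0$ gives $\sum_{\sm_1(\mu)=0}|\uPart_2(\sm(\mu))|$ equal to the number of partitions of $n$ into even parts, which is $|\Part_\ve(2n)|$. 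Adding these yields exactly your target identity, and the rest of your argument then goes through.
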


\begin{proof}
The proof is much the same as that of Theorem~\ref{thm:spinodd2-cuspidal}. The base case of the induction is now the $n=2$ case, which follows from Theorem~\ref{thm:sln-cuspidal}. If $n$ is odd, the set of $\overline{G}$-conjugacy classes of Levi subgroups of $\overline{G}$ is in bijection not with $\bigsqcup_{0\leq m\leq n}\Part(m)$ but rather with $\bigsqcup_{\substack{0\leq m\leq n\\m\neq n-1}}\Part(m)$. (The exclusion of the $m=n-1$ case is because a Levi subgroup of the form $\GL(\nu_1)\times\cdots\times\GL(\nu_s)\times\SO(2)$ is conjugate to one of the form $\GL(\nu_1)\times\cdots\times\GL(\nu_s)\times\GL(1)$). So the equality required in place of~\eqref{eqn:spinodd2-count} is
\begin{equation} \label{eqn:spineven2-count}
|\Part_{\SO}(2n)|=\sum_{\substack{0\leq m\leq n\\m\neq n-1}}|\Part_{2,\SO}(2(n-m))|\sum_{\nu\in\Part(m,2)}|\uPart_2(\sm(\nu))|,
\end{equation}
which can also be proved in the same way as~\eqref{eqn:sp2n2-count}. (In fact, the exclusion of the $m=n-1$ case makes no difference to~\eqref{eqn:spineven2-count}, since $\Part_{2,\SO}(2)$ is empty.) Now suppose that $n$ is even. As seen above, $|\fN_{G,\bk}|$ is not $|\Part_{\SO}(2n)|$ but rather $|\Part_{\SO}(2n)|+|\Part_{\ve}(2n)|$. Correspondingly, the set of $\overline{G}$-conjugacy classes of Levi subgroups of $\overline{G}$ is not quite in bijection with $\bigsqcup_{\substack{0\leq m\leq n\\m\neq n-1}}\Part(m)$: if $m=n$ and all parts of $\nu\in\Part(n)$ are even, there are two $\overline{G}$-conjugacy classes of Levi subgroups of the form $\GL(\nu_1)\times\cdots\times\GL(\nu_{s})$. The equality we need to prove, therefore, is the sum of~\eqref{eqn:spineven2-count} and
\begin{equation}\label{eqn:spineven2-extra}
|\Part_{\ve}(2n)|=\sum_{\substack{\nu\in\Part(n,2)\\\sm_1(\nu)=0}}|\uPart_2(\sm(\nu))|.
\end{equation}
Note that the left-hand side of~\eqref{eqn:spineven2-extra} is the same as the number of partitions of $n$ into even parts; under the bijection of Lemma~\ref{lem:gln-comb} (for $\ell=2$), these correspond exactly to the terms of the disjoint union labelled by $\nu\in\Part(n,2)$ where $\sm_1(\nu)=0$, so~\eqref{eqn:spineven2-extra} is proved.
\end{proof}

\subsection{Special orthogonal groups with $\ell \ne 2$}

Now, as a preliminary step, we treat the case of  $\overline{G} = \SO(N)$ when $\ell \neq 2$.
The situation in this case is parallel to that of \S\ref{ss:sp-not2}.
For $x \in \cO_\lambda$ with $\lambda \in \Part_{\SO}(N)$, we have $A_{\overline{G}}(x) \cong (\Z/2\Z)^{a(\lambda)}$ where
\[
a(\lambda) =
\begin{cases}
0 & \text{if $N$ is even and $\lambda \in \Part_{\ve}(N)$,} \\
|\{i \mid \sm_{2i-1}(\lambda) \ne 0 \}|-1 & \text{otherwise.}
\end{cases}
\]
Thus, the number of isomorphism classes of simple $\overline{G}$-equivariant local systems on $\cO_\lambda$ is $2^{a(\lambda)}$, and all these local systems have rank one.

If $N=k^2$ for some positive integer $k$, then by~\cite[Corollary 13.4(b)]{lusztig} there is a unique rank-one $\overline{G}$-equivariant $\Ql$-local system $\cD_k^{\Ql}$ on the orbit $\cO_{(2k-1,2k-3,\cdots,3,1)}$ such that $(\cO_{(2k-1,2k-3,\cdots,3,1)},\cD_k^{\Ql})$ is a cuspidal pair in Lusztig's sense. This local system has an obvious $\Zl$-form $\cD_k^{\Zl}$, and we set $\cE_k:=\bk \otimes_{\Zl} \cD_k^{\Zl}$. Then $(\cO_{(2k-1,2k-3,\cdots,3,1)},\cE_k)$ is a cuspidal pair in our sense by \cite[Proposition~2.22]{genspring1}.

\begin{thm} \label{thm:sonot2-cuspidal}
Let $\bk$ be a field of characteristic different from $2$. Then
Theorem~{\rm \ref{thm:main}} and Statements~{\rm \ref{stmt:abs-irr}} and~{\rm \ref{stmt:dist-char}} hold for $\overline{G}=\SO(N)$. If $N$ is not of the form $k^2$, there is no cuspidal pair; if $N = k^2$, the unique cuspidal pair is $(\cO_{(2k-1,2k-3,\cdots,3,1)},\cE_k)$.
\end{thm}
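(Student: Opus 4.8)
The plan is to prove Theorem~\ref{thm:sonot2-cuspidal} by induction on $N$, following exactly the pattern established for $\Sp(2n)$ in Theorem~\ref{thm:sp2n-cuspidal}. The base cases $N=3,4$ are covered by Theorem~\ref{thm:sln-cuspidal} (since $\SO(3)$ and $\SO(4)$ are isogenous to products of $\SL(2)$'s, and we may work with a central quotient as in \S\ref{ss:reductions}). For the inductive step, first I would recall the description of Levi subgroups of $\overline{G}=\SO(N)$: up to conjugacy they have the form $\overline{L}_\nu\cong\GL(\nu_1)\times\cdots\times\GL(\nu_s)\times\SO(N-2m)$ for $\nu=(\nu_1,\ldots,\nu_s)\in\Part(m)$ with $2m\le N$, omitting the last factor when $N-2m\le 1$ (and, as in Theorem~\ref{thm:spineven2-cuspidal}, being slightly careful in type $\mathbf{D}$ about the $\SO(2)$ factor and the doubling of classes when all $\nu_i$ are even). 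By the inductive hypothesis applied to the $\SO(N-2m)$ factor and \cite[Theorem~3.1]{genspring1} applied to the $\GL(\nu_i)$ factors, a proper Levi $\overline{L}_\nu$ has a cuspidal pair iff $\nu\in\Part(m,\ell)$ and $N-2m=j^2$ for some positive integer $j$, in which case the unique cuspidal pair is $(\cO_{(\nu_1)}\times\cdots\times\cO_{(\nu_s)}\times\cO_{(2j-1,\ldots,3,1)},\ubk\boxtimes\cdots\boxtimes\ubk\boxtimes\cE_j)$. Since each $\cE_j$ is the modular reduction of a rank-one $\Ql$-local system and since $\bk$ contains fourth roots of unity, Statement~\ref{stmt:abs-irr} holds for all Levi subgroups including $\overline{G}$, and Statement~\ref{stmt:dist-char} holds at least for all proper Levi subgroups (uniqueness of the cuspidal pair on each relevant orbit makes it vacuous there). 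Hence Theorem~\ref{thm:conditional} applies and gives Theorem~\ref{thm:main} for $\overline{G}$.

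It remains to prove that $\overline{G}$ itself has exactly one cuspidal pair when $N=k^2$ and none otherwise; Statement~\ref{stmt:dist-char} for $\overline{G}$ is then immediate. By the count \eqref{eqn:cuspidal-count}, using $N_{\overline{G}}(\overline{L}_\nu)/\overline{L}_\nu\cong(\Z/2\Z)\wr\fS_{\sm(\nu)}$ (in type $\mathbf{B}$, and the appropriate index-two or wreath-product variants in type $\mathbf{D}$) and the identity $|\Irr(\bk[(\Z/2\Z)\wr\fS_{\sm(\nu)}])|=|\uBipart_\ell(\sm(\nu))|$ together with \eqref{eqn:bipart-ladic}, the required identity reduces to a purely combinatorial statement: the number of pairs $(\cO_\lambda,\cE)$ with $\lambda\in\Part_{\SO}(N)$ and $\cE$ a simple $\overline{G}$-equivariant local system — that is, $\sum_{\lambda\in\Part_{\SO}(N)}2^{a(\lambda)}$ (with the type-$\mathbf{D}$ correction counting split orbits) — equals $\sum_{m,j:\,2m+j^2=N}|\Bipart(m)|$ (with the corresponding Weyl-group corrections in type $\mathbf{D}$). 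This is exactly the combinatorial identity \cite[(10.6.2)]{lusztig} (or its analogue) that Lusztig used to classify cuspidal pairs for $\SO(N)$ in the characteristic-zero case, so it suffices to cite that.

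The main obstacle, as in Theorem~\ref{thm:spineven2-cuspidal}, is the bookkeeping in type $\mathbf{D}_n$: one must correctly account for (a) the splitting of $\cO_\lambda$ into two $\overline{G}$-orbits when $\lambda\in\Part_{\ve}(2n)$, (b) the fact that a Levi factor $\GL(\nu_1)\times\cdots\times\GL(\nu_s)$ with all $\nu_i$ even comes in two non-conjugate classes, and (c) the corresponding change in the relative Weyl group (an index-two subgroup of $(\Z/2\Z)\wr\fS_{\sm(\nu)}$, or the full group, depending on parities). The point is that these corrections match up on both sides of the count — the extra local systems/orbits on the left are in bijection with the extra terms coming from split Levi classes on the right — exactly as the pair \eqref{eqn:spineven2-count}--\eqref{eqn:spineven2-extra} worked out in the $\ell=2$ case. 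I expect the cleanest route is to handle $N=2n+1$ and $N=2n$ as two sub-cases, reducing each to the relevant Lusztig identity, and to observe that the cuspidal pair $(\cO_{(2k-1,\ldots,3,1)},\cE_k)$ produced is genuinely $\overline{G}$-equivariant (not merely $\mathrm{O}(N)$-equivariant), which follows from its construction as a modular reduction of Lusztig's cuspidal pair.
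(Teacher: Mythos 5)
Your skeleton is the same as the paper's: induction on $N$ with base cases $\SO(3),\SO(4)$ reduced to $\SL(2)$ factors, verification of Statements~\ref{stmt:abs-irr} and~\ref{stmt:dist-char} for proper Levi subgroups from the inductive hypothesis and the $\GL$ classification, Theorem~\ref{thm:conditional} to get Theorem~\ref{thm:main}, and then a count via~\eqref{eqn:cuspidal-count} reduced to Lusztig's characteristic-zero identity. Two points, one minor and one substantive. Minor: you invoke ``$\bk$ contains fourth roots of unity'' to get Statement~\ref{stmt:abs-irr}, but that hypothesis is not in the statement (it is only needed for $\Spin(N)$); for $\SO(N)$ with $\ell\neq 2$ every $A_{\overline{G}}(x)$ is $(\Z/2\Z)^{a(\lambda)}$, so all equivariant local systems on nilpotent orbits have rank one and are automatically absolutely irreducible.

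The substantive gap is in your type-$\mathbf{D}$ bookkeeping. You assert that the corrections ``match up exactly as in the $\ell=2$ case,'' i.e.\ that the extra orbits counted by $|\Part_{\ve}(N)|$ on the left are compensated by doubled conjugacy classes of Levi subgroups $\GL(\nu_1)\times\cdots\times\GL(\nu_s)$ with all parts even. That mechanism is unavailable here: the Levi subgroups admitting cuspidal pairs have $\nu\in\Part(m,\ell)$ with $\ell$ odd, so every part is an odd power of $\ell$, no relevant Levi class splits, and moreover $2\nmid\gcd(\nu)$ forces the relative Weyl group in the $2m=N$ case to be the \emph{index-two subgroup} of $(\Z/2\Z)\wr\fS_{\sm(\nu)}$. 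The compensation therefore comes not from extra Levi classes but from Clifford theory for this index-two subgroup: writing $\sigma$ for the swap of the two partitions in a bipartition, one finds $|\Irr(\bk[N_{\overline{G}}(\overline{L}_\nu)/\overline{L}_\nu])|=\frac{1}{2}|\uBipart_\ell(\sm(\nu))|+\frac{3}{2}|\uPart_\ell(\frac{1}{2}\sm(\nu))|$ when $2m=N$, and after summing (using~\eqref{eqn:bipart-ladic} and $\sum_{\nu}|\uPart_\ell(\frac{1}{2}\sm(\nu))|=|\Part(m/2)|$) the total is $\sum_{2m+k^2=N,\,k>0}|\Bipart(m)|+\frac{1}{2}|\Bipart(N/2)|+\frac{3}{2}|\Part(N/4)|$, which is what Lusztig's identity \cite[(10.6.3)]{lusztig} equates with $\sum_{\lambda\in\Part_{\SO}(N)}2^{a(\lambda)}+|\Part_{\ve}(N)|$. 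Without this Clifford-theoretic count your reduction to ``the relevant Lusztig identity'' does not go through as stated, since the two sides you propose to compare are not the ones appearing in that identity.
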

\begin{proof}
Since $\SO(3)$ is a quotient of $\SL(2)$, and $\SO(4)$ a quotient of $\SL(2) \times \SL(2)$, the theorem above in these two cases is implied by Theorem~\ref{thm:sln-cuspidal}, using the reductions explained in~\S\ref{ss:reductions}.

For $N \ge 5$, we proceed by induction.  Recall the description of Levi subgroups from the proofs of Theorems~\ref{thm:spinodd2-cuspidal} and~\ref{thm:spineven2-cuspidal}.  From the description of $A_{\overline{G}}(x)$ above, it is clear that Statement~\ref{stmt:abs-irr} holds for all Levi subgroups of $\overline{G}$, including $\overline{G}$ itself.  By the inductive hypothesis, we know that for $\nu \in \Part(m)$, $1 \le m \le \lfloor N/2\rfloor$ (and $m\neq N/2-1$ when $N$ is even), the corresponding Levi $\overline{L}_\nu$ has a cuspidal pair if and only if $\nu \in \Part(m,\ell)$ and $N - 2m = k^2$ for some positive integer $k$, and in that case the unique cuspidal pair is $(\cO_{(\nu_1)} \times \cdots \times \cO_{(\nu_s)} \times \cO_{(2k-1,\cdots,3,1)}, \ubk \boxtimes \cdots \boxtimes \ubk \boxtimes \cE_k)$. Thus, Statement~\ref{stmt:dist-char} holds for proper Levi subgroups of $\overline{G}$.  By Theorem~\ref{thm:conditional}, Theorem~\ref{thm:main} holds for $\overline{G}$.

Statement~\ref{stmt:dist-char} for $\overline{G}$ will be immediate once we show that the number of cuspidal pairs for $\overline{G}$ is $1$ if $N = k^2$ and $0$ otherwise.  In order to treat the even and odd cases simultaneously, let us adopt the convention that $\Part_{\ve}(N) = \emptyset$ if $N$ is odd.  Then 
\begin{equation}\label{eqn:sonot2-fn-count}
|\fN_{\overline{G},\bk}| = \sum_{\lambda\in\Part_{\SO}(N)}2^{a(\lambda)} + |\Part_{\ve}(N)|.
\end{equation}
Next, we must count the representations of the various $N_{\overline{G}}(\overline{L}_\nu)/\overline{L}_\nu$.  We have
\begin{equation}\label{eqn:son-rel-weyl}
N_{\overline{G}}(\overline{L}_\nu)/\overline{L}_\nu =
\begin{cases}
(\Z/2\Z)\wr\fS_{\sm(\nu)} & \text{if $2m < N$, or} \\
& \text{ \quad if $2m = N$ and $2\mid \gcd(\nu)$,} \\
\text{an index-$2$ sub-} \\
\text{ \quad group of $(\Z/2\Z)\wr\fS_{\sm(\nu)}$} & \text{if $2m = N$ and $2 \nmid \gcd(\nu)$.} 
\end{cases}
\end{equation}
In the first case, $N_{\overline{G}}(\overline{L}_\nu)/\overline{L}_\nu$ is a product of Coxeter groups of type $\mathbf{B}$, and its irreducible representations are parametrized by $\uBipart_\ell(\sm(\nu))$, just as in the proof of Theorem~\ref{thm:sp2n-cuspidal}.

When $2m = N$, on the other hand, the situation is analogous to the relationship between irreducible representations of Coxeter groups of types $\mathbf{B}$ and $\mathbf{D}$, via Clifford theory.  Let $\sigma$ denote the  action of $\Z/2\Z$ on $\Bipart(k)$ which exchanges the two partitions making up a bipartition.  Then $\sigma$ induces in an obvious way actions on $\Bipart_\ell(k)$ and on $\uBipart_\ell(\sm(\nu))$.  Let $\blambda \in \uBipart_\ell(\sm(\nu))$.  If $\sigma(\blambda) \ne \blambda$, then the corresponding irreducible representations $D^{\blambda}$ and $D^{\sigma(\blambda)}$ become isomorphic when restricted to $N_{\overline{G}}(\overline{L}_\nu)/\overline{L}_\nu$.  But if $\blambda = \sigma(\blambda)$, then the restriction of $D^{\blambda}$ to $N_{\overline{G}}(\overline{L}_\nu)/\overline{L}_\nu$ breaks up as the sum of two nonisomorphic irreducible representations.  Thus,
\[
|\Irr(\bk[N_{\overline{G}}(\overline{L}_\nu)/\overline{L}_\nu])| = 
\frac{|\uBipart_\ell(\sm(\nu))| - |\uBipart_\ell(\sm(\nu))^\sigma|}{2} + 2|\uBipart_\ell(\sm(\nu))^\sigma|,
\]
where $\uBipart_\ell(\sm(\nu))^\sigma = \{ \blambda \in \uBipart_\ell(\sm(\nu)) \mid \sigma(\blambda) = \blambda \}$.  The set $\uBipart_\ell(\sm(\nu))^\sigma$ is empty unless all components of $\sm(\nu)$ are even.  In that case, it makes sense to form the composition $\frac{1}{2} \sm(\nu)$, and there is an obvious bijection
\[
\uBipart_\ell(\sm(\nu))^\sigma \cong \uPart_\ell(\textstyle\frac{1}{2} \sm(\nu)).
\]
By interpreting $\uPart_\ell(\frac{1}{2}\sm(\nu))$ as the empty set when $\frac{1}{2} \sm(\nu)$ is not defined, we obtain the following formula, valid whenever $2\|\nu\| = m$:
\[
|\Irr(\bk[N_{\overline{G}}(\overline{L}_\nu)/\overline{L}_\nu])| = 
\textstyle\frac{1}{2}|\uBipart_\ell(\sm(\nu))| + \frac{3}{2}|\uPart_\ell(\textstyle\frac{1}{2}\sm(\nu))|.
\]

We are now ready to count the total number of irreducible representations of all $N_{\overline{G}}(\overline{L}_\nu)/\overline{L}_\nu$, as $\overline{L}_\nu$ ranges over Levi subgroups admitting a cuspidal pair. (In this computation, when $N=k^2$ we also count the cuspidal pair for the Levi $\overline{G}$ constructed before the statement of the theorem.) Note that for such groups, we have $\nu \in \Part(m,\ell)$ for some $m$.  Since $\ell \ne 2$, $\gcd(\nu)$ will never be divisible by $2$.  Thus, for our purposes, the cases in~\eqref{eqn:son-rel-weyl} are distinguished simply by whether $2m < N$ or $2m = N$.  In the following computation, all sums involving the condition $2m = N$ should be regarded as $0$ if $N$ is odd.
\begin{align*}
&\sum_{\substack{m,k \in \N \\2m+k^2=N\\ \nu\in\Part(m,\ell)}} \!\!\!\!|\Irr(\bk[N_{\overline{G}}(\overline{L}_\nu)/\overline{L}_\nu])| \\
&=
\!\!\!\!\sum_{\substack{m\in\N,k\in\Z_{>0}\\2m+k^2=N\\ \nu\in\Part(m,\ell)}} \!\!\!\!|\Irr(\bk[N_{\overline{G}}(\overline{L}_\nu)/\overline{L}_\nu])|
+
\!\!\!\!\sum_{\substack{m\in\N\\2m=N\\ \nu\in\Part(m,\ell)}} \!\!\!\!|\Irr(\bk[N_{\overline{G}}(\overline{L}_\nu)/\overline{L}_\nu])| \\
&=
\!\!\!\!\sum_{\substack{m\in\N,k\in\Z_{>0}\\2m+k^2=N\\ \nu\in\Part(m,\ell)}} \!\!\!\!|\uBipart_\ell(\sm(\nu))|
+
\!\!\!\!\!\!\!\!\sum_{\substack{m\in\N\\2m=N\\ \nu\in\Part(m,\ell)}} \!\!\!\!\left(\frac{1}{2}|\uBipart_\ell(\sm(\nu))| + \frac{3}{2}|\uPart_\ell(\textstyle\frac{1}{2}\sm(\nu))|\right)  \\
&=
\!\!\!\!\sum_{\substack{m\in\N,k\in\Z_{>0}\\2m+k^2=N\\ \nu\in\Part(m,\ell)}} \!\!\!\!|\uBipart_\ell(\sm(\nu))|
+ 
\!\!\!\!\!\!\!\!\sum_{\substack{m\in\N\\2m=N\\ \nu\in\Part(m,\ell)}\hss} \!\!\!\!\frac{1}{2}|\uBipart_\ell(\sm(\nu))| 
+
\!\!\!\!\!\!\!\!\sum_{\substack{m\in\N\\2m=N\\ \nu\in\Part(m,\ell)\\ \text{all $\sm_i(\nu)$ even}}} \!\!\!\!\frac{3}{2}|\uPart_\ell(\textstyle\frac{1}{2}\sm(\nu))|  \\
&=
\!\!\!\!\sum_{\substack{m\in\N,k\in\Z_{>0}\\2m+k^2=N}} \!\!\!\!|\Bipart(m)|
+ 
\!\!\sum_{\substack{m\in\N\\2m=N}} \!\!\frac{1}{2}|\Bipart(m)| 
+
\!\!\sum_{\substack{m\in\N\\2m=N}} \!\!\frac{3}{2}|\Part(m/2)| 
\end{align*}
The last step in this computation is justified by~\eqref{eqn:bipart-ladic} and by the following identity, which is an easy consequence of Lemma~\ref{lem:gln-comb}:
\[
\sum_{\substack{\nu \in \Part(m,\ell)\\ \text{all $\sm_i(\nu)$ even}}} |\uPart_\ell(\textstyle\frac{1}{2}\sm(\nu))| = |\Part(m/2)|.
\]
By~\eqref{eqn:cuspidal-count}, to complete the proof of the theorem, we must show that the quantity above is equal to the right-hand side of~\eqref{eqn:sonot2-fn-count}.  That equality is none other than the identity~\cite[(10.6.3)]{lusztig} used by Lusztig to classify characteristic-$0$ cuspidal pairs.
\end{proof}

\subsection{Spin groups with $\ell \ne 2$}  

We now turn our attention to $G = \Spin(N)$ when $\ell \neq 2$.  Let $\varepsilon$ denote the nontrivial element of the kernel of the map $G \to \overline{G}$.  The groups $A_G(x)$ are no longer necessarily just products of copies of $\Z/2\Z$, although they are still $2$-groups.  An explicit description of these groups can be found in~\cite[\S14.3]{lusztig} (see also the remarks following~\cite[Corollary~6.1.7]{cm}).  As explained further below, it follows from this description that if $\bk$ contains all fourth roots of unity of its algebraic closure, then every irreducible representation of $A_G(x)$ over $\bk$ is absolutely irreducible.  Therefore, we assume in this subsection that $\bk$ contains the fourth roots of unity. 
Let $\K$ be the extension of $\Ql$ obtained by adjoining the fourth roots of unity, $\O$ be its ring of integers, and $\F$ be the residue field of $\O$. Then $\bk$ is an extension of $\F$.

Given a pair $(\cO,\cE) \in \fN_{G,\bk}$, one can consider the character $\chi: Z(G) \to \bk^\times$ by which $Z(G)$ acts on $\cE$. If this character descends to a character of the quotient $Z(\overline{G})$---that is, if $\chi(\varepsilon) = 1$---then the pair $(\cO,\cE)$ is actually $\overline{G}$-equivariant, and its cuspidality has been studied in Theorem~\ref{thm:sonot2-cuspidal}.  Thus, it now suffices to study the remaining characters.

Fix a character $\chi: Z(G) \to \bk^\times$ such that $\chi(\varepsilon) = -1$. There is one such character if $N$ is odd, and two if $N$ is even. 
Let $\tilde{\chi}:Z(G) \to \O^\times\subset\K^\times$ be the unique lift of $\chi$. 
Let $\fN_{G,\bk,\chi} \subset \fN_{G,\bk}$ be the set of pairs $(\cO,\cE)$ such that $Z(G)$ acts on $\cE$ by $\chi$, and let $\fN_{G,\K,\tilde{\chi}}$ be the corresponding set of characteristic-$0$ pairs.

By~\cite[Proposition~14.4]{lusztig}, $\fN_{G,\K,\tilde{\chi}}$ is in bijection with the set
\[
\Part_{\Spin,\varepsilon}(N) = \{ \lambda \in \Part_{\SO}(N) \mid \text{$\sm_{2i+1}(\lambda) \le 1$ for all $i \ge 0$} \}.
\]
Explicitly, the bijection works as follows. If $\lambda \in \Part_{\Spin,\varepsilon}(N)\setminus\Part_{\ve}(N)$, then the $G$-orbit $\cO_\lambda$ supports a unique irreducible $G$-equivariant $\K$-local system $\cD_{\lambda,\tilde{\chi}}^{\K}$ of central character $\tilde{\chi}$. Lusztig explains in~\cite[\S 14.3]{lusztig} that the quotient $\K[A_G(x_\lambda)] / (1 + \varepsilon)$ (for any chosen $x_\lambda \in\cO_\lambda$) is isomorphic to the even part of a Clifford algebra, and that $\cD_{\lambda,\tilde{\chi}}^{\K}$ corresponds to the unique simple module for this algebra on which $Z(G)$ acts by $\tilde{\chi}$.
(Lusztig works over $\Qlb$, but the quadratic form defining the Clifford algebra is defined and split over $\O$, so the results from~\cite[\S 9, no.~4]{bourbaki:algebre} that he cites apply equally well over $\K$ as over $\Qlb$.) 
The same arguments show that $\O[A_G(x_\lambda)] / (1 + \varepsilon)$ and $\bk[A_G(x_\lambda)] / (1 + \varepsilon)$ are also isomorphic to the even part of a similarly defined Clifford algebra, and the proofs in~\cite[\S 9, no.~4]{bourbaki:algebre} show that the relevant simple module over $\K$ is defined over $\O$, and that its modular reduction to $\bk$ is the unique simple module for $\bk[A_G(x_\lambda)] / (1 + \varepsilon)$ on which $Z(G)$ acts by $\chi$.
Hence $\cD_{\lambda,\tilde{\chi}}^{\K}$ has a natural $\O$-form $\cD_{\lambda,\tilde{\chi}}^{\O}$, and the modular reduction $\cE_{\lambda,\chi} := \bk \otimes_{\O} \cD_{\lambda,\tilde{\chi}}^{\O}$ is 
the unique irreducible $G$-equivariant $\bk$-local system on $\cO_\lambda$ of central character $\chi$, and is absolutely irreducible.

If $\lambda\in\Part_{\ve}(N)$ (forcing $N\equiv 0 \pmod 4$), then (again by~\cite[\S 14.3]{lusztig}) one of the two $G$-orbits in $\cO_\lambda$ supports a rank-one $\K$-local system $\cD_{\lambda,\tilde{\chi}}^{\K}$ of central character $\tilde{\chi}$, and this is the unique irreducible $G$-equivariant $\K$-local system of central character $\tilde{\chi}$ on either of the two orbits. In this case $|A_G(x)|=2$, so it is clear that $\cD_{\lambda,\tilde{\chi}}^{\K}$ has a natural $\O$-form $\cD_{\lambda,\tilde{\chi}}^{\O}$, and that the modular reduction $\cE_{\lambda,\chi} := \bk \otimes_{\O} \cD_{\lambda,\tilde{\chi}}^{\O}$ is the unique irreducible $G$-equivariant $\bk$-local system of central character $\chi$ on either of the two orbits in $\cO_\lambda$. This local system is of rank one, hence absolutely irreducible.

To summarize, every element of $\fN_{G,\bk,\chi}$ is of the form $(\cO_\lambda,\cE_{\lambda,\chi})$ for some $\lambda \in \Part_{\Spin,\varepsilon}(N)$ (where, if $\lambda\in\Part_{\ve}(N)$, $\cO_\lambda$ should be replaced  by the appropriate one of the two $G$-orbits it contains), and hence $\fN_{G,\bk,\chi}$ is in bijection with $\Part_{\Spin,\varepsilon}(N)$.  In particular, we have
\begin{equation}\label{eqn:spinnot2-fn-count}
|\fN_{G,\bk,\chi}| = |\Part_{\Spin,\varepsilon}(N)|.
\end{equation}

If $N = \binom{k+1}{2}$ with $k$ odd, then $(\cO_{(2k-1, 2k-5, \cdots,5,1)}, \cE_{(2k-1, 2k-5, \cdots,5,1),\chi})$ is a cuspidal pair, by~\cite[Corollary~4.9]{lus-spalt} and~\cite[Proposition~2.22]{genspring1}.  Similarly, if $N = \binom{k+1}{2}$ with $k$ even, $(\cO_{(2k-1,2k-5, \cdots, 7,3)}, \cE_{(2k-1,2k-5,\cdots,7,3),\chi})$ is a cuspidal pair.

\begin{thm} \label{thm:spinnot2-cuspidal}
Let $\bk$ be a field of characteristic different from $2$ and containing all fourth roots of unity.
Then Theorem~{\rm \ref{thm:main}} and 
Statements~{\rm \ref{stmt:abs-irr}} and~{\rm \ref{stmt:dist-char}} hold for $G=\Spin(N)$. If $N$ is not of the form $\binom{k+1}{2}$, there is no cuspidal pair in $\fN_{G,\bk,\chi}$.  If $N=\binom{k+1}{2}$, the unique cuspidal pair in $\fN_{G,\bk,\chi}$ is
\[
\begin{cases}
(\cO_{(2k-1,2k-5,\cdots,5,1)},\cE_{(2k-1,2k-5,\cdots,5,1),\chi}) & \text{if $k$ is odd,} \\
(\cO_{(2k-1,2k-5,\cdots,7,3)},\cE_{(2k-1,2k-5,\cdots,7,3),\chi}) & \text{if $k$ is even.}
\end{cases}
\]
\end{thm}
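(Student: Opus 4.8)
The plan is to follow the template of the proof of Theorem~\ref{thm:sonot2-cuspidal}, proceeding by induction on $N$, verifying Statements~\ref{stmt:abs-irr} and~\ref{stmt:dist-char} for all proper Levi subgroups, invoking Theorem~\ref{thm:conditional}, and then reducing the classification of cuspidal pairs to a combinatorial identity of Lusztig. For the base cases $N\le 6$ one uses the exceptional isogenies $\Spin(3)\cong\SL(2)$, $\Spin(4)\cong\SL(2)\times\SL(2)$, $\Spin(5)\cong\Sp(4)$ and $\Spin(6)\cong\SL(4)$ together with the reductions of~\S\ref{ss:reductions}, checking in particular that for $N=3$ and $N=6$ (the values $\le 6$ of the form $\binom{k+1}{2}$) the orbit and the condition $\chi(\varepsilon)=-1$ match the statement. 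For $N\ge 5$ we argue inductively. The proper Levi subgroups of $G$ are the preimages $\widetilde{L}_\nu$ of the subgroups $\overline{L}_\nu=\GL(\nu_1)\times\cdots\times\GL(\nu_s)\times\SO(N-2\|\nu\|)$ of $\overline{G}$ (with the conventions on the last factor recorded in the proofs of Theorems~\ref{thm:spinodd2-cuspidal} and~\ref{thm:spineven2-cuspidal}), and $N_G(\widetilde{L}_\nu)/\widetilde{L}_\nu\cong N_{\overline{G}}(\overline{L}_\nu)/\overline{L}_\nu$ as in~\eqref{eqn:son-rel-weyl}. By~\S\ref{ss:reductions} the cuspidal pairs for $\widetilde{L}_\nu$ are identified with a subset of the cuspidal pairs for $\SL(\nu_1)\times\cdots\times\SL(\nu_s)\times\Spin(N-2\|\nu\|)$; Statements~\ref{stmt:abs-irr} and~\ref{stmt:dist-char} hold for each factor by Theorem~\ref{thm:sln-cuspidal}, by the Clifford-algebra discussion preceding the present theorem (which uses that $\bk$ contains the fourth roots of unity), and by the inductive hypothesis, and both statements pass to products and hence to $\widetilde{L}_\nu$. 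Statement~\ref{stmt:abs-irr} also holds for $G$ itself by the same Clifford-algebra discussion.

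\textbf{Applying Theorem~\ref{thm:conditional} and splitting by central character.} Since Statements~\ref{stmt:abs-irr} and~\ref{stmt:dist-char} hold for all proper Levi subgroups, Theorem~\ref{thm:conditional} gives Theorem~\ref{thm:main} for $G$ together with the count~\eqref{eqn:cuspidal-count}. It remains to prove Statement~\ref{stmt:dist-char} for $G$ and to classify its cuspidal pairs; these follow once we show that for each character $\chi$ of $Z(G)$ the set of cuspidal pairs of central character $\chi$ has exactly one element when $N=\binom{k+1}{2}$ and is empty otherwise. For $\chi$ with $\chi(\varepsilon)=1$ this is subsumed in Theorem~\ref{thm:sonot2-cuspidal}, so we may assume $\chi(\varepsilon)=-1$, i.e.\ we work inside $\fN_{G,\bk,\chi}$. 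The existence of one cuspidal pair when $N=\binom{k+1}{2}$, on the stated orbit, is exactly the construction recalled before the statement of the theorem (via~\cite[Corollary~4.9]{lus-spalt} and~\cite[Proposition~2.22]{genspring1}), so the remaining task is the upper bound $|\fN_{G,\bk,\chi}|\le \delta$, where $\delta=1$ if $N=\binom{k+1}{2}$ and $\delta=0$ otherwise.

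\textbf{The counting reduction.} We restrict~\eqref{eqn:cuspidal-count} to the $\chi$-part: by Lemma~\ref{lem:central-consistency}, $|\fN_{G,\bk,\chi}|$ is the sum, over $G$-classes of Levi subgroups $\widetilde{L}$ and cuspidal pairs $(\cO_L,\cE_L)$ for $\widetilde{L}$ whose central character induces $\chi$, of $|\Irr(\bk[N_G(\widetilde{L})/\widetilde{L}])|$. The key points are: (i) a pure-$\GL$ Levi subgroup $\widetilde{L}_\nu$ (i.e.\ with $2\|\nu\|=N$, so $\overline{L}_\nu$ has no $\SO$-factor) cannot carry a cuspidal pair with $\chi(\varepsilon)=-1$, because for such a pair $\nu\in\Part(\|\nu\|,\ell)$ forces every part of $\nu$ to be odd (here $\ell\ne 2$ is essential), and a coroot-lattice computation in type $\mathbf{B}$/$\mathbf{D}$ then shows $\varepsilon\in Z(\widetilde{L}_\nu)^\circ$, whence $\varepsilon$ acts trivially on every equivariant local system on $\cN_{\widetilde{L}_\nu}$; (ii) for a Levi subgroup with a genuine $\SO(N-2m)$-factor, the inductive hypothesis says it carries a cuspidal pair of the right central character precisely when $\nu\in\Part(m,\ell)$ and $N-2m=\binom{j+1}{2}\ge 3$, and then exactly one such pair (the restriction of $\chi$ selects one of the at most two $\varepsilon$-type central characters available on the $\Spin(N-2m)$-factor); in that range $N_G(\widetilde{L}_\nu)/\widetilde{L}_\nu=(\Z/2\Z)\wr\fS_{\sm(\nu)}$, so $|\Irr(\bk[N_G(\widetilde{L}_\nu)/\widetilde{L}_\nu])|=|\uBipart_\ell(\sm(\nu))|$. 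Summing over $\nu$ and applying~\eqref{eqn:bipart-ladic}, together with~\eqref{eqn:spinnot2-fn-count} on the left-hand side, the equality to be proved becomes
\[
|\Part_{\Spin,\varepsilon}(N)| = \delta + \sum_{\substack{m\in\N,\ j\in\Z_{>0}\\ 2m+\binom{j+1}{2}=N,\ \binom{j+1}{2}\ge 3}} |\Bipart(m)|,
\]
which is precisely the identity Lusztig used to classify the characteristic-$0$ cuspidal pairs of $\Spin(N)$ of central character $\varepsilon$ (see~\cite[\S14]{lusztig} and~\cite[\S4]{lus-spalt}). This establishes the upper bound and completes the induction.

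\textbf{Main obstacle.} The delicate step is point (i)--(ii) above: the bookkeeping of which Levi subgroups $\widetilde{L}_\nu$ support a cuspidal pair whose central character induces the given $\chi$. Ruling out the pure-$\GL$ Levi subgroups requires the interplay between the hypothesis $\ell\ne 2$ (forcing odd parts of $\nu$) and a lattice computation locating $\varepsilon$ inside $Z(\widetilde{L}_\nu)^\circ$; and handling the $\SO$-factor Levi subgroups requires checking, when $N-2m$ is even, that restriction of $\chi$ pins down a single one of the two $\varepsilon$-type central characters on the $\Spin(N-2m)$-factor. Once these are settled, the rest of the count is a formal consequence of Lemma~\ref{lem:gln-comb} and Lusztig's identity.
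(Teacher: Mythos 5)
Your outline reproduces the general skeleton of the paper's argument (induction, Theorem~\ref{thm:conditional}, lower bound by modular reduction, a counting identity of Lusztig), but the crucial bookkeeping step --- your points (i) and (ii) --- is wrong, because you analyze the Levi subgroups of $G=\Spin(N)$ as if they were direct products $\GL(\nu_1)\times\cdots\times\GL(\nu_s)\times\Spin(N-2m)$ with the central element $\varepsilon$ visible only in the $\Spin$-factor, so that a cuspidal pair in the $\chi$-part would carry constant local systems on the $\GL$-factors and hence require $\nu\in\Part(m,\ell)$ (odd parts). That is not the structure of these Levi subgroups: the preimage $L_\nu$ of $\GL(\nu_1)\times\cdots\times\GL(\nu_s)\times\SO(N-2m)$ in $\Spin(N)$ is a quotient of $M_\nu\times\Spin(N-2m)$, where $M_\nu=\{(z,g_1,\ldots,g_s)\mid z^2=\det(g_1)\cdots\det(g_s)\}$ is a connected double cover of the $\GL$-product, and $\varepsilon$ is identified with $(\delta_\nu,1)=(1,\varepsilon_{N-2m})$ with $\delta_\nu=(-1,1,\ldots,1)\in M_\nu$. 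Consequently a pair in $\fN_{L_\nu,\bk,\chi}$ with $\chi(\varepsilon)=-1$ must have $\delta_\nu$ acting by $-1$ on the $M_\nu$-component of the local system; this forces $2\mid\gcd(\nu)$ (otherwise $Z(M_\nu)$ is connected and $\delta_\nu$ acts trivially), and then, by the $\SL$-classification of Theorem~\ref{thm:sln-cuspidal}, cuspidality forces every $\nu_i$ to be of the form $2\ell^{n_i}$ --- even parts, not powers of $\ell$. So the lattice computation in your point (i) starts from a false premise, and its conclusion also fails: pure-$\GL$ Levi subgroups with $\nu=2\nu'$ \emph{do} carry cuspidal pairs of central character $\chi$ (already in $\Spin(4)\cong\SL(2)\times\SL(2)$, the preimage of a $\GL(2)\subset\SO(4)$ is $\SL(2)\times\C^\times$, and the pair (regular orbit, sign local system)$\boxtimes$(trivial) is cuspidal with $\varepsilon$ acting by $-1$).

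Because of this, the Levi subgroups contributing to the $\chi$-part are those with $\nu=2\nu'$, $\nu'\in\Part(m',\ell)$ and $4m'+\binom{k+1}{2}=N$, and the identity one needs (and the one the paper uses, namely \cite[Corollary~14.5]{lusztig}) is
\[
\sum_{4m'+\binom{k+1}{2}=N}|\Bipart(m')| \;=\; |\Part_{\Spin,\varepsilon}(N)|,
\]
not the identity you wrote with the constraint $2m+\binom{j+1}{2}=N$. Your identity is false: for $N=10$ one has $\Part_{\Spin,\varepsilon}(10)=\{(9,1),(7,3),(5,2,2,1)\}$, so the left-hand side is $3$, while your right-hand side is $\delta+|\Bipart(2)|=1+5=6$. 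Hence the upper-bound count in your induction fails, and with it the verification of Statement~\ref{stmt:dist-char} for the proper Levi subgroups (which also rests on knowing exactly which cuspidal pairs they carry in each central character). The remaining ingredients of your outline (base cases via exceptional isogenies, the lower bound via \cite[Corollary~4.9]{lus-spalt} and \cite[Proposition~2.22]{genspring1}, the identification of the relative Weyl groups as type-$\mathbf{B}$ groups with irreducibles indexed by $\uBipart_\ell$) do agree with the paper, but the proof cannot be completed along the lines you describe without redoing the Levi analysis as above.
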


\begin{proof}
The groups $\Spin(3) \cong \SL(2)$ and $\Spin(4) \cong \SL(2) \times \SL(2)$ fall under Theorem~\ref{thm:sln-cuspidal}.  For $N \ge 5$, we proceed by induction.

We begin with a review of the Levi subgroups of $G$.  Suppose $0 \le m \le \lfloor N/2\rfloor$, and $m\neq N/2-1$ if $N$ is even.  Let $\nu \in \Part(m)$.  As in the preceding subsections, we denote by $\overline{L}_\nu$ a Levi subgroup of $\overline{G} = \SO(N)$ that is isomorphic to $GL(\nu_1) \times \cdots \times \GL(\nu_s) \times \SO(N - 2m)$.  Consider the group
\[
M_\nu = \{(z, g_1,\cdots,g_s) \in \C^\times \times \GL(\nu_1) \times \cdots \GL(\nu_s) \mid z^2 = \det(g_1)\cdots \det(g_s) \}.
\]
Note that $Z(M_\nu)/Z(M_\nu)^\circ$ has order~$2$ if $2 \mid \gcd(\nu)$, and $Z(M_\nu)$ is connected otherwise.

Let $\delta_\nu$ denote the element $(-1,1,\cdots,1) \in M_\nu$.  Let $\varepsilon_{N-2m} \in \Spin(N-2m)$ be the nontrivial element of the kernel of $\Spin(N-2m) \to \SO(N-2m)$.  Finally, let
\[
L_\nu = M_\nu \times \Spin(N - 2m)/\langle (\delta_\nu, \varepsilon_{N-2m}) \rangle.
\]
This is a Levi subgroup of $\Spin(N)$, isomorphic to the preimage of $\overline{L}_\nu$ under $\Spin(N) \to \SO(N)$.  The element $\varepsilon$ can be identified with $(\delta_\nu,1) = (1,\varepsilon_{N-2m}) \in L_\nu$.

Assume for now that $m > 0$, so that $L_\nu$ is a proper Levi subgroup of $G$. 
Consider two pairs $(\cO,\cE) \in \fN_{M_\nu,\bk}$ and $(\cO',\cE') \in \fN_{\Spin(N-2m),\bk}$.  The pair $(\cO \times \cO', \cE \boxtimes \cE')$ is $L_\nu$-equivariant if and only if the scalar by which $\delta_\nu$ acts on $\cE$ coincides with the scalar by which $\varepsilon_{N-2m}$ acts on $\cE'$.

We are interested in the subset $\fN_{L_\nu,\bk,\chi} \subset \fN_{L_\nu,\bk}$ consisting of pairs on which $Z(G)$ acts by $\chi$.  There is a natural identification
\[
\fN_{L_\nu,\bk,\chi} \leftrightarrow 
\left\{\small
\begin{array}{c}
(\cO,\cE), (\cO',\cE') \in{} \\
 \fN_{M_\nu,\bk} \times \fN_{\Spin(N-2m),\bk}
\end{array}
\;\Bigg|\;
\begin{array}{c}
\text{$\delta_\nu$ acts on $\cE$ by $-1$, and} \\
\text{$Z(\Spin(N-2m))/Z(\Spin(N-2m))^\circ$} \\
\text{acts on $\cE'$ by $\chi$}
\end{array}
\right\}.
\]

Let us now consider the conditions under which $\fN_{L_\nu,\bk,\chi}$ contains a cuspidal element.  An obvious restriction is that $2 \mid \gcd(\nu)$: otherwise, $Z(M_\nu)$ is connected, so $\delta_\nu$ cannot act nontrivially, and $\fN_{L_\nu,\bk,\chi}$ is empty.  Suppose now that $2 \mid \gcd(\nu)$.  Under the bijection above, an element of $\fN_{L_\nu,\bk,\chi}$ is cuspidal if and only if both $(\cO,\cE)$ and $(\cO',\cE')$ are cuspidal.  By induction, the latter can happen only if $N - 2m = \binom{k+1}{2}$ for some $k$, and in that case, there is a unique possibility for $(\cO',\cE')$.  On the other hand, for $(\cO,\cE)$, the reasoning is similar to that carried out in the proof of Theorem~\ref{thm:sln-cuspidal}.  If $(\cO,\cE)$ is cuspidal, then $\cO$ must be the regular nilpotent orbit for $M_\nu$.  Write $(\cO,\cE)$ as
\[
(\cO_{(\nu_1)} \times \cdots \times \cO_{(\nu_s)}, \cE_1 \boxtimes \cdots \boxtimes \cE_s),
\]
where each pair $(\cO_{(\nu_i)},\cE_i)$ lies in $\fN_{\SL(\nu_i),\bk}$.   Each such pair must be cuspidal for $\SL(\nu_i)$, so by the classification of cuspidal pairs in Theorem~\ref{thm:sln-cuspidal}, $Z(M_\nu)/Z(M_\nu)^\circ$ must act on $\cE_i$ by a character of order $(\nu_i)_{\ell'}$.  Since we have already required $\delta_\nu$ to act by $-1$, such an $\cE_i$ exists only when every $\nu_i$ is of the form $2\ell^n$.  

To summarize, $\fN_{L_\nu,\bk,\chi}$ contains a unique cuspidal element if both of the following conditions hold: $N - 2m = \binom{k+1}{2}$ for some $k$, and $\nu = 2\nu'$ for some $\nu' \in \Part(m/2,\ell)$.  Otherwise, $\fN_{L_\nu,\bk,\chi}$ contains no cuspidal element.  The cuspidal elements of $\fN_{L_\nu,\bk}$ with trivial central character were discussed in the proof of Theorem~\ref{thm:sonot2-cuspidal}.  Together, these observations show that Statement~\ref{stmt:dist-char} holds for every proper Levi subgroup of $G$.

As explained above, our assumption on $\bk$ implies that Statement~\ref{stmt:abs-irr} holds for all the $M_\nu$, and hence for all Levi subgroups of $G$, including $G$ itself.  By Theorem~\ref{thm:conditional}, Theorem~\ref{thm:main} holds for $G$.  It remains only to classify the cuspidal pairs for $G$, since that classification will imply Statement~\ref{stmt:dist-char} for $G$.

Note that $N_G(L_\nu)/L_\nu \cong N_{\overline{G}}(\overline{L}_\nu)/\overline{L}_\nu$.  From~\eqref{eqn:son-rel-weyl}, we see that this group is always a product of Coxeter groups of type $\mathbf{B}$, and that its irreducible $\bk$-representations are parametrized by $\uBipart_\ell(\sm(\nu)) \cong \uBipart_\ell(\sm(\nu'))$.  (Here, we have used the observation that the nonzero entries of $\sm(\nu)$ are the same as those of $\sm(\nu')$.)  We can now compute the total number of irreducible representations of all the $N_G(L_\nu)/L_\nu$ as $L_\nu$ ranges over Levi subgroups admitting a cuspidal pair. (In the formulas below, when $N = \binom{k+1}{2}$ we also count the cuspidal pair for the Levi $G$ constructed before the statement of the theorem.) In the following calculation, the quantity $m'$ corresponds to $m/2$ in the preceding discussion.
\begin{multline}\label{eqn:spinnot2-rep-count}
\sum_{\substack{m' \in \N, k \in \Z_{>0}\\ 4m' + \binom{k+1}{2} = N}} \sum_{\nu' \in \Part(m',\ell)} |\Irr(\bk[N_G(L_\nu)/L_\nu])| \\
=
\sum_{\substack{m' \in \N, k \in \Z_{>0}\\ 4m' + \binom{k+1}{2} = N}} \sum_{\nu' \in \Part(m',\ell)} |\uBipart_\ell(\sm(\nu'))|
=
\sum_{\substack{m' \in \N, k \in \Z_{>0}\\ 4m' + \binom{k+1}{2} = N}} |\Bipart(m')|.
\end{multline}

We wish to show that the number of cuspidal pairs in $\fN_{G,\bk,\chi}$ is $1$ if $N = \binom{k+1}{2}$ and $0$ otherwise.  By~\eqref{eqn:cuspidal-count}, it suffices to show that the quantities in~\eqref{eqn:spinnot2-fn-count} and~\eqref{eqn:spinnot2-rep-count} are equal.  That is the content of~\cite[Corollary~14.5]{lusztig}, used by Lusztig in the characteristic-$0$ version of the problem.
\end{proof}

\section{Computations in some cases}
\label{sec:det2}

With Theorem~\ref{thm:main} established for the classical groups, we may consider the question of computing the bijection~\eqref{eqn:mgsc} combinatorially.  In this section, we carry out this computation for $\SL(n)$ in arbitrary characteristic, and for $\SO(n)$ and $\Sp(2n)$ when $\ell = 2$. (Recall from Section~\ref{sec:son} that when $\ell=2$, the bijection~\eqref{eqn:mgsc} for $\Spin(n)$ is essentially the same as for $\SO(n)$.)

In these constructions, if $(\K,\O,\F)$ is an $\ell$-modular system, for $\blambda \in \uPart(\sa)$ we denote by $S^{\blambda}_{\K}$ the irreducible $\K$-representation of $\fS_{\sa}$ associated with $\blambda$, and by $S^{\blambda}_{\O}$ its standard $\O$-form. If $\blambda \in \uPart_\ell(\sa)$, we denote by $D^{\blambda}_{\F}$ (or simply $D^{\blambda}$) the irreducible $\F$-representation of $\fS_{\sa}$ associated with $\blambda$.

\subsection{The special linear group}
\label{ss:sln-det}

Let $G = \SL(n)$. The notation and conventions of Section~\ref{sec:sln} will be in force, especially those from~\S\ref{ss:sln-prelim} involving Levi subgroups of $G$.  As a consequence of Theorem~\ref{thm:sln-cuspidal} and its proof, the set $\fN_{L_\nu,\bk}^{\cusp}$ is empty unless $\nu \in \Part(n)$ has the form $d\rho$ for $d\mid n_{\ell'}$ and $\rho\in\Part(n/d,\ell)$, in which case it is in bijection with $\{\chi\in\widehat{\mu_n}\,|\,e(\chi)=d\}$. So the modular generalized Springer correspondence~\eqref{eqn:mgsc} for $G=\SL(n)$ is a bijection
\begin{equation*}\label{eqn:sln-genspring}
\bigsqcup_{\substack{\chi\in\widehat{\mu_n}\\\rho\in \Part(n/e(\chi),\ell)}}
\Irr(\bk[N_G(L_{e(\chi)\rho})/L_{e(\chi)\rho}]) \longleftrightarrow 
\fN_{G,\bk}.
\end{equation*}
Using our combinatorial parametrizations of $\Irr(\bk[N_G(L_{e(\chi)\rho})/L_{e(\chi)\rho}])$ and
$\fN_{G,\bk}$, we can reinterpret this as a bijection
\begin{equation}\label{eqn:sln-genspring2}
\Xi: \bigsqcup_{\substack{\chi\in\widehat{\mu_n}\\\rho\in \Part(n/e(\chi),\ell)}} \uPart_\ell(\sm(\rho)) \simto \Part(n)'.
\end{equation}
It remains to determine the bijection $\Xi$ explicitly, which is achieved by the following result.

\begin{thm} \label{thm:sln-det}
The modular generalized Springer correspondence $\Xi$ for $G=\SL(n)$, when interpreted as in~\eqref{eqn:sln-genspring2}, coincides with the combinatorial bijection $\Xico$ defined in Lemma~\textup{\ref{lem:sln-comb}}. 
\end{thm}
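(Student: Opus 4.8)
The plan is to reduce Theorem~\ref{thm:sln-det} to the already-known combinatorial description of the modular generalized Springer correspondence for $\GL(n)$ from~\cite{genspring1}, using the relationship between $\SL(n)$ and $\GL(n)$ together with the twist by the central character $\chi$. First I would recall that for $\GL(n)$, \cite[Lemma 3.8 and Lemma 3.11]{genspring1} (whose proof uses the combinatorial bijection $\Psico$ of Lemma~\ref{lem:gln-comb}) identify the modular generalized Springer correspondence with $\Psico$. The key observation is that, for a fixed $\chi \in \widehat{\mu_n}$ with $e(\chi) = d$, the part of the $\SL(n)$ correspondence indexed by $\chi$ and $\rho \in \Part(n/d,\ell)$ should be obtained from the $\GL(n/d)$ correspondence by "multiplying partitions by $d$" and "twisting all local systems by $\chi$." Concretely, the Levi $L_{d\rho} \subset \SL(n)$ and its relative Weyl group $\fS_{\sm(d\rho)} \cong \fS_{\sm(\rho)}$ match up with the Levi $L_\rho^{\GL(n/d)} \subset \GL(n/d)$ and its relative Weyl group $\fS_{\sm(\rho)}$; this is the bijection implicit in the proof of Lemma~\ref{lem:sln-comb}.

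The main steps, in order, are as follows. First, fix $\chi$ and use the Fourier-transform characterization of the bijection (Lemma~\ref{lem:fourier} combined with Theorem~\ref{thm:cocycle}), noting via Corollary~\ref{cor:sln-fourier-invariance} that $(\cO_L',\cE_L') = (\cO_L,\cE_L)$ so the bookkeeping simplifies. Second, establish a "twisting" compatibility: tensoring an $\SL(n)$-equivariant perverse sheaf on $\cN_G$ by the rank-one local system $\cE_{(n),\chi}$ pulled back appropriately (or rather, working with the cuspidal datum $(\cO^{L_{d\rho}}_{[d\rho]}, \cE^{L_{d\rho}}_\chi)$ whose local-system part is governed entirely by $\chi$ via~\eqref{eqn:isogeny2}) intertwines the induction functor $\Ind_{L_{d\rho}}^{\SL(n)}$ for this cuspidal datum with the induction functor $\Ind_{L_\rho}^{\GL(n/d)}$ for the \emph{trivial} cuspidal datum in a way compatible with the relative-Weyl-group actions. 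The point is that on $Y_{(L_{d\rho},\cO)}$ the local system $\overline{\cE^{L_{d\rho}}_\chi} \otimes \cL_V$ has underlying local system determined by $\chi$ and its "variable part" determined by $V$; since $\overline{\cE^{L_{d\rho}}_\chi}$ is the unique central-character-$\chi$ lift (Theorem~\ref{thm:cocycle}(2)), this reduces the identification of $\Xi$ to the identification of the underlying-partition part of the bijection, which is exactly the $\GL(n/d)$ computation applied with $n/d$ in place of $n$ and rescaled by $d$. Third, assemble: for the index $(\chi,\rho,\blambda)$, the $\GL(n/d)$ correspondence sends $(\rho,\blambda)$ to $\psico_\rho(\blambda) = \sum_{i\ge 0}\ell^i(\lambda^{(\ell^i)})^\tr \in \Part(n/d)$, and after rescaling by $d = e(\chi)$ and attaching the central character $\chi$ we obtain $(\sum_{i\ge 0}e(\chi)\ell^i(\lambda^{(\ell^i)})^\tr, \chi) = \xico_{\chi,\rho}(\blambda)$, i.e.\ $\Xi = \Xico$.

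The main obstacle I expect is making the "twisting compatibility" in the second step precise and correctly tracking the relative-Weyl-group actions through it. The subtlety is that the bijection~\eqref{eqn:sln-genspring2} is canonical, characterized by a geometric condition (nonzero restriction of the relevant $\IC$-extension to the induced orbit $\mathrm{Ind}_L^G(\cO)$, as in Theorem~\ref{thm:cocycle}), so one cannot simply match cardinalities — one must verify that the isomorphism between the $\SL(n)$ and $\GL(n/d)$ pictures respects this geometric normalization. I anticipate handling this by using Theorem~\ref{thm:uind-indec} (transitivity of induction, compatible with the group actions) to reduce to the case where the cuspidal datum sits on a Levi that is as small as possible, and by invoking the uniqueness clauses in Theorem~\ref{thm:cocycle}; alternatively, one can follow the method of~\cite[\S3]{genspring1} directly, computing the restriction to the relevant induced orbit using the explicit description of $Y_{(L_{d\rho},\cO)}$ and the covering $\varpi_{(L_{d\rho},\cO)}$, and observing that the central character $\chi$ simply rides along unchanged. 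Either way, once the normalization is matched, the combinatorial identity $\Xi = \Xico$ follows formally from Lemma~\ref{lem:gln-comb} and the definition of $\xico_{\chi,\rho}$ in Lemma~\ref{lem:sln-comb}.
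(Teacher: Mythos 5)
Your overall bookkeeping (fixing $\chi$, using Lemma~\ref{lem:central-consistency} to see that the image of the $\chi$-part of the correspondence consists of pairs $(e(\chi)\tau,\chi)$, and using Corollary~\ref{cor:sln-fourier-invariance} to drop the primes) matches the paper, but the heart of your argument --- the ``twisting compatibility'' that is supposed to identify the character-$\chi$ part of the $\SL(n)$ correspondence with the $\GL(n/e(\chi))$ correspondence rescaled by $e(\chi)$ --- is not an available operation, and this is a genuine gap. There is no functor on $\Perv_{\SL(n)}(\cN_G,\bk)$ (nor on local systems over the strata $Y_{(L_{e(\chi)\rho},\cO^{L_{e(\chi)\rho}}_{[e(\chi)\rho]})}$) that ``tensors by $\chi$'': the local system $\cE_{(n),\chi}$ lives only on the regular orbit, the orbits supporting character-$\chi$ local systems form a proper subset of orbits, and there is no geometric map relating the $\SL(n)$-orbits $\cO_{e(\chi)\tau}$ to the $\GL(n/e(\chi))$-orbits $\cO_\tau$ or the Levi $L_{e(\chi)\rho}\subset\SL(n)$ to a Levi of $\GL(n/e(\chi))$. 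The agreement of relative Weyl groups and of combinatorial labels is exactly the statement to be proved, not an input. In particular, when $\chi$ is nontrivial the canonical local system $\overline{\cE^{L_{e(\chi)\rho}}_\chi}$ singled out by Theorem~\ref{thm:cocycle} is genuinely nonconstant (cf.\ Remark~\ref{rmk:indeterminacy}), so the geometric normalization of the bijection cannot be ``transferred'' from the constant-sheaf $\GL$ picture; your own worry about matching normalizations is precisely where the proposed route breaks down.

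What the paper actually does is prove, by induction on $n$, the dominance inequality $\psi_{\chi,\rho}(\blambda)\le\psico_\rho(\blambda)$ (sufficient because both maps are bijections between the same finite sets), with two ingredients that are absent from your outline. For the base case $\rho=(\ell^i,\dots,\ell^i)$ one cannot argue within $\GL$-type combinatorics at all: the paper lifts the cuspidal datum to a characteristic-zero cuspidal pair on $L_{e\rho}$ via a character $\tilde\chi$ of order $e\ell^i$, invokes the Lusztig--Spaltenstein computation of the characteristic-zero generalized Springer correspondence (with the Evens--Mirkovi\'c sign twist, since the bijection here is normalized via Fourier transform), checks that the canonical equivariant structure of Proposition~\ref{prop:equivariance-hatE} is compatible with reduction mod $\ell$ so that $\overline{\cE}^{L_{e\rho}}_{\tilde\chi,\K}$ reduces to $\overline{\cE}^{L_{e\rho}}_\chi$, and concludes a support bound by modular reduction. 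Your phrase ``the central character simply rides along unchanged'' glosses over exactly this content. The remaining case is then handled, much as you anticipate, by transitivity through an intermediate Levi $M=\mathrm{S}(\GL(m_0e)\times\GL(m_1e\ell)\times\cdots)$ using Theorem~\ref{thm:uind-indec}, together with Lemma~\ref{lem:bounds} and the induced-orbit formula; so that part of your plan is sound, but without the characteristic-zero comparison in the base case the induction never gets started.
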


Write $\xi_{\chi,\rho}$ for the restriction of $\Xi$ to the subset of the domain indexed by $\chi$ and $\rho$. We need to show that $\xi_{\chi,\rho}=\xico_{\chi,\rho}$, where $\xico_{\chi,\rho}$ is as in  Lemma~\textup{\ref{lem:sln-comb}}. Now by definition, the image of $\xi_{\chi,\rho}$ consists of the combinatorial parameters for the pairs in the subset $\fN_{G,\bk}^{(L_{e(\chi)\rho},\cO^{L_{e(\chi)\rho}}_{[e(\chi)\rho]},\cE^{L_{e(\chi)\rho}}_\chi)}$ of $\fN_{G,\bk}$. By Lemma~\ref{lem:central-consistency}, all such pairs have as their second component a local system with central character $\chi$. So every element of the image of $\xi_{\chi,\rho}$ has the form $(e(\chi)\tau,\chi)$ for some $\tau\in\Part(n/e(\chi))$. 

Therefore, for each $\chi\in\widehat{\mu_n}$ there is some bijection
\begin{equation*}  \label{eqn:sln-genspring-chi}
\Psi_\chi=\bigsqcup_{\rho \in \Part(n/e(\chi),\ell)} \psi_{\chi,\rho}: \bigsqcup_{\rho \in \Part(n/e(\chi),\ell)}\uPart_\ell(\sm(\rho)) \simto \Part(n/e(\chi))
\end{equation*}
such that 
\begin{equation*}
\xi_{\chi,\rho}(\blambda)=(e(\chi)\psi_{\chi,\rho}(\blambda),\chi)\quad\text{ for all $\blambda\in\uPart_\ell(\sm(\rho))$.}
\end{equation*} 

Theorem~\ref{thm:sln-det} reduces to the following result:

\begin{thm} \label{thm:sln-det-chi}
For every $\chi\in\widehat{\mu_n}$ and $\rho\in\Part(n/e(\chi),\ell)$ we have $\psi_{\chi,\rho}=\psico_{\rho}$, where $\psico_{\rho}$ is defined as in Lemma~\textup{\ref{lem:gln-comb}}.
\end{thm}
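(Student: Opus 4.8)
The plan is to follow the strategy used for $\GL(n)$ in~\cite[\S3]{genspring1}, carrying the central character $\chi$ through every step, and to argue by induction on $n$ (the case $n=1$ being trivial). Fix $\chi$, set $d=e(\chi)$, $m=n/d$, fix $\rho\in\Part(m,\ell)$, and put $\nu=d\rho$, $L=L_\nu$, $(\cO_L,\cE_L)=(\cO^{L_\nu}_{[\nu]},\cE^{L_\nu}_\chi)$. Here $\cO_L$ is distinguished in $\cN_L$, the rank-one local system $\cE_L$ is absolutely irreducible, and its isomorphism class is fixed by $N_G(L)$ by Lemma~\ref{lem:central-conjugation}; hence $N_G(L,\cO_L)=N_G(L)\cong\fS_{\sm(\nu)}=\fS_{\sm(\rho)}$ and the results of Sections~\ref{sec:study} and~\ref{sect:transitivity} apply. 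Given $\blambda\in\uPart_\ell(\sm(\rho))$, write $V=D^{\blambda}$ for the corresponding irreducible $\bk[N_G(L)/L]$-module and $(\cO_V,\cE_V)=\xi_{\chi,\rho}(\blambda)$; by Lemma~\ref{lem:central-consistency} we have $\cO_V=\cO_{d\tau}$ and $\cE_V=\cE_{d\tau,\chi}$ with $\tau=\psi_{\chi,\rho}(\blambda)$, so the problem is to show $\tau=\psico_\rho(\blambda)$.

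To pin down $\tau$ I would use the restriction theorem of \S\ref{sect:transitivity}. First, since $(\cO_V,\cE_V)$ is a simple quotient of $\Ind_{L\subset P}^G(\IC(\cO_L,\cE_L))$, Lemma~\ref{lem:bounds} together with the computation $G\cdot\cO_L=\cO_{d\rho}$ gives $\rho\le\tau$ in dominance order, bounding the support. To get $\tau$ exactly, note that by the Proposition preceding Theorem~\ref{thm:restriction-compatibility} we have $\bS_{G}^{(L,\cO_L,\cE_L)}(\IC(\cO_{d\tau},\cE_{d\tau,\chi}))\cong V$, and for each proper Levi $M$ with $L\subset M\subsetneq G$, Theorem~\ref{thm:restriction-compatibility} identifies $\mathrm{Res}^{N_G(L)/L}_{N_M(L)/L}(V)$ with $\bS_{M}^{(L,\cO_L,\cE_L)}\bigl(\Res^G_{M}(\IC(\cO_{d\tau},\cE_{d\tau,\chi}))\bigr)$. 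Such an $M$ is a Levi subgroup of $\SL(n)$, hence a product of general linear groups and one smaller special linear factor, of strictly smaller semisimple rank; by the inductive hypothesis for the $\SL$ factor, together with the $\GL$ determination of~\cite{genspring1} for the $\GL$ factors, the modular generalized Springer correspondence for $M$ is known and is given by $\Xico$ on the $\SL$ factor and $\Psico$ on the $\GL$ factors. Thus the right-hand side is computed once one knows which simples of the $(L,\cO_L,\cE_L)$-induction series for $M$ occur, and with what multiplicity, in $\Res^G_M(\IC(\cO_{d\tau},\cE_{d\tau,\chi}))$. Letting $M$ run over the maximal proper Levis with $L\subset M\subsetneq G$ — equivalently, over those for which $N_M(L)/L$ is a maximal Young subgroup of $\fS_{\sm(\rho)}$ — the collection of branchings $\mathrm{Res}(V)$ determines $V$, hence $\tau$; and by construction this is governed by the same recursion (via the orbit identification $d\tau'\leftrightarrow\tau'$ and the equality $\sm(d\rho)=\sm(\rho)$) as was used in~\cite{genspring1} to prove that $\Psico$ describes the $\GL(m)$ correspondence. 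Since $\psico_\rho$ satisfies that recursion with the same base case — the case where $\fS_{\sm(\rho)}$ is trivial, i.e. $\rho$ has pairwise distinct parts, in which the unique member of the series is $\mathrm{Ind}_L^G(\cO_L)=\cO_{(n)}$ and $\psico_\rho$ yields $(m)$ — and since both $\psi_{\chi,\rho}$ and $\psico_\rho$ are bijections between the same two finite sets, we conclude $\psi_{\chi,\rho}=\psico_\rho$, which is Theorem~\ref{thm:sln-det-chi}.

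The step I expect to be the main obstacle is the computation of the composition multiplicities of the $(L,\cO_L,\cE_L)$-series simples inside $\Res^G_M(\IC(\cO_{d\tau},\cE_{d\tau,\chi}))$. In~\cite{genspring1} the corresponding step was painless because $\cE$ was the constant sheaf and $N_M(L)/L=N_G(L)/L$; here the nontrivial rank-one local system $\cE_{d\tau,\chi}$ must be tracked, and the central character $\chi$ carried along throughout (using that $\Res^G_M$ preserves central characters, as in Lemma~\ref{lem:central-consistency} and Lemma~\ref{lem:fourier-consistency}). This forces one to use the full equivariance machinery of \S\ref{sect:transitivity}, in particular Theorem~\ref{thm:uind-indec} and Theorem~\ref{thm:compatible-equivariance}, to make the induction-in-stages comparison precise; and one must note that for $d>1$ the objects in play are genuinely not $\GL(n)$-equivariant, so~\cite{genspring1} cannot be quoted directly but its argument must be re-run with $\chi$ present. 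Once the restriction formula is available, the remaining points — the base case above and the fact that restriction to maximal Young subgroups separates the relevant irreducibles — are routine.
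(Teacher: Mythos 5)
Your setup (reduction to the claim $\tau=\psico_\rho(\blambda)$ via central characters, Lemma~\ref{lem:central-consistency}, and the applicability of Theorem~\ref{thm:cocycle}) is fine, but the core of your argument has a genuine gap. You propose to pin down $\tau$ from the identity $\mathrm{Res}^{N_G(L)/L}_{N_M(L)/L}(D^\blambda)\cong\bS_{M}^{(L,\cO_L,\cE_L)}\bigl(\Res^G_{M\subset Q}(\IC(\cO_{d\tau},\cE_{d\tau,\chi}))\bigr)$, but the left-hand side is already known (we know $V=D^\blambda$; the unknown is $\tau$), and the identity only constrains $\tau$ through the right-hand side, whose dependence on $\tau$ you never compute. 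Computing, even partially, the multiplicities of the $(L,\cO_L,\cE_L)$-series simples inside $\Res^G_M(\IC(\cO_{d\tau},\cE_{d\tau,\chi}))$ is exactly the kind of modular decomposition-number information that is not available; you flag this as ``the main obstacle'' but then assert the rest is routine and that the argument of~\cite{genspring1} can be ``re-run''. That mischaracterizes~\cite{genspring1}: its proof of the $\GL(n)$ determination does not use the restriction theorem at all, but rather (i) modular reduction from the characteristic-zero correspondence of Lusztig--Spaltenstein for the rectangular case $\rho=(\ell^i,\ldots,\ell^i)$, and (ii) transitivity of induction for the remaining $\rho$. Moreover, your branching scheme collapses precisely in the cases where it is most needed: when $\ell(\rho)\le 2$ (e.g.\ $\rho=(\ell^i,\ell^i)$, a rectangle) there is no intermediate Levi $L\subsetneq M\subsetneq G$, so the method yields no information, and in small characteristic (especially $\ell=2$) the modular branching data to Young subgroups is far too coarse for ``the collection of branchings determines $V$'' (let alone $\tau$) to be justified. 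Finally, your induction has no base mechanism importing an actually computed correspondence; some external characteristic-zero input is indispensable.

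The paper's proof takes a different route, which you should compare with your plan: since $\Psi_\chi$ and $\bigsqcup_\rho\psico_\rho$ are bijections between the same finite sets, it suffices to prove the single inequality $\psi_{\chi,\rho}(\blambda)\le\psico_\rho(\blambda)$ in dominance order. For rectangular $\rho$ this is Lemma~\ref{lem:first-case}: one lifts $(\cO^{L_{e\rho}}_{[e\rho]},\cE^{L_{e\rho}}_\chi)$ to a characteristic-zero cuspidal pair, uses the Lusztig--Spaltenstein/Evens--Mirkovi{\'c} computation of $\psi_{\tilde\chi,\rho,\K}(\lambda)=\ell^i\lambda^\tr$, and shows via Proposition~\ref{prop:equivariance-hatE} that the canonical local system $\overline{\cE}^{L_{e\rho}}_{\tilde\chi,\K}$ reduces to $\overline{\cE}^{L_{e\rho}}_{\chi}$, so that $\IC(\cO_{e\psi_{\chi,\rho}(\lambda)},\cE_{e\psi_{\chi,\rho}(\lambda),\chi})$ occurs in the modular reduction of the characteristic-zero simple and hence is supported in $\overline{\cO_{e\ell^i\lambda^\tr}}$. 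For non-rectangular $\rho$ this is Lemma~\ref{lem:hard-part}: one takes the intermediate Levi $M$ grouping equal parts, applies the inductive hypothesis to $M$, and uses Theorem~\ref{thm:uind-indec} (together with exactness of $\Ind_{M\subset Q}^G$ and Lemma~\ref{lem:bounds}, plus the induced-orbit formula) to get the inequality --- i.e.\ the transitivity machinery of Section~\ref{sect:transitivity} is used on the induction side, not via Theorem~\ref{thm:restriction-compatibility}, which, as remarked in the paper, is not needed for these computations. If you want to salvage your approach, you would have to supply a genuinely new computation of modular restrictions of $\IC$ sheaves; as written, the proposal does not prove the theorem.
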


The proof of Theorem~\ref{thm:sln-det-chi} is similar to that of~\cite[Theorem~3.4]{genspring1}; in fact, when $\chi$ is trivial, Theorem~\ref{thm:sln-det-chi} is essentially equivalent to~\cite[Theorem~3.4]{genspring1}, by the principles of \S\ref{ss:reductions}. However, the general case presents some additional complications.

Let $e$ denote $e(\chi)$. We proceed by induction on $n$, the base case $n=1$ being trivial. Since we know that $\Psi_\chi$ and $\bigsqcup_{\rho \in \Part(n/e,\ell)} \psico_{\rho}$ are bijections with the same (finite) domain and codomain, it suffices to prove that
\begin{equation}\label{eqn:sln-det-ineq}
\psi_{\chi,\rho}(\blambda) \le \psico_\rho(\blambda)\quad\text{ for all $\blambda\in\uPart_\ell(\sm(\rho))$,}
\end{equation}
where $\le$ denotes the usual dominance partial order on $\Part(n/e)$. (However, the induction hypothesis still has equality rather than $\le$.) 

The first step corresponds to~\cite[Lemma~3.10]{genspring1}.

\begin{lem} \label{lem:first-case}
Assume that $n=me\ell^i$ for some $m \geq 1$ and $i\geq 0$. Then~\eqref{eqn:sln-det-ineq} holds for the partition
\[
\rho = (\underbrace{\ell^i, \ell^i, \ldots, \ell^i}_{\text{$m$ entries}}).
\]
\end{lem}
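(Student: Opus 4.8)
The idea is to mimic the proof of~\cite[Lemma~3.10]{genspring1}, but with the extra bookkeeping needed to track the central character $\chi$. When $\rho = (\ell^i, \ldots, \ell^i)$ with $m$ entries, the relevant Levi subgroup is $L_{e\rho} = L_{(e\ell^i,\ldots,e\ell^i)}$, with $N_G(L_{e\rho})/L_{e\rho} \cong \fS_m$, so $\uPart_\ell(\sm(\rho)) = \Part_\ell(m)$ and a single element $\blambda$ in the domain is just an $\ell$-regular partition $\lambda \in \Part_\ell(m)$. By definition $\psico_\rho(\lambda) = \ell^i \lambda^\tr$ (only the $i$-th component of the multi-partition is nonzero), so we need to show $e\psi_{\chi,\rho}(\lambda) \le e\ell^i\lambda^\tr$ in $\Part(n)$, equivalently $\psi_{\chi,\rho}(\lambda) \le \ell^i\lambda^\tr$ in $\Part(n/e) = \Part(m\ell^i)$.

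First I would unwind the definition of $\psi_{\chi,\rho}(\lambda)$ via Lemma~\ref{lem:fourier} and Theorem~\ref{thm:cocycle}: the pair $(\cO,\cE)$ corresponding to $\lambda$ is characterized by $\bT_\fg(\IC(\cO,\cE)) \cong \IC(Y_{(L_{e\rho},\cO'_{[e\rho]})}, \overline{\cE'}\otimes \cL_{D^\lambda})$, where $\cO'_{[e\rho]}$ and $\cE'$ are the Fourier-transform data of the cuspidal pair on $L_{e\rho}$. By Corollary~\ref{cor:sln-fourier-invariance}, the Fourier transform fixes cuspidal pairs for $\SL$, so $(\cO'_{[e\rho]}, \cE') = (\cO^{L_{e\rho}}_{[e\rho]}, \cE^{L_{e\rho}}_\chi)$, i.e.\ the cuspidal pair is supported on the regular orbit of $L_{e\rho}$. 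Then I would use Lemma~\ref{lem:bounds}: since $\IC(\cO,\cE)$ appears in $\Ind_{L_{e\rho}\subset P}^G(\IC(\cO^{L_{e\rho}}_{[e\rho]},\cE^{L_{e\rho}}_\chi))$, we get $\overline{\cO} \subset G\cdot(\overline{\cO^{L_{e\rho}}_{[e\rho]}} + \fu_P)$. The orbit $\cO^{L_{e\rho}}_{[e\rho]}$ is the regular orbit of a product of $m$ copies of $\SL(e\ell^i)$, so its Jordan type (as a nilpotent in $\mathfrak{gl}_n$) is $(e\ell^i)^m = (e\ell^i,\ldots,e\ell^i)$, and the closure-saturation $G\cdot(\overline{\cO^{L_{e\rho}}_{[e\rho]}}+\fu_P)$ corresponds exactly to partitions of $n$ dominated by $\left((e\ell^i)^m\right)^\vee$-type bounds; translating through the identification $(e\tau,\chi)\leftrightarrow\tau$, this yields precisely $\psi_{\chi,\rho}(\lambda) \le (\ell^i)^m$-saturated bound. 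To sharpen this from $(\ell^i)^m$ to $\ell^i\lambda^\tr$ I would mimic the finer argument of~\cite[Lemma~3.10]{genspring1}: use the explicit structure of the induced local system $(\varpi_{(L_{e\rho},\cO^{L_{e\rho}}_{[e\rho]})})_*\widetilde{\cE^{L_{e\rho}}_\chi}$ together with the computation of the stalks of its $\IC$-extension on the various orbits in the induced orbit's closure, exactly as in loc.\ cit.\ but with the twist by $\chi$ carried along (which does not affect the combinatorial bounds since $\chi$ only shifts the central character, and the local system still has rank one).

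The main obstacle I anticipate is making the last step genuinely independent of the nontrivial central character: in~\cite{genspring1} the group was $\GL(n)$ (or the $\chi$-trivial situation), where $\cE$ is the constant sheaf and one can quote Lusztig's results on ordinary Springer correspondence stalks directly. Here $\cE^{L_{e\rho}}_\chi$ is a genuinely nontrivial rank-one local system, so I need to argue that the relevant cohomology-vanishing/dominance estimates on the fibres of $\pi_{(L,\cO)}$ are insensitive to tensoring by a rank-one local system — which should follow because the bound $\dim\left((x+\fu_P)\cap\cO\right) \le \frac12(\dim\cO - \dim\cC)$ and the identification of the top-dimensional component (as in the proof of Proposition~\ref{prop:distinguished-restrict}) are purely geometric, and a rank-one twist changes neither the dimension of the relevant varieties nor the nonvanishing of top compactly-supported cohomology with the (still one-dimensional) coefficient system. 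Once that insensitivity is established, the rest is the same dominance-order manipulation with transpose partitions as in~\cite[Lemma~3.10]{genspring1}, and~\eqref{eqn:sln-det-ineq} follows for this $\rho$.
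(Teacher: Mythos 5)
There is a genuine gap. Your reduction to proving $\psi_{\chi,\rho}(\lambda)\le\ell^i\lambda^\tr$ is fine, but neither of the two steps you propose actually delivers it. First, Lemma~\ref{lem:bounds} is vacuous in this situation: since $\cO^{L_{e\rho}}_{[e\rho]}$ is the regular orbit of $L_{e\rho}$, we have $\mathrm{Ind}_{L_{e\rho}}^G(\cO^{L_{e\rho}}_{[e\rho]})=\cO_{(n)}$, so $G\cdot\bigl(\overline{\cO^{L_{e\rho}}_{[e\rho]}}+\fu_{P_{e\rho}}\bigr)=\cN_G$ and the upper containment gives no information, while the other containment of Lemma~\ref{lem:bounds} is a lower bound in the dominance order, i.e.\ the wrong direction; your claimed ``dominated by $((e\ell^i)^m)^\vee$-type'' bound is not what the lemma yields (and, read as $\psi_{\chi,\rho}(\lambda)\le(\ell^i)^m$, would be false, since $\ell^i\lambda^\tr\ge(\ell^i)^m$ always). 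Second, the ``sharpening'' you appeal to does not exist in the cited source: the proof of \cite[Lemma~3.10]{genspring1} is not a stalk computation of the $\IC$-extension, and computing modular stalks of $\IC\bigl(Y_{(L_{e\rho},\cO^{L_{e\rho}}_{[e\rho]})},\cD^\lambda\otimes\overline{\cE}^{L_{e\rho}}_\chi\bigr)$ is essentially equivalent to the problem being solved. Your ``insensitivity to a rank-one twist'' heuristic only concerns dimension estimates as in Proposition~\ref{prop:distinguished-restrict}; it says nothing about which nilpotent orbit carries the head attached to a given modular representation $D^\lambda$, which is the whole content of the inequality.

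The missing idea is a comparison with characteristic zero by modular reduction, which is what the paper does. One lifts $\chi$ to a character $\tilde\chi$ over $\O$ of order $e\ell^i=\gcd(e\rho)$ (not merely of order $e$), so that $(\cO^{L_{e\rho}}_{[e\rho]},\cE^{L_{e\rho}}_{\tilde\chi,\K})$ is a cuspidal pair in characteristic $0$ by \cite[(10.3.2)]{lusztig}; the characteristic-zero generalized Springer correspondence for this datum is known by \cite{lus-spalt}, and, after the sign twist of \cite{evens-mirk} needed because our correspondence is normalized via the Fourier transform, it sends $\lambda$ to $\ell^i\lambda^\tr$. One then checks that the canonical equivariant structures of Proposition~\ref{prop:equivariance-hatE} are compatible with reduction, so that $\overline{\cE}^{L_{e\rho}}_{\tilde\chi,\K}$ reduces to $\overline{\cE}^{L_{e\rho}}_{\chi}$, and uses that $D^\lambda$ occurs in the reduction of $S^\lambda_{\O}$; comparing the two Fourier-transform characterizations, one concludes that $\IC(\cO_{e\psi_{\chi,\rho}(\lambda)},\cE_{e\psi_{\chi,\rho}(\lambda),\chi})$ occurs in the modular reduction of the characteristic-zero simple perverse sheaf $\IC(\cO_{e\ell^i\lambda^\tr},\cE_{e\ell^i\lambda^\tr,\tilde\chi,\K})$, hence is supported in $\overline{\cO_{e\ell^i\lambda^\tr}}$, which is exactly the desired inequality. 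Without some such input from a known (characteristic-zero) answer, your argument never produces the upper bound $\ell^i\lambda^\tr$.
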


\begin{proof}
Note that the composition $\sm(\rho)$ contains a single nonzero entry, equal to $m$, so that $\uPart(\sm(\rho))$ can be identified with $\Part(m)$ and $\uPart_\ell(\sm(\rho))$ with $\Part_\ell(m)$. The inequality we need to prove is that
\begin{equation}\label{eqn:sln-det-ineq-special}
\psi_{\chi,\rho}(\lambda) \le \ell^i\lambda^\tr \quad
\text{ for all $\lambda \in \Part_\ell(m)$.}
\end{equation}
Recall the $\ell$-modular system $(\K,\O,\F)$ defined in \S\ref{ss:sln-prelim}.
We can and will assume that $\bk=\F$.

As with~\cite[Lemma~3.10]{genspring1}, we make use of the fact that the Levi subgroup $L_{e\rho} = \mathrm{S}(\GL(e\ell^i) \times \cdots \times\GL(e\ell^i))$ has cuspidal pairs in characteristic $0$, in which setting the generalized Springer correspondence was determined in~\cite{lus-spalt}. 
Explicitly, let $\tilde\chi:\mu_n\to\O^\times$ denote a character of order $e\ell^i$ whose modular reduction is $\chi$; since $\gcd(e\rho)=e\ell^i$, this character $\tilde\chi$ factors through a faithful character of $Z(L_{e\rho})/Z(L_{e\rho})^\circ$. Form the corresponding $\O$-local system $\cE^{L_{e\rho}}_{\tilde\chi,\O}$ on $\cO^{L_{e\rho}}_{[e\rho]}$, and let $\cE^{L_{e\rho}}_{\tilde\chi,\K} = \K \otimes_\O \cE^{L_{e\rho}}_{\tilde\chi,\O}$.  Then $(\cO^{L_{e\rho}}_{[e\rho]}, \cE^{L_{e\rho}}_{\tilde\chi,\K}) \in \fN_{L_{e\rho},\K}$ is a characteristic-$0$ cuspidal pair by~\cite[(10.3.2)]{lusztig}. 

As part of the characteristic-$0$ generalized Springer correspondence (see~\cite[Proposition~5.2]{lus-spalt}), we have an injection
\[
\psi_{\tilde\chi,\rho,\K}:\Part(m) \to \Part(n/e)
\]
such that for any $\lambda\in\Part(m)$, the simple summand of $\Ind_{L_{e\rho} \subset P_{e\rho}}^G \IC(\cO^{L_{e\rho}}_{[e\rho]}, \cE^{L_{e\rho}}_{\tilde\chi,\K})$ corresponding to the irreducible $\K$-representation $S^{\lambda}_{\K}$ of $\fS_m\cong N_G(L_{e\rho})/L_{e\rho}$ is $\IC(\cO_{e\psi_{\tilde\chi,\rho,\K}(\lambda)},\cE_{e\psi_{\tilde\chi,\rho,\K}(\lambda),\tilde\chi,\K})$, where $\cE_{e\psi_{\tilde\chi,\rho,\K}(\lambda),\tilde\chi,\K}$ denotes the unique irreducible $G$-equivariant $\K$-local system of central character $\tilde\chi$ on the orbit $\cO_{e\psi_{\tilde\chi,\rho,\K}(\lambda)}$. (Here, $P_{e\rho}$ is any parabolic subgroup of $G$ having $L_{e\rho}$ as Levi factor.) For consistency with our definitions in the modular case, we define the generalized Springer correspondence using Fourier transform, i.e.\ via the characteristic-$0$ analogues of Lemma~\ref{lem:fourier} and Theorem~\ref{thm:cocycle} (for the latter, see~\cite[Proposition~2.20]{genspring1}). That is, $\psi_{\tilde\chi,\rho,\K}$ is specified by the rule
\begin{equation} \label{eqn:sln-fourier1}
\bT_{\fg}\bigl(\IC(\cO_{e\psi_{\tilde\chi,\rho,\K}(\lambda)},\cE_{e\psi_{\tilde\chi,\rho,\K}(\lambda),\tilde\chi,\K}) \bigr)\cong
\IC\bigl(Y_{(L_{e\rho},\cO^{L_{e\rho}}_{[e\rho]})},\cS_{\K}^\lambda\otimes\overline{\cE}^{L_{e\rho}}_{\tilde\chi,\K}\bigr),
\end{equation}
where $\cS_{\K}^\lambda$ is the $\K$-local system on $Y_{(L_{e\rho},\cO^{L_{e\rho}}_{[e\rho]})}$ corresponding to the representation $S^{\lambda}_{\K}$ of $\fS_m$ via the usual Galois covering $\varpi_{(L_{e\rho},\cO^{L_{e\rho}}_{[e\rho]})}$, and $\overline{\cE}^{L_{e\rho}}_{\tilde\chi,\K}$ is the unique irreducible summand of $(\varpi_{(L_{e\rho},\cO^{L_{e\rho}}_{[e\rho]})})_*\widetilde{\cE}^{L_{e\rho}}_{\tilde\chi,\K}$ whose $\IC$-extension has a nonzero restriction to $\mathrm{Ind}_{L_{e\rho}}^G(\cO^{L_{e\rho}}_{[e\rho]})=\cO_{(n)}$. 
Equivalently (see~\cite[Section 7]{lusztig-cusp2}), $\overline{\cE}^{L_{e\rho}}_{\tilde\chi,\K}$ corresponds to the $\fS_m$-equivariant structure on $\widetilde{\cE}^{L_{e\rho}}_{\tilde\chi,\K}$ defined by restricting an $\fS_m$-equivariant structure on the local system $\widehat{\cE}^{L_{e\rho}}_{\tilde\chi,\K}$ on $\widetilde{T}_{(L_{e\rho},\cO^{L_{e\rho}}_{[e\rho]})}$, the latter $\fS_m$-equivariant structure being normalized by the characteristic-$0$ version of Proposition~\ref{prop:equivariance-hatE}. Since $(\varpi_{(L_{e\rho},\cO^{L_{e\rho}}_{[e\rho]})})^*\overline{\cE}^{L_{e\rho}}_{\tilde\chi,\K}\cong\cE^{L_{e\rho}}_{\tilde\chi,\K}$, the local system $\overline{\cE}^{L_{e\rho}}_{\tilde\chi,\K}$ has rank one.

As shown in~\cite[\S 3.7 and Theorem~3.8(c)]{evens-mirk}, the generalized Springer correspondence defined as above using Fourier transform differs by a sign twist from the correspondence computed in~\cite[Proposition~5.2]{lus-spalt}, so $\psi_{\tilde\chi,\rho,\K}$ is given explicitly by
\begin{equation*}
\psi_{\tilde\chi,\rho,\K}(\lambda)=\ell^i\lambda^\tr.
\end{equation*} 

On the other hand, in our modular setting, the injection
\[
\psi_{\chi,\rho}:\Part_\ell(m)\to \Part(n/e)
\]
is defined via Lemma~\ref{lem:fourier} and Theorem~\ref{thm:cocycle}, so we have
\begin{equation} \label{eqn:sln-fourier2}
\bT_{\fg}\bigl(\IC(\cO_{e\psi_{\chi,\rho}(\lambda)},\cE_{e\psi_{\chi,\rho}(\lambda),\chi}) \bigr)\cong
\IC\bigl(Y_{(L_{e\rho},\cO^{L_{e\rho}}_{[e\rho]})},\cD^\lambda\otimes\overline{\cE}^{L_{e\rho}}_{\chi}\bigr),
\end{equation}
where $\cD^\lambda$ is the $\bk$-local system on $Y_{(L_{e\rho},\cO^{L_{e\rho}}_{[e\rho]})}$ corresponding to the irreducible $\bk$-representation $D^{\lambda}$ of $\fS_m$, and $\overline{\cE}^{L_{e\rho}}_{\chi}$ 
corresponds to the $\fS_m$-equivariant structure on $\widetilde{\cE}^{L_{e\rho}}_{\chi}$ defined by restricting the $\fS_m$-equivariant structure on $\widehat{\cE}^{L_{e\rho}}_{\chi}$ given by Proposition~\ref{prop:equivariance-hatE}. Here, we have used Corollary~\ref{cor:sln-fourier-invariance} to omit the primes on $\cO^{L_{e\rho}}_{[e\rho]}$ and $\cE^{L_{e\rho}}_\chi$.

For any $\lambda \in \Part_\ell(m)$, the representation $D^{\lambda}$ occurs in the modular reduction $S^{\lambda}_{\O}\otimes_{\O}\bk$ of $S^\lambda_{\K}$; likewise, $\cD^\lambda$ occurs in the modular reduction of $\cS^{\lambda}_{\K}$. Since the modular reduction of $\tilde\chi$ is $\chi$, the modular reduction of the rank-one local system $\cE^{L_{e\rho}}_{\tilde\chi,\K}$ is $\cE^{L_{e\rho}}_\chi$. Hence the local systems $\widetilde{\cE}^{L_{e\rho}}_{\tilde\chi,\K}$ and $\widehat{\cE}^{L_{e\rho}}_{\tilde\chi,\K}$ (also of rank one) have modular reductions $\widetilde{\cE}^{L_{e\rho}}_\chi$ and $\widehat{\cE}^{L_{e\rho}}_\chi$, respectively. The $\fS_m$-equivariant structure on $\widehat{\cE}^{L_{e\rho}}_{\tilde\chi,\K}$ defined as in~\cite[Lemma 7.10(b)(c)]{lusztig-cusp2} induces an $\fS_m$-equivariant structure on $\widehat{\cE}^{L_{e\rho}}_\chi$ that satisfies the condition of Proposition~\ref{prop:equivariance-hatE}, and hence coincides with the $\fS_m$-equivariant structure defined by Proposition~\ref{prop:equivariance-hatE}. It follows that the modular reduction of $\overline{\cE}^{L_{e\rho}}_{\tilde\chi,\K}$ is $\overline{\cE}^{L_{e\rho}}_{\chi}$. 

Hence $\cD^\lambda\otimes\overline{\cE}^{L_{e\rho}}_{\chi}$ occurs in the modular reduction of $\cS_{\K}^\lambda\otimes\overline{\cE}^{L_{e\rho}}_{\tilde\chi,\K}$. By the argument following~\cite[(3.14)]{genspring1}, we can conclude from this and from the equations~\eqref{eqn:sln-fourier1} and~\eqref{eqn:sln-fourier2} that $\IC(\cO_{e\psi_{\chi,\rho}(\lambda)},\cE_{e\psi_{\chi,\rho}(\lambda),\chi})$ occurs in the modular reduction of $\IC(\cO_{e\psi_{\tilde\chi,\rho,\K}(\lambda)},\cE_{e\psi_{\tilde\chi,\rho,\K}(\lambda),\tilde\chi,\K})$, and in particular is supported in $\overline{\cO_{e\ell^i\lambda^\tr}}$. So we have shown that $\cO_{e\psi_{\chi,\rho}(\lambda)}\subset\overline{\cO_{e\ell^i\lambda^\tr}}$, which gives~\eqref{eqn:sln-det-ineq-special}. 
\end{proof}

The remaining step corresponds to~\cite[Lemma~3.11]{genspring1}.

\begin{lem} \label{lem:hard-part}
If $\rho\in\Part(n/e,\ell)$ is not of the form $(\ell^i,\ell^i,\ldots,\ell^i)$, then~\eqref{eqn:sln-det-ineq} holds.
\end{lem}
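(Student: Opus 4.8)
The plan is to reduce the general case of Lemma~\ref{lem:hard-part} to the already-established special case (Lemma~\ref{lem:first-case}) by the same factorization trick used in~\cite[Lemma~3.11]{genspring1}: if $\rho$ is not of the form $(\ell^i,\dots,\ell^i)$, then it either has two distinct part-sizes, or it has a single part-size $\ell^i$ but with a multiplicity $m = \sm_{\ell^i}(\rho)$ that we can split. In either case we can write $\rho = \rho' \cup \rho''$ as a nontrivial union of partitions, with $\|\rho'\| = n'/e$ and $\|\rho''\| = n''/e$ where $n = n' + n''$, $e \mid n'$, $e \mid n''$, and both $n', n'' < n$. Correspondingly the Levi subgroup $L_{e\rho}$ sits inside an intermediate Levi $M = L_{(n',n'')}$-type subgroup (more precisely, the subgroup $\mathrm{S}(\GL(n') \times \GL(n'))$ is not quite right for $\SL$, but one takes $M$ to be the Levi of $G$ with the corresponding block structure), so that $L_{e\rho}$ is a Levi of $M$ with cuspidal datum $(\cO^{L_{e\rho}}_{[e\rho]}, \cE^{L_{e\rho}}_\chi)$ corresponding via~\eqref{eqn:isogeny2} to the external product of the analogous data for the two blocks.

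First I would set up this factorization carefully, recording that $N_G(L_{e\rho})/L_{e\rho} \cong \fS_{\sm(\rho)}$ decomposes according to the decomposition of $\sm(\rho)$ coming from $\rho = \rho' \cup \rho''$, so that an irreducible $\bk$-representation $D^{\blambda}$ of it is a summand of $\mathrm{Ind}_{\fS_{\sm(\rho')} \times \fS_{\sm(\rho'')}}^{\fS_{\sm(\rho)}}(D^{\blambda'} \boxtimes D^{\blambda''})$ for suitable $\blambda', \blambda''$ (by the branching rule / the fact that every irreducible is a summand of an induced product of irreducibles). Then I would invoke Theorem~\ref{thm:uind-indec} with this chain $L_{e\rho} \subset M \subset G$: it tells us that $\uInd_{M}^G \IC(Y^M_{(L_{e\rho}, \cO)}, \overline{\cE}^M \otimes \cL_{P_U})$ decomposes with multiplicities governed by induction of projectives, so — passing to heads, or equivalently using Lemma~\ref{lem:fourier} and the fact that $\bT_{\fg}$ intertwines $\Ind$ and the $\IC$ of an induced local system — the simple quotient of $\Ind_M^G(\IC(\cO_{e\rho'}^{\mathrm{blk}}, \cdots) \boxtimes \IC(\cdots))$ indexed by $D^{\blambda}$ is supported on the closure of $\cO_{e\psi_{\chi,\rho}(\blambda)}$. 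Combined with Lemma~\ref{lem:bounds} applied to this induction, and the inductive hypothesis that $\psi_{\chi,\rho'} = \psico_{\rho'}$ and $\psi_{\chi,\rho''} = \psico_{\rho''}$ (valid since $n', n'' < n$), this pins down the support of the induced object in terms of $e \cdot (\psico_{\rho'}(\blambda') + \psico_{\rho''}(\blambda''))$, and a standard dominance-order estimate for induced perverse sheaves (exactly as in~\cite[Lemma~3.11]{genspring1}, using $\uInd$-compatibility and the combinatorics of $\cup$ versus $+$ on transposes) yields $\psi_{\chi,\rho}(\blambda) \leq \psico_{\rho}(\blambda)$, which is~\eqref{eqn:sln-det-ineq}.

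The main obstacle I anticipate is keeping the bookkeeping of central characters and the $\SL$-versus-$\GL$ discrepancy under control: unlike in~\cite{genspring1}, the local systems are nontrivial, and one must check that the external-product decomposition~\eqref{eqn:isogeny2} of $\cE^{L_{e\rho}}_\chi$ is genuinely compatible with the transitivity isomorphism~\eqref{eqn:transitivity} and with the $N_G(L)/L$-equivariant structures of Theorem~\ref{thm:uind} — i.e.\ that the $\fS_{\sm(\rho)}$-action used to index simple quotients really does restrict to the product action on the two blocks. This is where Theorem~\ref{thm:uind-indec} does the heavy lifting, so the remaining work is to verify that the hypotheses $(1)$--$(4)$ preceding Theorem~\ref{thm:uind-indec} hold for the pair $(\cO^{L_{e\rho}}_{[e\rho]}, \cE^{L_{e\rho}}_\chi)$ with respect to both $M$ and $G$; but $\cO^{L_{e\rho}}_{[e\rho]}$ is the regular (hence distinguished) orbit of $L_{e\rho}$, so conditions $(1)$--$(2)$ follow from Lemma~\ref{lem:cgu}, condition $(3)$ from the rank-one-ness of $\cE^{L_{e\rho}}_\chi$, and condition $(4)$ from Lemma~\ref{lem:central-conjugation} together with the classification in Theorem~\ref{thm:sln-cuspidal}. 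Once that compatibility is in place, the dominance estimate is routine, completing the proof of Lemma~\ref{lem:hard-part} and hence, with Lemma~\ref{lem:first-case}, of Theorems~\ref{thm:sln-det-chi}, \ref{thm:sln-det}.
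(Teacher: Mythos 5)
Your overall strategy (pass through an intermediate Levi $M$, apply Theorem~\ref{thm:uind-indec}, use the inductive hypothesis for $M$, finish with Lemma~\ref{lem:bounds} and the orbit-induction formula) is the right one, but your choice of $M$ creates a genuine gap. You split $\rho=\rho'\cup\rho''$ arbitrarily and take $M\cong\mathrm{S}(\GL(en')\times\GL(en''))$; then $N_M(L_{e\rho})/L_{e\rho}\cong\fS_{\sm(\rho')}\times\fS_{\sm(\rho'')}$, which is a \emph{proper} subgroup of $N_G(L_{e\rho})/L_{e\rho}\cong\fS_{\sm(\rho)}$ as soon as $\rho'$ and $\rho''$ share a part size. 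In that situation Theorem~\ref{thm:uind-indec} only tells you that $\Ind_{M\subset Q}^G(\mathcal{P}^{U})$ (the induction of the indecomposable summand attached to the projective cover of $U=D^{\blambda'}\boxtimes D^{\blambda''}$) is a \emph{direct sum} $\bigoplus_V(\mathcal{Q}^V)^{\oplus m_{V,U}}$; your target summand $\mathcal{Q}^{\blambda}$ is only one piece of it. Your step ``passing to heads, the simple quotient of $\Ind_M^G(\IC(\cdots))$ indexed by $D^{\blambda}$ is supported on $\overline{\cO_{e\psi_{\chi,\rho}(\blambda)}}$'' is exactly the unproved point: knowing that $D^{\blambda}$ is a quotient of $\mathrm{Ind}_{\fS_{\sm(\rho')}\times\fS_{\sm(\rho'')}}^{\fS_{\sm(\rho)}}(D^{\blambda'}\boxtimes D^{\blambda''})$ does not, with the tools of this paper, imply that $\IC(\cO_{e\psi_{\chi,\rho}(\blambda)},\cE_{e\psi_{\chi,\rho}(\blambda),\chi})$ is a quotient of $\Ind_M^G$ of the simple head of $\mathcal{P}^{U}$: the induced perverse sheaf is not the $\IC$-extension of its restriction to $Y_{(L_{e\rho},\cO)}$, so ``$\IC$ preserves heads'' does not apply, and Theorem~\ref{thm:uind-indec} controls inductions of the summands $\overline{\cE}^M\otimes\cL_{P_U}$, not of arbitrary simple constituents. (Also, your claim that every irreducible is a direct \emph{summand} of such an induction is false modularly; at best it is a quotient.) A telling sanity check: your scheme never uses the hypothesis that $\rho$ is not of the form $(\ell^i,\ldots,\ell^i)$ — splitting a rectangular $\rho$ by multiplicities is allowed in your setup — so if the argument worked it would also dispose of the case of Lemma~\ref{lem:first-case}, which genuinely requires the comparison with Lusztig's characteristic-zero correspondence.

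The paper avoids this head-tracking problem by choosing $M=\mathrm{S}(\GL(m_0e)\times\GL(m_1e\ell)\times\cdots\times\GL(m_ie\ell^i)\times\cdots)$ with $m_i=\sm_{\ell^i}(\rho)$, i.e.\ grouping all blocks of equal size into a single $\GL$ factor. Then $N_M(L_{e\rho})/L_{e\rho}=N_G(L_{e\rho})/L_{e\rho}\cong\fS_{\sm(\rho)}$, so Theorem~\ref{thm:uind-indec} gives the clean identity $\Ind_{M\subset Q}^G(\mathcal{P}^{\blambda})\cong\mathcal{Q}^{\blambda}$ with $\mathcal{Q}^{\blambda}$ indecomposable with simple head $\IC(\cO_{e\psi_{\chi,\rho}(\blambda)},\cE_{e\psi_{\chi,\rho}(\blambda),\chi})$; exactness of $\Ind_{M\subset Q}^G$ then shows that the induction of the head of $\mathcal{P}^{\blambda}$ (known by the inductive hypothesis, which applies because the hypothesis on $\rho$ forces each block size $m_ie\ell^i<n$) surjects onto that simple head, and Lemma~\ref{lem:bounds} together with $\mathrm{Ind}_M^G(\cO_{e\ell^0(\lambda^{(\ell^0)})^\tr}\times\cdots)=\cO_{e\psico_\rho(\blambda)}$ yields~\eqref{eqn:sln-det-ineq}. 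Your two-block idea can be repaired only if you split $\rho$ by part sizes (so that $\rho'$ and $\rho''$ have disjoint parts and the two relative Weyl groups coincide), and iterating that is essentially the paper's proof; as written, with an arbitrary split and reliance on modular branching for symmetric groups, the argument does not go through.
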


\begin{proof}
Let $m_i = \sm_{\ell^i}(\rho)$, and form the Levi subgroup $M = \mathrm{S}(\GL(m_0e) \times \GL(m_1e\ell) \times \cdots \times \GL(m_ie\ell^i) \times \cdots)$.  This group contains $L_{e\rho}$; their relationship can be pictured as follows:
{\small\[
\begin{array}{@{}c@{}c@{}c@{}c@{}c@{}c@{}c@{}c@{}c@{}}
L_{e\rho} &{}= \mathrm{S}(&      \underbrace{\GL(e) \times \cdots \times \GL(e)}_{\text{$m_0$ copies}}
  &{}\times{}& \underbrace{\GL(e\ell) \times \cdots \times \GL(e\ell)}_{\text{$m_1$ copies}}
  &{}\times{}& \underbrace{\GL(e\ell^2) \times \cdots \times \GL(e\ell^2)}_{\text{$m_2$ copies}}
  &{}\times{}& \cdots) \\
\downarrow && \downarrow && \downarrow && \downarrow \\
M &{}= \mathrm{S}(&      \GL(m_0e)
  &{}\times{}& \GL(m_1e\ell)
  &{}\times{}& \GL(m_2e\ell^2)
  &{}\times{}& \cdots).
\end{array}
\]}%
Let $Q$ be the parabolic subgroup of $G$ containing $P_{e\rho}$ that has $M$ as its Levi factor. We are going to apply the results of Section~\ref{sect:transitivity}, specifically Theorem~\ref{thm:uind-indec}, to the triple $L_{e\rho}\subset M\subset G$ (the assumptions $(3)$ and $(4)$ are trivially true for our local systems). Note that $N_{M}(L_{e\rho})/L_{e\rho}=N_G(L_{e\rho})/L_{e\rho}\cong\fS_{\sm(\rho)}$. 

By~\eqref{eqn:isogeny}, $M/Z(M)^\circ$ is a central quotient of 
\[ \SL(m_0e)\times\SL(m_1e\ell)\times\cdots\times\SL(m_ie\ell^i)\times\cdots. \]
By assumption, the induction hypothesis applies to each factor of the latter product, so we know that the modular generalized Springer correspondence for $M$ is given by Theorem~\ref{thm:sln-det}. In particular, we know that for all $\blambda\in\uPart_\ell(\sm(\rho))$,
\begin{equation*}
\begin{split}
\bT_{\mathfrak{m}}\bigl(
\IC&(\cO_{e\ell^0(\lambda^{(\ell^0)})^\tr}\times\cO_{e\ell^1(\lambda^{(\ell^1)})^\tr}\times\cdots,
\cE_{e\ell^0(\lambda^{(\ell^0)})^\tr,e\ell^1(\lambda^{(\ell^1)})^\tr,\cdots;\chi})\bigr)\\
&\cong
\IC\bigl(Y_{(L_{e\rho},\cO^{L_{e\rho}}_{[e\rho]})}^M,\cD^{\blambda,M}\otimes(\overline{\cE}^{L_{e\rho}}_{\chi})^M\bigr),
\end{split}
\end{equation*}
where $\cE_{e\ell^0(\lambda^{(\ell^0)})^\tr,e\ell^1(\lambda^{(\ell^1)})^\tr,\cdots;\chi}$ denotes the unique $M$-equivariant irreducible local system on the orbit $\cO_{e\ell^0(\lambda^{(\ell^0)})^\tr}\times\cO_{e\ell^1(\lambda^{(\ell^1)})^\tr}\times\cdots$ with central character $\chi$, and $\cD^{\blambda,M}$ is the irreducible local system on $Y_{(L_{e\rho},\cO^{L_{e\rho}}_{[e\rho]})}^M$ corresponding to the irreducible representation $D^\blambda$ via the Galois covering $\varpi_{(L_{e\rho},\cO^{L_{e\rho}}_{[e\rho]})}^M$. (Here, as in the proof of Lemma~\ref{lem:first-case}, we have used Corollary~\ref{cor:sln-fourier-invariance}.)

By comparison, for $G$ we have the as-yet-uncomputed map
\[
\psi_{\chi,\rho}:\uPart_\ell(\sm(\rho))\to\Part(n/e)
\] 
defined by the rule that for all $\blambda\in\uPart_\ell(\sm(\rho))$,
\begin{equation*}
\bT_{\fg}\bigl(\IC(\cO_{e\psi_{\chi,\rho}(\blambda)},\cE_{e\psi_{\chi,\rho}(\blambda),\chi}) \bigr)\cong
\IC\bigl(Y_{(L_{e\rho},\cO^{L_{e\rho}}_{[e\rho]})},\cD^\blambda\otimes\overline{\cE}^{L_{e\rho}}_{\chi}\bigr),
\end{equation*}
where $\cD^{\blambda}$ is the irreducible local system on $Y_{(L_{e\rho},\cO^{L_{e\rho}}_{[e\rho]})}$ corresponding to the irreducible representation $D^\blambda$ via the Galois covering $\varpi_{(L_{e\rho},\cO^{L_{e\rho}}_{[e\rho]})}$. (Here again we have used Corollary~\ref{cor:sln-fourier-invariance}.)

Let $G^\blambda$ be the projective cover of $D^\blambda$ as a $\bk[\fS_{\sm(\rho)}]$-module, and let $\cG^\blambda$ and $\cG^{\blambda,M}$ denote the corresponding local systems on $Y_{(L_{e\rho},\cO^{L_{e\rho}}_{[e\rho]})}$ and $Y_{(L_{e\rho},\cO^{L_{e\rho}}_{[e\rho]})}^M$ respectively. Then $\IC(Y_{(L_{e\rho},\cO^{L_{e\rho}}_{[e\rho]})},\cG^{\blambda} \otimes\overline{\cE}^{L_{e\rho}}_{\chi})$ is the indecomposable direct summand of $\IC(Y_{(L_{e\rho},\cO^{L_{e\rho}}_{[e\rho]})},(\varpi_{(L_{e\rho},\cO^{L_{e\rho}}_{[e\rho]})})_* \widetilde{\cE}^{L_{e\rho}}_{\chi})$ with head $\IC(Y_{(L_{e\rho},\cO^{L_{e\rho}}_{[e\rho]})},\cD^{\blambda} \otimes\overline{\cE}^{L_{e\rho}}_{\chi})$. Since $\bT_{\fg}$ is an equivalence, there is an indecomposable direct summand $\mathcal{Q}^{\blambda}$ of $\Ind_{L_{e\rho}\subset P_{e\rho}}^G(\IC(\cO^{L_{e\rho}}_{[e\rho]},\cE^{L_{e\rho}}_\chi))$ such that 
\begin{equation} \label{eqn:fourier-q-first}
\bT_{\fg}(\mathcal{Q}^{\blambda})\cong\IC(Y_{(L_{e\rho},\cO^{L_{e\rho}}_{[e\rho]})},\cG^{\blambda} \otimes\overline{\cE}^{L_{e\rho}}_{\chi}),
\end{equation}
and the head of $\mathcal{Q}^{\blambda}$ is $\IC(\cO_{e\psi_{\chi,\rho}(\blambda)},\cE_{e\psi_{\chi,\rho}(\blambda),\chi})$. Similarly, there is an indecomposable direct summand $\mathcal{P}^{\blambda}$ of $\Ind_{L_{e\rho}\subset P_{e\rho}\cap M}^M(\IC(\cO^{L_{e\rho}}_{[e\rho]},\cE^{L_{e\rho}}_\chi))$ such that 
\begin{equation} \label{eqn:fourier-p-first}
\bT_{\fg}(\mathcal{P}^{\blambda})\cong\IC \bigl( Y_{(L_{e\rho},\cO^{L_{e\rho}}_{[e\rho]})}^M,\cG^{\blambda,M} \otimes (\overline{\cE}^{L_{e\rho}}_{\chi})^M \bigr),
\end{equation}
and the head of $\mathcal{P}^{\blambda}$ is $\IC(\cO_{e\ell^0(\lambda^{(\ell^0)})^\tr}\times\cO_{e\ell^1(\lambda^{(\ell^1)})^\tr}\times\cdots,
\cE_{e\ell^0(\lambda^{(\ell^0)})^\tr,e\ell^1(\lambda^{(\ell^1)})^\tr,\cdots;\chi})$.

In this case where $N_M(L_{e\rho})/L_{e\rho}=N_G(L_{e\rho})/L_{e\rho}$, Theorem~\ref{thm:uind-indec} says simply that
\begin{equation*}
\uInd_{M\subset Q}^G \bigl( \IC(Y_{(L_{e\rho},\cO^{L_{e\rho}}_{[e\rho]})}^M,\cG^{\blambda,M} \otimes (\overline{\cE}^{L_{e\rho}}_{\chi})^M) \bigr)
\cong
\IC(Y_{(L_{e\rho},\cO^{L_{e\rho}}_{[e\rho]})},\cG^{\blambda} \otimes\overline{\cE}^{L_{e\rho}}_{\chi}).
\end{equation*} 
Using~\eqref{eqn:fourier-q-first},~\eqref{eqn:fourier-p-first}, and~\cite[Corollary~2.10]{genspring1} we deduce that
\begin{equation} \label{eqn:proj-ind-first}
\Ind_{M\subset Q}^G(\mathcal{P}^\blambda)\cong\mathcal{Q}^\blambda.
\end{equation}
Since $\Ind_{M\subset Q}^G$ is exact,~\eqref{eqn:proj-ind-first} implies that 
\begin{equation} \label{eqn:long-ind}
\Ind_{M\subset Q}^G(\IC(\cO_{e\ell^0(\lambda^{(\ell^0)})^\tr}\times\cO_{e\ell^1(\lambda^{(\ell^1)})^\tr}\times\cdots,
\cE_{e\ell^0(\lambda^{(\ell^0)})^\tr,e\ell^1(\lambda^{(\ell^1)})^\tr,\cdots;\chi}))
\end{equation}
is a quotient of $\mathcal{Q}^\blambda$. But $\mathcal{Q}^\blambda$ has the simple head $\IC(\cO_{e\psi_{\chi,\rho}(\blambda)},\cE_{e\psi_{\chi,\rho}(\blambda),\chi})$, so we deduce that the induced perverse sheaf~\eqref{eqn:long-ind} surjects to $\IC(\cO_{e\psi_{\chi,\rho}(\blambda)},\cE_{e\psi_{\chi,\rho}(\blambda),\chi})$. The desired inequality~\eqref{eqn:sln-det-ineq} now follows from Lemma~\ref{lem:bounds},
since
\begin{equation*}
\mathrm{Ind}_M^G(\cO_{e\ell^0(\lambda^{(\ell^0)})^\tr}\times\cO_{e\ell^1(\lambda^{(\ell^1)})^\tr}\times\cdots)=\cO_{e\psico_\rho(\blambda)}
\end{equation*}
by~\cite[Lemma 7.2.5]{cm}.
\end{proof}

\subsection{Special orthogonal and symplectic groups in characteristic $2$}

In this subsection, we take $\ell = 2$.  Let $G=\G(N)$ where $\G$ stands for either $\SO$ or $\Sp$, and $N\geq 3$. In the $\Sp(N)$ case we assume, of course, that $N$ is even; in the $\SO(N)$ case we assume first that $N\not\equiv 0 \pmod 4$, and we will treat the case where $N\equiv 0 \pmod 4$ later.
Recall that there are no nontrivial $L$-equivariant irreducible local systems on nilpotent orbits for $L$ where $L$ is any Levi subgroup of $G$. For brevity, we will omit the trivial local system from the notation where possible.

We saw in Sections~\ref{sec:sp2n} and~\ref{sec:son} that the Levi subgroups admitting a cuspidal pair are those of the form
\[
L_\nu = \GL(\nu_1) \times \cdots \times \GL(\nu_m) \times \G(N - 2k),
\qquad 0 \le k \le \lfloor N/2\rfloor, \ \nu \in \Part(k,2),
\]
excluding the case $k=N/2-1$ when $\G=\SO$ and $N$ is even.
Let $P_\nu \subset \G(N)$ be a parabolic subgroup with $L_\nu$ as its Levi factor.  Let $W_\nu = N_{\G(N)}(L_\nu)/(L_\nu)$.  Recall that this is isomorphic either to $(\Z/2\Z) \wr \fS_{\sm(\nu)}$, or else (in certain cases in type $\mathbf{D}$) to an index-$2$ subgroup thereof.  In either case,
its irreducible representations in characteristic $2$ are parametrized by $\uPart_2(\sm(\nu))$: the irreducible $\bk$-representation of $W_\nu$ labelled by $\blambda\in\uPart_2(\sm(\nu))$ is obtained by pulling back the irreducible representation $D^{\blambda}$ of $\fS_{\sm(\nu)}$ through the projection $W_\nu\twoheadrightarrow \fS_{\sm(\nu)}$.

The orbits in $\cN_{L_\nu}$ supporting cuspidal pairs are those of the form
\[
\cO_{[\nu];\mu} := \cO_{(\nu_1)} \times \cdots \times \cO_{(\nu_m)} \times \cO_\mu, \qquad
\mu \in \Part_{2,\G}(N - 2k).
\]
Thus, the modular generalized Springer correspondence for $G$ can be regarded as a bijection
\begin{equation*} \label{eqn:sp2n2-genspring2}
\Omega: \bigsqcup_{0 \le k \le \lfloor N/2\rfloor} \  \bigsqcup_{\nu \in \Part(k,2)} \Part_{2,\G}(N-2k)\times\uPart_2(\sm(\nu)) \to \Part_\G(N).
\end{equation*}
(Here, we do not need to exclude the $k=N/2-1$ case when $\G=\SO$ and $N$ is even, because $\Part_{2,\SO}(2)$ is empty anyway.)

\begin{thm} \label{thm:sp2n2-det}
Let $G = \SO(N)$ with $N \not\equiv 0 \pmod 4$, or $G = \Sp(N)$ with $N$ even.
The modular generalized Springer correspondence for $G$ is given by
\[
\Omega=\bigsqcup_{0 \le k \le \lfloor N/2\rfloor} \  \bigsqcup_{\nu \in \Part(k,2)} \omegaco_{k,\nu},
\]
where $\omegaco_{k,\nu}:\Part_{2,\G}(N-2k)\times\uPart_2(\sm(\nu)) \to \Part_\G(N)$ is defined by
\[
\omegaco_{k,\nu}(\mu,\blambda)=\mu\cup\psico_{k,\nu}(\blambda)\cup\psico_{k,\nu}(\blambda).
\]
Here $\psico_{k,\nu}$ denotes the map $\psico_\nu$ of Lemma~{\rm \ref{lem:gln-comb}} with $k$ in place of $n$ \textup{(}and with $\ell=2$\textup{)}.
\end{thm}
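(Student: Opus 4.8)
The proof will follow the pattern of Theorem~\ref{thm:sln-det}, in particular of Lemma~\ref{lem:hard-part}: reduce the assertion to a one-sided dominance inequality, and then establish that inequality using transitivity of induction through the intermediate Levi subgroup $M=\GL(k)\times\G(N-2k)$ together with Theorem~\ref{thm:uind-indec}. First I would check that $\bigsqcup_{k,\nu}\omegaco_{k,\nu}$ is itself a bijection onto $\Part_\G(N)$: composing with the bijection of Lemma~\ref{lem:gln-comb} (taken with $\ell=2$), which identifies $\bigsqcup_{\nu\in\Part(k,2)}\uPart_2(\sm(\nu))$ with $\Part(k)$ via $\blambda\mapsto\psico_{k,\nu}(\blambda)$, the map $\bigsqcup_{k,\nu}\omegaco_{k,\nu}$ becomes $(\mu,\tau)\mapsto\mu\cup\tau\cup\tau$ on $\bigsqcup_{0\le k\le\lfloor N/2\rfloor}\Part_{2,\G}(N-2k)\times\Part(k)$, which is the bijection~\eqref{eqn:sp2n2-bijection} when $\G=\Sp$ and (because $N\not\equiv0\pmod4$) an evident variant of it when $\G=\SO$. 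Since $\Omega$ is also a bijection with the same source and target (Theorem~\ref{thm:main}), and since the dominance order on $\Part_\G(N)$ coincides with the closure order on nilpotent $G$-orbits and makes $\Part_\G(N)$ a finite poset, it suffices to prove $\omegaco_{k,\nu}(\mu,\blambda)\le\Omega_{k,\nu}(\mu,\blambda)$ for all $(k,\nu,\mu,\blambda)$; indeed, a bijection $\sigma$ of a finite poset with $\sigma(x)\ge x$ for every $x$ must be the identity, so this forces $\Omega_{k,\nu}=\omegaco_{k,\nu}$.

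The case $k=0$ gives an equality, so assume $k\ge1$ and put $M=\GL(k)\times\G(N-2k)$, a \emph{proper} Levi subgroup of $G$ (here $N\not\equiv0\pmod4$ ensures there is no ambiguity of Levi classes, and the excluded value $k=N/2-1$ is harmless since $\Part_{2,\SO}(2)=\emptyset$); let $Q\supset P_\nu$ be a parabolic with Levi factor $M$. Inside $M$, the cuspidal pair $(\cO_{[\nu];\mu},\ubk)$ is the external product of the $\GL(k)$-cuspidal data on $\cO_{(\nu_1)}\times\cdots\times\cO_{(\nu_m)}$ (cuspidal since $\nu\in\Part(k,2)$, by~\cite[Theorem~3.1]{genspring1}) and the cuspidal pair $(\cO_\mu,\ubk)$ of $\G(N-2k)$ (cuspidal by Theorem~\ref{thm:sp2n2-cuspidal}, resp.\ Theorem~\ref{thm:spinodd2-cuspidal}); since $N_M(L_\nu)/L_\nu=\fS_{\sm(\nu)}$, the generalized Springer correspondence for $M$ — given on the $\GL(k)$-factor by~\cite[Theorem~3.4]{genspring1} and trivially on the $\G(N-2k)$-factor — identifies the indecomposable summand $\mathcal{P}^\blambda$ of $\Ind_{L_\nu\subset R}^M\IC(\cO_{[\nu];\mu},\ubk)$ attached to the projective cover of $D^\blambda$ over $\bk[\fS_{\sm(\nu)}]$: it has simple head $\IC(\cO_{\psico_{k,\nu}(\blambda)}\times\cO_\mu,\ubk)$. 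Now apply Theorem~\ref{thm:uind-indec} to $L_\nu\subset M\subset G$. The multiplicities there are governed by $\mathrm{Ind}_{\fS_{\sm(\nu)}}^{W_\nu}(P_{D^\blambda})$, where $\fS_{\sm(\nu)}$ is the canonical complement to the normal $2$-subgroup $(\Z/2\Z)^{\sm(\nu)}$ of $W_\nu=N_G(L_\nu)/L_\nu$ (or of its index-$2$ subgroup, in the relevant type-$\mathbf{D}$ cases). Because the restriction to $\fS_{\sm(\nu)}$ of the inflation of any $D^{\blambda'}$ is again $D^{\blambda'}$, and induction from a subgroup preserves projectivity, this module is projective with simple head $D^\blambda$ (of multiplicity one), hence equals $P_{D^\blambda}^{W_\nu}$. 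Thus the sum in Theorem~\ref{thm:uind-indec} collapses to a single term, and transporting this through the Fourier identities~\eqref{eqn:fourier} and~\eqref{eqn:trans-uind} exactly as in the derivation of~\eqref{eqn:proj-ind-first} yields $\Ind_{M\subset Q}^G(\mathcal{P}^\blambda)\cong\mathcal{Q}^\blambda$, where $\mathcal{Q}^\blambda$ is the indecomposable summand of $\Ind_{L_\nu\subset P_\nu}^G\IC(\cO_{[\nu];\mu},\ubk)$ with simple head $\IC(\cO_{\Omega_{k,\nu}(\mu,\blambda)},\ubk)$.

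To conclude, since $\mathrm{Hom}(\mathcal{Q}^\blambda,\IC(\cO_{\Omega_{k,\nu}(\mu,\blambda)},\ubk))\ne0$ and $\mathcal{Q}^\blambda\cong\Ind_{M\subset Q}^G(\mathcal{P}^\blambda)$, the adjunction $\Ind_{M\subset Q}^G\dashv\Res_{M\subset Q}^G$ gives $\mathrm{Hom}(\mathcal{P}^\blambda,\Res_{M\subset Q}^G\IC(\cO_{\Omega_{k,\nu}(\mu,\blambda)},\ubk))\ne0$; as $\mathcal{P}^\blambda$ has simple head $\IC(\cO_{\psico_{k,\nu}(\blambda)}\times\cO_\mu,\ubk)$, this local system occurs in $\Res_{M\subset Q}^G\IC(\cO_{\Omega_{k,\nu}(\mu,\blambda)},\ubk)$, whose support is contained in $\overline{\cO_{\Omega_{k,\nu}(\mu,\blambda)}}\cap\cN_M$. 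Hence the $M$-orbit $\cO_{\psico_{k,\nu}(\blambda)}\times\cO_\mu$ lies in $\overline{\cO_{\Omega_{k,\nu}(\mu,\blambda)}}$; but under $\cN_M=\cN_{\GL(k)}\times\cN_{\G(N-2k)}\hookrightarrow\cN_G$ this orbit lands in the single $G$-orbit of Jordan type $\mu\cup\psico_{k,\nu}(\blambda)\cup\psico_{k,\nu}(\blambda)=\omegaco_{k,\nu}(\mu,\blambda)$, so $\cO_{\omegaco_{k,\nu}(\mu,\blambda)}\subseteq\overline{\cO_{\Omega_{k,\nu}(\mu,\blambda)}}$, which is the desired inequality.

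The main obstacle is the collapse of the multiplicity sum in Theorem~\ref{thm:uind-indec} to a single term — i.e.\ the representation-theoretic statement that $\mathrm{Ind}_{\fS_{\sm(\nu)}}^{W_\nu}(P_{D^\blambda})$ is the indecomposable projective $P_{D^\blambda}^{W_\nu}$ — together with the bookkeeping needed to pass from $\mathcal{P}^\blambda$ to $\mathcal{Q}^\blambda$ through the Fourier transform while keeping track of the Fourier-twisted orbits in~\eqref{eqn:fourier} and checking that the hypotheses~(1)--(4) of Section~\ref{sect:transitivity} (hence of Theorem~\ref{thm:uind-indec}) hold — they do, by Lemma~\ref{lem:cgu}, since the orbits involved are distinguished. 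A secondary point is the combinatorics of the first step: the $\SO$-case variant of~\eqref{eqn:sp2n2-bijection}, and the verification that $\omegaco_{k,\nu}(\mu,\blambda)$ indeed lies in $\Part_\G(N)$ (its odd parts when $\G=\Sp$, resp.\ its even parts when $\G=\SO$, occur with even multiplicity).
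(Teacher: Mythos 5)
Your proposal is correct and follows essentially the same route as the paper: reduce to the one-sided dominance inequality via the bijectivity of $\Omegaco$ (Lemma~\ref{lem:gln-comb} plus the analogue of~\eqref{eqn:sp2n2-bijection}), then prove it by passing through the intermediate Levi $M_k=\GL(k)\times\G(N-2k)$, using the $\GL$-case of~\cite[Theorem~3.4]{genspring1}, the collapse of Theorem~\ref{thm:uind-indec} to a single term because in characteristic $2$ the induction $\mathrm{Ind}_{\fS_{\sm(\nu)}}^{W_\nu}$ of a projective cover is again a projective cover, and finally the orbit-closure bound. The only cosmetic differences are that the paper makes the Fourier-twisted orbit explicit by introducing $\mu^\dagger$ with $\bT_{\fg(N-2k)}(\IC(\cO_\mu))\cong\IC(\cO_{\mu^\dagger})$ (the bookkeeping you flag but leave implicit) and invokes Lemma~\ref{lem:bounds} where you inline its adjunction-plus-support argument.
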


\begin{proof}
Let $\Omegaco$ denote $\bigsqcup_{0 \le k \le \lfloor N/2\rfloor} \  \bigsqcup_{\nu \in \Part(k,2)} \omegaco_{k,\nu}$, a map with the same domain and codomain as $\Omega$. Combining the bijection of Lemma~\ref{lem:gln-comb} with that of~\eqref{eqn:sp2n2-bijection} (or its $\SO$ analogue), we see that $\Omegaco$ is a bijection. Hence 
it suffices to show that
\begin{equation} \label{eqn:sp2n2-inequality}
\omegaco_{k,\nu}(\mu,\blambda)\leq\omega_{k,\nu}(\mu,\blambda)
\end{equation} 
for all $k,\nu,\mu,\blambda$ as above, where $\omega_{k,\nu}$ denotes the restriction of $\Omega$ to the subset of the domain indexed by $k$ and $\nu$, and $\leq$ is the dominance order (corresponding to the closure order on nilpotent orbits for $G$).

We need to consider some Fourier transforms. Let $\mu\mapsto\mu^\dagger$ denote the involution of $\Part_{2,\G}(N-2k)$ defined by
\begin{equation*}
\bT_{\fg(N-2k)}(\IC(\cO_\mu))\cong\IC(\cO_{\mu^\dagger}).
\end{equation*}
(As mentioned in \S\ref{ss:series}, it is quite possible that $\mu^\dagger=\mu$ always, but we are not able to prove this.) 
Then using~\cite[(3.6)]{genspring1} we obtain
\begin{equation*}
\bT_{\fl_\nu}(\IC(\cO_{[\nu];\mu}))\cong\IC(\cO_{[\nu];\mu^\dagger}+\fz_{L_\nu}).
\end{equation*}
By the definition of the bijection $\Omega$, the simple perverse sheaf $\IC(\cO_{\omega_{k,\nu}(\mu,\blambda)})$ is a quotient of $\Ind_{L_\nu\subset P_\nu}^G(\IC(\cO_{[\nu];\mu}))$, namely the one with Fourier transform
\begin{equation*} \label{eqn:fourier-omega}
\bT_{\fg}(\IC(\cO_{\omega_{k,\nu}(\mu,\blambda)}))\cong\IC(Y_{(L_\nu,\cO_{[\nu];\mu^\dagger})},\cD^{\blambda}),
\end{equation*}
where $\cD^{\blambda}$ is the local system on $Y_{(L_\nu,\cO_{[\nu];\mu^\dagger})}$ corresponding, via the Galois covering $\varpi_{(L_\nu,\cO_{[\nu];\mu^\dagger})}$, to the irreducible representation $D^{\blambda}$ of $W_\nu$. (Here we are using Lemma~\ref{lem:trivial-case}.) 

Let $G^{\blambda}$ be the projective cover of $D^{\blambda}$ as a $\bk[W_\nu]$-module, and let $\cG^{\blambda}$ be the local system on $Y_{(L_\nu,\cO_{[\nu];\mu^\dagger})}$ corresponding to $G^{\blambda}$. 
As in the proof of Lemma~\ref{lem:hard-part},
there is an indecomposable direct summand $\mathcal{Q}^{\blambda}$ of $\Ind_{L_\nu\subset P_\nu}^G(\IC(\cO_{[\nu];\mu}))$ such that 
\begin{equation} \label{eqn:fourier-q}
\bT_{\fg}(\mathcal{Q}^{\blambda})\cong\IC(Y_{(L_\nu,\cO_{[\nu];\mu^\dagger})},\cG^{\blambda}),
\end{equation}
and the head of $\mathcal{Q}^{\blambda}$ is $\IC(\cO_{\omega_{k,\nu}(\mu,\blambda)})$.

Let $M_k$ denote the Levi subgroup of $G$ containing $L_\nu$ that has the form $M_k = \GL(k) \times \G(N - 2k)$, and let $Q_k$ be the parabolic subgroup of $G$ containing $P_{\nu}$ that has $M_k$ as Levi factor. We are going to apply the results of Section~\ref{sect:transitivity}, specifically Theorem~\ref{thm:uind-indec}, to the triple $L_\nu\subset M_k\subset G$ (the assumptions $(3)$ and $(4)$ are trivially true for the constant local system). Note that $N_{M_k}(L_\nu)/L_\nu \cong \fS_{\sm(\nu)}$. We identify $\cN_{M_k}$ with $\cN_{\GL(k)}\times\cN_{\G(N-2k)}$.

Since the $\G(N-2k)$ factor of $M_k$ plays no role in the induction $\Ind_{L_\nu\subset P_\nu \cap M_k}^{M_k}$, we know from~\cite[Theorem 3.4]{genspring1} that $\IC(\cO_{\psico_{k,\nu}(\blambda)}\times\cO_\mu)$ is a quotient of $\Ind_{L_\nu\subset P_\nu \cap M_k}^{M_k}(\IC(\cO_{[\nu];\mu}))$, namely the one with Fourier transform
\begin{equation*} \label{eqn:fourier-psi}
\bT_{\mathfrak{m}_k}(\IC(\cO_{\psico_{k,\nu}(\blambda)}\times\cO_\mu))\cong\IC(Y_{(L_\nu,\cO_{[\nu];\mu^\dagger})}^{M_k},\cD^{\blambda,M_k}),
\end{equation*}
where $\cD^{\blambda,M_k}$ is the local system on $Y_{(L_\nu,\cO_{[\nu];\mu^\dagger})}^{M_k}$ corresponding, via the Galois covering $\varpi_{(L_\nu,\cO_{[\nu];\mu^\dagger})}^{M_k}$, to the irreducible representation $D^{\blambda}$ of $\fS_{\sm(\nu)}$. 

Let $F^{\blambda}$ be the projective cover of $D^{\blambda}$ as a $\bk[\fS_{\sm(\nu)}]$-module, and let $\cF^{\blambda}$ be the local system on $Y_{(L_\nu,\cO_{[\nu];\mu^\dagger})}^{M_k}$ corresponding to $F^{\blambda}$. There is an indecomposable direct summand $\mathcal{P}^{\blambda}$ of $\Ind_{L_\nu\subset P_\nu \cap M_k}^{M_k}(\IC(\cO_{[\nu];\mu}))$ such that
\begin{equation} \label{eqn:fourier-p}
\bT_{\mathfrak{m}_k}(\mathcal{P}^\blambda)\cong\IC(Y_{(L_\nu,\cO_{[\nu];\mu^\dagger})}^{M_k},\cF^{\blambda}),
\end{equation}
and the head of $\mathcal{P}^{\blambda}$ is $\IC(\cO_{\psico_{k,\nu}(\blambda)}\times\cO_\mu)$.

Now since $W_\nu$ and $\fS_{\sm(\nu)}$ have the same irreducible $\bk$-representations, the induced representation $\mathrm{Ind}_{\fS_{\sm(\nu)}}^{W_\nu}(F^\blambda)$ is isomorphic to $G^\blambda$. So in this case Theorem~\ref{thm:uind-indec} says that
\begin{equation*} 
\uInd_{M_k\subset Q_k}^G\bigl(\IC(Y_{(L_\nu,\cO_{[\nu];\mu^\dagger})}^{M_k},\cF^{\blambda})\bigr)\cong
\IC(Y_{(L_\nu,\cO_{[\nu];\mu^\dagger})},\cG^{\blambda}).
\end{equation*}
Using~\eqref{eqn:fourier-q},~\eqref{eqn:fourier-p}, and~\cite[Corollary~2.10]{genspring1} we deduce that
\begin{equation} \label{eqn:proj-ind}
\Ind_{M_k\subset Q_k}^G(\mathcal{P}^\blambda)\cong\mathcal{Q}^\blambda.
\end{equation}
Since $\Ind_{M_k\subset Q_k}^G$ is exact,~\eqref{eqn:proj-ind} implies that $\Ind_{M_k\subset Q_k}^G(\IC(\cO_{\psico_{k,\nu}(\blambda)}\times\cO_\mu))$ is a quotient of $\mathcal{Q}^\blambda$; on the other hand, $\mathcal{Q}^\blambda$ has the simple head $\IC(\cO_{\omega_{k,\nu}(\mu,\blambda)})$. We conclude that  $\Ind_{M_k\subset Q_k}^G(\IC(\cO_{\psico_{k,\nu}(\blambda)}\times\cO_\mu))$ surjects to $\IC(\cO_{\omega_{k,\nu}(\mu,\blambda)})$.
The desired inequality~\eqref{eqn:sp2n2-inequality} now follows from Lemma~\ref{lem:bounds}, since
\begin{equation} \label{eqn:saturation}
G\cdot(\cO_{\psico_{k,\nu}(\blambda)}\times\cO_\mu)=\cO_{\omegaco_{k,\nu}(\blambda)}
\end{equation}
by definition of $\omegaco_{k,\nu}$.
\end{proof}

We now turn to the case where $G = \SO(N)$ with $N \equiv 0 \pmod 4$.  In this case, as we noted in Section~\ref{sec:son}, certain partitions correspond to more than one nilpotent orbit or conjugacy class of Levi subgroups.  To label these objects combinatorially, we will use partitions that may be decorated with a superscript Roman numeral $I$ or $II$, as in~\cite{cm}.  Let
\[
\Part(k, 2)' := \Part(k,2) \qquad\text{if $k < N/2$,}
\]
and
\begin{multline*}
\Part(N/2,2)' := \{ \nu \mid \nu \in \Part(N/2,2), \sm_1(\nu) \ne 0 \}\, \cup \\
\{\nu^I \mid \nu \in \Part(N/2,2), \sm_1(\nu) = 0\} \cup
\{\nu^{II} \mid \nu \in \Part(N/2,2), \sm_1(\nu) = 0\}.
\end{multline*}
Then the conjugacy classes of Levi subgroups of $G$ admitting a cuspidal pair are in bijection with the set $\bigsqcup_{\substack{0 \le k \le N/2\\k\neq N/2-1}} \Part(k,2)'$: in particular, for $\nu\in\Part(N/2,2)$ with $\sm_1(\nu)=0$, we have two representative Levi subgroups $L_{\nu^I}$ and $L_{\nu^{II}}$, both isomorphic to $\GL(\nu_1)\times\GL(\nu_2)\times\cdots$ but not $G$-conjugate to each other. (They are $\mathrm{O}(N)$-conjugate.)

To make the labelling consistent, we choose representatives $M_{N/2}^I$ and $M_{N/2}^{II}$ of the two $G$-conjugacy classes of Levi subgroups isomorphic to $\GL(N/2)$. Then for $\nu\in\Part(N/2,2)$ with $\sm_1(\nu)=0$, we specify that $L_{\nu^I}$ is contained in $M_{N/2}^I$ and $L_{\nu^{II}}$ is contained in $M_{N/2}^{II}$. Note that if $\nu\in\Part(N/2,2)$ and $\sm_1(\nu) \ne 0$, then a Levi subgroup $L_\nu$ of the corresponding conjugacy class is contained both in a conjugate of $M_{N/2}^I$ and in a conjugate of $M_{N/2}^{II}$.
 
Similarly, let
\begin{multline*}
\Part_\SO(N)' := (\Part_\SO(N) \setminus \Part_\ve(N))\, \cup \\
\{ \lambda^I \mid \lambda \in \Part_\ve(N) \} \cup
\{ \lambda^{II} \mid \lambda \in \Part_\ve(N) \}.
\end{multline*}
Then $\fN_{G,\bk}$ (or equivalently, the set of nilpotent orbits in $\cN_G$) is in bijection with $\Part_\SO(N)'$. For $\lambda\in\Part_\ve(N)$, we have two orbits $\cO_{\lambda^I}$ and $\cO_{\lambda^{II}}$ with the same Jordan type; we specify that $\cO_{\lambda^I}$ is the orbit that meets the Lie algebra of $M_{N/2}^I$ and $\cO_{\lambda^{II}}$ is the orbit that meets the Lie algebra of $M_{N/2}^{II}$. 

\begin{rmk}
To match the use of Roman numerals above with that in~\cite{cm}, choose $M_{N/2}^I$ and $M_{N/2}^{II}$ as in~\cite[Lem\-ma~7.3.2(ii)]{cm}.  With that choice, our labelling of orbits by Roman numerals is consistent with that in~\cite[Theorem~5.1.4 and Lemma~5.3.5]{cm}.  This claim can be worked out using the explicit description of orbit representatives in~\cite[Recipe~5.2.6]{cm}.  Alternatively, it follows from~\cite[Corollary~6.3.5, Theorem~7.3.3(iii), and Theorem~8.3.1]{cm}.
\end{rmk}

For convenience, in the formulas below we continue to use the notation $\nu$ for an arbitrary element of $\Part(N/2,2)'$ even though it may be a decorated partition; in that case, notation such as $\sm(\nu)$ should be interpreted using the underlying partition.

The modular generalized Springer correspondence for $G$ can thus be regarded as a bijection
\[
\Omega': \bigsqcup_{0 \le k \le N/2} \  \bigsqcup_{\nu \in \Part(k,2)'} \Part_{2,\SO}(N-2k)\times\uPart_2(\sm(\nu)) \to \Part_\SO(N)'.
\]
For $0\le k \le N/2$ and $\nu \in \Part(k,2)'$ we define a map
\[
\omegacop_{k,\nu}: \Part_{2,\SO}(N - 2k) \times \uPart_2(\sm(\nu)) \to \Part_\SO(N)'
\]
by the same formula as for $\omegaco_{k,\nu}$ in Theorem~\ref{thm:sp2n2-det}, with the following addendum: when $k = N/2$ and $\nu$ is decorated with a Roman numeral (forcing $\sm_1(\nu)=0$), the same Roman numeral should be used to decorate the output of this map (which necessarily belongs to $\Part_\ve(N)$).

\begin{thm}
\label{thm:so4n2-det}
Let $G = \SO(N)$ with $N \equiv 0 \pmod 4$. The modular generalized Springer correspondence for $G$ is given by
\[
\Omega'=\bigsqcup_{0 \le k \le  N/2} \  \bigsqcup_{\nu \in \Part(k,2)'} \omegacop_{k,\nu}.
\]
\end{thm}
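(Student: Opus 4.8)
Set $\Omegaco' := \bigsqcup_{0 \le k \le N/2}\bigsqcup_{\nu\in\Part(k,2)'}\omegacop_{k,\nu}$, a map with the same (finite) domain and codomain as $\Omega'$. The plan is to follow the proof of Theorem~\ref{thm:sp2n2-det} while tracking the decorations $I$, $II$. As there, the two tasks are: (a) show that $\Omegaco'$ is a bijection, and (b) prove that $\cO_{\omegacop_{k,\nu}(\mu,\blambda)}$ lies in the closure of $\cO_{\omega'_{k,\nu}(\mu,\blambda)}$, where $\omega'_{k,\nu}$ denotes the restriction of $\Omega'$. Granting (a) and (b), we conclude $\Omega' = \Omegaco'$ by the elementary observation already used in the proof of Theorem~\ref{thm:sp2n2-det} that a bijection $\sigma$ of a finite poset satisfying $p \preceq \sigma(p)$ for all $p$ is the identity; here one takes $\sigma = \Omega'\circ(\Omegaco')^{-1}$, with $\preceq$ the closure order on nilpotent orbits of $G$, i.e.\ the order on $\Part_\SO(N)'$ refining the dominance order on the underlying partitions and seeing the decorations. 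Since the target now includes decorated partitions, no separate argument about decorations is needed at this stage: they will be forced by the inclusions of orbit closures.

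For (a), I would first note that $\psico_{N/2,\nu}(\blambda) = \sum_{i \ge 1} 2^i(\lambda^{(2^i)})^\tr$ when $\sm_1(\nu) = 0$, so it has only even parts, whereas when $\sm_1(\nu) \ne 0$ the $i=0$ term $(\lambda^{(1)})^\tr$ of a nonempty $2$-regular partition necessarily has an odd part (a nonempty $2$-regular partition cannot have an all-even transpose), so $\psico_{N/2,\nu}(\blambda)$ does too. Hence the inputs with $k < N/2$, or $k = N/2$ with $\nu$ undecorated, produce undecorated outputs, and $\Omegaco'$ restricts to a bijection from these onto $\Part_\SO(N)\setminus\Part_\ve(N)$ by the same combination of Lemma~\ref{lem:gln-comb} with a variant of the bijection~\eqref{eqn:sp2n2-bijection} as in the proof of Theorem~\ref{thm:sp2n2-det}. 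For the inputs with $k = N/2$ and $\nu$ decorated (so $\sm_1(\nu) = 0$ and $\Part_{2,\SO}(0)$ is a single point), the outputs lie in $\Part_\ve(N)$, and one uses that $\tau \mapsto \tau\cup\tau$ is a bijection from the set of partitions of $N/2$ into even parts onto $\Part_\ve(N)$, while $\Psico$ restricts (as observed after~\eqref{eqn:spineven2-extra}) to a bijection from $\bigsqcup_{\nu\in\Part(N/2,2),\,\sm_1(\nu)=0}\uPart_2(\sm(\nu))$ onto the partitions of $N/2$ into even parts; composing, the $I$-decorated outputs, resp.\ the $II$-decorated outputs, are attained bijectively. (Equivalently: $\Omegaco'$ is injective by these observations and bijective because $|\mathrm{domain}|=|\mathrm{codomain}|$, the latter equality being the count already performed in the proof of Theorem~\ref{thm:spineven2-cuspidal}.)

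For (b), I would transcribe the argument of Theorem~\ref{thm:sp2n2-det} essentially verbatim, using Fourier transform, the involution $\mu \mapsto \mu^\dagger$ of $\Part_{2,\SO}(N-2k)$ (trivial when $k = N/2$), Lemma~\ref{lem:trivial-case}, and the intermediate Levi $M_k = \GL(k)\times\SO(N-2k)$; the only modification is that when $k = N/2$ and $\nu$ is decorated, say by $I$, the role of $M_k$ is played by $M_{N/2}^I \cong \GL(N/2)$, which by construction contains $L_{\nu^I}$. The $\GL$-part of the induced perverse sheaf is controlled by \cite[Theorem~3.4]{genspring1}, and the passage from $M_k$ to $G$ uses Theorem~\ref{thm:uind-indec} applied to $L_\nu \subset M_k \subset G$. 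Invoking it requires $\mathrm{Ind}_{N_{M_k}(L_\nu)/L_\nu}^{N_G(L_\nu)/L_\nu}(F^\blambda) \cong G^\blambda$, with $N_{M_k}(L_\nu)/L_\nu \cong \fS_{\sm(\nu)}$ and $G^\blambda$ the projective cover of $D^\blambda$; this holds whether $N_G(L_\nu)/L_\nu$ is the full wreath product $(\Z/2\Z)\wr\fS_{\sm(\nu)}$ or, in the type $\mathbf{D}$ cases of~\eqref{eqn:son-rel-weyl}, its index-$2$ subgroup, because (as $\ell = 2$) the normal $2$-subgroup $N_G(L_\nu)/L_\nu \cap (\Z/2\Z)^{\|\sm(\nu)\|}$ acts trivially on every simple $\bk$-module, so $N_G(L_\nu)/L_\nu$ has the same simple modules as $\fS_{\sm(\nu)}$, and $\mathrm{Ind}(F^\blambda)$, being projective over $\bk[N_G(L_\nu)/L_\nu]$ with one-dimensional head $D^\blambda$ by Frobenius reciprocity, must equal $G^\blambda$. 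Exactly as in Theorem~\ref{thm:sp2n2-det}, this yields that $\mathrm{Ind}_{M_k\subset Q_k}^G(\IC(\cO_{\psico_{k,\nu}(\blambda)}\times\cO_\mu))$ surjects onto $\IC(\cO_{\omega'_{k,\nu}(\mu,\blambda)})$, so Lemma~\ref{lem:bounds} gives $G\cdot(\cO_{\psico_{k,\nu}(\blambda)}\times\cO_\mu) \subseteq \overline{\cO_{\omega'_{k,\nu}(\mu,\blambda)}}$. One then identifies the orbit on the left: for $k < N/2$ this is the orbit identity~\eqref{eqn:saturation}, while for $k = N/2$ with $\nu$ decorated by $I$, a nilpotent element of $\mathrm{Lie}(M_{N/2}^I) = \mathfrak{gl}(N/2)$ of Jordan type $\tau := \psico_{N/2,\nu}(\blambda)$ has Jordan type $\tau\cup\tau \in \Part_\ve(N)$ in $\mathfrak{so}(N)$ and lies in $\mathrm{Lie}(M_{N/2}^I)$, so by our convention it belongs to $\cO_{(\tau\cup\tau)^I} = \cO_{\omegacop_{N/2,\nu^I}(*,\blambda)}$ (symmetrically for $II$). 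Thus $G\cdot(\cO_{\psico_{k,\nu}(\blambda)}\times\cO_\mu) = \cO_{\omegacop_{k,\nu}(\mu,\blambda)}$ in all cases, which is (b).

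I expect the main obstacle to be precisely this last piece of bookkeeping — checking that the Roman numeral produced on the geometric side via $M_{N/2}^I$ (resp.\ $M_{N/2}^{II}$) matches the one attached by $\omegacop$ — together with the parallel decoration count in the bijectivity of $\Omegaco'$. Once the conventions relating $M_{N/2}^I$, $M_{N/2}^{II}$, the labelling of orbits in $\Part_\ve(N)$, and the combinatorial map $\Psico$ are pinned down (as they were just before the statement), the remainder of the proof is a faithful transcription of the proof of Theorem~\ref{thm:sp2n2-det}, with no genuinely new input.
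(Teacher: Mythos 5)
Your proposal is correct and follows essentially the same route as the paper, which simply transcribes the proof of Theorem~\ref{thm:sp2n2-det} with the proviso that for $k=N/2$ and decorated $\nu$ one takes the intermediate Levi to be $M_{N/2}^I$ or $M_{N/2}^{II}$ matching the decoration, so that the $G$-saturation identity holds with the correct Roman numeral. The extra details you supply (the parity analysis of $\psico_{N/2,\nu}$ for the decorated/undecorated split, and the verification that $\mathrm{Ind}_{\fS_{\sm(\nu)}}^{W_\nu}(F^{\blambda})\cong G^{\blambda}$ via Frobenius reciprocity) are points the paper leaves implicit but are consistent with its argument.
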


\begin{proof}
The proof is essentially identical to that of Theorem~\ref{thm:sp2n2-det}, 
with the obvious proviso that when we define the subgroup $M_{N/2}$, in the case that $k=N/2$ and $\sm_1(\nu)=0$, we choose whichever of $M_{N/2}^{I}$ or $M_{N/2}^{II}$ matches the Roman numeral decoration on our given $\nu\in\Part(N/2,2)'$. This ensures that, when $\cO_{\psico_{N/2,\nu}(\blambda)}$ is interpreted as a nilpotent orbit in this $M_{N/2}$, its $G$-saturation is $\cO_{\omegacop_{N/2,\nu}(\blambda)}$, proving the analogue of~\eqref{eqn:saturation}.
\end{proof}

We deduce a description of the (un-generalized) modular Springer correspondence for $G=\SO(N)$ and $G=\Sp(N)$ in characteristic $\ell=2$, complementing the results of~\cite{jls} in the $\ell\neq 2$ case. Notice that the proofs of Theorems~\ref{thm:sp2n2-det} and~\ref{thm:so4n2-det} relied on the fact that we were dealing with bijections, so we needed to work with the full generalized correspondence in order to obtain this description.

\begin{cor} \label{cor:ordinary-springer}
The modular Springer correspondence for $G=\SO(N)$ or $G=\Sp(N)$ is the map 
\[ \Irr(\bk[N_G(T)/T])\to\fN_{G,\bk} \] 
described combinatorially by
\[
\Part_2(\lfloor N/2\rfloor)\to\Part_{G}(N):\lambda\mapsto\begin{cases}
\lambda^\tr\cup\lambda^\tr,&\text{ if $N$ is even,}\\
(1)\cup\lambda^\tr\cup\lambda^\tr,&\text{ if $N$ is odd.}
\end{cases}
\]
\end{cor}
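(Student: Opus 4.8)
The plan is to derive Corollary~\ref{cor:ordinary-springer} as the special case of Theorems~\ref{thm:sp2n2-det} and~\ref{thm:so4n2-det} corresponding to $k=0$ and $\nu=\varnothing$. The (un-generalized) modular Springer correspondence is, by definition, the induction series attached to the minimal Levi subgroup $L=T$ (a maximal torus), with the trivial cuspidal pair $(\cO_{\{0\}},\ubk)$; equivalently, it is the restriction of the modular generalized Springer correspondence to the block indexed by $k=0$. In the notation of Theorem~\ref{thm:sp2n2-det}, the relevant Levi subgroup is $L_\varnothing=\G(N)$ when $k=0$; wait---more precisely, for the (un-generalized) Springer correspondence the relevant Levi is the torus, which in the stratification of Levi subgroups admitting a cuspidal pair is the case $k=\lfloor N/2\rfloor$ with $\nu=(1,1,\dots,1)$ \emph{only if} that $\nu$ lies in $\Part(k,2)$; but $(1^k)\in\Part(k,2)$ precisely because all parts are powers of $2$ (namely $2^0$). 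So I would take $k=\lfloor N/2\rfloor$ and $\nu=(1^{\lfloor N/2\rfloor})$, for which $\G(N-2k)$ is either $\Sp(0)$, $\SO(0)$, $\SO(1)$, $\SO(2)$---in each case a group with no nilpotent orbit other than $\{0\}$, so $\Part_{2,\G}(N-2k)=\{\varnothing\}$ (and the $\SO(2)$ case does not arise since $k=N/2-1$ is excluded).

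First I would record that $N_G(T)/T=W_{(1^{\lfloor N/2\rfloor})}$ is the full Weyl group of type $\mathbf{B}_{\lfloor N/2\rfloor}$ (or its index-$2$ subgroup of type $\mathbf{D}$ when $N\equiv0\pmod 4$), and that $\sm((1^{\lfloor N/2\rfloor}))$ is the composition with a single nonzero entry equal to $\lfloor N/2\rfloor$, so $\uPart_2(\sm((1^{\lfloor N/2\rfloor})))$ is identified with $\Part_2(\lfloor N/2\rfloor)$. Thus $\Irr(\bk[N_G(T)/T])$ is parametrized by $\Part_2(\lfloor N/2\rfloor)$, matching the domain in the statement. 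Next I would invoke Theorem~\ref{thm:sp2n2-det} (resp.\ Theorem~\ref{thm:so4n2-det} when $N\equiv0\pmod4$) and specialize the formula $\omegaco_{k,\nu}(\mu,\blambda)=\mu\cup\psico_{k,\nu}(\blambda)\cup\psico_{k,\nu}(\blambda)$ to $k=\lfloor N/2\rfloor$, $\nu=(1^{\lfloor N/2\rfloor})$, $\mu=\varnothing$ (for $N$ even), resp.\ $\mu=(1)$ (for $N$ odd, since $\Part_{2,\SO}(1)=\{(1)\}$). Here $\blambda\in\uPart_2(\sm((1^{\lfloor N/2\rfloor})))$ is just a single $2$-regular partition $\lambda$ of $\lfloor N/2\rfloor$, and $\psico_{k,\nu}$ becomes the map $\psico_\nu$ of Lemma~\ref{lem:gln-comb} with $\nu=(1^{\lfloor N/2\rfloor})$. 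Since $\sm_1((1^{\lfloor N/2\rfloor}))=\lfloor N/2\rfloor$ and all other multiplicities vanish, the formula $\psico_\nu(\blambda)=\sum_{i\ge0}\ell^i(\lambda^{(\ell^i)})^\tr$ collapses to the single term $i=0$, giving $\psico_\nu(\lambda)=(\lambda^{(1)})^\tr=\lambda^\tr$. Substituting yields $\omegaco_{k,\nu}(\mu,\lambda)=\lambda^\tr\cup\lambda^\tr$ when $N$ is even and $(1)\cup\lambda^\tr\cup\lambda^\tr$ when $N$ is odd, which is exactly the asserted combinatorial description.

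The only genuinely substantive point---and the one I expect to need care---is the identification of the un-generalized Springer correspondence with this particular block of the generalized one. This requires knowing that the Springer functor $\bS_G$ of~\cite{ahjr} agrees with the generalized Springer functor $\bS_G^{(T,\{0\},\ubk)}$ defined via Fourier transform in Section~\ref{sect:transitivity}, and that the resulting parametrization of $\Irr(\bk[N_G(T)/T])$ by $2$-regular partitions of $\lfloor N/2\rfloor$ is the standard one (i.e.\ $D^\lambda$ for the Coxeter group of type $\mathbf{B}$ or $\mathbf{D}$, pulled back from $\fS_{\lfloor N/2\rfloor}$ in characteristic $2$ as described just before Theorem~\ref{thm:sp2n2-det}). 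Both facts are implicit in the set-up: the identification of functors is noted after the definition of $\bS_G^{(L,\cO_L,\cE_L)}$ in Section~\ref{sect:transitivity}, and the parametrization of $\Irr(\bk[W])$ in types $\mathbf{B}$, $\mathbf{D}$ with $\ell=2$ is the one fixed in the discussion preceding Theorem~\ref{thm:sp2n2-det}. Once these bookkeeping matters are pinned down, the corollary is a one-line specialization. I would also remark, as the statement's preamble does, that one cannot shortcut the argument: the proofs of Theorems~\ref{thm:sp2n2-det} and~\ref{thm:so4n2-det} proceed by a counting/bijectivity argument over \emph{all} blocks simultaneously, so the full generalized correspondence is genuinely needed to pin down even this single block.
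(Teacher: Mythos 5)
Your proposal is correct and follows essentially the same route as the paper: the paper's proof is precisely the specialization of Theorems~\ref{thm:sp2n2-det} and~\ref{thm:so4n2-det} to $k=\lfloor N/2\rfloor$ and $\nu=(1^{\lfloor N/2\rfloor})$, so that $L_\nu=T$, with $\psico_\nu(\lambda)=\lambda^\tr$ and $\mu=\varnothing$ or $(1)$ exactly as you compute (your opening guess of $k=0$, $\nu=\varnothing$ is wrong, but you correct it yourself before it matters). The bookkeeping points you flag — the identification of $\Irr(\bk[N_G(T)/T])$ with $\Part_2(\lfloor N/2\rfloor)$ via pullback from $\fS_{\lfloor N/2\rfloor}$, and the remark that the full generalized correspondence is needed because the theorems are proved by a global bijectivity argument — match the paper's own framing.
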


\begin{proof}
This is obtained from Theorem~\ref{thm:sp2n2-det} or~\ref{thm:so4n2-det} by taking $k=\lfloor N/2\rfloor$ and $\nu=(1^k)$, so that $L_\nu$ is a maximal torus $T$. 
\end{proof}


\end{document}